\theoremstyle{plain}
\newtheorem{theorem}{Theorem}
\newtheorem{corollary}[theorem]{Corollary}
\newtheorem{lemma}[theorem]{Lemma}
\newtheorem{proposition}[theorem]{Proposition}
\theoremstyle{definition}
\newtheorem{remark}[theorem]{Remark}
\newtheorem*{remark*}{Remark}
\newcommand{\bDelta}{\Delta}
\newcommand{\Rd}{\R^{d}}
\newcommand{\pr}{\mathbf P}
\newcommand{\e}{\mathbf E}
\newcommand{\Z}{\mathbb{Z}}
\newcommand{\R}{\mathbb{R}}
\newcommand{\bL}{{\mathbf L}}
\newcommand{\cN}{{\mathcal N}}
\newcommand{\1}{\mathsf{1}}
\newcommand{\Mcal}{\mathcal{M}}
\newcommand{\mfo}{\mathcal{M}_{\rm f}}
\newcommand{\scf}[2]{\langle #1,\,#2 \rangle}
\begin{document}
\title[Regularity and irregularity of superprocesses]
{Regularity and irregularity of superprocesses with $(1+\beta)$-stable branching mechanism}
\author[Mytnik]{Leonid Mytnik} 
\address{Faculty of Industrial Engineering and Management, Technion Israel Institute of Technology, Haifa 32000, Israel}
\email{leonid@ie.technion.ac.il}

\author[Wachtel]{Vitali Wachtel} \address{Institut f\"ur Mathematik,
Universit\"at Augsburg, 86135 Augsburg, Germany}
\email{vitali.wachtel@mathematik.uni-augsburg.de}

\begin{abstract}
  We would like to give an overview of results on regularity, or better to say "irregularity",
  properties of densities at fixed times of super-Brownian motion with $(1+\beta)$-stable branching for
  $\beta<1$. First, the following dichotomy for the density is shown: it is continuous in the dimension $d=1$
  and locally unbounded in all higher dimensions where it exists. Then in $d=1$ we determine pointwise
  and local H\"older exponents of the density, and calculate the multifractal spectrum corresponding
  to pointwise H\"older exponents.
\end{abstract}
\keywords{H\"older continuity, Hausdorff dimension, multifactal spectrum, local unboundedness, superprocess, stable process}
\subjclass{Primary 60J80, 28A80; Secondary 60G57}
\maketitle

\section{Introduction, main results and discussion}
\label{sec:1}
 
\subsection{Model and motivation}

This paper is devoted to regularity and fractal properties of superprocesses
with $(1+\beta)$-branching. Regularity properties of functions is the most
classical question in analysis. Typically one is interested in such properties
as continuity/discontinuity and differentiability. Starting from Weierstrass,
who constructed an example of a continuous but nowhere differentiable
function, people got more and more interested in such 'strange' properties of
functions. Trajectories of stochastic processes give a rich source of such
functions. The most classical example is the Brownian motion: almost every
path of the Brownian motion is continuous but nowhere differentiable.

In order to measure the regularity of a function $f$ at point $x_0$,
we need to introduce so-called H\"older classes $C^\eta(x_0)$. One says
that $f\in C^\eta(x_0)$, $\eta>0$ if the exists a polynomial $P$ of degree
$[\eta]$ such that
$$
|f(x)-P(x-x_0)|=O(|x-x_0|^\eta).
$$
For $\eta\in (0,1)$ the above definition coincides with the definition of H\"older
continuity with index $\eta$ at $x_0$.
With the definition of $f\in C^\eta(x_0)$ at hand, let us define the 
\emph{pointwise H\"older exponent} of $f$ at $x_0$:
\begin{equation}
H_f(x_0)\,:=\,\sup\bigl\{\eta>0:\ f\in C^{\eta}(x_0)\bigr\},
\end{equation}
and we set it to $0$ if $f \not \in C^{\eta}(x_0)$ for all $\eta>0$. To simplify the
exposition we will sometimes call $H_f(x_0)$ the H\"older exponent of $f$ at $x_0$. 

It is well known that the Weierstrass functions have the same H\"older exponent at all
points. The same is true for the Brownian motion: the pointwise H\"older exponent at
all times is equal to $1/2$ with probability one. However, there exist functions with
H\"older exponent changing from point to point. In such a case one speaks of a
\emph{multifractal} function.
For some classical examples of deterministic multifractal functions we refer to
Jaffard~\cite{Jaffard1997}. In studying multifractal functions people are interested
in the 'size' of the set of points with given H\"older exponent. To measure these sizes
for different H\"older exponents of a function $f$ one introduces the following function 
\begin{equation}
\label{dim}
D(\eta)={\rm dim}\{x_0:H_f(x_0)=\eta\},
\end{equation}
where ${\rm dim}(A)$ denotes the Hausdorff dimension of the set $A$. The mapping
$\eta\mapsto D(\eta)$ reveals the so-called \emph{multifractal spectrum} related
to pointwise H\"older exponents of $f$. A standard example of a multifractal
random function is given by a Levy process with infinite Levy measure. Its
multifractal spectrum was determined by Jaffard in \cite{Jaff99}. In general, the
multifractal analysis of random functions and measures has attracted attention for
many years and has been studied for example in Dembo et al.\ \cite{DemboPeresRosenZeitouni2001},
Durand~\cite{Durand2009}, Hu and Taylor \cite{HuTaylor2000}, Klenke and M\"{o}rters
\cite{KlenkeMoerters2005}.

All these examples are related to Levy processes, which have independent increments.
Whenever the independence structure is lost, the analysis becomes much more complicated.
However, there are still some examples of stochastic processes without independent increments,
for which  the rigorous analysis of multifractal spectrum is possible. The multifractal spectrum of measures
defined on branching random trees has been studied by M{\"o}rters and Shieh~\cite{MoertersShieh2004},
Berestycki, Berestycki and
              Schweinsberg~\cite{BBS07}, and more recently
by Balan\c{c}a~\cite{Balanca15}.  
The analysis of multifractal spectrum has been also 
done in some variations for measure-valued branching processes, see, e.g,
Le\,Gall and Perkins \cite{LeGallPerkins1995}, Perkins and Taylor \cite{PerkinsTaylor1998}, and recently by 
Mytnik and Wachtel~\cite{MW14}. 
One of the aims of this review is to describe results on multifractal spectrum and other regularity properties of  
 measure-valued branching processes, and show the methods of proofs. 
We shall do it in the particular case of $(1+\beta)$-stable
super-Brownian motion, whose densities in low dimensions turn out to have a very non-trivial
regularity structure.

\vspace{12pt}

Before we start with the precise definition of these processes, we need to introduce
the following notation. $\Mcal$ is the space of all
Radon measures on $\Rd$ and $\mfo$ is the space of finite measures on $\Rd$ with weak
topology ($\Rightarrow$ denotes weak convergence). In general if $F$ is a set of functions,
write $F_{+}$ or $F^{+}$ for non-negative functions in $F$. For any metric space $E$, let
$\mathcal{C}_{E}$ (respectively, $\mathcal{D}_{E}$) denote the space of continuous
(respectively, c\`{a}dl\`{a}g) $E$-valued paths with compact-open (respectively, Skorokhod) topology. 
The integral of a function $\phi$ with respect to a measure $\mu$ is written as
$\langle \mu,\phi\rangle$ or $\scf{\phi}{\mu}$ or $\mu(\phi)$. We use $c$ (or $C$) to denote a
positive, finite constant whose value may vary from place to place.  
A constant of the form
$c(a,b,\ldots)$ means that this constant depends on parameters $a,b,\ldots$. 
Moreover, $c_{(\#)}$ will denote a constant appearing in formula line (or array) $(\#)$.

Let $(\Omega,\mathcal{F}_{t},\mathcal{F},\mathbf{P})$ be the probability space with filtration,
which is sufficiently large to contain all the processes defined below. Let
$\mathcal{C}(E)$ denote the space of continuous functions on $E$ and let  
$\mathcal{C}_b(E)$  be the space of bounded functions in  $\mathcal{C}(E)$. Let
$\mathcal{C}_b^n=\mathcal{C}_b^n(\Rd)$ denote the subspace of functions in
$\mathcal{C}_b=\mathcal{C}_b(\Rd)$ whose partial derivatives of order $n$ or less are also
in $\mathcal{C}_b$. A c\`{a}dl\`{a}g  adapted  measure-valued process $X$ is called a
super-Brownian motion with $(1+\beta)$-stable branching if $X$ satisfies the following
martingale problem. For every
$\varphi\in\mathcal{C}_b^{2}$ and every $f\in \mathcal{C}^2(\R)$,
\begin{eqnarray}
\label{equt:14_1}
&&f(\langle X_t,\varphi\rangle)-f(\langle X_0,\varphi\rangle)-{1\over
2}\int_0^t f'(\langle X_s,\varphi\rangle)
\langle X_s,\bDelta \varphi\rangle ds\\
\nonumber
&&\ -\int_0^t \Big(\int_{\Rd}\!\int_{(0,\infty)} \Big(f(\langle
X_s,\varphi\rangle+r\varphi(x))-f(\langle
X_s,\varphi\rangle)-f'(\langle X_s,\varphi\rangle)r\varphi(x) \Big)
n(dr) X_s(dx)\Big)ds
\end{eqnarray}
is an $\mathcal{F}_{t}$-martingale, where 
\begin{equation}
\label{eq:4_15_1}
n(dr)={\beta(\beta+1)\over \Gamma(1-\beta)}\,r^{-2-\beta}\,dr.
\end{equation}

There is also an analytic description of this process: For every positive
$\varphi\in\mathcal{C}_b^{2}$ one has
$$
\e e^{-\langle X_t, \varphi\rangle}=e^{-\langle X_0, u_t\rangle},
$$
where $u$ is the solution to the equation
\begin{equation}
\frac{\mathrm{d}}{\mathrm{d}t}u\ =\
\bDelta u-u^{1+\beta}, \label{logLap}
\end{equation}
with the initial condition $\varphi$.

If $\beta=1$, $X_{.}$ has continuous sample $\mfo$-valued paths, while for $0<\beta<1$,
$X_{.}$ is a.s. discontinuous and has jumps all of the form
$\Delta X_{t} =\delta_{x(t)}m(t)$ and the set of jump times is dense in $[0,\zeta)$, where
\mbox{$\zeta=\inf \{t:\; \langle X_{t},1\rangle=0\}$} is the lifetime of $X$ (see, for
example, Section 6.2.2 of~\cite{bib:daw91}). For $t>0$ fixed, $X_{t}$ is absolutely
continuous a.s. if and only if $d<2/\beta$ (see~\cite{Fleischm88}
and Theorem~8.3.1 of~\cite{bib:daw91}). If $\beta=1$, and $d=1$, then much more can be said ---
$X_{t}$ is absolutely continuous for all $t>0$ a.s. and has a density $X(t,x)$ which is jointly
continuous on $(0,\infty)\times \R$ (see~\cite{bib:konshig88}, \cite{bib:reim89}). In view of the
jumps of $X$ (described above) if $0<\beta<1$, we see that $X_{t}$ cannot have  a density for a
dense set of times a.s. and the regularity properties of the densities are very intriguing. In
this work we consider the ``stable branching'' case of $0<\beta<1$ and consider the question:
\begin{center}
{\it What are the regularity properties of the density of $X$ at fixed times $t$?}
\end{center}
The analytic methods used in~\cite{Fleischm88} to prove the existence of a
density at a fixed time do not shed any light on its regularity properties. However recently there
have been developed technique that allowed to treat these questions. To the best of our  knowledge,
the regularity properties of the densities for super-Brownian motion with $\beta$-stable
branching were first studied in Mytnik and Perkins~\cite{MP03}. It was shown there, that there
is a continuous version of the density if and only if $d=1$.  Moreover, when $d>1$ the density
is very badly behaved. Note that in the case of $\beta=1$, the density of super-Brownian motion
$X_t(dx)$ exists only in dimension $d=1$ and the density has a version which is H\"older
continuous with any exponent smaller than $1/2$ (see Konno and Shiga \cite{bib:konshig88}). 

Now consider the case $\beta<1$.  In a series of papers of Fleischmann, Mytnik, and
Wachtel~\cite{FMW10}, \cite{FMW11} and Mytnik and Wachtel~\cite{MW14} the properties of the
density were studied for a superprocess with $\beta$-stable branching with an $\alpha$-stable
motion, the so-called $(\alpha,d,\beta)$-superprocess. The case of $\alpha=2$,
clearly corresponds to the super-Brownian motion. In~\cite{FMW10}, the results of Mytnik and
Perkins~\cite{MP03} were extended to the case of $\alpha$-stable  motion. In particular, it
was shown that there is a \emph{dichotomy}\/ for the  density function of the measure (in what
follows, we just say the ``density of the measure''): There is a continuous version of the
density of $X_{t}(dx)$  if $d=1$ and $\alpha>1+\beta,$\thinspace\ but otherwise the density 
is unbounded  on open sets of positive $X_{t}(\mathrm{d}x)$-measure. Moreover, in the case of
continuity ($\,d=1$ and $\,\alpha>1+\beta$), H\"{o}lder regularity properties of the density
had been studied in \cite{FMW10}, \cite{FMW11}, \cite{MW14}. It turned out that on any set of
positive $X_t(dx)$ measure, there are points with different pointwise H\"older exponents.
In~\cite{MW14} the Hausdorff dimensions of sets containing the points with certain H\"older
exponents were computed: this reveals the multifractal spectrum related to pointwise
H\"older exponents.   

The main purpose of this paper is to give concise exposition of the results on the
regularity properties of densities of superprocesses with stable branching. On top of it
we will also prove some new results that give a more complete picture of regularity properties.
\subsection{Results on  regularity properties of the densities of super-Brownian motion with stable branching.} 
As we have mentioned above we are interested in the regularity properties of the
$(\alpha,d,\beta)$-superprocess with $\beta\in(0,1)$. In this paper we will consider
the particular case of
\begin{eqnarray}
\label{equt:12_12_1} 
\alpha =2,
\end{eqnarray}
that is, the case of super-Brownian motion. We do it in order to simplify the exposition,
however the proofs go through also in the case of $\alpha$-stable motion process. The
enthusiastic reader who is interested in this general case is invited to go through the series
of papers \cite{FMW10}, \cite{FMW11}, \cite{MW14}.

So, from now on we assume~(\ref{equt:12_12_1}) and $$\beta<1.$$  

The first result deals with the \emph{dichotomy} of the density of super-Brownian motion,
see \cite{MP03} and \cite{FMW10}. Recall that, by~\cite{Fleischm88}, the density for fixed times $t>0$, exists if 
and only if $d<2/\beta$. 
\begin{theorem}
[\textbf{Dichotomy for densities}]\label{T.dichotomy}  Let $d<2/\beta$. Fix $\,t>0$%
\thinspace\ and $\,X_{0}=\mu\in\mathcal{M}_{\mathrm{f}\,}.$\thinspace\ 

\begin{description}
\item[(a)] If\/ $\,d=1$\thinspace\ then with probability one, there is a
continuous version $\tilde{X}_{t}$ of the density function of the measure\/
$\,X_{t}(\mathrm{d}x).$\thinspace\

\item[(b)] If\/ $\,d>1,$\thinspace then with probability one, for all open
$\,U\subseteq\R^{d},$%
\[
\Vert X_{t}\Vert_{U}\ =\ \infty\ \,\text{whenever }\,X_{t}(U)>0.
\]

\end{description}
\end{theorem}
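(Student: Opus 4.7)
My strategy for both parts starts from the mild (Green-function) representation of the density. Starting from the martingale problem \eqref{equt:14_1} with $f(x)=x$ and a smoothing argument in the test function, one derives, for $d<2/\beta$,
\[
X_t(x) \;=\; P_t\mu(x) \;+\; \int_0^t\!\!\int_{\Rd}\!\!\int_0^\infty p_{t-s}(x-y)\,r\,\tilde N(ds,dy,dr),
\]
where $p_t$ is the heat kernel on $\Rd$, $P_t\mu(x)=\int p_t(x-y)\mu(dy)$, and $\tilde N$ is the compensated jump measure of $X$ with compensator $n(dr)\,X_s(dy)\,ds$ read off from \eqref{equt:14_1}--\eqref{eq:4_15_1}. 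Both claims will be proved by analyzing the stochastic-integral term in this representation.

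For part (a) I would apply Kolmogorov's continuity criterion in the spatial variable $x$. Since $n(dr)$ in \eqref{eq:4_15_1} has tails $r^{-2-\beta}$, only absolute moments of order $p<1+\beta$ are available, so in place of the $L^2$ isometry one uses a Bichteler--Jacod type bound
\[
\e\Bigl|\!\int\! f\,d\tilde N\Bigr|^p \;\le\; C_p\,\e\!\int |f(s,y,r)|^p\,n(dr)\,X_s(dy)\,ds, \qquad 1<p<1+\beta,
\]
applied with $f(s,y,r)=r\bigl(p_{t-s}(x-y)-p_{t-s}(x'-y)\bigr)$. Splitting jumps into small and large parts to tame $\int r^p\,n(dr)$, and exploiting the Hölder-in-$x$ regularity of $p_{t-s}$, one reaches an estimate of the form $\e|X_t(x)-X_t(x')|^p\le C|x-x'|^{\eta p}$ with an explicit $\eta=\eta(p,\beta)$. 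In dimension $d=1$, and only then, one can arrange $\eta p>1$ by choosing $p$ close enough to $1+\beta$, and Kolmogorov's criterion furnishes the continuous version $\tilde X_t$.

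For part (b) the strategy is to exhibit a direct obstruction to boundedness coming from the jumps. Conditionally on the past, the jumps of $X$ form a Poisson point process on $[0,t)\times\Rd\times(0,\infty)$ with intensity $n(dr)\,X_s(dy)\,ds$, and via the representation each jump $(s,x(s),m(s))$ contributes a bump $m(s)\,p_{t-s}(x-x(s))$ to $X_t$ of height $m(s)\,c(t-s)^{-d/2}$. Because $\int_0^t (t-s)^{-d/2}\,ds=\infty$ precisely when $d\ge 2$, late jumps inside a given open $U$ drive the density to infinity at points of $U$. A Borel--Cantelli argument over dyadic time windows $[t-2^{-k-1},t-2^{-k})$ restricted to $U$, combined with the observation that on $\{X_t(U)>0\}$ the Poisson intensity $X_s(U')$ does not vanish on suitable $U'\subseteq U$ as $s\uparrow t$, yields $\Vert X_t\Vert_U=\infty$ almost surely. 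A countable-basis argument then promotes this to the simultaneous ``for all open $U$'' statement.

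The most delicate step in (a) is verifying the strict positivity of the Hölder exponent $\eta$ produced by the moment bound; the relevant integral $\int_0^t\!\int_\Rd (p_{t-s}(x-y))^p\,dy\,ds$ is finite only when $p(d-1)<2$, which for $p$ close to $1+\beta$ forces precisely $d=1$, and this is where the dichotomy originates. In (b), the subtle work is upgrading an ``in expectation'' blow-up to an almost-sure dichotomy holding simultaneously over all open $U$; this requires uniform control of the random Poisson intensity $X_s(U)$ along a countable basis of sets and along any approach $s\uparrow t$.
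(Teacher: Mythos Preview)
Your approach to part (a) is essentially the same as the paper's: both use the mild representation and a $p$-th moment bound (with $1<p<1+\beta$) on spatial increments of the stochastic integral, followed by Kolmogorov's criterion, and both observe that the resulting H\"older exponent can be pushed above $1/p$ precisely when $d=1$.

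For part (b), however, there is a genuine gap. The assertion ``each jump contributes a bump $m(s)\,p_{t-s}(x-x(s))$ to $X_t$'' is only a heuristic: in the mild representation the integral is against the \emph{compensated} measure $\tilde N$, and since $\int_0^\infty r\,n(dr)=\infty$ you cannot separate the jump part from the compensator and argue that a single bump survives uncancelled to time $t$. Likewise, a Borel--Cantelli argument over dyadic windows is problematic because the jump intensity is the random measure $X_s(dy)$ and the events in different windows are not independent. The divergence of $\int_0^t(t-s)^{-d/2}\,ds$ is a red herring; what actually matters is that a jump of size $\sim(t-s)^{1/(1+\beta)}$ smoothed by $p_{t-s}$ has height $\sim(t-s)^{1/(1+\beta)-d/2}\to\infty$ for integer $d\ge 2$.

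The paper fills exactly this gap, and the missing ingredients are substantial. First, nonnegativity of $X_t$ gives the lower bound $\|X_t\|_B\ge\int_B X_t(y)\,p_{t-\tau}(y-\zeta)\,dy$, replacing the pointwise density by a smoothed quantity. Second, the strong Markov property together with the \emph{branching property} of the superprocess shows that, conditionally on a big jump $r\delta_\zeta$ at a stopping time $\tau<t$, the process $X_{\tau+\cdot}$ stochastically dominates a fresh superprocess started from $r\delta_\zeta$; this is how one rigorously says that the bump ``survives''. Third --- and this is the analytic heart --- one must prove that $\mathbf{E}_{r\delta_\zeta}\exp\bigl\{-\int_B X_s(y)\,p_s(y-\zeta)\,dy\bigr\}\to 0$ along $s\downarrow 0$ with $r\ge s^{1/(1+\beta)}\log^{1/(1+\beta)}(1/s)$; the paper does this via estimates on the log-Laplace equation~\eqref{logLap}. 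Finally, the existence of such big jumps on $\{X_t(B)>0\}$ is obtained not by Borel--Cantelli but by time-changing the counting process of jumps to a standard Poisson process and showing the random intensity integral diverges (Lemma~\ref{lem:3}, using Lemma~\ref{L.explosion}). Your sketch correctly locates the source of the blow-up, but none of these four tools appear in it, and without them the argument does not close.
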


\medskip

For the later results, we assume that $d=1$, $\beta<1$, that is, there is a continuous 
version of the density at fixed time $t$. This density, with a slight abuse of notation,
will be also denoted by $X_t(x)$, $x\in \R$. 

\medskip

In the next theorem the first regularity properties of the density $X_t(\cdot)$, in 
dimension $d=1$, are revealed (see \cite{FMW10}).

\begin{theorem}
[\textbf{Local H\"{o}lder continuity}]\label{T.loc.Hold}  Let $d=1$. Fix $\,t>0$%
\thinspace\ and $\,X_{0}=\mu\in\mathcal{M}_{\mathrm{f}\,}.$\thinspace\ 
\begin{description}
\item[(a)] For each $\,\eta<\eta
_{\mathrm{c}}:=\frac{2}{1+\beta}-1,$\thinspace\ this version ${X}_{t}(\cdot)$ is
locally H\"{o}lder continuous of index $\,\eta:$%
\[
\sup_{x_{1},x_{2}\in K,\,x_{1}\neq x_{2}}\frac{\bigl|{X}_{t}%
(x_{1})-{X}_{t}(x_{2})\bigr|}{|x_{1}-x_{2}|^{\eta}}\,<\,\infty
,\quad\text{compact }\,K\subset\R.
\]
\item[(b)] For every $\,\eta\geq\eta_{\mathrm{c}}%
$\thinspace\ with probability one, for any open $\,U\subseteq\R,$%
\[
\sup_{x_{1},x_{2}\in U,\,x_{1}\neq x_{2}}\frac{\bigl|{X}_{t}%
(x_{1})-{X}_{t}(x_{2})\bigr|}{|x_{1}-x_{2}|^{\eta}}\,=\,\infty
\quad\text{whenever }\,X_{t}(U)>0.
\]
\end{description}
\end{theorem}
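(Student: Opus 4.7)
My plan is to begin from the Green-function representation
\[
X_t(x)\;=\;\int_\R p_t(x-y)\,X_0(dy)\;+\;Z_t(x),
\qquad Z_t(x)\;:=\;\int_0^t\!\!\int_\R\!\!\int_0^\infty r\,p_{t-s}(x-y)\,\tilde N(dr,dy,ds),
\]
where $p_u$ is the one-dimensional heat kernel and $\tilde N$ is the compensated jump measure of $X$ with predictable intensity $n(dr)\,X_{s-}(dy)\,ds$ (the orthogonal-martingale-measure rephrasing of \eqref{equt:14_1}). Since the deterministic term is real-analytic in $x$, all regularity and irregularity are carried by $Z_t$.

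\textbf{Part (a).} For $p\in(1,1+\beta)$, a Burkholder--Davis--Gundy-type inequality for compensated Poisson integrals with $(1+\beta)$-stable jump intensity gives
\[
\e\bigl|Z_t(x_1)-Z_t(x_2)\bigr|^p
\;\le\; C_p\,\e\!\left(\int_0^t\!\!\int_\R \bigl|p_{t-s}(x_1-y)-p_{t-s}(x_2-y)\bigr|^{1+\beta}\,X_s(dy)\,ds\right)^{\!p/(1+\beta)}\!.
\]
With $h:=|x_1-x_2|$, I would split the $s$-integration at $s=t-h^2$. For $u=t-s<h^2$ one uses the trivial $|p_u(z_1)-p_u(z_2)|\le 2p_u(z_i)$ together with $\int p_u^{1+\beta}(y)\,dy\sim u^{-\beta/2}$; for $u\ge h^2$, the scaling $\int|p_u(z+h)-p_u(z)|^{1+\beta}dz\lesssim h^{1+\beta}u^{-(1+2\beta)/2}$. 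Combined with the first-moment bound $\e X_s(dy)\le C\,dy$ on bounded sets, and (for $\beta\le 1/2$) a further scale-dependent truncation of the jump sizes to recover the sharp exponent, this gives $\e|Z_t(x_1)-Z_t(x_2)|^p\le C h^{(2-\beta)p/(1+\beta)}$. Kolmogorov's continuity criterion then yields local Hölder continuity with any exponent $\eta<(2-\beta)/(1+\beta)-1/p$, and letting $p\uparrow 1+\beta$ produces any $\eta<\eta_c$.

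\textbf{Part (b).} For the lower part I would, at each scale $u>0$, decompose $\tilde N$ into ``large'' jumps (those of size $\ge R_u:=u^{1/(1+\beta)}$) and ``small'' jumps, giving $Z_t=Z_t^{\mathrm{lg},u}+Z_t^{\mathrm{sm},u}$. The expected number of large jumps born in $U\times(t-u,t)$ equals
\[
\int_{t-u}^t\!\!\int_U \e X_s(dy)\,ds\cdot\!\!\int_{R_u}^\infty n(dr)\;\asymp\;u\cdot R_u^{-(1+\beta)}\;=\;\mathrm{const}>0
\]
on $\{X_{t-u}(U)>0\}$. Working at dyadic scales $u_n=2^{-n}$ and conditioning on $\mathcal F_{t-u_n}$, these events are conditionally independent, so a conditional Borel--Cantelli argument produces such a jump at infinitely many scales a.s.\ on $\{X_t(U)>0\}$. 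A jump of size $r\ge R_u$ at time $t-u$ based at $x_0\in U$ adds the bump $r\,p_u(\cdot-x_0)$ to $X_t$, whose oscillation on the matching scale $h=\sqrt u$ is $\asymp r/\sqrt u\ge R_u/\sqrt u = h^{\eta_c}$. Meanwhile the truncation $\int_0^{R_u} r^2 n(dr)\sim R_u^{1-\beta}<\infty$ allows an $L^2$-heat-kernel computation showing $\e|Z_t^{\mathrm{sm},u}(x_1)-Z_t^{\mathrm{sm},u}(x_2)|^2\le C h^{(3-\beta)/(1+\beta)}$, whose exponent exceeds $2\eta_c$ strictly. Chebyshev thus makes the background fluctuation $o(h^{\eta_c})$ with overwhelming probability at the chosen random scale, so the bump dominates and forces the Hölder-$\eta_c$ seminorm on $U$ to be infinite.

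\textbf{Main obstacle.} The delicate part is (b): the bump and the a-priori background bound sit at essentially the same scaling level, so one cannot subtract them deterministically. Making the conditional Borel--Cantelli argument really work requires a Palm/spine decomposition of the Poisson jump measure around the distinguished jump --- essentially the historical-process machinery producing an after-jump superprocess independent of that jump --- together with a bound on the residual $Z_t^{\mathrm{sm},u_n}$ that is uniform over the random location, size and scale selected by the large jump. The compatible sharp $L^p$-bound in (a) for all $\beta\in(0,1)$, which also rests on the same scale-dependent jump-size truncation, is the other significant bookkeeping burden.
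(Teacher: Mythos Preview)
The paper does not give a self-contained proof of this theorem; it refers to \cite{FMW10} and only sketches, in the Remark after the proof of Theorem~\ref{T.dichotomy}(a), the Kolmogorov argument based on Lemma~\ref{L.moment_bound}. That remark is exactly your approach to (a), and the paper explicitly notes that it yields the sharp exponent $\eta_{\mathrm c}$ only when $\beta\ge 1/2$: the moment bound is limited to $\delta<\min\{1,(2-\beta)/(1+\beta)\}$, and for $\beta<1/2$ the constraint $\delta<1$ bites, giving only H\"older exponent $\beta/(1+\beta)<\eta_{\mathrm c}$. Your ``scale-dependent truncation of the jump sizes'' is precisely where the real work is, and you have not said what it is. In \cite{FMW10} the sharp exponent for all $\beta$ is obtained not via Kolmogorov but via the time-change representation $Z_t(x_1)-Z_t(x_2)=L^+_{T_+}-L^-_{T_-}$ of Lemma~\ref{L9} together with the pathwise bound on $T_\pm$ from Lemma~\ref{L.fixed.3}; this replaces moment estimates by tail bounds for spectrally positive stable processes (Lemmas~\ref{L3} and~\ref{L.small.values}).

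For (b) your outline has a genuine gap. The jump measure $N$ of $X$ is \emph{not} Poisson: its compensator $X_s(\mathrm dy)\,n(\mathrm dr)\,\mathrm ds$ depends on the state, so a Palm/spine decomposition does not decouple the distinguished jump location $y_0$ from the ``background''. Concretely, the after-jump superprocess started from $r\delta_{y_0}$ contributes its own fluctuations centred at the random point $y_0$, and these are of the same order as the bump you want to detect. Your $L^2$ bound $\mathbf E|Z_t^{\mathrm{sm},u}(x_1)-Z_t^{\mathrm{sm},u}(x_2)|^2\le C h^{(3-\beta)/(1+\beta)}$ is correct for \emph{fixed} $x_1,x_2$, but turning it into a statement at the random $(y_0,y_0+\sqrt u)$ requires either a union bound over a grid of mesh $\sim\sqrt u$ (which fails: the number of grid points times $h$ is of order one, not summable) or a uniform H\"older bound, and Kolmogorov applied to the truncated process gives at best exponent $\eta_{\mathrm c}$ again, not strictly larger. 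The approach in \cite{FMW10} (mirrored here in Section~\ref{sec:fixed} for the related Theorem~\ref{T.fixed}) sidesteps this entirely: one works with the stable processes $L^\pm$ directly, uses Lemma~\ref{lem:3} to produce a big jump of $L^+$, and then exploits the one-sidedness---a spectrally positive stable process cannot compensate a large upward jump without making another large jump (Lemma~\ref{L.small.values} and the argument of Lemmas~\ref{n.L5}--\ref{n.L7})---so the problem reduces to showing that \emph{two} big jumps at matching scales are rare, which is a straightforward intensity computation.
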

One of the consequences of the above theorem is  that  the so-called \emph{optimal index}
for \emph{local} H\"{o}lder continuity of $X_{t}$ equals to $\eta_{\mathrm{c}}$ (see e.g.
Section 2.2 in~\cite{bib:seuret02} for the discussion about local Holder index of continuity). 

The main part of this paper is devoted to studying pointwise regularity properties of $X_t$
which differ drastically from their local regularity properties.  In particular, we are interested
in pointwise H\"older exponent at fixed points and in multifractal spectrum.

Let us fix $t>0$ and  return to the continuous density $X_{t}$ of the $(2,1,\beta)$-superprocess.
In what follows, $H_X(x)$ will denote the 
pointwise H\"{o}lder exponent of $X_{t}$ at $x\in \R.$ 
The next result describes $H_X(x)$ at {\it fixed} points $x\in \R$.  
\begin{theorem}
[\textbf{Pointwise H\"{o}lder exponent at fixed points}]\label{T.fixed}  Let $d=1$. Fix $\,t>0, x\in \R$%
\thinspace\ and $\,X_{0}=\mu\in\mathcal{M}_{\mathrm{f}\,}.$\thinspace\ Define 
$\bar\eta_{\mathrm{c}}:=\frac{3}{1+\beta}-1.$ If $\bar\eta_{\rm c}\neq1$ then, for every fixed $x$,
\[  H_X(x)= \bar\eta_{\mathrm{c}},\;
\quad\mathbf{P}-a.s.\ on\ \{X_{t}(x)>0\}.
\]
\end{theorem}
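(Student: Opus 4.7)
The plan is to exploit the jump representation of $X$. From the martingale problem (\ref{equt:14_1}), $X$ admits a mild form as a Poisson stochastic integral:
\[
X_t(x)\ =\ (S_t X_0)(x)\ +\ Z_t(x),\qquad Z_t(x):=\int_0^t\!\!\int_\R\!\!\int_{(0,\infty)} p_{t-s}(x-y)\,r\,\tilde N(ds,dy,dr),
\]
where $S_t$ is the Brownian semigroup, $p_t$ its kernel, and $\tilde N$ the compensated Poisson measure with intensity $ds\,X_{s-}(dy)\,n(dr)$. The deterministic summand $(S_tX_0)(x)$ is $C^\infty$ in $x$, so the pointwise regularity of $X_t$ at $x$ is dictated entirely by $Z_t$. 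The exponent $\bar\eta_c$ arises from balancing two scalings: a single jump $(s,y,r)$ with $t-s\sim h$ and $|y-x|\lesssim\sqrt{h}$ contributes to $Z_t(x)-Z_t(x+\epsilon)$ an amount of order $r/\sqrt{h}$ once $\epsilon\gtrsim\sqrt{h}$, while the Poisson intensity of such jumps with $r\geq R$ in this parabolic window is of order $h^{3/2}R^{-(1+\beta)}$ (provided $X_s$ has density bounded away from $0$ near $(t,x)$); setting $h=\epsilon^2$ and this intensity $\sim 1$ gives $R\sim\epsilon^{3/(1+\beta)}$ and oscillation $R/\sqrt{h}\sim\epsilon^{\bar\eta_c}$.

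For the lower bound $H_X(x)\geq\bar\eta_c$, I would compute $p$th moments (with $p<1+\beta$, $p\uparrow 1+\beta$) of either the plain increment $Z_t(x+\epsilon)-Z_t(x)$ (if $\bar\eta_c<1$) or the first-order Taylor remainder $Z_t(x+\epsilon)-Z_t(x)-\epsilon Z_t'(x)$ (if $\bar\eta_c>1$, after establishing that $Z_t'(x)$ exists as a pointwise derivative at the fixed point). A Burkholder-type estimate for stochastic integrals against the $(1+\beta)$-stable compensated Poisson measure bounds such moments by the heat-kernel integral
\[
\int_0^t\!\!\int_\R |\Delta_\epsilon p_{t-s}(x-y)|^{1+\beta}\,dy\,ds\ \leq\ C\,\epsilon^{2-\beta},
\]
where $\Delta_\epsilon$ is the corresponding first- or second-order finite difference at scale $\epsilon$; the estimate is verified by splitting at $s=t-\epsilon^2$ and using the scalings $\|p_h\|_{1+\beta}^{1+\beta}\lesssim h^{-\beta/2}$, $\|\partial p_h\|_{1+\beta}^{1+\beta}\lesssim h^{-1/2-\beta}$, $\|\partial^2 p_h\|_{1+\beta}^{1+\beta}\lesssim h^{-1-3\beta/2}$. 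Chebyshev and Borel-Cantelli at dyadic scales $\epsilon_n=2^{-n}$ yield the desired bound along the subsequence $x\pm\epsilon_n$ eventually a.s., and Theorem~\ref{T.loc.Hold}(a) interpolates between dyadic levels to transfer the estimate to arbitrary $x'\to x$.

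For the upper bound $H_X(x)\leq\bar\eta_c$, I would exhibit, a.s. on $\{X_t(x)>0\}$, a sequence of ``marker jumps'' obstructing membership of $X_t$ in $C^\eta(x)$ for every $\eta>\bar\eta_c$. Fixing such $\eta$, consider the pairwise disjoint parabolic boxes $B_n:=(t-h_n,t-h_n/2)\times B(x,\sqrt{h_n})$ with $h_n=2^{-2n}$. On $\{X_t(x)>0\}$, a continuity/Markov argument yields that $X_s$ has density bounded away from $0$ near $(t,x)$ for $s$ slightly less than $t$, so the Poisson intensity of jumps in $B_n$ of size $r\in[c_1,c_2]\,\epsilon_n^{3/(1+\beta)}$ is bounded uniformly from below. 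Independence of the Poisson increments across the disjoint $B_n$, combined with the second Borel-Cantelli lemma, produces a.s. infinitely many $n$ carrying exactly one such marker jump $(s_n,y_n,r_n)$; a direct heat-kernel computation shows this jump contributes an increment to $X_t(x)-X_t(x+\epsilon_n)$ of definite sign and magnitude $\gtrsim\epsilon_n^{\bar\eta_c}$. The principal obstacle is showing that this contribution is not cancelled by the aggregate of the remaining jumps in $B_n$ and the compensator: one stratifies the Poisson measure by jump size, uses upper-tail Poisson estimates to exclude atypically large jumps, and recycles the $(1+\beta)$-moment bound of the lower-bound step to control the residual integral over strictly smaller jumps. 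The hypothesis $\bar\eta_c\neq 1$ (i.e., $\beta\neq 1/2$) excludes the boundary case at which the parabolic jump scaling coincides with the linear-approximation threshold in the definition of $C^\eta(x)$, where additional logarithmic corrections would be required.
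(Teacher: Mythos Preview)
Your heuristic and overall decomposition match the paper's, but both halves of the argument have substantive gaps.

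\textbf{Lower bound.} The moment-plus-Chebyshev step does not reach $\bar\eta_{\rm c}$; it only reaches the \emph{local} index $\eta_{\rm c}=\tfrac{2}{1+\beta}-1$. Your heat-kernel estimate $\int_0^t\!\int_\R |\Delta_\epsilon p_{t-s}|^{1+\beta}\,dy\,ds\le C\epsilon^{2-\beta}$ is correct, but this quantity is precisely the time change $T_\pm$ in the representation $Z^\eta_t(x+\epsilon,x)=L^+_{T_+}-L^-_{T_-}$ with $L^\pm$ spectrally positive $(1+\beta)$-stable (Lemma~\ref{L9}). Since $\mathbf E|L_T|^p=c_p\,\mathbf E[T^{p/(1+\beta)}]$ for every $p<1+\beta$, any Burkholder-type bound yields at best $\mathbf E|\Delta_\epsilon Z|^p\le C\epsilon^{p(2-\beta)/(1+\beta)}=C\epsilon^{p\eta_{\rm c}}$, and Chebyshev gives $\mathbf P(|\Delta_\epsilon Z|>\epsilon^{\bar\eta_{\rm c}-\gamma})\le C\epsilon^{p(\eta_{\rm c}-\bar\eta_{\rm c}+\gamma)}$, which is \emph{not} summable over $\epsilon_n=2^{-n}$ once $\gamma<\bar\eta_{\rm c}-\eta_{\rm c}=\tfrac{1}{1+\beta}$. (This holds equally for the first- and second-order remainder: both produce the same $(1+\beta)$-integral scale $\epsilon^{2-\beta}$.) The paper closes this gap by first restricting to an event on which jump sizes \emph{near the fixed point} are controlled, $\Delta X_s(y)\le c((t-s)|y-x|)^{1/(1+\beta)-\gamma}$ (Corollary~\ref{6.1}); this forces the pathwise bound $|\Delta Z_s(x+\epsilon,x)|\le C\epsilon^{\bar\eta_{\rm c}-3\gamma}$ uniformly in $s$ (Lemma~\ref{L.fixed.4}), and one then applies the exponential tail bound of Lemma~\ref{L3} for a stable process with \emph{truncated} jumps to obtain a summable estimate (Lemma~\ref{L.fixed.5}). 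The inequality $\bar\eta_{\rm c}>\eta_{\rm c}$ is a genuinely pointwise phenomenon that unconditional moments cannot see: the large jumps of $L^\pm$ responsible for its heavy tail are, at a \emph{fixed} spatial point, almost surely absent.

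\textbf{Upper bound.} The jump measure $N$ is \emph{not} a Poisson random measure: its compensator $ds\,X_s(dy)\,n(dr)$ is random, so counts over your disjoint parabolic boxes $B_n$ are not independent and the second Borel--Cantelli lemma does not apply. The paper instead time-changes the relevant counting process to a standard Poisson process and shows that the random time change diverges a.s.\ on $\{X_t(x)>0\}$ (Lemmas~\ref{n.L3},~\ref{n.L4}); this replaces your independence argument. For non-cancellation, your sketch points in the right direction but is too coarse: recycling the $(1+\beta)$-moment bound again only controls the residual at scale $\epsilon^{\eta_{\rm c}}$, which is \emph{larger} than the marker contribution $\epsilon^{\bar\eta_{\rm c}}$. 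The paper's argument (Lemmas~\ref{n.L5'} and~\ref{n.L7}) shows that cancelling the marker's effect on $L^+$ by a large value of $L^-$ would require a \emph{second} jump of comparable size in a specific space-time window, and then bounds the probability of two such jumps directly via the compensator.
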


\begin{remark}
The above result was proved in~\cite{FMW11} for the case of  $\beta>1/2$. This is the case for
which $H_X(x)<1$. In the case of $\beta\leq 1/2$, it was shown in \cite{FMW11}, that for any
fixed point $x\in\R$ 
$$
H_X(x)\geq 1\quad \mathbf{P}-a.s.\ on \ \{X_t(x)>0\}.
$$
Thus, Theorem~\ref{T.fixed} strengthens the result from~\cite{FMW11} by determining the 
pointwise H\"older exponent at fixed points for any $\beta\in(0,1)\setminus\{\frac{1}{2}\}$. 
\end{remark}

\medskip

The above results immediately imply that almost every realization of $X_t$ has points
with different pointwise exponents of continuity. For example, it follows from
Theorem~\ref{T.loc.Hold} that one can find (random) points $x$ where $H_X(x)=\eta_{\rm c}$.
Moreover, it follows from Theorem~\ref{T.fixed} that there are also points $x$ where
$H_X(x)=\bar\eta_{\mathrm{c}}>\eta_{\mathrm{c}}$. This indicates that we are dealing with
random multifractal function $x\mapsto X_t(x)$. 
To study its multifractal spectrum,
for any open $U\subset\R$ and any
$\eta\in(\eta_{\mathrm{c}\,},\bar{\eta}_{\mathrm{c}}]$ define a random set
$$
{\mathcal E}_{U,X,\eta}:=\bigl\{x\in U:\,H_X(x)=\eta\bigr\}
$$
and let $D_{U}(\eta)$ denote its Hausdorff dimension (similarly to \eqref{dim}).

The function $\,\eta\mapsto D_{U}(\eta)$ reveals the
multifractal spectrum related to pointwise H\"{o}lder exponents of $X_t(\cdot)$. This spectrum is determined in the next 
theorem (see~\cite{MW14}) which also claims its independence on $U$. 

\begin{theorem}
[\textbf{Multifractal spectrum}]\label{thm:mfractal}%
Fix $\,\,t>0,\,$\thinspace\ and $\,X_{0}=\mu\in\mathcal{M}_{\mathrm{f}\,}.$
Let $\,d=1$. Then, for any $\eta\in [\eta_{\mathrm{c}}, \bar\eta_{\mathrm{c}}]\setminus\{1\}$
and any open set $U$, with probability one,
\begin{equation}
\label{mfr}
D_U(\eta)= (\beta+1)(\eta-\eta_{\rm c})
\end{equation}
whenever  $X_t(U)>0$.  
\end{theorem}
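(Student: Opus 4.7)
The plan is to exploit the jump structure of $X$. The superprocess admits a representation in which, beyond a continuous martingale part, there is a Poisson point process $\mathcal N$ on $[0,t)\times\R\times(0,\infty)$ with intensity $ds\,X_s(dy)\,n(dr)$; each atom $(s,y,r)$ contributes to $X_t$ an independent $(1+\beta)$-stable cluster started from $r\delta_y$, whose density at $x$ is a bump of spatial width $\sqrt{t-s}$ and amplitude of order $r/\sqrt{t-s}$ near $y$. Inspecting the increment $X_t(x+h)-X_t(x)$, a cluster with age $\delta:=t-s$ and centre $y$ at distance $\lesssim\sqrt\delta$ of $x$ contributes roughly $r/\sqrt\delta$ when $h\asymp\sqrt\delta$, whereas clusters with $\sqrt\delta\gg h$ contribute only through their internal Hölder-$\bar\eta_c$ fluctuation (controlled by Theorem~\ref{T.fixed} applied to that cluster), and clusters with $\sqrt\delta\ll h$ are essentially invisible. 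The guiding principle is that $H_X(x)\le\eta$ is forced by the existence, at arbitrarily small scales, of a "bad" atom with $r\gtrsim h^{\eta+1}$ and $\sqrt\delta\asymp h\asymp|y-x|$. I would formalise this by declaring an atom $(s,y,r)$ \emph{$(n,\eta)$-significant} when $t-s\in[2^{-2(n+1)},2^{-2n})$ and $r\ge 2^{-n(\eta+1)}$.

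For the upper bound, Campbell's formula applied to $\mathcal N$, together with standard moment bounds on $X_s(dy)$, gives the expected number of $(n,\eta)$-significant atoms with $y\in U$ equal to
\[
C_U\cdot 2^{n[(1+\beta)(\eta+1)-2]}\;=\;C_U\cdot 2^{n(\beta+1)(\eta-\eta_c)},
\]
where the exponent $(1+\beta)(\eta+1)$ comes from $\int_{r_0}^\infty r^{-2-\beta}dr$ and the $-2$ from the time window of length $2^{-2n}$. A Borel--Cantelli argument (paired with Theorem~\ref{T.loc.Hold}(a) used to dominate contributions from non-significant atoms) shows that every point $x$ with $H_X(x)\le\eta$ lies within $C\cdot 2^{-n}$ of the spatial coordinate of some $(n,\eta)$-significant atom for infinitely many $n$. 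Covering the set $\{x\in U:H_X(x)\le\eta\}$ by the resulting family of $2^{-n}$-balls yields the desired upper bound on Hausdorff dimension whenever $X_t(U)>0$.

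For the matching lower bound, I would construct a random Frostman measure $\rho$ concentrated on $\mathcal E_{U,X,\eta}$ itself. Along a rapidly increasing subsequence $n_k\uparrow\infty$, form the uniform probability on the $2^{-n_k}$-neighbourhoods of the spatial coordinates of $(n_k,\eta)$-significant atoms in $U$, and extract a weak-$*$ limit $\rho$. Three items must be verified: (a) $\rho$-a.e.\ $x$ is within $2^{-n_k}$ of some $(n_k,\eta)$-significant atom for infinitely many $k$ (tautological from the construction, which upon combination with the lower bound from the target atom gives $H_X(x)\le\eta$); (b) $\rho$-a.e.\ $x$ satisfies $H_X(x)\ge\eta$, i.e.\ no atom significant for a strictly smaller exponent $\eta'<\eta$ is close to $x$ at arbitrarily small scales; (c) the $s$-energy $\iint|x-y|^{-s}\rho(dx)\rho(dy)$ is a.s.\ finite for every $s<(\beta+1)(\eta-\eta_c)$. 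Combining (a)--(b) pins down $H_X(x)=\eta$ on a set of $\rho$-full measure, which together with (c) and Frostman's theorem delivers $\dim\mathcal E_{U,X,\eta}\ge(\beta+1)(\eta-\eta_c)$.

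The principal obstacle is item (b): producing the correct \emph{lower} bound on the Hölder exponent $\rho$-almost surely, and, simultaneously, a genuine lower bound on the oscillation $|X_t(x+h)-X_t(x)|$ near the designated $(n_k,\eta)$-significant atom uniform against possible cancellations from other atoms. The expected number of atoms significant for any smaller exponent $\eta'<\eta$ in a $2^{-n}$-neighbourhood of a typical $\rho$-point tends to zero at rate $2^{n(1+\beta)(\eta'-\eta)}$, so a delicate Borel--Cantelli argument across scales is needed, together with the sharp tail estimates on stable-cluster densities derived in~\cite{FMW11}, to rule out small-but-dangerous contributions from the continuum of smaller atoms and to confirm the macroscopic $\asymp 2^{-n_k\eta}$ sweep of the target atom. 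Establishing (b) is where the bulk of the technical work in~\cite{MW14} sits; by comparison the second-moment calculation in (c) and the covering argument for the upper bound are essentially routine applications of the independence of $\mathcal N$. The exclusion $\eta\ne1$ arises because the definition of pointwise Hölder exponent changes its polynomial-correction structure at integer values, and treating $\eta=1$ would require additional care.
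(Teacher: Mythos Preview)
Your upper-bound sketch is essentially the paper's: define significant jumps via a dyadic scale decomposition, count them through the compensator, cover $\{H_X\le\eta\}$ by balls around jump locations, and conclude via Borel--Cantelli. The paper formalises this through the sets $S_\eta$ and Lemmas~\ref{P1}--\ref{P2}, but the mechanism is the one you describe.

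For the lower bound you take a genuinely different route from the paper. You propose the Frostman/energy method: build a random measure $\rho$ supported on neighbourhoods of significant atoms and show finite $s$-energy for $s<(\beta+1)(\eta-\eta_c)$. The paper instead uses Jaffard's \emph{ubiquity} machinery: it shows (Lemma~\ref{lower2}) that the enlarged limsup set $J_{\eta,(\beta+1)(\eta-\eta_c)}$ contains an interval $\{X_t\ge\theta\}$, and then invokes Theorem~2 of \cite{Jaff99} to deduce $\mathcal H_\eta(J_{\eta,1})>0$ directly, bypassing any second-moment computation. The exceptional-set argument for your item~(b) is handled in the paper by Proposition~\ref{last_prop}, whose spirit you capture correctly.

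There is, however, a genuine gap in your proposal. You repeatedly invoke ``the independence of $\mathcal N$'', in particular to dismiss the energy estimate~(c) as routine. But $\mathcal N$ is \emph{not} a Poisson point process: its compensator $ds\,X_s(dy)\,n(dr)$ has a random spatial intensity $X_s(dy)$ that is itself built from past atoms, so distinct atoms of $\mathcal N$ are correlated. First-moment (Campbell-type) bounds survive this, which is why your upper bound is fine, but a second-moment/energy calculation does not. The paper confronts exactly this issue: Lemmas~\ref{lower0}--\ref{lower1} establish uniform two-sided control on $X_s(I_k^{(n)})$ for $s$ near $t$, and the proof of Lemma~\ref{lower2} then \emph{couples} $\mathcal N$ from below to a genuine Poisson process $\Gamma$ with deterministic intensity $n^{-2m}\,dy\,ds\,n(dr)$, after which independence (of the $\xi_k^{(n)}$) is legitimately available. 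The ubiquity argument only needs this one-sided stochastic domination, which is what makes it cleaner than the energy method here; to salvage your approach you would need analogous two-sided control to bound pair correlations of significant atoms, and that is substantial work you have not accounted for.
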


\begin{remark}
It should be emphasized that the result in Theorem \ref{thm:mfractal} is not uniform in $\eta$.
More precisely, an event of zero probability, on which \eqref{mfr} can fail, is not necessarily
the same for different values of the exponent $\eta$. The question, whether there exists a zero
set $M$ such that \eqref{mfr} holds for all $\eta$ and all $\omega\in M^c$, remains open. 
Note that the uniformity of multifractal spectrum in $\eta$ has been obtained by 
Balan\c ca~\cite{Balanca15} 
for closely related model  
 --- this has been done for level sets of stable random trees. 
\end{remark}

\begin{remark}
The proof of the above theorem fails in the case $\eta=1$, and it is a bit disappointing. Formally, it happens
for some technical reasons, but one has also to note, that this point is critical: it is the
borderline between differentiable and non-differentiable functions. However we still believe that
the function $D_U(\cdot)$ can be continuously extended to $\eta=1$, i.e., 
$D_U(1)=(\beta+1)(1-\eta_{\mathrm{c}})$ almost surely on $\{X_t(U)>0\}$.
\end{remark}
\begin{remark}
The condition $\beta<1$ excludes the case of the quadratic super-Brownian motion, i.e., $\beta=1$.
But it is a known ``folklore" result that the super-Brownian motion $X_t(\cdot)$ is almost
surely monofractal on any open set of strictly positive density. That is, $\pr$-a.s., for any $x$
with $X_t(x)>0$ we have $H_X(x)=1/2$. For the fact that $H_X(x)\geq1/2$, for any $x$, see Konno and
Shiga \cite{bib:konshig88} and Walsh~\cite{bib:wal86}. To get that $H_X(x)\leq 1/2$ on the event
$\{X_t(x)>0\}$ one can show that
$$
\limsup_{\delta\to0}\frac{|X_t(x+\delta)-X_t(x)|}{\delta^\eta}=\infty\quad
\text{for all }x\text{ such that }X_t(x)>0,\ \pr-\text{a.s.,}
$$
for every $\eta>1/2$. This result follows from the fact that for $\beta=1$ the noise driving the
corresponding stochastic equation for $X_t$ is Gaussian (see (0.4) in \cite{bib:konshig88}) in
contrast to the case of $\beta<1$ considered here, where we have driving discontinuous noise with
L\'evy type intensity of jumps.
\end{remark}

\bigskip

\noindent{\bf Organization of the article.}\;
Beyond the description of the regularity properties of the densities of $(2,d,\beta)$-superprocesses,
which  is given above, one of the main goals of the article is to provide the approach for proving
these properties. We will also show how to use this approach to verify some of the results mentioned
above. 
In particular we will give main elements of the proofs of Theorems~\ref{T.dichotomy}, \ref{T.fixed},
\ref{thm:mfractal}. As for the missing details and the proof of Theorem~\ref{T.loc.Hold} we refer
the reader to corresponding papers. 

Now we will say a few of words about the organization of the material in the following sections. 
In Section~\ref{sec:stoch_int_rep}, we give 
the representation of $(2,d,\beta)$-superprocess as a solution to certain martingale problem and 
describe the approach for studying the regularity of the superprocess.
Section~\ref{sec:simple_properties} collects certain properties of  $(2,d,\beta)$-superprocesses 
which later are used for the proofs. Section~\ref{sec:jumps} is very important: it gives precise
estimates on the sizes of jumps of $(2,d,\beta)$-superprocesses, which in turn are crucial for
deriving the regularity properties. Sections~\ref{sec:dichotomy}, \ref{sec:fixed},
\ref{sec:multifactal} are devoted to the partial proofs of Theorems~\ref{T.dichotomy},
\ref{T.fixed}, \ref{thm:mfractal}. Since many of the proofs are technical, at the beginning of
several sections and subsections we give heuristic explanations of our results, which, as we hope,
will provide the reader with some intuition about the results and their proofs.


\section{Stochastic representation for $X$ and description of the approach for determining regularity}
\label{sec:stoch_int_rep}
Let us start with formal definition of $(2,d,\beta)$ superprocess. A c\`{a}dl\`{a}g  adapted  measure-valued
process $X$ is called an $(2,d,\beta)$ superprocess, or super-Brownian motion with $(1+\beta)$-stable
branching, if $X$ satisfies the following martingale problem. For every
$\varphi\in\mathcal{C}_b^{2}(\R^d)$ and every $f\in \mathcal{C}^2(\R)$,
\begin{eqnarray}
\label{equt:14_2}
&&f(\langle X_t,\varphi\rangle)-f(\langle X_0,\varphi\rangle)-{1\over
2}\int_0^t f'(\langle X_s,\varphi\rangle)
\langle X_s,\bDelta
 \varphi\rangle ds\\
\nonumber
&&\ -\int_0^t \Big(\int_D\!\int_{(0,\infty)} \Big(f(\langle
X_s,\varphi\rangle+r\varphi(x))-f(\langle
X_s,\varphi\rangle)- \\
\nonumber
&&  \hspace*{2cm} f'(\langle X_s,\varphi\rangle)r\varphi(x) \Big)
n(dr) X_s(dx)\Big)ds
\end{eqnarray}
is an $\mathcal{F}_{t}$-martingale. The jump measure $n(dr)$ is defined in~(\ref{eq:4_15_1}). 

The following lemma contains a semimartingale decomposition of $X$ which
includes stochastic integrals with respect to discontinuous martingale
measures.
\begin{lemma}
\label{L.mart.dec}Fix $\,X_{0}=\mu\in\mathcal{M}_{\mathrm{f}\,}.$\thinspace\ 
\begin{description}
\item[(a) (Discontinuities)] 
Define the  random measure
\begin{eqnarray}   
\label{equt:7_12_4} 
\cN:=\sum_{s\in J} \delta_{(s,\Delta X_s)},
\end{eqnarray}
where  $J$
denotes the set of all  jump times of $X$. Then
there exists a random counting measure $\,N\!\left(  _{\!_{\!_{\,}}%
}\mathrm{d}(s,x,r)\right)  $\thinspace\ on $\,\R_{+}\times
\R^{d}\times\R_{+}$\thinspace\ 
such that 
\begin{eqnarray}
  \int_{\R_+}\int_{\mathcal {M}_{\rm f}} G(s,\mu)\, \cN(ds,d\mu)
= \int_0^\infty \int_{\R_+} \int_{\R^d} \,G(s,r\,\delta_x) N\!\left(  _{\!_{\!_{\,}}
}\mathrm{d}(s,x,r)\right), 
\end{eqnarray}
for any bounded continuous $G$ on $\R_+\times \Mcal_{\rm f}$.
That is, all discontinuities of the process $\,X$ are jumps upwards of the form $\,r\delta_{x\,}.$

\item[(b) (Jump intensities)] The compensator $\,\widehat{N}$\thinspace\ of\/
$\,N$\thinspace\ is given by%
\begin{equation}
\label{decomp}
\widehat{N}\!\left(  _{\!_{\!_{\,}}}\mathrm{d}(s,x,r)\right)  \,=\,
\mathrm{d}s\,X_{s}(\mathrm{d}x)\,n(\mathrm{d}r),
\end{equation}
that is, \thinspace$\widetilde{N}\,:=\,N-\widehat{N}$\thinspace\ is a martingale
measure on $\,\R_{+}\times\R^{d}\times\R_{+\,}%
.$\vspace{2pt}

\item[(c) (Martingale decomposition)] For all
$\varphi\in\mathcal{C}_{\mathrm{b}}^{2,+}$\thinspace\ and $\,t\geq0,$%
\begin{equation}
\label{mart.dec}
\left\langle X_{t},\varphi\right\rangle \,=\,\left\langle \mu,\varphi
\right\rangle +\int_{0}^{t}\!\mathrm{d}s\,\left\langle X_{s},%
\bDelta
\varphi\right\rangle +M_{t}(\varphi)
\end{equation}
with discontinuous martingale%
\begin{eqnarray}
\label{mart}
\nonumber
t\,\mapsto\,M_{t}(\varphi)\,&:=&
\, \int_{0}^t\int_{\mathcal {M}_{\rm f}}\!
\langle\mu,\varphi\rangle
(\cN- \widehat\cN)\!\left(  _{\!_{\!_{\,}}}\mathrm{d}(ds,d\mu)\right)
\\
\label{equt:7_12_6}
&=&
\,\int_{(0,t]\times\R^{d}\times
\R_{+}}\!r\,\varphi(x)\widetilde{N}\left(  _{\!_{\!_{\,}}}\mathrm{d}(s,x,r)\right)
\end{eqnarray}
{}
\end{description}
\end{lemma}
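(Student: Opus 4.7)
All three parts will be extracted from the martingale problem (\ref{equt:14_2}), which already presents the generator of $X$ on cylinder functionals $f(\langle\cdot,\varphi\rangle)$ in L\'evy--Khintchine form, with a Laplacian ``drift'' and a jump kernel $n(dr)\,X_s(dx)$. The plan is to first obtain the martingale decomposition (c), then to identify the predictable compensator of the jump random measure of $X$ by comparison with the canonical semimartingale representation, and finally to read off the atomic form of jumps in (a) from the support of that compensator.

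For (c), plug $f(u)=u$ into (\ref{equt:14_2}) (justified by approximating by smooth bounded $f_n$ and using the standard first-moment bound $\e\langle X_t,\varphi\rangle=\langle\mu,P_t\varphi\rangle$ to pass to the limit). Since $f''\equiv 0$, the compensator integrand $f(\langle X_s,\varphi\rangle+r\varphi(x))-f(\langle X_s,\varphi\rangle)-f'(\langle X_s,\varphi\rangle)r\varphi(x)$ vanishes identically, and (\ref{mart.dec}) is obtained with
\[ M_t(\varphi)=\langle X_t,\varphi\rangle-\langle\mu,\varphi\rangle-\tfrac{1}{2}\int_0^t\langle X_s,\Delta\varphi\rangle\,ds. \]

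For (a)--(b), let $\cN=\sum_{s\in J}\delta_{(s,\Delta X_s)}$ be the jump random measure of $X$ on $\R_+\times\mfo$. Its predictable compensator $\widehat{\cN}$ is uniquely characterised (via the theory of integer-valued random measures of Jacod--Shiryaev) by the requirement that $\int F\,d(\cN-\widehat{\cN})$ be a martingale for $F$ in any separating class of predictable integrands. Testing with
\[ F(s,\nu)=f(\langle X_{s-},\varphi\rangle+\langle\nu,\varphi\rangle)-f(\langle X_{s-},\varphi\rangle)-f'(\langle X_{s-},\varphi\rangle)\langle\nu,\varphi\rangle, \]
for $f\in\mathcal{C}^2_b$ and $\varphi\in\mathcal{C}_b^{2,+}$, and comparing with the jump-compensator term of (\ref{equt:14_2}), one reads off
\[ \widehat{\cN}(ds,d\nu)=ds\int_{\Rd}\!\int_0^\infty \delta_{r\delta_x}(d\nu)\,n(dr)\,X_s(dx); \]
the functionals $\nu\mapsto g(\langle\nu,\varphi\rangle)$ form a separating class on $\mfo$, so this identification is unambiguous. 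Since $\widehat{\cN}$ is supported on the rank-one cone $\{r\delta_x:r>0,\,x\in\Rd\}$, and an integer-valued random measure is a.s.\ supported on the support of its predictable compensator, each jump $\Delta X_s$ is a.s.\ of the form $r_s\delta_{x_s}$. The bijection $(s,x,r)\leftrightarrow(s,r\delta_x)$ now transports $\cN$ into the counting measure $N$ of (a) with compensator (\ref{decomp}), and rewriting (\ref{mart}) in the $(s,x,r)$ coordinates produces the second, explicit form (\ref{equt:7_12_6}).

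The main obstacle is precisely this passage from one-dimensional compensator data (one $\varphi$ at a time) to the full measure-valued compensator $\widehat{\cN}$, and in particular the identification of its support as the rank-one cone $\{r\delta_x\}$; this is what forces jumps to be atomic. A secondary technical point is that $n(dr)$ is an infinite L\'evy measure with $\beta$-stable tail, so $M_t(\varphi)$ cannot be written as a classical integral against $N$: one must integrate against the compensated martingale measure $\widetilde{N}=N-\widehat{N}$, and $L^2$ estimates on compact intervals rely on $\int_{(0,\infty)}(r^2\wedge 1)\,n(dr)<\infty$, boundedness of $\varphi$, and a.s.\ finiteness of $X_s(\R^d)$.
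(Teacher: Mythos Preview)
Your plan is correct and follows essentially the same route as the paper: take $f(u)=u$ in the martingale problem to get the semimartingale decomposition, then apply It\^o's formula to $f(\langle X_t,\varphi\rangle)$ for general $f\in\mathcal{C}^2_b$ and compare with (\ref{equt:14_2}) to identify the jump compensator, invoking a density/separating argument to pass from one-dimensional test functions to the full compensator $\widehat\cN$.

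The one genuine difference is in the order of (a) and (b). The paper first reads off from the one-dimensional compensators $\widehat\cN_\varphi$ that each jump of $\langle X_\cdot,\varphi\rangle$ has the form $r\varphi(x)$, argues ``by putting all $\varphi$ together'' that $\Delta X_s=r\delta_x$, and only then computes $\widehat\cN$. You instead identify $\widehat\cN$ first and deduce the atomic structure from the general fact that an integer-valued random measure is a.s.\ carried by the support of its predictable compensator. Your ordering is arguably cleaner, but note that this support principle, while true, deserves a one-line justification (if $W\ge 0$ is predictable with $\int W\,d\widehat\cN=0$ a.s., then $t\mapsto\int_0^t W\,d\cN$ is a nonnegative increasing local martingale starting at $0$, hence identically $0$). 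One further point you leave implicit, which the paper spells out: to get the explicit form (\ref{equt:7_12_6}) you need that $M_t(\varphi)$ is \emph{purely discontinuous}, so that it is determined by its jumps; this comes out of the same It\^o comparison (the continuous-martingale bracket must vanish), and the paper then cites uniqueness of purely discontinuous martingales with prescribed jumps.
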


The martingale decomposition of $X$ in the above lemma is basically proven in Dawson
\cite[Section~6.1]{bib:daw91}. However for the sake of completeness we will reprove it here. 
Some ideas are taken also from~\cite{bib:lm03}. 

\begin{proof}
Since $X$ satisfies the martingale problem~(\ref{equt:14_2}) one can easily get (by formally
taking $f(x)=x$)  that 
\[%
\displaystyle
\left\langle X_{t},\varphi\right\rangle \,=\,\left\langle \mu,\varphi
\right\rangle +\int_{0}^{t}\!\mathrm{d}s\,\left\langle X_{s},%
\bDelta
\varphi\right\rangle +\widetilde M_{t}(\varphi)
\]
where $\widetilde  M_t(\varphi)$ is a local martingale. Moreover, by taking again
$f(\langle X_t,\varphi\rangle)$ for $f\in \mathcal{C}_{\mathrm{b}}^{2}(\R)$,
applying the It\^o formula and comparing the terms with~(\ref{equt:14_2}) one can
easily see that for each $\varphi\in\mathcal{C}_{\mathrm{b}}^{2,+}(\R^d)$,
$\widetilde M_t(\varphi)$ is a purely discontinuous martingale, with the compensator
measure $\widehat\cN_{\varphi}$ given by 
\begin{eqnarray}
\label{equt:7_12_2}
\int_0^t\int_{\R_+} f(s,v) \widehat\cN_{\varphi}(ds,dv) = 
\int_0^t\int_{\R_+}\int_{\R^d} f(s,r\varphi(x))\,X_s(dx)n(dr)\,ds,
\  t\geq 0\hspace{0.5cm}
\end{eqnarray}
for any bounded continuous $f: \R_+\times\R_+\mapsto \R$. This means that if $s$ is
a jump time for $\left\langle X_{\cdot},\varphi\right\rangle$, then
\begin{eqnarray}
\label{equt:7_12_1}
\Delta \left\langle X_{s},\varphi\right\rangle 
= \Delta \widetilde  M_s(\varphi) = r\varphi(x)
\end{eqnarray}
for some $r$ and $x\in \R^d$ (``distributed according  to $\varrho X_{s-}(dx)n(dr)$).
Since this holds for any test function $\varphi$, by putting all  together we infer that
if $s>0$ is the jump time for the measure-valued process $X$, then $\Delta X_s=r\delta_x$
for some $r$ and $x\in \R^d$. 
 
Let $\cN$ be as in~(\ref{equt:7_12_4}). Then by above description of of jumps of $X$, it
is clear that there exists a random counting measure $N$ such that 
\begin{eqnarray}
N := \sum_{(s,x,r): s\in J, \Delta X_{s}=r\delta_x} \delta_{(s,x,r)},
\end{eqnarray}
and {\bf (a)} follows. 

In order to obtain~(\ref{decomp}), we first get $\widehat \cN$ --- the compensator of 
$\cN$. It is defined as follows. For any nonnegative predictable function $F$ on
$\R_+\times\Omega\times \Mcal_{\rm f}$, $\widehat \cN$ satisfies the following quality 
\begin{equation}
\label{compens}
\e_\mu\Big[\int_{\R_+}\int_{\mathcal {M}_{\rm f}} 
 F(s,\omega,\mu) \cN(ds,d\mu) \Big]
=\e_\mu\Big[\int_{\R_+}\int_{\mathcal {M}_{\rm f}}  F(s,\omega,\mu)\,\widehat \cN(ds,d\mu)\Big]. 
\end{equation}
We will show that, in fact, $\widehat \cN$  is defined by the equality 
\begin{eqnarray} 
\label{equt:7_12_3} 
\int_{\R_+}\int_{\mathcal {M}_{\rm f}} G(s,\mu)\,\widehat \cN(ds,d\mu)
=\int_0^\infty ds\int_{\R_+} n(dr)\int_{\R^d} X_s(dx)\,G(s,r\,\delta_x),
\end{eqnarray}
which holds for  any
bounded continuous function $G$ on
$\R_+\times \Mcal_{\rm f}$.
To show~(\ref{equt:7_12_3}),
for any bounded nonnegative continuous function $\varphi$ define random measure 
$$\cN_{\varphi}:=\sum_{s\in J} \delta_{(s,\Delta \left\langle X_{s},\varphi\right\rangle)},$$
By~(\ref{equt:7_12_1}) we have that the compensator measure  of $\cN_{\varphi}$ is $\widehat\cN_{\varphi}$. 
Clearly for any bounded continuous $f: \R_+\times\R_+\mapsto \R$, and 
$\,\varphi\in\mathcal{C}_{\mathrm{b}}^{2,+}$
\begin{eqnarray}
\nonumber
\int_0^t\int_{\mathcal {M}_{\rm f}}  f(s,\left\langle \mu,\varphi\right\rangle)\, \cN(ds,d\mu)
&=&\int_0^t\int_{\R_+}  f(s,v\rangle)\, \cN_{\varphi}(ds,dv), \;\;\forall t\geq 0,
\end{eqnarray}
This immediately  implies, that the corresponding compensator measures also  satisfy
\begin{eqnarray} 
\nonumber 
\lefteqn{\int_0^t\int_{\mathcal {M}_{\rm f}}  f(s,\left\langle \mu,\varphi\right\rangle)\, \widehat\cN(ds,d\mu)}\\
\nonumber
&=&\int_0^t\int_{\R_+}  f(s,v)\, \widehat \cN_{\varphi}(ds,dv)
\\
&=& \int_0^t\int_{\R_+}\int_{\R^d} f(s,r\varphi(x))\,X_s(dx)n(dr)\,ds,
\end{eqnarray}
where the second equality follows by~(\ref{equt:7_12_2}). 
Since collection of  functions in the form $f(s,\left\langle \mu,\varphi\right\rangle)$, 
is dense in the space of bounded continuous functions on  $\R_+\times \Mcal_{\rm f}$, (\ref{equt:7_12_3}) 
follows.  Now~(\ref{decomp}) follows by {\bf (a)} and the definition of $N$. This finishes the 
proof of {\bf (b)}.  

To show~(\ref{equt:7_12_6}) it is enough to derive only the first inequality since the second one is immediate 
by the definition of $N$. Let us first identify the class of functions for which the stochastic integral with respect to 
$(\cN - \widehat \cN)(ds,d\mu)$ is well defined. 
Let $F$ be a measurable function on $\R_+\times \Mcal_{\rm f}$ such
that, for every $t\geq 0$,
\begin{equation}
\label{increpro}
\e_\mu\Big[\Big(\sum_{s\in J\cap [0,t]} F(s,\Delta
X_s)^2\Big)^{1/2}\Big]<\infty\ .
\end{equation}
Following \cite{bib:jacshir87} (Section II.1d), we can then define the
stochastic integral of $F$ with respect to the compensated measure $\cN-\widehat \cN$,
$$\int_0^t F(s,\mu)\,(\cN-\widehat \cN)(ds,d\mu),$$
as the unique purely discontinuous martingale (vanishing at time $0$)
whose jumps are indistinguishable
of the process $1_J(s)\,F(s,\Delta X_s)$.

We shall be interested in the special case where
$F(s,\mu)=F_\phi(s,\mu)\equiv\int \phi(s,x)\mu(dx)$
for some measurable function $\phi$ on $\R_+\times \R^d$ (some
convention is needed when
$\int |\phi(s,x)|\mu(dx)=\infty$, but this will be irrelevant in what
follows). If $\phi$ is bounded, then it is
easy to see that condition (\ref{increpro}) holds. Indeed, we can
bound  separately
\begin{eqnarray*}
\e_\mu\Big[\Big(\sum_{s\leq t} \langle \Delta
X_s,1\rangle^2\,1_{\{\langle \Delta X_s,1\rangle\leq
1\}}\Big)^{1/2}\Big]
&\leq& \e_\mu\Big[\sum_{s\leq t} \langle \Delta
X_s,1\rangle^2\,1_{\{\langle \Delta X_s,1\rangle\leq
1\}}\Big]^{1/2}\\
&=&\Big(\int_{(0,1]} r^2\,n(dr)\,\e_\mu\Big[\int_0^t \langle
X_s,1\rangle ds\Big]\Big)^{1/2}<\infty,
\end{eqnarray*}
and, using the simple inequality
$a_1^2+\cdots+a_n^2\leq(a_1+\cdots +a_n)^2$ for any nonnegative
reals $a_1,\ldots,a_n$,
\begin{eqnarray*}
\e_\mu\Big[\Big(\sum_{s\leq t} \langle \Delta
X_s,1\rangle^2\,1_{\{\langle \Delta X_s,1\rangle>
1\}}\Big)^{1/2}\Big]
&\leq& \e_\mu\Big[\sum_{s\leq t} \langle \Delta
X_s,1\rangle\,1_{\{\langle \Delta X_s,1\rangle >
1\}}\Big]\\
&=&\int_{(1,\infty)} r\,n(dr)\,\e_\mu\Big[\int_0^t \langle
X_s,1\rangle ds\Big]<\infty.
\end{eqnarray*}
In both cases, we have used (\ref{compens}) and the fact that
$\e_\mu[\langle
X_t,1\rangle]\leq \langle
\mu,1\rangle$.

To simplify notation, we write
$$M_t(\phi)=\int_0^t\int_{\R^d}\phi(s,x)\,M(ds,dx)\equiv \int_0^t
F_\phi(s,\mu)\,(\cN-\widehat \cN)(ds,d\mu),$$
whenever (\ref{increpro}) holds for $F=F_\phi$. This is consistent
with the notation of the introduction. Indeed,
if $\phi(s,x)=\varphi(x)$ where $\varphi\in \mathcal{C}^{2}_b(\R)$, then by
the very definition, $M_t(\phi)$
is a purely discontinuous martingale with the same jumps as the
process $\langle X_t,\varphi\rangle$.
Since the same holds for the process
$$\widetilde M_t(\varphi)=\langle X_t,\varphi\rangle -\langle X_0,\varphi\rangle -
{1\over 2}\int_0^t \langle X_s,\bDelta \varphi\rangle ds$$
(see Th\'eor\` eme 7 in \cite{EKR}) we get that $M_t(\phi)=\widetilde
M_t(\varphi)$.\hfill$\square$
\end{proof}

Let $\,\{p_t(x), t\geq 0, x\in \R^d\}$ denote the
continuous transition kernel related to the Laplacian $\bDelta$ in $\R^d,$ and
$(S_t,t\geq 0)$ the related semigroup, that is,
$$
S_tf(x)=\int_{\R^d}p_t(x-y)f(y)dy\text{ for any bounded function }f
$$
and
$$
S_t\nu(x)=\int_{\R^d}p_t(x-y)\nu(dy)\text{ for any finite measure }\nu.
$$
Fix $\,X_{0}=\mu\in\mathcal{M}_{\mathrm{f}\,}\backslash\{0\}.$ Recall that if
$d=1$ then $\,X_{t}(\mathrm{d}x)$ is a.s. absolutely continuous for every fixed
$t>0$ (see \cite{Fleischm88}). In what follows till the end of the section we
will consider the case of $d=1$. \textrm{F}rom the Green function representation
related to (\ref{mart.dec}) (see, e.g., \cite[(1.9)]{FMW10}) we obtain the following
representation of a version of the density function of $\,X_{t}(\mathrm{d}x)$ in $d=1$
(see, e.g., \cite[(1.12)]{FMW10}):
\begin{equation}
\begin{array}
[c]{l}
\displaystyle
X_{t}(x)\,=\,\mu\!\ast\!p_{t}\,(x)\,+\,\int_{(0,t]\times
\R}\!M\!\left(  _{\!_{\!_{\,}}}\mathrm{d}(u,y)\right)
p_{t-u}(y-x)\vspace{6pt}\\%
\displaystyle
\hspace{0.9cm}=:\mu\!\ast\!p_{t}\,(x)\,+Z_t(x),\quad x\in\R,
\end{array}
\label{rep.dens}%
\end{equation}
where
\begin{equation}
\label{def_Zt}
Z_s(x) =  \int_{(0,s]\times
\R}\!M\!\left(  _{\!_{\!_{\,}}}\mathrm{d}(u,y)\right)
p_{t-u}(y-x),\;\;0\leq s\leq t. 
\end{equation}

Note that although $\{Z_s\}_{s\leq t}$ depends on $t$, it does not appear in the notation 
since $t$ is fixed throughout the paper. $M\!\left(  _{\!_{\!_{\,}}}\mathrm{d}(s,y)\right)$
in (\ref{rep.dens}) is the martingale measure related to (\ref{mart}). Note that by Lemma~1.7
of~\cite{FMW10} the class of  ``legitimate'' integrands with respect to the martingale measure
$M\!\left(  _{\!_{\!_{\,}}}\mathrm{d}(s,y)\right) $ includes the set of functions $\psi$ such
that for some $p\in (1+\beta, 2)$, 
\begin{equation}
\label{eq:11_10_1}
\int_0^T ds \int_{\R}dx S_s\mu(x)|\psi(s,x)|^p <\infty, \quad \forall T>0.  
\end{equation} 
We let ${\mathcal L}^p_{\rm loc}$ denote the space of  equivalence classes of 
measurable functions satisfying~(\ref{eq:11_10_1}). For $\beta<1$, 
it is easy to check that, for any $t>0, z\in \R$, 
$$
(s,x)\mapsto p_{t-s}(z-x)\1_{s<t}
$$
is in ${\mathcal L}^p_{\rm loc}$ for any $p\in (1+\beta, 2)$, and hence the stochastic integral
in the representation~(\ref{rep.dens}) is well defined.

$\mu\ast p_t(x)$ is obviously twice differentiable. Thus, the regularity properties  of $X_t(\cdot)$
including its multifractal structure coincide with that of $Z$. Recalling the definitions of
$Z$ and $M(ds,dy)$, we see that there is a ``competition'' between branching and motion:
jumps of the martingale measure $M$ try to destroy smoothness of $X_t(\cdot)$ and $p$ tries to
make $X_t(\cdot)$ smoother. Thus, it is natural to expect, that $\{x:H_{Z}(x)=\eta\}$ can be
described by jumps of a certain order depending on  $\eta$.

Next we connect the martingale measure $M$ with spectrally positive
$1+\beta$-stable processes.

Let $L=\{L_{t}:\,t\geq0\}$ denote a spectrally positive stable process of index $1+\beta.$
Per definition, $L$ is an $\R$-valued time-homogeneous process with independent increments
and with Laplace transform given by%
\begin{equation}
\mathbf{E}\,\mathrm{e}^{-\lambda L_{t}}\,=\,\mathrm{e}^{t\lambda^{1+\beta}%
},\quad\lambda,t\geq0. \label{Laplace}%
\end{equation}
Note that $L$ is the unique (in law) solution to the following martingale
problem:%
\begin{equation}
t\mapsto\mathrm{e}^{-\lambda L_{t}}-\int_{0}^{t}\!\mathrm{d}s\ \mathrm{e}%
^{-\lambda L_{s}}\lambda^{1+\beta}\,\ \text{is a martingale for any}%
\,\ \lambda>0. \label{MP}%
\end{equation}

\begin{lemma}
\label{L9}
Let $d=1$. Suppose
$\,p\in(1+\beta,2)$\thinspace\ and let $\,\psi\in\mathcal{L}_{\mathrm{loc}%
}^{p}(\mu)$\vspace{1pt} with $\psi\geq0$.\thinspace\ Then there exists a
spectrally positive $(1+\beta)$-stable process \thinspace$\{L_{t}:\,t\geq0\}$
such that%
\begin{equation*}
U_{t}(\psi)\,:=\,\int_{(0,t]\times\R}\!M\!\left(  _{\!_{\!_{\,}}%
}\mathrm{d}(s,y)\right)  \psi(s,y)\,=\,L_{T(t)\,},\quad t\geq0,
\end{equation*}
where \thinspace$T(t)$\thinspace$:=$\thinspace$\int_{0}^{t}\mathrm{d}%
s\int_{\R}X_{s}(\mathrm{d}y)\,\bigl(\psi(s,y)\bigr)^{1+\beta}.$
\end{lemma}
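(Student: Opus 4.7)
The plan is to identify $U_t(\psi)$ as a purely discontinuous martingale whose predictable Laplace exponent equals $\lambda^{1+\beta}\,dT(s)$, and then to recover $L$ by time change through $T$. The positivity of $\psi$ matters only to ensure that all jumps of $U_t(\psi)$ are upward, so that the resulting process is spectrally positive.

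First I would invoke Lemma~\ref{L.mart.dec}. By part~(a), every jump of $X$ is of the form $r\delta_y$ with $r>0$, so $U_t(\psi)$ is well-defined under \eqref{eq:11_10_1} and is a purely discontinuous martingale with jumps $\Delta U_s(\psi)=r\,\psi(s,y)\ge 0$ taking place at the jump times $s$ of $X$. Its compensator, by part~(b), is $X_s(dy)\,n(dr)\,ds$ acting on the integrand $r\psi(s,y)$.

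Next I would apply It\^o's formula to $\mathrm{e}^{-\lambda U_t(\psi)}$ for $\lambda>0$. Because $U_t(\psi)$ is purely discontinuous with compensator as above, this gives
\begin{equation*}
\mathrm{e}^{-\lambda U_t(\psi)}\,=\,1\,+\,\text{local martingale}\,+\,\int_0^t\!\!\int_{\R}X_s(dy)\!\int_0^\infty\!\mathrm{e}^{-\lambda U_{s-}(\psi)}\bigl(\mathrm{e}^{-\lambda r\psi(s,y)}-1+\lambda r\psi(s,y)\bigr)n(dr)\,ds.
\end{equation*}
The key computation is that, with the normalization of $n(dr)$ in \eqref{eq:4_15_1}, a direct integration by parts yields
\begin{equation*}
\int_0^\infty\bigl(\mathrm{e}^{-\lambda r}-1+\lambda r\bigr)\,n(dr)\,=\,\lambda^{1+\beta},\qquad \lambda\ge 0,
\end{equation*}
so that by scaling $r\mapsto r\psi(s,y)$ the inner integral equals $\lambda^{1+\beta}\psi(s,y)^{1+\beta}$. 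Consequently,
\begin{equation*}
\mathrm{e}^{-\lambda U_t(\psi)}\,-\,\int_0^t \lambda^{1+\beta}\,\mathrm{e}^{-\lambda U_s(\psi)}\,dT(s)\quad\text{is a local martingale},
\end{equation*}
where $dT(s)=\int_{\R}X_s(dy)\psi(s,y)^{1+\beta}\,ds$.

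Finally I would perform the time change. Let $\tau(u):=\inf\{s\ge 0:T(s)>u\}$ be the right-continuous inverse of the continuous, nondecreasing process $T$, and set $L_u:=U_{\tau(u)}(\psi)$ (extending beyond $T(\infty)$, if $T(\infty)<\infty$, by splicing in an independent spectrally positive $(1+\beta)$-stable process on an enlarged probability space). Under the time change $s=\tau(u)$ the process in the previous display becomes $\mathrm{e}^{-\lambda L_u}-\int_0^u\lambda^{1+\beta}\mathrm{e}^{-\lambda L_v}\,dv$, which is a local martingale in the filtration $\mathcal{F}_{\tau(u)}$. Since $L$ has nonnegative jumps and locally bounded paths (one checks uniform integrability via the Laplace identity to promote ``local martingale'' to ``martingale''), this is precisely the characterization \eqref{MP}. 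Hence $L$ is a spectrally positive $(1+\beta)$-stable process and $U_t(\psi)=L_{T(t)}$ by construction.

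The main obstacle is the time-change step, specifically handling intervals on which $T$ is flat. On any such interval one has $\int_{\R}X_s(dy)\psi(s,y)^{1+\beta}=0$ for a.e.\ $s$, forcing $\psi(s,\cdot)=0$ $X_s$-a.e., so by the explicit jump description of $M$ the process $U_s(\psi)$ is itself constant on that interval; this guarantees that $L_u=U_{\tau(u)}(\psi)$ does not depend on which representative of $\tau(u)$ one picks, and that $L$ inherits the correct c\`adl\`ag structure. The possibility $T(\infty)<\infty$ is resolved by the standard adjunction of an independent stable process to cover the range $u>T(\infty)$.
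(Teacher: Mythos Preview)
Your proposal is correct and follows essentially the same approach as the paper's proof: apply It\^o's formula to $\mathrm{e}^{-\lambda U_t(\psi)}$, identify the compensator as $\lambda^{1+\beta}\,dT(s)$, time-change through the inverse $\tau$ of $T$, and splice in an independent stable process beyond $T(\infty)$ to obtain a process satisfying the martingale problem~\eqref{MP}. Your write-up is in fact somewhat more detailed than the paper's (you make the integral $\int_0^\infty(\mathrm{e}^{-\lambda r}-1+\lambda r)\,n(dr)=\lambda^{1+\beta}$ explicit and address the flat intervals of $T$), but the argument is the same.
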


\begin{proof}
Let us write It\^{o}'s formula for $\mathrm{e}^{-U_{t}(\psi)}:$%
\begin{align*}
&  \mathrm{e}^{-U_{t}(\psi)}-1\,=\,\text{local martingale}\\
&  \quad+\int_{0}^{t}\mathrm{d}s\ \mathrm{e}^{-U_{s}(\psi)}%
\int_{\R}X_{s}(\mathrm{d}y)\int_{0}^{\infty}n(\mathrm{d}r)
\,\bigl(\mathrm{e}^{-r\psi(s,y)}-1+r\,\psi(s,y)\bigr).\nonumber
\end{align*}
Define $\,\tau(t):=T^{-1}(t)$\thinspace\ and put $\,t^{\ast}:=\inf\!\left\{
_{\!_{\!_{\,}}}t:\,\tau(t)=\infty\right\}  .$ Then it is easy to get for every
$v>0$,
\begin{align*}
\mathrm{e}^{-vU_{\tau(t)}(\psi)}\,  &  =\,1+\int_{0}^{t}\mathrm{d}%
s\ \mathrm{e}^{-vU_{\tau(s)}(\psi)}\,\frac{X_{\tau(s)}\bigl(v^{1+\beta}%
\psi^{1+\beta}(s,\cdot)\bigr)}{X_{\tau(s)}\bigl(\psi^{1+\beta}(s,\cdot
)\bigr)}+\text{loc.\ mart.}\nonumber\\
&  =\,1+\int_{0}^{t}\mathrm{d}s\ \mathrm{e}^{-vU_{\tau(s)}(\psi)}\,v^{1+\beta
}+\text{loc. mart.,}\quad t\leq t^{\ast}.
\end{align*}
Since the local martingale is bounded, it is in fact a martingale. Let
$\,\tilde{L}$\thinspace\ denote a spectrally positive process of index
$1+\beta,$ independent of $X.$ Define%
\begin{equation*}
L_{t}\,:=\,\left\{  \!\!%
\begin{array}
[c]{ll}%
U_{\tau(t)}(\psi), & t\leq t^{\ast},\\
U_{\tau(t^{\ast})}(\psi)+\tilde{L}_{t-t^{\ast}\,}, & t>t^{\ast}\ \text{(if
}\,t^{\ast}<\infty).
\end{array}
\right.
\end{equation*}
Then we can easily get that $L$ satisfies the martingale problem (\ref{MP})
with $\kappa$ replaced by $1+\beta.$ Now by time change back we obtain%
\begin{equation*}
U_{t}(\psi)=\tilde{L}_{T(t)}=L_{T(t)\,},
\end{equation*}
finishing the proof.\hfill$\square$
\end{proof}
Having this result we may represent the increment $Z_t(x_1)-Z_t(x_2)$ as a difference
of two stable processes. More precisely, for every fixed pair $(x_1,x_2)$ there
exist spectrally positive stable processes $L^+$ and $L^-$ such that
\begin{equation}
\label{Levy.rep}
Z_t(x_1)-Z_t(x_2)=L^+_{T_+(x_1,x_2)}-L^-_{T_-(x_1,x_2)}
\end{equation}
where
\begin{equation}
\label{Levy.rep1}
T_\pm(x_1,x_2)=\int_0^t\mathrm{d}u\int_{\R}X_u(\mathrm{d}y)
\left((p_{t-u}(x_1-y)-p_{t-u}(x_2-y))^\pm\right)^{1+\beta}.
\end{equation}
It is clear from Lemma \ref{L9} that every jump $r\delta_{s,y}$ of the martingale
measure $M$ produces a jump of one of those stable processes:
\begin{itemize}
 \item If $p_{t-s}(x_1-y)>p_{t-s}(x_2-y)$ then $L^+$ has a jump of size
       $r(p_{t-u}(x_1-y)-p_{t-u}(x_2-y))$;
 \item If $p_{t-s}(x_1-y)<p_{t-s}(x_2-y)$ then $L^-$ has a jump of size
       $r(p_{t-u}(x_2-y)-p_{t-u}(x_1-y))$.     
\end{itemize}
Therefore, representation \eqref{Levy.rep} gives a handy tool for the
study of the influence of jumps of $M$ on the behavior of the increment
$Z_t(x_1)-Z_t(x_2)$. Moreover, it becomes clear that one needs to know good
estimates for the difference of kernels $p_{t-u}(x_1-y)-p_{t-u}(x_2-y)$
and for the tails of spectrally positive stable processes. 

For H\"older exponents $\eta>1$ we cannot use \eqref{Levy.rep}, since for exponents greater
than $1$ one has to subtract a polynomial correction. Instead of $Z(x_1)-Z(x_2)$ we shall
consider
\begin{align}
\label{def.Z.new}
\nonumber
Z_s(x_1,x_2)
&:=Z_s(x_1)-Z_s(x_2)-(x_1-x_2)\int_0^s\int_{\R}
M\left(\mathrm{d}(u,y)\right)\frac{\partial}{\partial y}p_{t-u}(x_2-y)\\
&=\int_0^s\int_{\R} M\left(\mathrm{d}(u,y)\right)q_{t-u}(x_1-y,x_2-y),
\quad 0\leq s\leq t,
\end{align}
where
\begin{equation}
\label{def_qq}
q_s(x,y):=p_s(x)-p_s(y)-(x-y)\frac{\partial}{\partial y}p_s(y).
\end{equation}
Here we may again apply Lemma \ref{L9} to obtain a representation for $Z(x_1,x_2)$ in terms
of difference of spectrally positive stable processes, similarly to \eqref{Levy.rep} which
gives the representation for $Z_s(x_1)-Z_s(x_2)$. The only difference to \eqref{Levy.rep} is 
that $p_{t-s}(x_1-y)-p_{t-s}(x_2-y)$ in \eqref{Levy.rep1} is replaced by $q_{t-s}(x_1-y,x_2-y)$.
\section{Some simple properties of $(2,d,\beta)$-superprocesses}
\label{sec:simple_properties}
In this section we collect some estimates on $(2,d,\beta)$-superprocesses which are needed
for the implementation of the program described in Section~\ref{sec:1}.

We start with a lemma where we give some  
left continuity properties of $(2,d,\beta)$-superprocess at fixed times, in dimensions $d<2/\beta$. 
\begin{lemma}
\label{lem:1}
Let $d<2/\beta$, and $B$ be an arbitrary open ball in $\R^d$. Then,
for a fixed $t>0$,
\begin{equation*}
\lim_{s\rightarrow t}X_{s}(B)\,=\,X_{t}(B),\quad \mathbf{P}-a.s.
\end{equation*}
\end{lemma}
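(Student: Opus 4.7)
The plan is to reduce the statement to the Portmanteau theorem: the c\`adl\`ag property of $X$ together with the fact that $t$ is a fixed (deterministic) time will give weak convergence $X_s \Rightarrow X_t$ as $s\to t$, and the absolute continuity of $X_t$ in dimensions $d<2/\beta$ will handle the boundary of $B$.

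First I would verify that no jump of $X$ occurs at the fixed deterministic time $t$, i.e.\ that $\Delta X_t = 0$ almost surely. This is immediate from the compensator formula \eqref{decomp}: for the jump measure $N$ we have $\widehat{N}(\{t\}\times\R^d\times\R_+) = \int_{\{t\}} ds\,\e[\langle X_s,1\rangle]\,n(\R_+) = 0$ since $ds$ puts no mass on a single point, and hence $\e[N(\{t\}\times\R^d\times\R_+)] = 0$. Combined with right-continuity (c\`adl\`ag paths) this yields $X_{t-} = X_t$ a.s., so $X_s \to X_t$ in the weak topology of $\mfo$ as $s\to t$ from either side.

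Next I would invoke the result of Fleischmann cited in the excerpt: since $d<2/\beta$, the measure $X_t$ is absolutely continuous with respect to Lebesgue measure almost surely. In particular, since the boundary $\partial B$ of an open ball has zero Lebesgue measure, $X_t(\partial B)=0$ almost surely. The Portmanteau theorem then upgrades weak convergence of $X_s$ to $X_t$ into numerical convergence $X_s(B)\to X_t(B)$ on the set where both events (no jump at $t$ and $X_t(\partial B)=0$) occur, which has full probability.

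I do not expect any serious obstacle here; the only thing to be slightly careful about is that the null set off of which weak convergence holds depends on $\omega$ through the path of $X$, but taking the union with the null set $\{X_t(\partial B)>0\}$ still yields a $\pr$-null exceptional set, so the conclusion is pointwise in $\omega$ off that null set. If one preferred a more direct route, one could also test $X_s$ against two continuous bump functions sandwiching $\ind_B$ and $\ind_{\bar B}$ and let them approach the indicators, again using $X_t(\partial B)=0$; but the Portmanteau argument is the cleanest.
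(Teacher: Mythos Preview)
Your proof is correct and follows essentially the same route as the paper: a.s.\ continuity of the measure-valued process at the fixed time $t$ gives weak convergence $X_s\Rightarrow X_t$, and absolute continuity of $X_t$ ensures $X_t(\partial B)=0$, whence $X_s(B)\to X_t(B)$. The paper writes out the Portmanteau sandwich $X_t(B)\le\liminf X_s(B)\le\limsup X_s(\overline B)\le X_t(\overline B)=X_t(B)$ explicitly rather than citing the theorem by name, but the content is identical.
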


\begin{proof}
Since $t$ is fixed, $X$ is continuous at $t$ with probability $1$. Therefore,
\begin{equation*}
X_{t}(B)\leq\liminf_{s\rightarrow t}X_{s}(B)
\leq\limsup_{s\rightarrow t}X_{s}(B)
\leq\limsup_{s\rightarrow t}X_{s}(\overline{B})
\leq X_{t}(\overline{B})
\end{equation*}
with $\overline{B}$ denoting the closure of $B.$ But since $X_{t}(\mathrm{d}x)$
is absolutely continuous with respect to Lebesgue measure, we have 
$X_{t}(B)=X_{t}(\overline{B}).$ Thus the proof is finished.
\hfill$\square$
\end{proof}

In the next lemma we give a simple test for  explosion of an integral involving
$\{X_s(B)\}_{s\leq t}$ whereas $B$ in an open ball in $\R^d$. 
\begin{lemma}
\label{L.explosion}
Let  $d<2/\beta$, and $B$ be an arbitrary open ball in $\R^d$.
Let $f:(0,t)\rightarrow(0,\infty)$ be measurable and assume that
\begin{equation*}
\int_{t-\delta}^{t}\mathrm{d}s\ f(t-s)=\infty\ \,\text{for all sufficiently
small }\,\delta\in(0,t).
\end{equation*}
Then for these $\delta$
\begin{equation*}
\int_{t-\delta}^{t}\mathrm{d}s\ X_{s}(B)f(t-s)=\infty
\quad\mathbf{P}\text{-a.s. on the event }\bigl\{X_{t}(B)>0\bigr\}.
\end{equation*}
\end{lemma}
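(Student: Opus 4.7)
The plan is to reduce the claim to a purely deterministic statement by exploiting Lemma~\ref{lem:1}, which gives left (in fact two-sided) continuity of $s\mapsto X_s(B)$ at the fixed time $t$. On the event $\{X_t(B)>0\}$, this continuity yields a (random) lower bound on $X_s(B)$ near $s=t$, after which the hypothesis on $f$ immediately forces the integral to diverge.

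\medskip

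More precisely, I would proceed as follows. Fix a ball $B$ and the time $t>0$. By Lemma~\ref{lem:1} there is a $\pr$-null set outside of which $\lim_{s\to t} X_s(B)=X_t(B)$. Work with an $\omega$ in the complement of this null set such that $X_t(B,\omega)>0$. Set $a:=X_t(B,\omega)/2>0$. By the convergence there exists $\delta'=\delta'(\omega)\in(0,\delta)$ such that
\[
X_s(B,\omega)\ \geq\ a\qquad\text{for all }s\in[t-\delta',t].
\]
Shrinking $\delta'$ if necessary, we may moreover assume that $\delta'$ is small enough that the hypothesis $\int_{t-\delta'}^{t} f(t-s)\,ds=\infty$ applies.

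\medskip

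Then, using nonnegativity of $f$ and of $X_s(B)$,
\[
\int_{t-\delta}^{t} X_s(B,\omega)\,f(t-s)\,ds
\ \geq\ \int_{t-\delta'}^{t} X_s(B,\omega)\,f(t-s)\,ds
\ \geq\ a \int_{t-\delta'}^{t} f(t-s)\,ds\ =\ \infty,
\]
which gives the desired conclusion. Measurability of $s\mapsto X_s(B)$, needed for the integral to be well defined, follows from the fact that $X$ is c\`adl\`ag $\Mcal$-valued and $B$ is open (so $\mu\mapsto \mu(B)$ is lower semicontinuous in the weak topology, hence Borel).

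\medskip

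There is no real obstacle here: the statement is essentially a Fatou-type consequence of the left continuity of $s\mapsto X_s(B)$ at the fixed time $t$ on the event $\{X_t(B)>0\}$. The only subtle point worth flagging is that the null set produced by Lemma~\ref{lem:1} is allowed to depend on $B$ and $t$ but not on $f$, so the argument indeed gives the claim simultaneously for all measurable $f$ satisfying the stated nonintegrability condition (for a fixed pair $(B,t)$).
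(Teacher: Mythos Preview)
Your proof is correct and follows essentially the same approach as the paper's: use Lemma~\ref{lem:1} to get a positive lower bound on $X_s(B)$ for $s$ close to $t$, then bound the integral below by a constant times $\int_{t-\delta'}^{t} f(t-s)\,ds=\infty$. The only cosmetic difference is that the paper uses a single parameter $\varepsilon\in(0,\delta)$ simultaneously for the lower bound and the interval length, whereas you separate them into $a$ and $\delta'$.
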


\begin{proof}
Fix $\delta$ as in the lemma. Fix also $\omega$ such that $\,X_{t}(B)>0$
and $\,X_{s}(B)\rightarrow X_{t}(B)$ as $s\uparrow t$. For this $\omega$,
there is an $\varepsilon\in(0,\delta)$ such that $X_{s}(B)>\varepsilon$ 
for all $s\in(t-\varepsilon,t)$. Hence
\begin{equation*}
\int_{t-\delta}^{t}\mathrm{d}s\ X_{s}(B)f(t-s)
\geq\varepsilon\int_{t-\varepsilon}^{t}\mathrm{d}s f(t-s)=\infty,
\end{equation*}
and we are done.\hfill$\square$
\end{proof}

Now we will study the properties of $(2,1,\beta)$-superprocess in dimension
$d=1$. We start with moment estimates on the spatial increments of $Z_t$
defined in \eqref{def_Zt}.
Until the end of this section we consider the case of 
$$
d=1.
$$  
\begin{lemma}
\label{L.moment_bound}
Let $d=1$. {F}or each $q\in(1,1+\beta)$ and $\delta<\min\bigl\{1,(2-\beta)/(1+\beta)\bigr\}$,
\begin{equation*}
\mathbf{E}\bigl|Z_t(x_{1})-Z_t(x_{2})\bigr|^{q}\,\leq
\,C\,|x_{1}-x_{2}|^{\delta q},\quad x_{1},x_{2}\in\mathbb{R}.%
\end{equation*}
\end{lemma}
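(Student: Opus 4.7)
The plan is to write the increment as a stochastic integral against $M$, pass to the spectrally positive $(1+\beta)$-stable representation of Lemma~\ref{L9}, and reduce to a deterministic heat-kernel bound.

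First, from \eqref{def_Zt},
$$
Z_t(x_1)-Z_t(x_2)=\int_0^t\!\!\int_\R\psi(u,y)\,M(du,dy),\qquad \psi(u,y):=p_{t-u}(x_1-y)-p_{t-u}(x_2-y).
$$
Applying Lemma~\ref{L9} separately to $\psi_+$ and $\psi_-$, and using $|a-b|^q\le 2^{q-1}(|a|^q+|b|^q)$, reduces the problem to bounding $\e|L^\pm_{T_\pm(t)}|^q$ with $T_\pm$ as in~\eqref{Levy.rep1}. For $q\in(1,1+\beta)$ the standard moment estimate for these $(1+\beta)$-stable time-changed martingales (which follows from the conditional-stable-process structure of Lemma~\ref{L9} together with Jensen's inequality applied to the concave map $x\mapsto x^{q/(1+\beta)}$ and the Green-function identity $\e X_u(dy)=S_u\mu(y)\,dy$) yields
$$
\e|L^\pm_{T_\pm(t)}|^q\le C\Bigl(\int_0^t\!du\int_\R\psi_\pm^{1+\beta}(u,y)\,S_u\mu(y)\,dy\Bigr)^{q/(1+\beta)}.
$$
Adding the $\pm$ contributions and using $\psi_+^{1+\beta}+\psi_-^{1+\beta}=|\psi|^{1+\beta}$, it suffices to show the deterministic estimate $I\le C|x_1-x_2|^{(1+\beta)\wedge(2-\beta)}$ for $|x_1-x_2|\le 1$, where $I:=\int_0^t\!du\int_\R|\psi(u,y)|^{1+\beta}S_u\mu(y)\,dy$.

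Second, set $h=|x_1-x_2|$ and split $(0,t)=(0,t/2)\cup(t/2,t)$. On $(0,t/2)$ one has $t-u\asymp t$, hence $\|\psi(u,\cdot)\|_\infty\le Ch(t-u)^{-1}\le Ch$; combined with $\int_{\R}S_u\mu(y)dy=\|\mu\|$ this produces a contribution $\le Ch^{1+\beta}$. On $(t/2,t)$ one has $\|S_u\mu\|_\infty\le C$, and scaling the Gaussian kernel gives the sharp Lebesgue estimate
$$
\int_\R\bigl|p_s(y+h)-p_s(y)\bigr|^{1+\beta}dy\le C\, s^{-\beta/2}\min\bigl(1,(h/\sqrt{s})^{1+\beta}\bigr).
$$
Integrating in $s=t-u$ and splitting at $s\asymp h^2$ (where the two branches of the minimum balance) leads, after an elementary case analysis depending on whether $\beta<1/2$ or $\beta>1/2$, to a contribution $\le C(h^{2-\beta}+h^{1+\beta})$. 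Summing these two pieces yields $I\le Ch^{(1+\beta)\wedge(2-\beta)}$.

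Combining the two steps gives $\e|Z_t(x_1)-Z_t(x_2)|^q\le Ch^{[\min(1,(2-\beta)/(1+\beta))]\,q}$, which dominates the desired $h^{\delta q}$ for any $\delta<\min(1,(2-\beta)/(1+\beta))$ when $h\le 1$; the case $h\ge 1$ is trivial from $\sup_{x\in\R}\e|Z_t(x)|^q<\infty$. The main obstacle will be the sharp estimate of $I$: one must balance the sup-norm bound $Cs^{-1/2}$ against the gradient bound $Ch\cdot s^{-1}$ for $\psi$, while coping with the fact that $S_u\mu$ need not be uniformly bounded as $u\downarrow 0$, which is precisely what forces the $(0,t/2)/(t/2,t)$ split; the borderline $\beta=1/2$ loses only a logarithmic factor, harmlessly absorbed by the strict inequality $\delta<(2-\beta)/(1+\beta)$.
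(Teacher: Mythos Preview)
Your Step~2 (the deterministic heat-kernel estimate via scaling and the split at $s\asymp h^2$) is correct and is a pleasant hands-on alternative to the paper's route, which instead invokes Lemma~\ref{L.kernel2} from the appendix to bound the space--time integrals.

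Step~1, however, has a genuine gap. You assert
\[
\e\bigl|L^\pm_{T_\pm(t)}\bigr|^q\ \le\ C\Bigl(\int_0^t\!\!\int_\R\psi_\pm^{1+\beta}\,S_u\mu\,dy\,du\Bigr)^{q/(1+\beta)}
=C\bigl(\e T_\pm(t)\bigr)^{q/(1+\beta)},
\]
justified by ``the conditional-stable-process structure of Lemma~\ref{L9} together with Jensen''. That phrasing implicitly treats $L^\pm_{T_\pm}$ as if, conditionally on $T_\pm$, it were distributed like $T_\pm^{1/(1+\beta)}L^\pm_1$ --- i.e.\ as if $L^\pm$ and $T_\pm$ were independent. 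They are not: Lemma~\ref{L9} is a \emph{time change}, not a decoupling; both $L^\pm$ and $T_\pm$ are manufactured from the \emph{same} jumps of $M$. What is true is that $T_\pm(t)$ is a stopping time in the time-changed filtration of $L^\pm$, so an inequality of the form $\e|L_T|^q\le C\,\e[T^{q/(1+\beta)}]$ may hold, but proving it for a stopped $(1+\beta)$-stable L\'evy process is real work (a Burkholder/good-$\lambda$ type argument), not a one-line consequence of Jensen, and nothing of the sort is available in the paper.

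The paper avoids this issue entirely. Instead of the stable representation it quotes inequality~(3.1) from \cite{MP03}, which gives a \emph{two}-term moment bound
\[
\e|Z_t(x_1)-Z_t(x_2)|^q\ \le\ C\Bigl[\Bigl(\int_0^t\!\!\int_\R S_s\mu\,|\phi|^\theta\Bigr)^{q/\theta}+\int_0^t\!\!\int_\R S_s\mu\,|\phi|^q\Bigr],
\]
with $\phi(s,y)=p_{t-s}(x_1-y)-p_{t-s}(x_2-y)$ and some $\theta\in(1+\beta,2)$. (The two exponents straddle the critical index $1+\beta$; this is the natural shape once one truncates large versus small jumps, since $\int_0^\infty r^p\,n(dr)=\infty$ for every fixed $p$.) Each integral is then bounded by $C|x_1-x_2|^{\delta\varepsilon}$ via Lemma~\ref{L.kernel2}, for any $\delta<\min\{1,(3-\varepsilon)/\varepsilon\}$; letting $\theta\downarrow1+\beta$ and $q\uparrow1+\beta$ yields the constraint $\delta<\min\{1,(2-\beta)/(1+\beta)\}$.

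If you want to keep your single-term route, you must supply an honest proof that $\e|L_T|^q\le C(\e T)^{q/(1+\beta)}$ for stopping times $T$; otherwise, the clean fix is to cite (or reprove) the \cite{MP03} moment inequality and then feed it your Step~2 estimate in place of Lemma~\ref{L.kernel2}.
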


\begin{proof}
Applying (3.1) from \cite{MP03} with
$$
\phi(s,y)=p_{t-s}(x_1-y)-p_{t-s}(x_2-y),
$$
we get
\begin{align}
&  \mathbf{E}\bigl|Z_t(x_{1})-Z_t(x_{2})\bigr|^{q}\nonumber\\
&  \leq\,C\,\bigg[\Bigl(\int_{0}^{t}\mathrm{d}s\int_{\R}S_{s}\mu(\mathrm{d}y)
\bigl|p_{t-s}(x_{1}-y)-p_{t-s}(x_{2}-y)\bigr|^{\theta}\Bigr)^{\!q/\theta}\nonumber\\
&  \quad\ +\int_{0}^{t}\mathrm{d}s\int_{\R}S_{s}\mu(\mathrm{d}y)
\bigl|p_{t-s}(x_{1}-y)-p_{t-s}(x_{2}-y)\bigr|^{q}\bigg]. \label{L4.1}%
\end{align}
For every $\varepsilon\in(1,3)$,
\begin{align*}
&  \int_{0}^{t}\mathrm{d}s\int_{\R}S_{s}\mu(\mathrm{d}y)
\bigl|p_{t-s}(x_{1}-y)-p_{t-s}(x_{2}-y)\bigr|^{\varepsilon}\nonumber\\
& =\int_{\R}\mu(\mathrm{d}z)\int_{0}^{t}\mathrm{d}s
\int_{\R}\mathrm{d}y\ p_{s}(y-z)
\bigl|p_{t-s}(x_{1}-z)-p_{t-s}(x_{2}-z)\bigr|^{\varepsilon}\nonumber\\
&  =\int_{\R}\mu(\mathrm{d}z)
\int_{0}^{t}\mathrm{d}s\int_{\R}\mathrm{d}y\ p_{s}(y)
\bigl|p_{t-s}(x_{1}-z-y)-p_{t-s}(x_{2}-z-y)\bigr|^{\varepsilon}.
\end{align*}
Using Lemma~\ref{L.kernel2}, we get for every positive
$\delta<\min\bigl\{1,(3-\varepsilon)/\varepsilon\bigr\}$,
\begin{align*}
&  \int_{0}^{t}\mathrm{d}s\int_{\R}S_{s}\mu(\mathrm{d}y)
\bigl|p_{t-s}(x_{1}-y)-p_{t-s}(x_{2}-y)\bigr|^{\varepsilon}\\
&  \leq C|x_{1}-x_{2}|^{\delta\varepsilon}\int_{\R}\mu(\mathrm{d}z)
\Big(p_{t}\bigl((x_{1}-z)/2\bigr)+p_{t}\bigl((x_{2}-z)/2\bigr)\Big)
\leq C|x_{1}-x_{2}|^{\delta\varepsilon},
\end{align*}
since $\,\mu,t$\thinspace\ are fixed. Applying this bound to both summands at
the right hand side of \eqref{L4.1} finishes the proof of the lemma.
\hfill$\square$
\end{proof}

Bounds on moments of spatial increments of $Z_t$, from the previous lemma, clearly give the same bounds 
on spatial increments of $X_t$ itself. However, on top of this, they immediately give the bounds on 
the moments of the supremum of $X_t(\cdot)$ on compact spatial sets: this is done in the next lemma. 

\begin{lemma}
\label{L.fixed.1} 
Let $d=1$. If $K\subset \R$ is a compact and $q\in(1,1+\beta)$ then
\begin{equation*}
\mathbf{E}\Big(\sup_{x\in K}X_{t}(x)\Big)^{q}<\infty.
\end{equation*}
\end{lemma}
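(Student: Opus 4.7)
The plan is to apply the Kolmogorov--Chentsov continuity criterion to the stochastic integral $Z_t$ from~\eqref{def_Zt}, using the moment estimate of Lemma~\ref{L.moment_bound}. By the decomposition $X_t(x)=(\mu\ast p_t)(x)+Z_t(x)$ from~\eqref{rep.dens}, and since for fixed $t>0$ the function $\mu\ast p_t$ is uniformly bounded on $\R$ (as $\mu$ is a finite measure and $p_t$ is a bounded continuous kernel), it suffices to show $\mathbf{E}\bigl[\sup_{x\in K}|Z_t(x)|\bigr]^{q}<\infty$.

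The crucial observation is that for $\beta\in(0,1)$ one has
\[
(1+\beta)\cdot\min\!\left\{1,\tfrac{2-\beta}{1+\beta}\right\}\;=\;\min\{1+\beta,\,2-\beta\}\;>\;1.
\]
Given $q\in(1,1+\beta)$, one may therefore select $q^{\ast}\in[q,1+\beta)$ close to $1+\beta$, together with $\delta<\min\{1,(2-\beta)/(1+\beta)\}$ close to its upper bound, so that $\delta q^{\ast}>1$. Lemma~\ref{L.moment_bound} then yields the Kolmogorov hypothesis
\[
\mathbf{E}\bigl|Z_t(x_1)-Z_t(x_2)\bigr|^{q^{\ast}}\leq C\,|x_1-x_2|^{\delta q^{\ast}},\qquad x_1,x_2\in\R,
\]
with exponent $\delta q^{\ast}>1$.

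Applying the Garsia--Rodemich--Rumsey lemma (or a standard dyadic chaining argument), one extracts, for any $\gamma\in(0,\delta-1/q^{\ast})$, moment control of the H\"older seminorm on $K$:
\[
\mathbf{E}\left[\sup_{\substack{x_1,x_2\in K\\ x_1\neq x_2}}\frac{|Z_t(x_1)-Z_t(x_2)|}{|x_1-x_2|^{\gamma}}\right]^{q^{\ast}}<\infty.
\]
Fixing any $x_0\in K$ and noting that $\mathbf{E}|Z_t(x_0)|^{q^{\ast}}<\infty$ (obtained by applying (3.1) of~\cite{MP03} to $\phi(s,y)=p_{t-s}(x_0-y)$, in the same way as in the proof of Lemma~\ref{L.moment_bound}), the pointwise estimate $|Z_t(x)|\leq|Z_t(x_0)|+\mathrm{diam}(K)^{\gamma}\,[Z_t]_{\gamma,K}$ yields $\mathbf{E}\bigl[\sup_{x\in K}|Z_t(x)|\bigr]^{q^{\ast}}<\infty$, whence the desired $q$-th moment bound follows since $q\leq q^{\ast}$.

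No step constitutes a real obstacle; the only subtlety is the joint choice of $\delta$ and $q^{\ast}$ ensuring $\delta q^{\ast}>1$, which succeeds precisely because $\beta<1$. Were $\beta=1$, the product $\min\{1+\beta,\,2-\beta\}$ would equal $1$ and this chaining approach would degenerate, consistent with the fact that in the quadratic case the sharp pointwise regularity is $1/2$ rather than anything better.
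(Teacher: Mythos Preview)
Your proof is correct and follows essentially the same approach as the paper: both reduce to controlling $\mathbf{E}\sup_{x\in K}|Z_t(x)|^q$ via the increment moment bound of Lemma~\ref{L.moment_bound} combined with a Kolmogorov/GRR-type chaining result (the paper invokes Corollary~1.2 of Walsh~\cite{bib:wal86} as a black box, which is exactly the mechanism you spell out). Your explicit introduction of $q^\ast\in[q,1+\beta)$ to guarantee $\delta q^\ast>1$ is a careful detail that the paper leaves implicit.
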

\begin{proof}
By Jensen's inequality, we may additionally assume that $q>1.$ It follows from
(\ref{rep.dens}) that%
\begin{equation*}
\Big(\sup_{x\in K}X_{t}(x)\Big)^{\!q}\,\leq\,4\,\biggl(\!\Big(\sup_{x\in K}%
\mu\!\ast\!p_{t}\,(x)\Big)^{\!q}+\sup_{x\in K}\bigl|Z_t(x)\bigr|^{q}\biggr)\!.
\end{equation*}
Clearly, the first term at the right hand side is finite. Furthermore,
according to Corollary~1.2 of Walsh \cite{bib:wal86}, Lemma \ref{L.moment_bound}
implies that
\begin{equation*}
\mathbf{E}\sup_{x\in K}\bigl|Z_t(x)\bigr|^{q}<\infty.
\end{equation*}
This completes the proof.\hfill$\square$
\end{proof}

From the above lemma one can immediately see that for any fixed $t$, $X_t(\cdot)$ is bounded on 
compacts. However, this is clearly not the case, if one start considering $X_s(x)$ as a function 
of $(s,x)$ with $s\leq t$. The reason is obvious: as we have discussed in the introduction, 
the measure-valued process $X_s(dx)$ has jumps in the form of atomic measures, and if 
$\langle X_t, 1\rangle>0$, the set of jump times is dense in $[0,t]$. However, it turns out that if one ``smooths''
a bit $X_s$ by taking its convolution with the heat kernel $p_{c(t-s)}(x-\cdot)$, then the resulting function of 
$(s,x)$  is 
a.s. bounded on compacts for $c$ large enough. This not obvious result is given in the next lemma.

\begin{lemma}
\label{L.fixed.2}
Let $d=1$. 
Fix a non-empty compact $K\subset\R$. Then
\begin{equation*}
V(K):=\sup_{0\leq s\leq t,\,x\in K}
S_{4(t-s)}X_{s}(x)<\infty\quad\mathbf{P}-a.s.
\end{equation*}
\end{lemma}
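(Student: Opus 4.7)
The plan is to prove that $(s,x)\mapsto S_{4(t-s)}X_s(x)$ admits a jointly continuous version on the compact set $[0,t]\times K$; then $V(K)$ is the supremum of a continuous function over a compact set and is a.s.\ finite. The multiplier $4$ in the smoothing scale is precisely what keeps the stochastic integrals below well-controlled as $s\uparrow t$.

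I would first derive a clean stochastic representation of $Y_s(x):=S_{4(t-s)}X_s(x)$. Apply the martingale decomposition of Lemma \ref{L.mart.dec}(c) with the space--time test function
\begin{equation*}
\psi_s(u,y):=p_{4t-3s-u}(x-y),\qquad 0\le u\le s.
\end{equation*}
Since $\psi_s$ satisfies the backward heat equation $\partial_u\psi_s+\tfrac12\Delta_y\psi_s=0$, the drift term cancels and one obtains
\begin{equation*}
Y_s(x)=\mu\ast p_{4t-3s}(x)+\tilde Z_s(x),\qquad \tilde Z_s(x):=\int_0^s\!\!\int_{\R} M(du,dy)\,p_{4t-3s-u}(x-y).
\end{equation*}
At $s=t$ this reproduces (\ref{rep.dens}), as a sanity check. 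The deterministic term $\mu\ast p_{4t-3s}(x)$ is smooth and bounded on $[0,t]\times K$ (since $4t-3s\ge t$), so the entire problem reduces to joint continuity of $(s,x)\mapsto\tilde Z_s(x)$ on $[0,t]\times K$.

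Next I would establish Kolmogorov-type moment estimates for the increments of $\tilde Z$. Fix $q\in(1,1+\beta)$ and split
\begin{equation*}
\tilde Z_{s_1}(x_1)-\tilde Z_{s_2}(x_2)=\bigl[\tilde Z_{s_1}(x_1)-\tilde Z_{s_1}(x_2)\bigr]+\bigl[\tilde Z_{s_1}(x_2)-\tilde Z_{s_2}(x_2)\bigr].
\end{equation*}
Applying the $L^q$-moment inequality of \cite[(3.1)]{MP03} (as used in the proof of Lemma \ref{L.moment_bound}), together with heat-kernel difference estimates (Lemma \ref{L.kernel2} and its time-increment analog) and the bandwidth bound $4t-3s-u\ge 4(t-s)$, should yield
\begin{equation*}
\mathbf{E}\bigl|\tilde Z_{s_1}(x_1)-\tilde Z_{s_2}(x_2)\bigr|^q\le C\bigl(|s_1-s_2|^{aq}+|x_1-x_2|^{bq}\bigr)
\end{equation*}
for suitable positive $a,b$ with $aq>1$, $bq>1$. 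Kolmogorov's continuity criterion then delivers a jointly continuous version of $\tilde Z$, hence of $Y$, on $[0,t]\times K$, giving $V(K)<\infty$ a.s.

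The main obstacle is making this moment estimate uniform up to the singular boundary $s=t$, where the smoothing bandwidth $4(t-s)$ collapses to $0$. This is where the multiplier $4$ earns its keep: it provides enough separation between the smoothing scale and the martingale-measure singularity so that the $u$-integrals reduce to Beta-type integrals that stay bounded uniformly in $s\in[0,t]$, while one can still achieve the Kolmogorov exponents $aq>1$, $bq>1$ by choosing $q<1+\beta$ close to $1+\beta$ and the H\"older exponent $\delta$ close to the critical value $(2-\beta)/(1+\beta)$ of Lemma \ref{L.moment_bound}. A secondary subtlety is that $\psi_s$ depends on both $u$ and $s$, so the time increment $\tilde Z_{s_1}(x)-\tilde Z_{s_2}(x)$ (with $s_2<s_1$) must first be split into an integral over $[0,s_2]$, on which only the kernel parameter shifts with $s$, and a residual integral over $[s_2,s_1]$, each estimated separately.
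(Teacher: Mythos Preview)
Your approach has a fundamental flaw: the process $(s,x)\mapsto Y_s(x)=S_{4(t-s)}X_s(x)$ is \emph{not} continuous in $s$, so no jointly continuous version exists and Kolmogorov's criterion cannot apply. Indeed, whenever $X$ has a jump $\Delta X_{u_0}=r\delta_{y_0}$ (and these are dense in $[0,\zeta)$), one has
\[
\Delta Y_{u_0}(x)=r\,p_{4(t-u_0)}(x-y_0)\neq 0,
\]
so $s\mapsto Y_s(x)$ is only c\`adl\`ag. Equivalently, in your representation $\tilde Z_s(x)=\int_0^s\!\!\int_\R M(\mathrm d(u,y))\,p_{4t-3s-u}(x-y)$ the upper limit of integration moves with $s$, and each atom of $M$ produces a jump in $\tilde Z_\cdot(x)$. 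This also explains why the time-increment moment estimate cannot achieve an exponent $>1$: for the ``residual'' integral over $[s_2,s_1]$ you are essentially looking at an increment of a time-changed $(1+\beta)$-stable process, for which $\mathbf E|L_{s_1}-L_{s_2}|^q\sim|s_1-s_2|^{q/(1+\beta)}$ with $q/(1+\beta)<1$ whenever $q<1+\beta$. (The condition $aq>1$, $bq>1$ would in any case be insufficient for Kolmogorov in a two-dimensional parameter.)

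The paper avoids this obstacle entirely by arguing by contradiction via the strong Markov property. Assuming $V(K)=\infty$ on an event $A$ of positive probability, one introduces the stopping time $\tau_n$ at which the smoothed density first exceeds $n$, picks $x_n\in K$ realizing this, and uses the branching/Markov property to write
\[
\mathbf E\bigl[S_{3(t-\tau_n)}X_t(x_n)\bigr]
=\mathbf E\bigl[S_{3(t-\tau_n)}S_{t-\tau_n}X_{\tau_n}(x_n)\bigr]
=\mathbf E\bigl[S_{4(t-\tau_n)}X_{\tau_n}(x_n)\bigr]\ge n\,\mathbf P(A).
\]
The left-hand side, however, is bounded uniformly in $n$ by $\mathbf E\sup_{y\in K_1}X_t(y)+C<\infty$ (Lemma~\ref{L.fixed.1}), giving the contradiction. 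Note that the factor $4=3+1$ is tailored to this identity: one unit of smoothing comes from running the process from $\tau_n$ to $t$, and the remaining three units connect back to the fixed-time density $X_t$ that Lemma~\ref{L.fixed.1} controls.
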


\begin{proof}
Assume that the statement of the lemma does not hold, i.e. there exists an
event $A$ of positive probability such that
$\sup_{0\leq s\leq t,\,x\in K}S_{4(t-s)}X_{s}\,(x)=\infty$ for every
$\omega\in A$. Let $n\geq1.$ Put
\[
\tau_{n}\,:=\,\left\{  \!%
\begin{array}
[c]{l}%
\inf\Big\{s<t:\text{ there exists }x\in K\text{ such that }S_{4(t-s)}X_{s}
\,(x)>n\Big\},\quad\omega\in A,\vspace{4pt}\\
t,\quad\omega\in A^{\mathrm{c}}.
\end{array}
\right.
\]
If $\,\omega\in A,$ choose $x_{n}=x_{n}(\omega)\in K$ such that
$\,S_{4(t-\tau_{n})} X_{\tau_{n}}(x_{n})>n,$ whereas if
$\,\omega\in A^{\mathrm{c}},$ take any $x_{n}=x_{n}(\omega)\in K.$
Using the strong Markov property gives
\begin{align}
&  \mathbf{E}S_{3(t-\tau_{n})}X_{t}\,(x_{n})
=\mathbf{EE}\bigl[S_{3(t-\tau_{n})}X_{t}\,(x_{n})\,\bigl|
\mathcal{F}_{\tau_{n}}\bigr]\label{56}\\[2pt]
&  =\mathbf{E}S_{3(t-\tau_{n})}
S_{(t-\tau_{n})}X_{\tau_{n}}(x_{n})=
\mathbf{E}S_{4(t-\tau_{n})}X_{\tau_{n}}(x_{n}).\nonumber
\end{align}
\textrm{F}rom the definition of $(\tau_{n},x_{n})$ we get
\begin{equation*}
\mathbf{E}S_{4(t-\tau_{n})}X_{\tau_{n}}(x_{n})
\geq n\mathbf{P}(A)\rightarrow\infty\ \,\text{as}\ \,n\uparrow\infty.
\end{equation*}
In order to get a contradiction, we want to prove boundedness in $n$ of the
expectation in (\ref{56}). Choosing a compact
$K_{1}\supset K$ satisfying 
$\mathrm{dist}\bigl(K,(K_{1})^{\mathrm{c}}\bigr)\geq1,$ we have
\begin{align*}
&  \mathbf{E}S_{3(t-\tau_{n})}X_{t}\,(x_{n})\\
&  =\mathbf{E}\int_{K_{1}}\mathrm{d}y\ X_{t}(y)\,p_{3(t-\tau_{n})}(x_{n}-y)
+\mathbf{E}\int_{(K_{1})^{\mathrm{c}}}\mathrm{d}y\ X_{t}(y)\,p_{3(t-\tau_{n})}(x_{n}-y)\\
&  \leq\mathbf{E}\sup_{y\in K_{1}}X_{t}(y)+
\mathbf{E}X_{t}(\R)\sup_{y\in(K_{1})^{\mathrm{c}},
\,x\in K,\,0\leq s\leq t\,}p_{3s}(x-y).
\end{align*}
By our choice of $\,K_{1}$\thinspace\ we obtain the bound
\begin{equation}
\mathbf{E}S_{3(t-\tau_{n})}X_{t}\,(x_{n})
\leq\,\mathbf{E}\sup_{y\in K_{1}}X_{t}(y)+C=C,
\end{equation}
the last step again by Lemma~\ref{L.fixed.1}. Altogether, (\ref{56}) is bounded in $n,$
and the proof is finished.
\hfill$\square$
\end{proof}

An easy application of the previous lemma is the following result.

\begin{lemma}
\label{n.L2}
Let $d=1$. 
Fix any non-empty bounded $K\subset\R$. Then
\begin{equation*}
W_{K}:=\sup_{(c,s,x): c\geq1,\,\,0\vee(t-c^{-2})\leq s<t,\,\,\\*x\in K 
                    }\frac
{X_{s}\bigl(B_{c\,(t-s)^{1/2}}(x)\bigr)}{c\,(t-s)^{1/2}}<\infty
\quad\mathbf{P}-a.s.
\end{equation*}
\end{lemma}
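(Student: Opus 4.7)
The plan is to reduce the lemma to Lemma~\ref{L.fixed.2} via a Gaussian semigroup decomposition. The constraints $c\geq 1$ and $s\geq t-c^{-2}$ force the radius $r:=c(t-s)^{1/2}$ to satisfy $r\leq 1$ and $r^2\geq t-s$. Fix a compact $K_1\supset K$ with $\mathrm{dist}(K,K_1^{\mathrm{c}})\geq 1$, so that every ball $B_r(x)$ with $x\in K$ lies in $K_1$. By Lemma~\ref{L.fixed.2} applied to $K_1$, the random variable $V:=\sup_{0\leq s\leq t,\,y\in K_1}S_{4(t-s)}X_s(y)$ is a.s.\ finite, and by Doob's maximal inequality applied to the non-negative martingale $s\mapsto\langle X_s,1\rangle$ (which follows from the martingale problem with $\varphi\equiv 1$ and $f(x)=x$), the random variable $M:=\sup_{s\leq t}X_s(\R)$ is a.s.\ finite as well.

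The core pointwise estimate is the Gaussian lower bound $p_{4r^2}(x-y)\geq e^{-1/8}/(\sqrt{8\pi}\,r)$ for $y\in B_r(x)$, which after integrating against $X_s(dy)$ yields $X_s(B_r(x))/r\leq C_1\,S_{4r^2}X_s(x)$ for a universal constant $C_1$. To bound $S_{4r^2}X_s(x)$ I would exploit the crucial fact $r^2\geq t-s$ via the semigroup identity $S_{4r^2}X_s=S_{4(r^2-(t-s))}\,g$ with $g:=S_{4(t-s)}X_s$. Now $g$ is bounded by $V$ on $K_1$ and has $L^1$-norm exactly $X_s(\R)\leq M$. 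Splitting the convolution integral defining $S_{4(r^2-(t-s))}g(x)$ into the pieces over $K_1$ and $K_1^{\mathrm{c}}$, the first is at most $V$ (since the Gaussian kernel integrates to one), and the second is at most $M\cdot\sup_{|z|\geq 1,\,\tau>0}p_\tau(z)$, which is a finite universal constant (the one-dimensional Gaussian density at a fixed non-zero point attains its maximum over $\tau$ at $\tau=z^2$, and this value is decreasing in $|z|$). Combining gives $W_K\leq C(V+M)<\infty$ a.s.

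The step I expect to require the most care is the semigroup bookkeeping: the decomposition $4r^2=4(t-s)+4(r^2-(t-s))$ is what converts the time-$4(t-s)$ bound supplied by Lemma~\ref{L.fixed.2} into uniform control at the coarser scale $r$, and the requirement $r^2\geq t-s$ is precisely what makes this decomposition legal. The degenerate boundary case $r^2=t-s$ (i.e., $c=(t-s)^{-1/2}$) is immediate, since then $S_{4r^2}X_s(x)=S_{4(t-s)}X_s(x)\leq V$ directly.
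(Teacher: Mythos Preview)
Your proof is correct and takes a different route from the paper's. The paper proceeds by a direct covering argument: a ball of radius $c(t-s)^{1/2}$ is covered by at most $[c]+1\leq 2c$ balls of radius $(t-s)^{1/2}$ with centers in $K_1$, so the ratio in question is bounded by $2\sup_{s,\,x\in K_1}X_s(B_{(t-s)^{1/2}}(x))/(t-s)^{1/2}$; then the Gaussian lower bound $p_{t-s}(z)\geq p_1(1)(t-s)^{-1/2}$ for $|z|\leq (t-s)^{1/2}$ dominates this by a constant multiple of $\sup_{s,\,x\in K_1}S_{t-s}X_s(x)$, which is finite by Lemma~\ref{L.fixed.2}. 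Your approach instead uses the semigroup factorisation $S_{4r^2}=S_{4(r^2-(t-s))}\,S_{4(t-s)}$ (legitimate precisely because the constraint $s\geq t-c^{-2}$ forces $r^2\geq t-s$), then splits the outer convolution over $K_1$ and $K_1^{\mathrm c}$ to get a bound by $V$ plus a tail term controlled by $M=\sup_{s\leq t}X_s(\R)$. The paper's covering trick is slightly more elementary and needs only the single input $V$; your semigroup route avoids any covering but requires the auxiliary a.s.\ finiteness of $M$, which you correctly extract from the fact that $\langle X_\cdot,1\rangle$ is a non-negative martingale. Both arguments are equally short once Lemma~\ref{L.fixed.2} is in hand.
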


\begin{proof}
Every ball of radius $c\,(t-s)^{1/2}$ can be covered with at most $[c]+1$
balls of radius $(t-s)^{1/2}$. Therefore,
\begin{align*}
\sup_{(c,s,x): c\geq1,\,\,0\vee(t-c^{-1/2})\leq s<t,\,\,x\in K}\frac
{X_{s}\bigl(B_{c\,(t-s)^{1/2}}(x)\bigr)}{c\,(t-s)^{1/2}}\hspace{1cm}\\
\,\leq\,2\sup_{(s,x): 0<s\leq t,\,x\in K_{1}}\frac{X_{s}\bigl(B_{(t-s)^{1/2}%
}(x)\bigr)}{(t-s)^{1/2}}\,,
\end{align*}
where $K_{1}:=\bigl\{x:\ \mathrm{dist}(x,\overline{K})\leq1\bigr\}$
with $\overline{K}$ denoting the closure of $K$. (The restriction 
$s\geq t-c^{-1/2}$ is imposed to have all centers $x$ of the balls
$B_{(t-s)^{1/2}}(x)$ in $K_{1\,}.)$ We further note that
\begin{equation*}
S_{t-s}X_{s}\,(x)=
\int_{\R}\mathrm{d}y\ p_{t-s}(x-y)X_{s}(y)
\geq\int_{B_{(t-s)^{1/2}}(x)}\mathrm{d}y\ p_{t-s}(x-y)X_{s}(y).
\end{equation*}
Using the monotonicity and the scaling property of $p$, we get the bound
\begin{equation*}
S_{t-s}X_{s}\,(x)
\geq(t-s)^{-1/2}p_{1}(1)X_{s}\bigl(B_{(t-s)^{1/2}}(x)\bigr).
\end{equation*}
Consequently,
\begin{equation*}
\sup_{(s,x):0<s\leq t,\,x\in K_{1}}\frac{X_{s}\bigl(B_{(t-s)^{1/2}}%
(x)\bigr)}{(t-s)^{1/2}}
\leq\frac{1}{p_{1}(1)}\sup_{(s,x):0<s\leq t,\,x\in K_{1}}S_{t-s}X_{s}\,(x).
\end{equation*}
It was proved in Lemma \ref{L.fixed.2} that the random variable at
the right hand side is finite. Thus, the lemma is proved.
\hfill$\square$
\end{proof}

The boundedness of the smoothed density will play a crucial role in the analysis
of the time changes $T_\pm(x_1,x_2)$ described in the previous section, see
\eqref{Levy.rep}--\eqref{def_qq} and discussion there. 
The next lemma provides necessary tools to obtain poitwise upper bounds
for $T_\pm(x_1,x_2)$: by taking $\theta=1+\beta$ in \eqref{L.fixed.3.1} and
\eqref{L.fixed.3.2} below we get estimates for $T_\pm(x_1,x_2)$.

\begin{lemma}
\label{L.fixed.3} 
Let $d=1$. 
Fix\/ $\,\theta\in\lbrack1,3)$, $\delta\in\lbrack0,1]$ with
$\delta<(3-\theta)/\theta,$ and a non-empty compact $K\subset\R$. Then
\begin{gather}
\int_{0}^{t}\mathrm{d}s\int_{\R}X_{s}(\mathrm{d}y)
\bigl|p_{t-s}(x_{1}-y)-p_{t-s}(x_{2}-y)\bigr|^{\theta}\,\nonumber\\
\leq C V|x_{1}-x_{2}|^{\delta\theta},\quad x_{1},x_{2}\in
K,\quad \mathbf{P}-a.s.,\label{L.fixed.3.1}
\end{gather}
with $V=V(K)$ from Lemma~\ref{L.fixed.2}.\\
Moreover, for every $\theta\in[1,2)$ and $\delta\in(0,(3-2\theta)/\theta]$,
\begin{gather}
\int_{0}^{t}\mathrm{d}s\int_{\R}X_{s}(\mathrm{d}y)
\bigl|p_{t-s}(x_{1}-y)-p_{t-s}(x_{2}-y)-(x_1-x_2)\frac{\partial}{\partial x_2}p_{t-s}(x_{2}-y)\bigr|^\theta\,\nonumber\\
\leq C V|x_{1}-x_{2}|^{1+\delta},\quad x_{1},x_{2}\in
K,\quad \mathbf{P}-a.s.\label{L.fixed.3.2}
\end{gather}
\end{lemma}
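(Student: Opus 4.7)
The plan is to combine Lemma \ref{L.kernel2}'s pointwise heat-kernel difference estimates with the uniform a.s.\ control on the smoothed density $S_{4(t-s)}X_s$ provided by Lemma \ref{L.fixed.2}. The overall scheme has four steps: (i) bound the integrand pointwise in $y$ by a power of $|x_1-x_2|$ times a sum of Gaussian kernels; (ii) use the identity $p_a^\theta = C\,a^{(1-\theta)/2}\,p_{a/\theta}$ to replace $(p_{cu})^\theta$ by a single Gaussian $p_{cu/\theta}$ at the cost of an explicit power of $u=t-s$; (iii) integrate against $X_s(dy)$, choosing $c$ so that the resulting smoothing equals $4(t-s)$, and then replace the convolution by the constant $V(K)$ via Lemma \ref{L.fixed.2}; (iv) integrate in $s$ and verify the single power-law convergence condition on $(t-s)$, which will produce exactly the stated relation between $\delta$ and $\theta$.

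For \eqref{L.fixed.3.1} I would apply Lemma \ref{L.kernel2} to obtain, with $u=t-s$,
$$
|p_u(x_1-y)-p_u(x_2-y)|^\theta \le C\,\frac{|x_1-x_2|^{\delta\theta}}{u^{\delta\theta/2}}\,\bigl(p_{cu}(x_1-y)+p_{cu}(x_2-y)\bigr)^\theta,\qquad \delta\in[0,1],
$$
and then run (ii)--(iii) with $c=4\theta$ so that $cu/\theta = 4(t-s)$. This yields, for $x_1,x_2\in K$, the pointwise-in-$s$ bound $C\,V(K)\,|x_1-x_2|^{\delta\theta}\,(t-s)^{-(\delta\theta+\theta-1)/2}$. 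Step (iv) gives time-integrability iff $(\delta\theta+\theta-1)/2<1$, i.e.\ $\delta<(3-\theta)/\theta$, which is the hypothesis.

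For \eqref{L.fixed.3.2} I would run the same scheme starting from a second-order kernel estimate
$$
|q_u(x,y)|\le C\,\bigl(|x-y|/\sqrt u\bigr)^{1+\delta'}\bigl(p_{cu}(x)+p_{cu}(y)\bigr),\qquad \delta'\in[0,1],
$$
obtained by interpolating between a ``zeroth-order'' bound (triangle inequality together with $|\partial_y p_u(y)|\le Cu^{-1/2}p_{cu}(y)$) and the second-derivative Taylor-remainder bound $|q_u|\le C|x-y|^2u^{-1}(p_{cu}(x)+p_{cu}(y))$. After re-parameterizing $1+\delta=(1+\delta')\theta$, steps (ii)--(iv) then produce both the exponent $1+\delta$ on $|x_1-x_2|$ and the stated range $\delta\in(0,(3-2\theta)/\theta]$. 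The main technical point is step (i) in this second case: the interpolated bound on $q_u$ requires separate treatment of the regimes $|x-y|\lesssim\sqrt u$ (Taylor regime, via $p_u''$) and $|x-y|\gtrsim\sqrt u$ (trivial regime). Everything else is Gaussian bookkeeping plus the single integrability check on $\int_0^t(t-s)^{-\alpha}\,ds$.
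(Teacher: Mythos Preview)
Your approach is essentially the paper's: it applies the pointwise estimate \eqref{L.kernel1.1} (this is Lemma~\ref{L.kernel1}, not Lemma~\ref{L.kernel2} as you cite---the inequality you display is correct, just mis-attributed), bounds $p^\theta\le p(0)^{\theta-1}p$ in place of your exact identity $p_a^\theta=Ca^{(1-\theta)/2}p_{a/\theta}$, observes that $\int p_{t-s}((x_i-y)/2)\,X_s(dy)=2\,S_{4(t-s)}X_s(x_i)\le 2V(K)$, and integrates in $s$. For \eqref{L.fixed.3.2} the paper simply replaces \eqref{L.kernel1.1} by \eqref{L.kernel1.4}, which is exactly the second-order estimate you re-derive by interpolation.
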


\begin{proof}
Using \eqref{L.kernel1.1} gives
\begin{align*}
&\int_{0}^{t}\mathrm{d}s\int_{\R}X_{s}(\mathrm{d}y)
\bigl|p_{t-s}(x_{1}-y)-p_{t-s}(x_{2}-y)\bigr|^{\theta}
\leq C |x_{1}-x_{2}|^{\delta\theta}\times\\
& \times\int_{0}^{t}\mathrm{d}s\ (t-s)^{-(\delta\theta+\theta-1)/2}%
\int_{\R}X_{s}(\mathrm{d}y)
\Big(p_{t-s}\bigl((x_{1}-y)/2\bigr)+p_{t-s}\bigl((x_{2}-y)/2\bigr)\Big),
\end{align*}
uniformly in $x_{1},x_{2}\in\R$. Recalling the scaling
property of the kernel $p$, we get
\begin{align*}
&\int_{0}^{t}\mathrm{d}s\int_{\R}X_{s}(\mathrm{d}y)
\bigl|p_{t-s}(x_{1}-y)-p_{t-s}(x_{2}-y)\bigr|^{\theta}\\
&\leq C|x_{1}-x_{2}|^{\delta\theta}
\int_{0}^{t}\mathrm{d}s\ (t-s)^{-(\delta\theta+\theta-1)/2}
\Bigl(S_{4(t-s)}X_{s}(x_{1})+S_{4(t-s)}X_{s}(x_{2})\Bigr).
\end{align*}
We complete the proof of \eqref{L.fixed.3.1} by applying Lemma~\ref{L.fixed.2}.
To derive \eqref{L.fixed.3.2} it suffices to replace
\eqref{L.kernel1.1} by \eqref{L.kernel1.4} 
in the computations we used to prove \eqref{L.fixed.3.1}.
\hfill$\square$
\end{proof}

In Lemmas \ref{L.fixed.2} and \ref{n.L2} we have obtained uniform on compact sets
upper bounds for the ``smoothed'' densities. Now we turn to the analysis
of this smoothed density near a fixed spatial point. Without loss of
generality we choose fixed point $x=0$ in the next lemma.
\begin{lemma}
\label{n.L1} Let $d=1$.  For all $c,\theta>0$,
\begin{equation*}
\mathbf{P}\!\left(  X_{t}(0)>\theta,\,\ \liminf_{s\uparrow t}
S_{t-s}X_{s}\bigl(c\,(t-s)^{1/2}\bigr)\leq\theta\right)  =\,0.
\end{equation*}
\end{lemma}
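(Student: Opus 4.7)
The plan is to establish almost-sure convergence
$$\lim_{s\uparrow t} S_{t-s}X_s(c\sqrt{t-s}) = X_t(0),$$
which immediately implies the lemma. My starting point is the identity
$$S_{t-s}X_s(x) = \mu\ast p_t(x) + Z_s(x),\qquad s < t,\ x \in \R,$$
where $Z_s(x) = \int_{(0,s]\times\R} p_{t-u}(x-y)\,M(du,dy)$. This is obtained by applying It\^o's formula with the time-dependent test function $\phi_u(y) = p_{t-u}(x-y)$, which satisfies the backward heat equation $\partial_u\phi_u + \tfrac12\Delta_y\phi_u = 0$, together with the martingale decomposition of Lemma~\ref{L.mart.dec}.

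For each fixed $x$, the process $\{Z_s(x)\}_{s \in [0,t)}$ is a c\`adl\`ag martingale uniformly bounded in $L^q$ for some $q\in(1,1+\beta)$ (via the same computation as in Lemma~\ref{L.moment_bound}, with a single kernel), so it converges a.s.\ to $Z_t(x)$. Combining this with the continuity of $X_t(\cdot)$ from Theorem~\ref{T.dichotomy}, we get $S_{t-s}X_s(0) \to X_t(0)$ almost surely as $s\uparrow t$. The remaining task is to show that the spatial discrepancy
$$\Delta_s := S_{t-s}X_s(c\sqrt{t-s}) - S_{t-s}X_s(0) = \bigl[\mu\ast p_t(c\sqrt{t-s}) - \mu\ast p_t(0)\bigr] + \bigl[Z_s(c\sqrt{t-s}) - Z_s(0)\bigr]$$
tends to $0$ a.s.; the first bracket vanishes by continuity of $\mu\ast p_t$, so the issue is the $Z$-term.

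For the $Z$-term I would use the moment estimate of Lemma~\ref{L.moment_bound}, which gives, uniformly in $s<t$,
$$\mathbf{E}\bigl|Z_s(c\sqrt{t-s}) - Z_s(0)\bigr|^q \leq C(c\sqrt{t-s})^{\delta q} = C\,c^{\delta q}(t-s)^{\delta q/2}$$
for appropriate $q\in(1,1+\beta)$ and $\delta<\min\{1,(2-\beta)/(1+\beta)\}$. Evaluating along the dyadic sequence $s_n = t - 2^{-n}$ and applying Borel--Cantelli yields the desired convergence along $\{s_n\}$. To upgrade from this dyadic sequence to arbitrary $s\uparrow t$, I would control the fluctuations on each interval $[s_n,s_{n+1}]$ by decomposing $Z_s(c\sqrt{t-s}) - Z_s(0)$ into a temporal piece (the martingale increment $Z_s(y) - Z_{s_n}(y)$ at a frozen point $y = c\sqrt{t-s_n}$, handled via Doob's maximal inequality) and a spatial piece (accounting for the drift of $c\sqrt{t-s}$ through a $2^{-n/2}$-neighborhood, handled via a Kolmogorov-type continuity estimate for $Z_{s_{n+1}}$ on this shrinking neighborhood derived from the same moment bound).

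The main obstacle is precisely this last step: the evaluation point $c\sqrt{t-s}$ moves with $s$, so the process $s \mapsto Z_s(c\sqrt{t-s})$ is not itself a martingale, and a plain Doob inequality cannot be invoked. Reconciling the temporal martingale oscillation with the continuously moving spatial argument --- in such a way that both pieces decay summably along dyadic scales --- is the technical heart of the argument, and is what forces the choice of exponents $q<1+\beta$ and $\delta<(2-\beta)/(1+\beta)$ to give the required $\sum_n 2^{-n\delta q/2}<\infty$ margin.
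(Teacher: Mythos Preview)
Your approach is viable and actually proves a stronger statement than the paper needs (full almost-sure convergence $S_{t-s}X_s(c\sqrt{t-s})\to X_t(0)$), but it takes a genuinely different and more laborious route.

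The paper's proof never touches the stochastic-integral representation of $Z_s$. Instead it defines stopping times $\tau_n$ at which $S_{t-s}X_s(c\sqrt{t-s})\le\theta+1/n$, sets $x_n=c\sqrt{t-\tau_n}$, and invokes the strong Markov property to get the exact identity $\mathbf{E}[X_t(x_n)\mid\mathcal F_{\tau_n}]=S_{t-\tau_n}X_{\tau_n}(x_n)$. Since $x_n\to0$ and $X_t(\cdot)$ is continuous with $\mathbf{E}\sup_{|x|\le1}X_t(x)<\infty$, one has $X_t(x_n)\to X_t(0)$ in $L^1$; L\'evy's theorem on convergence of conditional expectations then forces $X_t(0)\le\theta$ almost surely on the bad event. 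This is short, uses only soft tools (strong Markov, $L^1$-continuity, martingale convergence of conditional expectations), and sidesteps entirely the moving-evaluation-point issue you identify as the main obstacle.

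Your route through $S_{t-s}X_s(x)=\mu\ast p_t(x)+Z_s(x)$ and moment bounds works, but the dyadic filling you sketch is more intricate than necessary. A cleaner completion of your own plan: since $s\mapsto Z_s(x_1)-Z_s(x_2)$ is a martingale for each fixed pair, Doob's $L^q$-maximal inequality combined with Lemma~\ref{L.moment_bound} yields
\[
\mathbf{E}\sup_{s\le t}\bigl|Z_s(x_1)-Z_s(x_2)\bigr|^q\le C\,|x_1-x_2|^{\delta q},
\]
and with $\delta q>1$ a Kolmogorov argument applied to $x\mapsto\sup_{s\le t}|Z_s(x)-Z_s(0)|$ gives a single a.s.-finite random constant $\Xi$ with $\sup_{s\le t}|Z_s(x)-Z_s(0)|\le\Xi|x|^\gamma$ for some $\gamma>0$ and all $x\in[-1,1]$. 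This immediately gives $Z_s(c\sqrt{t-s})-Z_s(0)\to0$ without any dyadic interpolation, resolving your ``main obstacle'' in one stroke.
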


\begin{proof}
For brevity, set
\begin{equation*}
A\,:=\left\{  \liminf_{s\uparrow t}S_{t-s}X_{s}
\bigl(c\,(t-s)^{1/2}\bigr)\leq\theta\right\}
\end{equation*}
and for $n>1/t$ define the stopping times
\begin{equation*}
\tau_{n}\,:=\,\left\{  \!%
\begin{array}
[c]{l}%
\inf\Big\{s\in(t-1/n,t):S_{t-s}X_{s}\bigl(c\,(t-s)^{1/2}\bigr)
\leq\theta+1/n\Big\},\quad\omega\in A,\vspace{4pt}\\
t,\quad\omega\in A^{\mathrm{c}}.
\end{array}
\right.
\end{equation*}
Define also
\begin{equation*}
x_{n}:=c\,(t-\tau_{n})^{1/2}.
\end{equation*}
Then, using the strong Markov property, we get
\begin{equation}
\mathbf{E}\!\left[  X_{t}(x_{n})\,\big|\,\mathcal{F}_{\tau_{n}}\right]
=S_{t-\tau_{n}}X_{\tau_{n}}(x_{n})=X_{t}(0)\mathsf{1}_{A^{\mathrm{c}%
}}+S_{t-\tau_{n}}X_{\tau_{n}}(x_{n})\mathsf{1}_{A\,}. \label{n1}%
\end{equation}
We next note that $x_{n}\rightarrow0$ almost surely as $n\uparrow\infty$. This
implies, in view of the continuity of $X_{t}$ at zero, that $X_{t}%
(x_{n})\rightarrow X_{t}(0)$ almost surely. Recalling that 
$$
\mathbf{E}\sup_{|x|\leq1}X_{t}(x)<\infty,
$$
in view of Corollary~2.8 of
\cite{FMW10}, we conclude that
\begin{equation*}
X_{t}(x_{n})\,\underset{n\uparrow\infty}{\longrightarrow}\,X_{t}%
(0)\quad\text{in }\mathcal{L}_{1\,}.
\end{equation*}
This, in its turn, implies that
\begin{equation}
\mathbf{E}\left[  X_{t}(x_{n})|\mathcal{F}_{\tau_{n}}\right]  -\mathbf{E}%
\left[  X_{t}(0)|\mathcal{F}_{\tau_{n}}\right]  \rightarrow0\quad\text{in
}\mathcal{L}_{1\,}. \label{n2}%
\end{equation}
Furthermore, it follows from the well known Levy theorem on convergence of
conditional expectations that
\begin{equation*}
\mathbf{E}\!\left[  X_{t}(0)\,\big|\,\mathcal{F}_{\tau_{n}}\right]
\,\underset{n\uparrow\infty}{\longrightarrow}\,\mathbf{E}\!\left[
X_{t}(0)\big|\,\mathcal{F}_{\infty}\right]  \quad\text{in }\mathcal{L}_{1\,},
\end{equation*}
where $\mathcal{F}_{\infty}:=\sigma\left(  \cup_{n>1/t}\mathcal{F}_{\tau_{n}%
}\right)  $.

Noting that $\tau_{n}\uparrow t$, we conclude that
\begin{equation*}
\mathcal{F}_{t-}\subseteq\mathcal{F}_{\infty}\subseteq\mathcal{F}_{t\,}.
\end{equation*}
Since $X_{\cdot}(0)$ is continuous at fixed $t$ a.s., we have
$X_{t}(0)=\mathbf{E}\!\left[  X_{t}(0)\,\big|\,\mathcal{F}_{t-}\right]  $
almost surely. Consequently, $\mathbf{E}\!\left[  X_{t}(0)\,\big|\,\mathcal{F}%
_{\infty}\right]  =X_{t}(0)$ almost surely, and we get, as a result,
\begin{equation}
\mathbf{E}\!\left[  X_{t}(0)\,\big|\,\mathcal{F}_{\tau_{n}}\right]
\,\underset{n\uparrow\infty}{\longrightarrow}\,X_{t}(0)\quad\text{in
}\mathcal{L}_{1\,}.\label{n3}%
\end{equation}
Combining (\ref{n2}) and (\ref{n3}), we have
\begin{equation*}
\mathbf{E}\!\left[  X_{t}(x_{n})\,\big|\,\mathcal{F}_{\tau_{n}}\right]
\,\underset{n\uparrow\infty}{\longrightarrow}\,X_{t}(0)\quad\text{in
}\mathcal{L}_{1\,}.
\end{equation*}
\textrm{F}rom this convergence and from (\ref{n1}) we finally get
\begin{equation*}
\mathbf{E}\Big[\mathsf{1}_{A}\left\vert X_{t}(0)-
S_{t-\tau_{n}}X_{\tau_{n}}(x_{n})\right\vert \Big]\,\underset{n\uparrow\infty
}{\longrightarrow}\,0.
\end{equation*}
Since $\,S_{t-\tau_{n}} X_{\tau_{n}}(x_{n})\leq\theta+1/n$%
\thinspace\ on $A$, for all $n>1/t$, the latter convergence implies that
$X_{t}(0)\leq\theta$ almost surely on the event $A$. Thus, the proof is finished.
\hfill$\square$
\end{proof}

\section{Analysis of jumps of superprocesses.}
\label{sec:jumps}
This section is devoted to the analysis of jumps of $(2,d,\beta)$-superprocesses.
The results of this section will be crucial for proofs of main theorems, since
regularity properties of $(2,d,\beta)$-superprocesses depend heavily on presence
and intensity of big jumps at certain locations.

Let $\,\Delta X_{s}:=X_{s}-X_{s-}$\thinspace\ denote the jumps of the
measure-valued process $\,X$. Also let $|\Delta X_s|= \langle \Delta X_s\,,1\rangle$ be the 
size of the jump, and with some abuse of notation $|\Delta X_s(x)|$ denotes the size of jump at a point $(s,x)$. 

The results of the section are a bit technical, however let us explain briefly the main bounds we are going to obtain. 
Recall that $t>0$ is fixed. First we would like to verify that the largest jump at the proximity of time $t$ is of the
order 
\begin{equation} 
 |\Delta X_s| \sim (t-s)^{1/(1+\beta)}, 
\end{equation}
for $s<t$. 
The exact lower and upper bounds are given in Lemmas~\ref{lem:3} and~\ref{L8}. Note that the jump of order 
$(t-s)^{1/(1+\beta)}$ happens at the some ``random'' spatial point. Whenever one asks about the size of 
the maximal jump at the proximity of a given time-space point $(t,x)$ one gets other estimates. 
For a moment fix a spatial point $x=0$. It turns out that, if $X_t(0)>0$, then   the size of the maximal jump at the proximity of the 
time-space point $(t,0)$ is of order 
\begin{equation} 
 |\Delta X_s(x)| \sim |(t-s)x|^{1/(1+\beta)}, 
\end{equation}
for  $x$ close to $0$. This is shown in Lemma~\ref{6.00}, Corollary~\ref{6.1} and Lemma~\ref{n.L3}.

We start with the lemma where we show that on any open subset of $\R^d$
big jumps, which are  ``a bit`` larger  than $(t-s)^{1/(1+\beta)}$,  will occur with probability one.
In fact, we give a lower bound on the size of the largest jump. 
 
\begin{lemma}
\label{lem:3} 
Let $d<2/\beta$ and $B$ be an open ball in $\R^d$.
For each $\varepsilon\in(0,t)$,
\begin{equation*}
\mathbf{P}\left(\Delta X_{s}(B)>
(t-s)^{\frac{1}{1+\beta}}\log^{\frac{1}{1+\beta}}\Big(\frac{1}{t-s}\Big)
\text{ for some }s\in(t-\varepsilon,t)\Big|X_t(B)>0\right)=1
\end{equation*}
\end{lemma}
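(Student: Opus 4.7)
The plan is to apply Lemma~\ref{L.mart.dec}, which represents jumps of $X$ via an integer-valued random measure $N$ with compensator $\widehat N(d(s,x,r)) = ds\,X_s(dx)\,n(dr)$, and then show that the compensator of the relevant ``large-jump set'' is infinite via Lemma~\ref{L.explosion}. Finally, a standard fact for integer-valued random measures converts the compensator being infinite into an actual jump being present.

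Setting $f(s) := (t-s)^{1/(1+\beta)}\log^{1/(1+\beta)}(1/(t-s))$, the event in question is exactly $\{N(A)\geq 1\}$ where
\[
A := \bigl\{(s,x,r) : s\in(t-\varepsilon,t),\ x\in B,\ r > f(s)\bigr\},
\]
because every jump of $X$ is of the form $r\delta_{x(s)}$, so $\Delta X_s(B) > f(s)$ is equivalent to $x(s)\in B$ and $r>f(s)$. Using the explicit Lévy measure $n(dr) = c\,r^{-2-\beta}dr$ I get
\[
\int_{f(s)}^\infty n(dr) = \frac{c}{1+\beta}\, f(s)^{-(1+\beta)} = \frac{c'}{(t-s)\log(1/(t-s))},
\]
and therefore
\[
\widehat N(A) = c'\int_{t-\varepsilon}^t \frac{X_s(B)}{(t-s)\log(1/(t-s))}\, ds.
\]

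Now I observe that the scalar integral $\int_{t-\delta}^t \frac{ds}{(t-s)\log(1/(t-s))}$ diverges (substitute $v=\log(1/(t-s))$ to see it behaves like $\log\log$ at $s=t$). Lemma~\ref{L.explosion} therefore yields $\widehat N(A) = \infty$, $\mathbf{P}$-a.s.\ on $\{X_t(B)>0\}$.

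The last step is the classical principle that for an integer-valued random measure with predictable compensator $\widehat N$ and a predictable set $A$, one has $\{\widehat N(A) = \infty\} \subseteq \{N(A) = \infty\}$ up to null sets (Jacod--Shiryaev, Thm.~I.3.17). Since $A$ is deterministic (hence predictable) this applies directly and gives $N(A)=\infty$, in particular $N(A)\geq 1$, $\mathbf{P}$-a.s.\ on $\{X_t(B)>0\}$. The main obstacle, if one wants to avoid invoking this general theorem, is to reprove the passage from an infinite compensator to at least one jump by hand. An elementary way would be to truncate $A$ by requiring $s < t - 1/n$ so that $\widehat N(A_n)$ is finite but tends to infinity a.s.\ on $\{X_t(B)>0\}$, and then use that $N(A_n) - \widehat N(A_n)$ is a purely discontinuous martingale with predictable quadratic variation of order $\widehat N(A_n)$ (because $r>f(s)\geq$ positive constant over $A_n$), so by Chebyshev $N(A_n)/\widehat N(A_n) \to 1$ along a subsequence, forcing $N(A_n)\to\infty$.
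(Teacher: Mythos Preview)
Your proof is correct and follows essentially the same approach as the paper: compute the compensator $\widehat N(A)$ explicitly, invoke Lemma~\ref{L.explosion} to show it diverges on $\{X_t(B)>0\}$, and then pass from infinite compensator to the existence of a jump. The only difference is in this last step, where the paper time-changes the counting process $\Pi_u$ into a standard Poisson process via Jacod's Theorem~10.33 (so $\Pi_\varepsilon=0$ forces the time change to be finite), whereas you cite the equivalent general fact $\{\widehat N(A)=\infty\}\subseteq\{N(A)=\infty\}$ from Jacod--Shiryaev directly.
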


\begin{proof}
It suffices to show that
\begin{equation}
\mathbf{P}\left(\Delta X_{s}(B)\leq
(t-s)^{\frac{1}{1+\beta}}\log^{\frac{1}{1+\beta}}\Big(\frac{1}{t-s}\Big)
\text{ for all }s\in(t-\varepsilon,t),X_t(B)>0\right)=0.
\label{equt:11}%
\end{equation}
For $u\in(0,\varepsilon]$ define
$$
\Pi_{u}:=N\biggl((s,x,r):\ s\in(t-\varepsilon,\,t-\varepsilon+u),\ x\in
B,\ r>(t-s)^{\frac{1}{1+\beta}}\log^{\frac{1}{1+\beta}}\Big(\frac{1}{t-s}\Big)\biggr),
$$
with the random measure $N$ introduced in Lemma~\ref{L.mart.dec}. Then
\begin{equation}
\left\{  \Delta X_{s}(B)\leq
(t-s)^{\frac{1}{1+\beta}}\log^{\frac{1}{1+\beta}}\Big(\frac{1}{t-s}\Big)\right\}  \,=\,\{\Pi_{\varepsilon}=0\}. \label{equt:10}%
\end{equation}
{F}rom a classical time change result for counting processes (see e.g. 
Theorem 10.33 in Jacod \cite{Jacod79}), we conclude that there exists a standard
Poisson process $A=\bigl\{A(v):\,v\geq0\bigr\}$ such
that
\begin{align*}
\Pi_{u}&=A\biggl(\int_{t-\varepsilon}^{t-\varepsilon+u}%
\mathrm{d}s\ X_{s}(B)\int_{(t-s)^{\frac{1}{1+\beta}}\log^{\frac{1}{1+\beta}}\bigl(\frac
{1}{t-s}\bigr)}^{\infty}n(\mathrm{d}r)\biggr)\\
&=A\biggl(\frac{c_\beta}{1+\beta}\int_{t-\varepsilon}^{t-\varepsilon
+u}\mathrm{d}s\ X_{s}(B)\,\frac{1}{(t-s)\log\bigl(\frac{1}{t-s}\bigr)}%
\biggr),
\end{align*}
where $c_\beta:=\frac{\beta(\beta+1)}{\Gamma(1-\beta)}$. (By this definition,
$n(dr)=c_\beta r^{-2-\beta}dr$.)
Then
\begin{align*}
\mathbf{P}\bigl(\Pi_{\varepsilon}=0,\,X_{t}(B)>0\bigr)
\leq\mathbf{P}\biggl(\int_{t-\varepsilon}^{t}\mathrm{d}s\ X_{s}%
(B)\frac{1}{(t-s)\log\bigl(\frac{1}{t-s}\bigr)}<\infty,X_{t}(B)>0\biggr).
\end{align*}
It is easy to check that
\begin{equation*}
\int_{t-\delta}^{t}\mathrm{d}s\ \frac{1}{(t-s)\log\bigl(\frac{1}{t-s}%
\bigr)}\,=\,\infty\,\ \text{for all}\,\ \delta\in(0,\varepsilon).
\end{equation*}
Therefore, by Lemma~\ref{L.explosion},
\begin{equation*}
\int_{t-\varepsilon}^{t}\mathrm{d}s\ X_{s}(B)\,\frac{1}{(t-s)\log
\bigl(\frac{1}{t-s}\bigr)}\,=\,\infty\,\ \ \text{on}\,\ \bigl\{X_{t}%
(B)>0\bigr\}.
\end{equation*}
As a result we have $\mathbf{P}\bigl(\Pi_{\varepsilon}=0,\,X_{t}(B)>0\bigr)=0$.
Combining this with (\ref{equt:10}) we get (\ref{equt:11}).
\hfill$\square$
\end{proof}

The next result complements the previous lemma: it gives with probability
close to one an upper bound for the sizes of jumps.
\begin{lemma}
\label{L8} 
Let $d=1$. Let \thinspace$\varepsilon>0$ and $\,\gamma\in\bigl(0,(1+\beta)^{-1}\bigr).$
There exists a constant $c_{(\ref{inL8})}=c_{(\ref{inL8})}(\varepsilon,\gamma)$ such that
\begin{equation}
\mathbf{P}\Big(|\Delta X_{s}|>c_{(\ref{inL8})}\,(t-s)^{(1+\beta)^{-1}-\gamma
}\text{ for some }s<t\Big)\leq\,\varepsilon. \label{inL8}%
\end{equation}
\end{lemma}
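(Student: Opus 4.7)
The plan is to estimate the probability in \eqref{inL8} directly via a first-moment bound on the number of ``large'' jumps of $X$, exploiting the Poisson structure of the jump measure $N$ described in Lemma~\ref{L.mart.dec}. Set $\eta := (1+\beta)^{-1}-\gamma$, which is strictly positive by the hypothesis on $\gamma$. Since every jump of $X$ is of the form $r\delta_x$ with total mass $r$, the event in \eqref{inL8} is precisely $\{\Pi_c \geq 1\}$, where
\[
\Pi_c := N\bigl(\{(s,x,r):\, 0<s<t,\, x\in\R,\, r>c(t-s)^{\eta}\}\bigr).
\]
So it suffices to bound $\mathbf{P}(\Pi_c \geq 1)$, and Markov's inequality reduces this to computing $\mathbf{E}[\Pi_c]$.

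Next I would use Lemma~\ref{L.mart.dec}(b), according to which the compensator of $N$ is $\widehat N(d(s,x,r)) = ds\,X_s(dx)\,n(dr)$. Taking expectations yields
\[
\mathbf{E}[\Pi_c] \,=\, \int_0^t \mathbf{E}[\langle X_s,1\rangle]\,ds \int_{c(t-s)^{\eta}}^{\infty} n(dr).
\]
Applying the martingale problem~\eqref{equt:14_2} with $\varphi\equiv 1$ (so $\Delta\varphi=0$) and $f(x)=x$ (via the standard truncation, since the jump integrand $f(y+r\varphi)-f(y)-f'(y)r\varphi$ vanishes), one sees that $\langle X_s,1\rangle$ is a nonnegative local martingale starting from $\mu(\R)$, hence a supermartingale with $\mathbf{E}[\langle X_s,1\rangle]\le \mu(\R)$.

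Using $n(dr)=c_\beta r^{-2-\beta}dr$ gives
\[
\int_{c(t-s)^{\eta}}^{\infty} n(dr) \,=\, \frac{c_\beta}{1+\beta}\, c^{-1-\beta}\,(t-s)^{-\eta(1+\beta)},
\]
and substituting $\eta(1+\beta)=1-\gamma(1+\beta)$ one obtains
\[
\mathbf{E}[\Pi_c] \,\le\, \frac{c_\beta\,\mu(\R)}{1+\beta}\, c^{-1-\beta} \int_0^t (t-s)^{\gamma(1+\beta)-1}\,ds \,=\, C(\beta,\gamma,t,\mu)\, c^{-1-\beta},
\]
where the time integral converges precisely because $\gamma(1+\beta)>0$. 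Markov's inequality then gives $\mathbf{P}(\Pi_c\ge 1)\le C c^{-1-\beta}$, so taking $c_{(\ref{inL8})}:=(C/\varepsilon)^{1/(1+\beta)}$ finishes the proof.

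I do not expect a real obstacle here: the proof is a short, direct first-moment argument made possible by the explicit $r^{-2-\beta}$ tail of $n(dr)$. The only points needing a sentence of care are (i) rewriting the supremum event as an event about $N$ (which requires the description of jumps in Lemma~\ref{L.mart.dec}(a)) and (ii) the uniform-in-$s$ bound $\mathbf{E}[\langle X_s,1\rangle]\le \mu(\R)$, which is standard for critical continuous-state branching but can also be verified via the martingale problem as above; if one is worried about integrability one may instead localize by the stopping time $\zeta_K := \inf\{s:\langle X_s,1\rangle\ge K\}$ and send $K\to\infty$ at the end.
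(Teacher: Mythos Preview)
Your proof is correct and follows essentially the same approach as the paper: a first-moment bound via Markov's inequality and the compensator formula from Lemma~\ref{L.mart.dec}(b), together with the bound $\mathbf{E}[\langle X_s,1\rangle]\leq\mu(\R)$. The only cosmetic difference is that the paper discretizes $[0,t)$ into dyadic subintervals and sums a geometric series, whereas you compute $\int_0^t (t-s)^{\gamma(1+\beta)-1}\,ds$ directly; your version is slightly more streamlined.
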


\begin{proof}
Recall the random measure $N$ from Lemma~\ref{L.mart.dec}(a). For any $c>0,$
set
\begin{equation}
Y_{0}\,:=\,N\Bigl([0,2^{-1}t)\times\R^d\times\bigl(c\,2^{-\lambda
}t^{\lambda},\infty\bigr)\Bigr),
\end{equation}%
\begin{equation}
Y_{n}\,:=\,N\Bigl(\bigl[(1-2^{-n})t,(1-2^{-n-1})t\bigr)\times\R^d
\times\bigl(c\,2^{-\lambda(n+1)}t^{\lambda},\infty\bigr)\Bigr),\ \,n\geq1,
\end{equation}
where $\lambda:=(1+\beta)^{-1}-\gamma$. It is easy to see that
\begin{equation}
\mathbf{P}\Big(|\Delta X_{s}|>c\,(t-s)^{\lambda}\text{ for some }%
s<t\Big)\,\leq\,\mathbf{P}\Bigl(\sum_{n=0}^{\infty}Y_{n}\geq1\Bigr)\,\leq
\,\sum_{n=0}^{\infty}\mathbf{E}Y_{n\,}, \label{L12.1}%
\end{equation}
where in the last step we have used the classical Markov inequality.
\textrm{F}rom the formula for the compensator $\hat{N}$ of $N$ in
Lemma~\ref{L.mart.dec}(b),
\begin{equation}
\mathbf{E}Y_{n}\,=\,c_\beta\int_{(1-2^{-n})t}^{(1-2^{-n-1})t}\mathrm{d}%
s\ \mathbf{E}X_{s}(\R^d)\int_{c\,2^{-\lambda(n+1)}t^{\lambda}}^{\infty
}\mathrm{d}r\ r^{-2-\beta},\quad n\geq1.
\end{equation}
Now%
\begin{equation}
\mathbf{E}X_{s}(\R^d)\,=\,X_{0}(\R^d)=:\,c_{(\ref{65})}. \label{65}%
\end{equation}
Consequently,
\begin{equation}
\mathbf{E}Y_{n}\,\leq\,\frac{c_\beta}{1+\beta}\,c_{(\ref{65})}c^{-1-\beta
}\,2^{-(n+1)\gamma(1+\beta)}\,t^{\gamma(1+\beta)}. \label{L12.2}%
\end{equation}
Analogous calculations show that (\ref{L12.2}) remains valid also in the case
$n=0$. Therefore,%
\begin{align}
\sum_{n=0}^{\infty}\mathbf{E}Y_{n}\,  &  \leq\,\frac{c_\beta}{1+\beta
}\,c_{(\ref{65})}c^{-1-\beta}\,t^{\gamma(1+\beta)}\sum_{n=0}^{\infty
}2^{-(n+1)\gamma(1+\beta)}\nonumber\\
\,  &  =\,\frac{c_\beta}{1+\beta}\,c_{(\ref{65})}c^{-1-\beta}\,t^{\gamma
(1+\beta)}\,\frac{2^{-\gamma(1+\beta)}}{1-2^{-\gamma(1+\beta)}}\,.
\label{L12.3}%
\end{align}
Choosing $c=c_{(\ref{inL8})}$ such that the expression in~(\ref{L12.3}) equals
$\varepsilon,$ and combining with~(\ref{L12.1}), the proof is complete.
\hfill$\square$
\end{proof}

Put
\begin{equation}
f_{s,x}\,:=\,\log\bigl((t-s)^{-1}\bigr)\,\mathsf{1}_{\{x\neq0\}}%
\log\bigl(|x|^{-1}\bigr). \label{def_f}%
\end{equation}
In the following lemma and corollary we obatin suitable upper bounds
for maximal jumps which occur close to $0$.
\begin{lemma}
\label{6.00}
Let $d=1$.
Fix $\,\,X_{0}=\mu\in\mathcal{M}_{\mathrm{f}}\backslash\{0\}.$ 
Let $\varepsilon>0$ and $q>0$. Then there exists a constant 
$c_{(\ref{in6.1a})}=c_{(\ref{in6.1a})}(\varepsilon,q)$ such that%
\begin{equation}
\mathbf{P}\Big(\Delta X_{s}(x)>c_{(\ref{in6.1a})}\bigl((t-s)|x|\bigr)^{\frac
{1}{1+\beta}}(f_{s,x})^{\ell}\text{ for some }s<t,\text{\ }\ x\in
B_{1/\mathrm{e}}(0)\Big)\!\leq\varepsilon, \label{in6.1a}%
\end{equation}
where
\begin{equation}
\ell\,:=\,\frac{1}{1+\beta}+q. \label{def.lambda1}%
\end{equation}
\end{lemma}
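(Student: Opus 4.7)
The plan is to dyadically partition the relevant $(s,x)$-region near $(t,0)$ and apply Markov's inequality to the compensator formula from Lemma~\ref{L.mart.dec}. Let $I_n := [(1-2^{-n})t,\,(1-2^{-n-1})t)$ for $n\geq 0$ and $R_m := \{x:2^{-m-1}<|x|\leq 2^{-m}\}\cap B_{1/\mathrm{e}}(0)$ for $m\geq 1$, so that $B_{1/\mathrm{e}}(0)\setminus\{0\}=\bigsqcup_{m\geq 1}R_m$. Jumps exactly at $x=0$ may be disregarded because the compensator assigns $\R_+\times\{0\}\times\R_+$ the mass $\int_0^t X_s(\{0\})\,ds\cdot\int n(dr)$ whose expectation is zero (as $S_s\mu$ is absolutely continuous for $s>0$). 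Choose $n_1=n_1(t)$ so that $\log((t-s)^{-1})\geq (n/2)\log 2$ throughout $I_n$ for every $n\geq n_1$; then on $I_n\times R_m$ with $n\geq n_1$, $m\geq 1$,
\begin{equation*}
\bigl((t-s)|x|\bigr)^{\frac{1}{1+\beta}}(f_{s,x})^{\ell}\;\geq\;\rho_{n,m}\;:=\;C_{*}\bigl(2^{-n-m}t\bigr)^{\frac{1}{1+\beta}}(nm)^{\ell},
\end{equation*}
so any violation of \eqref{in6.1a} on $I_n\times R_m$ forces the jump size $r$ to exceed $c\rho_{n,m}$.

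Define $Y_{n,m} := N\!\bigl(I_n\times R_m\times(c\rho_{n,m},\infty)\bigr)$. The compensator formula of Lemma~\ref{L.mart.dec}(b) together with $n(dr)=c_\beta r^{-2-\beta}dr$ gives
\begin{equation*}
\mathbf{E}Y_{n,m}\;=\;\frac{c_{\beta}}{1+\beta}(c\rho_{n,m})^{-(1+\beta)}\!\int_{I_n}\mathbf{E}X_s(R_m)\,ds.
\end{equation*}
To bound $\int_{I_n}\mathbf{E}X_s(R_m)\,ds=\int_{I_n}ds\int_{R_m}S_s\mu(x)\,dx$ by $C\sqrt{t}\,2^{-n-m}$, I split on $n$: for $n\geq 1$, one has $s\geq t/2$ so that $\|S_s\mu\|_\infty\leq C(t)\mu(\R)$, giving the bound $|I_n|\cdot|R_m|\cdot\|S_s\mu\|_\infty\leq C\sqrt{t}\,2^{-n-m}$; for $n=0$, one exchanges the order of integration and uses the uniform bound $\int_0^{t/2}p_s(u)\,ds\leq\sqrt{t/\pi}$, yielding $\int_{I_0}\mathbf{E}X_s(R_m)\,ds\leq\mu(\R)\sqrt{t/\pi}\,|R_m|\leq C\sqrt{t}\,2^{-m}$, which matches with $n=0$. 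Since $\ell(1+\beta)=1+q(1+\beta)$ and $\rho_{n,m}^{-(1+\beta)}=C_*^{-(1+\beta)}2^{n+m}t^{-1}(nm)^{-1-q(1+\beta)}$, the two estimates combine to
\begin{equation*}
\mathbf{E}Y_{n,m}\;\leq\;C_1(t,\mu)\,c^{-(1+\beta)}(nm)^{-1-q(1+\beta)}.
\end{equation*}

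Since $q(1+\beta)>0$, the series $\sum_{n,m\geq 1}(nm)^{-1-q(1+\beta)}=\bigl(\sum_{n\geq 1}n^{-1-q(1+\beta)}\bigr)^{2}$ converges, and the finitely many terms $n\in\{0,\ldots,n_1-1\}$ (where $f_{s,x}$ is not cleanly lower bounded) can be absorbed via crude constant lower bounds on the logarithmic factors, possibly at the cost of enlarging $C_1$. Therefore
\begin{equation*}
\mathbf{P}\bigl(\text{some bad jump in }[0,t)\times B_{1/\mathrm{e}}(0)\bigr)\;\leq\;\sum_{n,m}\mathbf{E}Y_{n,m}\;\leq\;K(t,\mu)\,c^{-(1+\beta)},
\end{equation*}
and choosing $c=c_{(\ref{in6.1a})}$ with $K\,c^{-(1+\beta)}\leq\varepsilon$ finishes the proof. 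The main technical obstacle is producing the decisive $2^{-m}$ factor in $\int_{I_n}\mathbf{E}X_s(R_m)\,ds$: the naive bound $\mathbf{E}X_s(R_m)\leq\mu(\R)$ loses this factor and the sum in $m$ would then diverge exponentially (as $2^{m}$), so the Fubini exchange together with the finiteness of the Brownian potential $\int_0^{t/2}p_s(u)\,ds\leq\sqrt{t/\pi}$ is the essential ingredient that makes the entire decomposition converge.
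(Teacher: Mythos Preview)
Your proof is correct and follows the same core strategy as the paper: bound the probability by the expected number of ``bad'' jumps via Markov's inequality and the compensator formula of Lemma~\ref{L.mart.dec}(b). The execution differs. The paper does not discretise at all; it computes $\mathbf{E}Y$ directly as
\[
\frac{c_\beta\,c^{-1-\beta}}{1+\beta}\int_0^t\frac{ds}{(t-s)\log^{1+q(1+\beta)}\!\bigl((t-s)^{-1}\bigr)}
\int_{B_{1/\mathrm{e}}(0)}\frac{\mathbf{E}X_s(dx)}{|x|\,\log^{1+q(1+\beta)}\!\bigl(|x|^{-1}\bigr)},
\]
bounds the inner integral by $C\mu(\R)\,s^{-1/2}$ via the pointwise estimate $p_s\leq Cs^{-1/2}$, and observes that both one-dimensional integrals converge thanks to the logarithmic weights. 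Your dyadic sum is a discretisation of exactly this computation: the cancellation $2^{n+m}\cdot 2^{-n-m}=1$ is the discrete shadow of the factor $((t-s)|x|)^{-1}$ meeting $|I_n|\cdot|R_m|$, and your potential bound $\int_0^{t/2}p_s(u)\,ds\leq\sqrt{t/\pi}$ is just the time-integrated version of the paper's $p_s\leq Cs^{-1/2}$. The paper's route is marginally cleaner since it avoids the cell bookkeeping and the $n=0$ versus $n\geq 1$ case split, but the two arguments are essentially interchangeable.
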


\begin{proof}
For any $c>0$ (later to be specialized to some $c_{(\ref{6.4})})$ set
\[
Y\,:=\,N\Bigl((s,x,r):\;(s,x)\in(0,t)\times B_{1/\mathrm{e}}(0),\ r\geq
c\,\bigl((t-s)|x|\bigr)^{1/(1+\beta)}(f_{s,x})^{\ell}\Bigr),
\]
Clearly,%
\begin{equation}%
\begin{array}
[c]{c}%
\displaystyle
\mathbf{P}\Big(\Delta X_{s}(x)>c\,\bigl((t-s)|x|\bigr)^{1/(1+\beta)}%
(f_{s,x})^{\ell}\text{ for some }s<t\ \text{and}\ x\in B_{1/\mathrm{e}%
}(0)\Big)\vspace{6pt}\\
=\,\mathbf{P}(Y\geq1)\,\leq\,\mathbf{E}Y,
\end{array}
\label{6.20}%
\end{equation}
where in the last step we have used the classical Markov inequality.
\textrm{F}rom (\ref{decomp}),
\begin{align}
\mathbf{E}Y\,  &  =\,c_\beta\,\mathbf{E}\int_{0}^{t}\mathrm{d}s\int
_{\R}X_{s}(\mathrm{d}x)\,\mathsf{1}_{B_{1/\mathrm{e}}(0)}%
(x)\int_{c\,\bigl((t-s)|x|\bigr)^{1/(1+\beta)}(f_{s,x})^{\ell}}^{\infty
}\mathrm{d}r\ r^{-2-\beta}\nonumber\\
&  =\,c_\beta\,\frac{c^{-1-\beta}}{1+\beta}\int_{0}^{t}\mathrm{d}%
s\ (t-s)^{-1}\log^{-1-q(1+\beta)}\bigl((t-s)^{-1}\bigr)\\
&  \hspace{1cm}\times\int_{\R}\mathbf{E}X_{s}(\mathrm{d}%
x)\,\mathsf{1}_{B_{1/\mathrm{e}}(0)}(x)\,|x|^{-1}\log^{-1-q(1+\beta
)}\bigl(|x|^{-1}\bigr).\nonumber
\end{align}
Now, writing $C$ for a generic constant (which may change from place to
place),%
\begin{align}
&  \int_{\R}\,\mathbf{E}X_{s}(\mathrm{d}x)\,\mathsf{1}%
_{B_{1/\mathrm{e}}(0)}(x)\,|x|^{-1}\log^{-1-q(1+\beta)}\bigl(|x|^{-1}%
\bigr)\,\nonumber\\
&  \leq\ \int_{\R}\mu(\mathrm{d}y)\!\int_{\R%
}\mathrm{d}x\ p_{s}(x-y)\,\mathsf{1}_{B_{1/\mathrm{e}}(0)}%
(x)\,|x|^{-1}\log^{-1-q(1+\beta)}\bigl(|x|^{-1}\bigr)\nonumber\\
&  \leq\ C\,\mu(\R)\,s^{-1/2}\!\int_{\R}\mathrm{d}%
x\ \mathsf{1}_{B_{1/\mathrm{e}}(0)}(x)\,|x|^{-1}\log^{-1-q(1+\beta
)}\bigl(|x|^{-1}\bigr)\nonumber\\
&  =:\ c_{(\ref{6.3})}s^{-1/2}, \label{6.3}%
\end{align}
where $\,c_{(\ref{6.3})}=c_{(\ref{6.3})}(q)$\thinspace\ (recall that $t$ is
fixed). Consequently,%
\begin{align}
\mathbf{E}Y\,  &  \leq\,c_\beta\,c_{(\ref{6.3})}\,c^{-1-\beta}\int_{0}%
^{t}\mathrm{d}s\text{\thinspace}s^{-1/2}\,(t-s)^{-1}\log^{-1-q(1+\beta
)}\bigl((t-s)^{-1}\bigr)\,\nonumber\\[1pt]
&  =:\,c_{(\ref{6.4})}\,c^{-1-\beta} \label{6.4}%
\end{align}
with $\,c_{(\ref{6.4})}=c_{(\ref{6.4})}(q).$\thinspace\ Choose now $c\,\ $such
that the latter expression equals $\varepsilon$ and write $c_{(\ref{6.4})}$
instead of $\,c.$ Recalling (\ref{6.20}), the proof is complete.
\end{proof}

Since $\,\sup_{0<y<1}y^{\gamma}\log^{\ell}\frac{1}{y}<\infty$ for
every $\gamma>0$, the following corollary is immediate from Lemma~\ref{6.00}.

\begin{corollary}
\label{6.1}Fix $\,\,X_{0}=\mu\in\mathcal{M}%
_{\mathrm{f}}\backslash\{0\}.$ Let $d=1$. Let \thinspace$\varepsilon>0$\thinspace\ and
$\,\gamma\in\bigl(0,(1+\beta)^{-1}\bigr).$\thinspace\ There exists a constant
\thinspace$c_{(\ref{in6.1})}=c_{(\ref{in6.1})}(\varepsilon,\gamma)$ such that
\begin{equation}
\mathbf{P}\Big(\Delta X_{s}(x)>c_{(\ref{in6.1})}%
\,\bigl((t-s)|x|\bigr)^{\frac{1}{1+\beta}-\gamma}\text{ for some }s<t\ \text{and}\ x\in
B_{2}(0)\Big)\leq\,\varepsilon. \label{in6.1}%
\end{equation}
\end{corollary}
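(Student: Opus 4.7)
The plan is to derive this corollary directly from Lemma~\ref{6.00} by absorbing the logarithmic factor $f_{s,x}^{\ell}$ into the power loss $\bigl((t-s)|x|\bigr)^{-\gamma}$ that we can afford, and then to extend the range from $B_{1/\mathrm{e}}(0)$ to $B_{2}(0)$ using the global jump bound of Lemma~\ref{L8}.

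First I would fix some $q>0$ (say $q=1$), so that $\ell=(1+\beta)^{-1}+q$ is determined, and apply Lemma~\ref{6.00} with $\varepsilon/2$ in place of $\varepsilon$. This produces an event $\Omega_{1}$ with $\mathbf{P}(\Omega_{1})\geq 1-\varepsilon/2$ on which
$$
\Delta X_{s}(x)\,\leq\,c_{1}\bigl((t-s)|x|\bigr)^{1/(1+\beta)}\,f_{s,x}^{\ell}\qquad\text{for all }s<t,\ x\in B_{1/\mathrm{e}}(0),
$$
with $c_{1}=c_{(\ref{in6.1a})}(\varepsilon/2,q)$. The crucial analytic input is the inequality stated just before the corollary, $\sup_{0<y<1}y^{\gamma}\log^{\ell}(1/y)=:C_{\gamma}<\infty$, i.e.\ $\log^{\ell}(1/y)\leq C_{\gamma}\,y^{-\gamma}$ on $(0,1)$. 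Applied separately to $y=t-s$ and $y=|x|$ in the regime $t-s<1$, $0<|x|<1/\mathrm{e}$, it yields $f_{s,x}^{\ell}\leq C_{\gamma}^{2}\bigl((t-s)|x|\bigr)^{-\gamma}$, so that on $\Omega_{1}$
$$
\Delta X_{s}(x)\,\leq\,c_{1}C_{\gamma}^{2}\bigl((t-s)|x|\bigr)^{1/(1+\beta)-\gamma}
$$
throughout this regime.

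It then remains to handle the residual pairs $(s,x)$, namely those with $|x|\in[1/\mathrm{e},2]$, and (in case $t>1$) those with $t-s\geq 1$ and $x\in B_{1/\mathrm{e}}(0)$. On each such subregion one of the quantities $|x|$ or $t-s$ is bounded below by a positive constant depending only on $t$, so $\bigl((t-s)|x|\bigr)^{1/(1+\beta)-\gamma}\geq c_{3}(t-s)^{1/(1+\beta)-\gamma}$ for a suitable $c_{3}>0$. I would then invoke Lemma~\ref{L8} with $\varepsilon/2$ in place of $\varepsilon$ to obtain an event $\Omega_{2}$ of probability at least $1-\varepsilon/2$ on which $|\Delta X_{s}|\leq c_{2}(t-s)^{1/(1+\beta)-\gamma}$ for every $s<t$; since $\Delta X_{s}(x)\leq|\Delta X_{s}|$, this delivers the target bound on the residual region up to a multiplicative constant. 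Taking $c_{(\ref{in6.1})}$ to be the maximum of the constants produced in the two steps and working on $\Omega_{1}\cap\Omega_{2}$ (which has probability at least $1-\varepsilon$) completes the argument.

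The only mildly delicate aspect is the bookkeeping in the residual step, where one must check case by case that $\bigl((t-s)|x|\bigr)^{1/(1+\beta)-\gamma}$ is dominated from below by a constant multiple of $(t-s)^{1/(1+\beta)-\gamma}$, so that Lemma~\ref{L8} suffices; everything else is a routine combination of Lemmas~\ref{6.00} and~\ref{L8} with the elementary log-versus-power comparison indicated by the hint.
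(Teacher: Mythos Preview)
Your approach is exactly what the paper intends: its one-line justification invokes only Lemma~\ref{6.00} together with the elementary bound $\sup_{0<y<1}y^{\gamma}\log^{\ell}(1/y)<\infty$, and you have correctly supplemented this with Lemma~\ref{L8} to pass from $B_{1/\mathrm{e}}(0)$ to $B_{2}(0)$, a detail the paper glosses over.

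There is, however, one slip in your residual bookkeeping. On the subregion $t-s\geq 1$, $x\in B_{1/\mathrm{e}}(0)$ (which arises only when $t>1$), your claimed inequality $\bigl((t-s)|x|\bigr)^{1/(1+\beta)-\gamma}\geq c_{3}(t-s)^{1/(1+\beta)-\gamma}$ is false: here it is $t-s$ that is bounded below, not $|x|$, so the left-hand side can be arbitrarily small while the right-hand side stays bounded away from zero. Lemma~\ref{L8} therefore does not cover this piece. A clean patch is to observe that on this region $t-s\in[1,t]$ is bounded above and below by constants depending only on $t$, so it suffices to bound $\mathbf{P}\bigl(\Delta X_{s}(x)>c'|x|^{1/(1+\beta)-\gamma}\text{ for some }s\in(0,t-1],\ x\in B_{1/\mathrm{e}}(0)\bigr)$; a direct Markov/compensator count as in the proof of Lemma~\ref{6.00} gives that the expected number of such jumps is at most $C(c')^{-1-\beta}\int_{0}^{t-1}s^{-1/2}\,ds\int_{B_{1/\mathrm{e}}(0)}|x|^{-1+\gamma(1+\beta)}\,dx<\infty$, which one makes small by choosing $c'$ large. (Note that Lemma~\ref{6.00} itself is tacitly written for $t-s<1$, since $f_{s,x}^{\ell}$ is ill-defined when $\log((t-s)^{-1})\leq 0$; so this residual case is one the paper does not address explicitly either.)
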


Introduce the event
\begin{equation*}
D_{\theta}:=\left\{  X_{t}(0)>\theta,\ \sup_{0<s\leq t}X_{s}(\R%
)\leq\theta^{-1},\ W_{B_{3}}(0)\leq\theta^{-1}\right\}  \!.
\end{equation*}
In the next lemma we obtain a lower bound on a jump which occur
close to time $t$ and to the spatial point $z=0$.

\begin{lemma}
\label{n.L3} 
Let $d=1$. For each $\theta>0$ there exists a constant 
$c_{(\ref{nn.L3})}=c_{(\ref{nn.L3})}(\theta)\geq1$ such that
\begin{align}
\label{nn.L3}
\mathbf{P}\Bigl( &\Delta X_{s}(y) >Q\,\bigl(y\,(t-s)\bigr)^{1/(1+\beta)}\log^{1/(1+\beta)}\bigl((t-s)^{-1}
\bigr)\\
\nonumber
&\text{for some }s\in(t-\varepsilon,t)\text{ and }\frac{c_{(\ref{nn.L3})}}{2}\,(t-s)^{1/2}\leq y\leq\frac{3c_{(\ref{nn.L3})}}{2}\,(t-s)^{1/2}
\big|\,D_{\theta}\Bigr)  =\,1
\end{align}
for all $\varepsilon\in(0,t\wedge1/8),\,\ Q>0$.
\end{lemma}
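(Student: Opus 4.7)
The strategy follows the template of Lemma~\ref{lem:3}, with the added difficulty that the spatial window in \eqref{nn.L3} shrinks with $s$. Set $A_s:=\bigl[\tfrac{c}{2}(t-s)^{1/2},\tfrac{3c}{2}(t-s)^{1/2}\bigr]$, with $c=c_{(\ref{nn.L3})}(\theta)\geq 1$ to be chosen below, and define
\begin{equation*}
\Pi:=N\Bigl\{(s,y,r):s\in(t-\varepsilon,t),\ y\in A_s,\ r>Q\bigl(y(t-s)\bigr)^{1/(1+\beta)}\log^{1/(1+\beta)}\bigl((t-s)^{-1}\bigr)\Bigr\}.
\end{equation*}
The event in \eqref{nn.L3} is $\{\Pi\geq 1\}$, and the aim becomes $\pr(\Pi=0,D_\theta)=0$. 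As in Lemma~\ref{lem:3}, the Jacod time-change gives $\Pi=A(\tau)$ for a standard Poisson process $A$, where after integrating $n(dr)$ and using $y\leq\tfrac{3c}{2}(t-s)^{1/2}$ on $A_s$,
\begin{equation*}
\tau\ \geq\ \frac{2c_\beta}{3c(1+\beta)Q^{1+\beta}}\int_{t-\varepsilon}^{t}\!\frac{X_s(A_s)\,ds}{(t-s)^{3/2}\log((t-s)^{-1})}.
\end{equation*}
Since $(t-s)^{-1}\log^{-1}((t-s)^{-1})$ fails to be integrable at $s=t$, the proof reduces to the pointwise lower bound: almost surely on $D_\theta$, there exist a random $\delta>0$ and $\gamma=\gamma(\theta)>0$ with $X_s(A_s)\geq\gamma(t-s)^{1/2}$ for every $s\in(t-\delta,t)$.

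To prove this, I would invoke Lemma~\ref{n.L1}: since $D_\theta\subset\{X_t(0)>\theta\}$, one has $\liminf_{s\uparrow t}S_{t-s}X_s(c(t-s)^{1/2})>\theta$ a.s.\ on $D_\theta$, so $S_{t-s}X_s(c(t-s)^{1/2})>\theta$ throughout some random $(t-\delta_1,t)$. Decomposing
\begin{equation*}
S_{t-s}X_s\bigl(c(t-s)^{1/2}\bigr)\ =\ \int_{A_s}\!p_{t-s}\bigl(c(t-s)^{1/2}-y\bigr)X_s(dy)+T_s
\end{equation*}
and bounding the first term by $C_0(t-s)^{-1/2}X_s(A_s)$ using $\|p_{t-s}\|_\infty\leq C_0(t-s)^{-1/2}$ gives $X_s(A_s)\geq C_0^{-1}(t-s)^{1/2}(\theta-T_s)$; it thus suffices to produce $c=c(\theta)$ and a deterministic $\delta_2=\delta_2(\theta)>0$ so that $T_s\leq\theta/2$ on $(t-\delta_2,t)$. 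Split $A_s^c=I_s\cup F_s$ with $I_s=\{y:\tfrac{c}{2}(t-s)^{1/2}<|y-c(t-s)^{1/2}|\leq 1\}$ and $F_s=\{y:|y-c(t-s)^{1/2}|>1\}$. On $I_s$, cover by disjoint intervals of length $\sqrt{t-s}$ centered on the grid $c(t-s)^{1/2}\pm j\sqrt{t-s}$; once $c(t-s)^{1/2}\leq 1$ these all sit in $B_3(0)$, so Lemma~\ref{n.L2} together with $W_{B_3(0)}\leq\theta^{-1}$ from $D_\theta$ bounds the $X_s$-mass of each by $\theta^{-1}\sqrt{t-s}$. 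Combined with the Gaussian decay $p_{t-s}\leq C_0(t-s)^{-1/2}e^{-c_1 j^2}$ on the $j$-th interval this sums to $\int_{I_s}p_{t-s}X_s(dy)\leq C_2\theta^{-1}e^{-c_3 c^2}$. On $F_s$, $p_{t-s}\leq C_0(t-s)^{-1/2}e^{-c_1/(t-s)}$ and $X_s(\R)\leq\theta^{-1}$ give a contribution whose prefactor vanishes as $t-s\downarrow 0$, hence is $<\theta/4$ on $(t-\delta_2(\theta),t)$. Choosing $c=c(\theta)$ so large that $C_2\theta^{-1}e^{-c_3 c^2}\leq\theta/4$ closes the estimate.

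The principal obstacle is the tail bound $T_s\leq\theta/2$. Each of the crude bounds $W_{B_3(0)}\leq\theta^{-1}$ and $X_s(\R)\leq\theta^{-1}$ is weak for small $\theta$, and either alone yields only $T_s=O(\theta^{-1})$, which is useless. The resolution is to deploy Gaussian decay of $p_{t-s}$ on two different scales: on the Brownian scale $\sqrt{t-s}$ the bound $W_{B_3(0)}$ gains the factor $e^{-c_3 c^2}$, which can be made small by taking $c=c(\theta)$ large; on the unit scale the super-exponential factor $e^{-c_1/(t-s)}$ dominates $(t-s)^{-1/2}\theta^{-1}$ as $s\uparrow t$. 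Once the pointwise lower bound on $X_s(A_s)$ is established, divergence of $\tau$ and hence \eqref{nn.L3} follow immediately.
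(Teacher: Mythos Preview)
Your proposal is correct and follows essentially the same route as the paper: reduce via the Jacod time change to divergence of the compensator integral, then show $X_s(A_s)\gtrsim (t-s)^{1/2}$ near $t$ by writing $S_{t-s}X_s(c(t-s)^{1/2})$ as the $A_s$-piece plus a tail $T_s$, invoking Lemma~\ref{n.L1} for the main term and controlling $T_s$ by an annular decomposition (Lemma~\ref{n.L2}, Gaussian decay, and the total-mass bound on $D_\theta$) made small by choosing $c=c(\theta)$ large. The only cosmetic difference is that the paper manipulates the integrand directly rather than isolating the inequality $X_s(A_s)\geq\gamma(t-s)^{1/2}$, and it uses annuli of radius $c(t-s)^{1/2}$ (yielding a cruder $O(c^{-2})$ tail bound) where you use radius $(t-s)^{1/2}$; neither change affects the argument.
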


\begin{proof}
Analogously to the proof of Lemma~\ref{lem:3}, to
show  that the number of jumps is greater than zero almost
surely on some event, it is enough to show the divergence
of a certain integral on that event or even on a bigger
one. Specifically here, it suffices to verify that there
exists $c=c_{\eqref{nn.L3}}$ such that
\begin{equation*}
I_{\varepsilon,c}\,:=\int_{t-\varepsilon}^{t}\frac{\mathrm{d}s}{(t-s)\log
\bigl((t-s)^{-1}\bigr)}\int_{\frac{c}{2}(t-s)^{1/2}}^{\frac{3c}%
{2}(t-s)^{1/2}}\mathrm{d}y\ y^{-1}X_{s}(y)=\infty
\end{equation*}
almost surely on the event $D_\theta$.

The mapping $\varepsilon\mapsto I_{\varepsilon,c}$ is nonincreasing.
Therefore, we shall additionally assume, without loss of generality,
that $\varepsilon\leq c^{-1/2}$ and this in turn implies that
$c(t-s)^{1/2}\leq1$ for all $s\in(t-\varepsilon, t)$. So, in what
follows, in the proof of the lemma we will assume without loss of
generality that given $c$, we choose $\varepsilon$ so that
$$
c(t-s)^{1/2}\leq1,\;\;\; \forall \;s\in(t-\varepsilon,t).
$$

Since $y\leq\frac{3c}{2}\,(t-s)^{1/2}$ and $p_{s}(x)\leq p_{s}(0)$
for all $x\in\R$, we have
\begin{align*}
I_{\varepsilon,c}\,\geq\frac{2}{3c}\int_{t-\varepsilon}^{t} &  \frac
{\mathrm{d}s}{(t-s)^{3/2}\log\bigl((t-s)^{-1}\bigr)}\\
&  \times\int_{\frac{c}{2}(t-s)^{1/2}}^{\frac{3c}{2}(t-s)^{1/2}%
}\!\!\mathrm{d}y\ \frac{p_{t-s}\bigl(c\,(t-s)^{1/2}-y\bigr)}{p_{t-s}(0)}\,X_{s}(y).
\end{align*}
Then, using the scaling property of the kernel $p$, we obtain
\begin{align}
I_{\varepsilon,c}\,\geq\frac{2}{3c\,p_{1}(0)}\int_{t-\varepsilon}^{t}
&\frac{\mathrm{d}s}{(t-s)\log\bigl((t-s)^{-1}\bigr)}\,\biggl(S_{t-s}%
X_{s}\,\bigl(c\,(t-s)^{1/2}\bigr)\nonumber\label{n4}\\
- &  \int_{\left\vert _{\!_{\!_{\,}}}y-c\,(t-s)^{1/2}\right\vert
>\frac{c}{2}(t-s)^{1/2}}\mathrm{d}y\ p_{t-s}%
\bigl(c\,(t-s)^{1/2}-y\bigr)X_{s}(y)\!\biggr)\!.
\end{align}
Since we are in dimension one, if
\begin{align}
\nonumber
y\in \widetilde{D}_{s,j}:=&\left\{z:\; c\left(  \frac{1}{2}+j\right)  (t-s)^{1/2}<\left\vert _{\!_{\!_{\,}}%
}z-c\,(t-s)^{1/2}\right\vert\right.\\
&\left.<c\left(  2+\frac{1}{2}+j\right)
(t-s)^{1/2}\right\},\label{1dim}%
\end{align}
then%
\begin{align*}
&  p_{t-s}\bigl(c\,(t-s)^{1/2}-y\bigr)
\leq p_{t-s}\bigl(c\,(j+1/2)(t-s)^{1/2}\bigr)\\
&  =(t-s)^{-1/2}p_{1}\bigl(c\,(j+1/2)\bigr)
=(2\pi)^{-1/2} (t-s)^{-1/2}e^{-c^2(1/2+j)^{2}/2}.
\end{align*}
\textrm{F}rom this bound we conclude that
\begin{align*}
&\int_{\left\vert _{\!_{\!_{\,}}}y-c\,(t-s)^{1/2}\right\vert >\frac{c}%
{2}(t-s)^{1/2}}\mathrm{d}y\ p_{t-s}\bigl(c\,(t-s)^{1/2}-y\bigr)
\mathsf{1}_{B_{2}(0)}(y)X_{s}(y)\\
&\hspace{1cm}\leq (2\pi)^{-1/2}(t-s)^{-1/2} \sum_{j=0}^{\infty}
e^{-c^2(1/2+j)^{2}/2}
 \int_{\widetilde{D}_{s,j}}\mathrm{d}y \mathsf{1}_{B_{2}(0)}(y)X_{s}(y).
\end{align*}
Now recall again that the spatial dimension equals to one and hence for
any $j\geq 0$  the set $\widetilde{D}_{s,j}$ in (\ref{1dim}) is the union
of two  balls of radius $c(t-s)^{1/2}$. If furthermore 
$\widetilde{D}_{s,j}\cap B_{2}(0)\neq\emptyset$, then, in view of the
assumption $c(t-s)^{1/2}\leq1$, the centers of those balls lie in $B_{3}(0)$.
Therefore, we can apply Lemma~\ref{n.L2} to bound the integral
$\int_{\widetilde{D}_{s,j}}\mathrm{d}y \mathsf{1}_{B_{2}(0)}(y)X_{s}(y)$ by 
 $2 c(t-s)^{1/2} W_{B_{3}(0)}$ and obtain
\begin{gather}
\int_{\left\vert _{\!_{\!_{\,}}}y-c\,(t-s)^{1/2}\right\vert >\frac{c}%
{2}(t-s)^{1/2}}\mathrm{d}y\ p_{t-s}\bigl(c\,(t-s)^{1/2}-y\bigr)
\mathsf{1}_{B_{2}(0)}(y)X_{s}(y)\nonumber
\\
\leq\frac{2c}{(2\pi)^{1/2}}W_{B_{3}(0)}
\sum_{j=0}^{\infty}e^{-c^2(1/2+j)^{2}/2}
\leq CW_{B_{3}(0)}c^{-2}.\label{n5}%
\end{gather}
Furthermore, if $|y|\geq2$ and $(t-s)\leq c^{-2}$, then
\begin{align*}
p_{t-s}\bigl(c\,(t-s)^{1/2}-y\bigr)&\leq p_{t-s}(1)
=(t-s)^{-1/2}p_{1}\bigl((t-s)^{-1/2}\bigr)\\
&=(2\pi)^{-1/2}(t-s)^{-1/2}e^{-1/2(t-s)}.
\end{align*}
This implies that
\begin{align*}
\int_{\R\setminus B_{2}(0)}\mathrm{d}y\ p_{t-s}\bigl(c\,(t-s)^{1/2}-y\bigr)X_{s}(y)
&\leq (2\pi)^{-1/2}(t-s)^{-1/2}e^{-1/2(t-s)}X_{s}(\R)\nonumber\\
&\leq Cc^{-2}X_{s}(\R).
\end{align*}
Combining this bound with (\ref{n5}), we obtain
\begin{align*}
\int_{\left\vert _{\!_{\!_{\,}}}y-c\,(t-s)^{1/2}\right\vert >\frac{c}%
{2}(t-s)^{1/2}}\mathrm{d}y\ p_{t-s}\bigl(c\,(t-s)^{1/2}-y\bigr)X_{s}(y)\\
\leq\,Cc^{-2}\Bigl(W_{B_{3}(0)}+\sup_{0<s\leq t}X_{s}(\R%
)\Bigr).
\end{align*}
Thus, we can choose $c$ so large that the right hand side in the previous
inequality does not exceed $\theta/2$. Since, in view of Lemma~\ref{n.L1},
$$
\liminf_{s\uparrow t}S_{t-s}X_{s}\bigl(c\,(t-s)^{1/2}\bigr)>\theta,
$$
we finally get
\begin{align*}
&  \liminf_{s\uparrow t}\Biggl(S_{t-s}X_{s}\bigl(c\,(t-s)^{1/2}\bigr)\\
&  \qquad\quad-\int_{|y-c\,(t-s)^{1/2}|>\frac{c}{2}(t-s)^{1/2}%
}\mathrm{d}y\ p_{t-s}\bigl(c\,(t-s)^{1/2}-y\bigr)X_{s}(y)\Biggr)\geq\theta/2.
\end{align*}
\textrm{F}rom this bound and (\ref{n4}) the desired property of
$I_{\varepsilon,c}$ follows.
\hfill$\square$
\end{proof}

Fix any $\theta>0$, and to simplify notation write $c:=c_{(\ref{nn.L3})}.$
For all $n$ sufficiently large, say $n\geq N_{0\,},$ define
\begin{align}
\label{An-def}
\nonumber
A_{n}\,:=\biggl\{\Delta X_{s}\left(  \Big(\frac{c}{2}\,2^{-n},\frac{3c}%
{2}\,2^{-n}\Big)\right)   &  \geq2^{-(\bar{\eta}_{\mathrm{c}}+1)n}%
\,n^{1/(1+\beta)}\\
\text{ } &  \text{for some }s\in(t-2^{-2 n},t-2^{-2(n+1)}%
)\biggr\}.
\end{align}

Based on Lemma~\ref{n.L3} we will show in the following lemma that,
if $X_t(0)>0$ then there exist infinitely many jumps $\Delta X_s(x)$
which are greater than $((t-s)|x|)^{1/(1+\beta)}$ with $x\sim (t-s)^{1/2}$.
To be more precise, we show that $A_n$ occur infinitely often.

\begin{lemma}
\label{n.L4}
We have
\begin{equation*}
\mathbf{P}\!\left(  A_{n}\,\text{infinitely often}\ \big|\,D_{\theta}\right)
=1.
\end{equation*}
\end{lemma}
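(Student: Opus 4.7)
Write $A_n = \{\Pi_n \geq 1\}$ where, with $c := c_{(\ref{nn.L3})}$ and $I_n := (t-2^{-2n}, t-2^{-2(n+1)})$,
\[
\Pi_n := N\!\left(\left\{(s,x,r):\, s \in I_n,\, x \in \bigl(\tfrac{c}{2}2^{-n}, \tfrac{3c}{2}2^{-n}\bigr),\, r \geq 2^{-(\bar\eta_{\mathrm{c}}+1)n}n^{1/(1+\beta)}\right\}\right).
\]
Using the identity $\bar\eta_{\mathrm{c}}+1 = 3/(1+\beta)$ and evaluating $\int_a^\infty c_\beta r^{-2-\beta}\,dr = (c_\beta/(1+\beta))a^{-(1+\beta)}$ at $a = 2^{-(\bar\eta_{\mathrm{c}}+1)n}n^{1/(1+\beta)}$, the compensator formula of Lemma~\ref{L.mart.dec}(b) yields
\[
\hat\Pi_n \;=\; \frac{c_\beta}{1+\beta}\cdot\frac{2^{3n}}{n}\int_{I_n} X_s\!\left(\bigl(\tfrac{c}{2}2^{-n}, \tfrac{3c}{2}2^{-n}\bigr)\right) ds.
\]
Since the $\Pi_n$'s are supported on pairwise disjoint time intervals and each is a.s.\ finite, $\Pi_n - \hat\Pi_n$ is a local martingale and the standard dichotomy for counting processes gives $\{\sum_n \Pi_n = \infty\} = \{\sum_n \hat\Pi_n = \infty\}$ almost surely. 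Divergence of $\sum_n \hat\Pi_n$ on $D_\theta$ would therefore force $\Pi_n \geq 1$ for infinitely many $n$, which is exactly $A_n$ i.o. Thus the whole task reduces to establishing $\sum_n \hat\Pi_n = \infty$ a.s.\ on $D_\theta$.

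\textbf{Key estimate.} Next I would prove the dyadic, fixed-annulus lower bound
\[
X_s\!\left(\bigl(\tfrac{c}{2}2^{-n}, \tfrac{3c}{2}2^{-n}\bigr)\right) \;\gtrsim\; 2^{-n}, \qquad s \in I_n,\ n\text{ large},
\]
a.s.\ on $D_\theta$. The starting point is Lemma~\ref{n.L1}, which gives $\liminf_{s\uparrow t} S_{t-s}X_s\bigl(c(t-s)^{1/2}\bigr) \geq \theta$ on $D_\theta$. I then split the smoothed density into (i) the contribution from the fixed annulus, majorized by $p_{t-s}(0)\,X_s\!\left(\bigl(\tfrac{c}{2}2^{-n}, \tfrac{3c}{2}2^{-n}\bigr)\right)$, and (ii) an ``outside'' contribution, controlled --- exactly as in the proof of Lemma~\ref{n.L3} --- by Gaussian-tail estimates for $p_{t-s}$ together with the bounds $W_{B_3(0)} \leq \theta^{-1}$ and $\sup_s X_s(\R) \leq \theta^{-1}$ available on $D_\theta$. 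For $s \in I_n$ one has $(t-s)^{1/2} \asymp 2^{-n}$, hence $p_{t-s}(0) \asymp 2^n$; taking $c$ sufficiently large makes the outside contribution smaller than $\theta/2$, and rearranging then yields the asserted lower bound. Plugging this back into the expression for $\hat\Pi_n$ gives $\hat\Pi_n \gtrsim 1/n$, whence $\sum_n \hat\Pi_n = \infty$.

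\textbf{Main obstacle.} The technically delicate point is translating the Lemma~\ref{n.L1} bound --- formulated at the $s$-dependent reference point $c(t-s)^{1/2}$ --- into a lower bound on the mass of $X_s$ in the \emph{fixed} dyadic annulus $(\tfrac{c}{2}2^{-n}, \tfrac{3c}{2}2^{-n})$. The two annuli are of comparable size and location (differing by a factor $\leq 2$) but do not nest, so any $X_s$-mass sitting in the residual interval $(\tfrac{c}{4}2^{-n}, \tfrac{c}{2}2^{-n})$ must be absorbed into the outside contribution via Gaussian-tail decay. This is feasible precisely because $c = c_{(\ref{nn.L3})}$ is a free parameter in Lemma~\ref{n.L3} that may be chosen as large as needed.
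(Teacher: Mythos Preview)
Your approach is genuinely different from the paper's. The paper gives a five-line reduction to Lemma~\ref{n.L3}: one checks that for $s\in(t-2^{-2n},t-2^{-2(n+1)})$ and $y\in(\tfrac{c}{2}(t-s)^{1/2},\tfrac{3c}{2}(t-s)^{1/2})$ the quantity $((t-s)y\log((t-s)^{-1}))^{1/(1+\beta)}$ dominates $2^{-(\bar\eta_{\rm c}+1)n}n^{1/(1+\beta)}$ up to a constant, whence $\bigcup_{n\ge N}A_n$ contains the event appearing in Lemma~\ref{n.L3} with $\varepsilon=2^{-2N}$, and one concludes by monotonicity. Your route via the compensator dichotomy $\{\sum_n\Pi_n=\infty\}=\{\sum_n\hat\Pi_n=\infty\}$ is sound in principle and self-contained, but it amounts to reproving the content of Lemma~\ref{n.L3} in dyadic form rather than simply invoking it.

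There is, however, a gap in your resolution of the ``Main obstacle''. You propose to absorb the $X_s$-mass on the residual interval $(\tfrac{c}{4}2^{-n},\tfrac{c}{2}2^{-n})$ into the outside contribution via Gaussian tails, claiming this works by taking $c$ large. But for $s$ near the right endpoint of $I_n$, where $(t-s)^{1/2}$ is close to $2^{-n-1}$, the reference point $c(t-s)^{1/2}$ sits close to $\tfrac{c}{2}2^{-n}$, the right edge of the residual. The normalized distance
\[
\frac{c(t-s)^{1/2}-\tfrac{c}{2}2^{-n}}{(t-s)^{1/2}}=c\Bigl(1-\tfrac{1}{2}\cdot\frac{2^{-n}}{(t-s)^{1/2}}\Bigr)
\]
can be arbitrarily close to $0$ as $(t-s)^{1/2}\downarrow 2^{-n-1}$, \emph{regardless of how large $c$ is}, so the Gaussian kernel gives no decay there and the residual contribution is of the same order as the main term. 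A correct fix is to restrict the $ds$-integral in $\hat\Pi_n$ to the sub-interval of $I_n$ on which $(t-s)^{1/2}\in((1-\delta)2^{-n},2^{-n})$ for some fixed small $\delta$: on that sub-interval the normalized distance above is bounded below by roughly $c\delta/2$, the Gaussian absorption goes through for large $c$, and since the sub-interval still has length $\asymp 2^{-2n}$ you retain $\hat\Pi_n\gtrsim 1/n$. Alternatively --- and far more economically --- just reduce to Lemma~\ref{n.L3} as the paper does.
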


\begin{proof}
If $y\in\left(\frac{c}{2}(t-s)^{1/2},\,\frac{3c}{2}(t-s)^{1/2}\right)$ 
and $s\in(t-2^{-2n},\,t-2^{-2(n+1)})$, then
\begin{align}
\left( (t-s)y\log\bigl((t-s)^{-1}\bigr)\right)^{1/(1+\beta)}
\geq\left(2^{-2(n+1)}\,\frac{c}{2}\,2^{-n-1}2n\log2\right)^{1/(1+\beta)}\nonumber\\
=c_{(\ref{100})}^{-1} 2^{-(\bar{\eta}_{\mathrm{c}}+1)n}\,n^{1/(1+\beta)}.\label{100}%
\end{align}
This implies that
\begin{align}
A_{n}&\supseteq\Bigg\{\Delta X_{s}(y)
\geq c_{(\ref{100})}\left((t-s)y\log\bigl((t-s)^{-1}\bigr)\right)^{1/(1+\beta)}\label{101}\\
&  \qquad\text{for some }s\in(t-2^{-2n},\,t-2^{-2(n+1)})
\text{ and }y\in\Big(\frac{c}{2}\,(t-s)^{1/2},\,\frac{3c}{2}\,(t-s)^{1/2}\Big)\Bigg\}.\nonumber
\end{align}

Consequently, from (\ref{101}) we get
\begin{align*}
\bigcup_{n=N}^{\infty}A_{n}\supseteq\Biggl\{\Delta &X_{s}(y)
\geq c_{(\ref{100})}\left((t-s)y\log\bigl((t-s)^{-1}\bigr)\right)^{1/(1+\beta)}\label{101}\\
&\text{for some }s\in(t-2^{-2N},t)
\text{ and }y\in\Big(\frac{c}{2}\,(t-s)^{1/2},\,\frac{3c}{2}\,(t-s)^{1/2}\Big)\Bigg\}
\end{align*}
for all $N>N_{0}\vee\frac{1}{2}\log_{2}(t\wedge1/8)$.
Applying Lemma~\ref{n.L3} and using the monotonicity of the union in $N$, we
get
\begin{equation*}
\mathbf{P}\!\left(  \bigcup_{n=N}^{\infty}A_{n}\big|\,D_{\theta}\right)
=1\quad\text{for all }N\geq N_{0\,}.
\end{equation*}
This completes the proof.
\hfill$\square$
\end{proof}

\section{Dichotomy for densities}
\label{sec:dichotomy}
\subsection{Proof of Theorem \ref{T.dichotomy}(a)}
The non-random part $\mu\ast p_t(x)$ is continuous. The continuity of $Z_t(\cdot)$ follows from the classical
Kolmogorov criteria. Indeed, it suffices to show that there exist $\theta$, $q$ and $\delta$ as in
Lemma~\ref{L.moment_bound} such that $\delta q>1$. But this is immediate from the observation
$$
\sup_{\delta<\min\{1,(3-\theta/\theta)\},\theta\in(1+\beta,2),q\in(1,1+\beta)}\delta q=
\min\left\{1+\beta,2-\beta\right\}>1.
$$
\begin{remark}
Combining Lemma~\ref{L.moment_bound} with Corollary 1.2 in Walsh \cite{bib:wal86}, we infer
that $Z_t(\cdot)$ is H\"older continuous of all orders smaller than $\delta-1/q$. Noting
that
$$
\sup_{\delta<\min\{1,(3-\theta/\theta)\},\theta\in(1+\beta,2),q\in(1,1+\beta)}(\delta-1/q)
=\min\left\{1,\frac{2-\beta}{1+\beta}\right\}-\frac{1}{1+\beta},
$$
we see that $Z$ is H\"older continuous of all orders smaller than
$\min\{\beta,1-\beta\}/(1+\beta)$. In other words, we proved Theorem \ref{T.loc.Hold}
for $\beta\geq1/2$. 
\hfill$\diamond$
\end{remark}
\subsection{Proof of Theorem \ref{T.dichotomy}(b)}
Throughout this subsection we assume that $d>1$. Recall
that $t>0$ and $X_{0}=\mu\in\mathcal{M}_{\mathrm{f}}\backslash\{0\}$ are
fixed. We want to verify that for each version of the density function 
$X_{t}$ the property
\begin{equation}
\label{basicprop}
\left\Vert X_{t}\right\Vert _{B}=\infty\ \,
\mathbf{P}\text{-a.s. on the event}\,\bigl\{X_{t}(B)>0\bigr\} 
\end{equation}
holds whenever $B$ is a fixed open ball in $\mathbb{R}^{d}$. Having this
relation for every open ball we may prove Theorem \ref{T.dichotomy}(b)
by the following simple argument: Let fix $\omega$ outside a null set so
that \eqref{basicprop} is valid for any ball with rational center and
rational radius. If $U$ is an open set with $X_t(U)>0$ then there exists
a ball $B$ with rational center and rational radius such that
$B\subset U$ and $X_t(B)>0$. Consequently, 
$||X_t||_U(\omega)=||X_t||_B(\omega)=\infty$.

\vspace{6pt}

To get~(\ref{basicprop}) we first show that on
the event $\bigl\{X_{t}(B)>0\bigr\}$ there are always sufficiently ``big''
jumps of $X$ on $B$ that occur close to time $t$. This is done in Lemma~\ref{lem:3}
above. Then with the help of properties of the log-Laplace equation derived in
Lemma~\ref{lem:2} we are able to show that the ``big'' jumps are large enough
to ensure the unboundedness of the density at time $t$. Loosely speaking the
density is getting unbounded in the proximity of big jumps. As we have seen in
the previous section, the largest jump at time $s<t$ is of order $(t-s)^{1/(1+\beta)}$.
Suppose this jump occurs at spatial point $x$. 
Since a jump occuring at time $s$ is smeared out by the kernel $p_{t-s}$,
we have the following estimate for the value of the density at time $t$ and spatial point $x$:
\begin{equation}
\label{verylast1}
X_t(x)\approx (t-s)^{1/(1+\beta)}p_{t-s}(0)\approx(t-s)^{1/(1+\beta)-d/2}.
\end{equation}
From \eqref{verylast1} it is clear that the density should explode in any dimension $d>1$.
In the rest of the section we justify this heuristic.

Set $\varepsilon_{n}:=2^{-n}$, $n\geq1$. Then we choose open balls 
$B_{n}\uparrow B$ such that
\begin{equation}
\overline{B_{n}}\subset B_{n+1}\subset B\quad\text{and}\quad\sup_{y\in
B^{\mathrm{c}},\,x\in B_{n},\,0<s\leq\varepsilon_{n}}\,p_{s}(x-y)
\,\underset{n\uparrow\infty}{\longrightarrow}\,0. \label{Bs}%
\end{equation}
Fix $n\geq1$ such that $\,\varepsilon_{n}<t.$\thinspace\ Set, for brevity, 
$$
\tau_{n}:=\inf\left\{s\in(t-\varepsilon_n,t):
\Delta X_s(B_n)>(t-s)^{\frac{1}{1+\beta}}\log^{\frac{1}{1+\beta}}\left(\frac{1}{t-s}\right)\right\}.
$$
It follows from Lemma \ref{lem:3} that
\begin{equation}
\label{interpret}
\mathbf{P}(\tau_n=\infty)\leq \mathbf{P}(X(B_n)=0),\quad n\geq1.
\end{equation}

In order to obtain a lower bound for $\left\Vert X_{t}\right\Vert _{B}$ we use
the following inequality
\begin{equation}
\left\Vert X_{t}\right\Vert _{B}\geq\int_{B}\mathrm{d}y\ X_{t}(y)p_{u}(y-x),
\quad x\in B,\ u>0. \label{123}%
\end{equation}
On the event $\{\tau_{n}<t\}$, denote by $\zeta_{n}$ the spatial location in
$B_{n}$ of the jump at time $\tau_{n\,},$\thinspace\ and by $r_{n}$ the size
of the jump, meaning that $\Delta X_{\tau_{n}}=r_{n}\delta_{\zeta_{n}}%
.$\thinspace\ Then specializing~(\ref{123}),%
\begin{equation}
\left\Vert X_{t}\right\Vert _{B}\,\geq\int_{B}\mathrm{d}y\ X_{t}%
(y)\,p_{t-\tau_{n}}(y-\zeta_{n})\,\ \text{on the event}\,\ \{\tau
_{n}<t\}. \label{123'}%
\end{equation}
\textrm{F}rom the strong Markov property at time $\tau_{n\,},$\thinspace
\ together with the branching property of superprocesses, we know that
conditionally on $\{\tau_{n}<t\}$, the process $\{X_{\tau_{n}+u}:\,u\geq0\}$
is bounded below in distribution by $\{\widetilde{X}_{u}^{n}:\,u\geq0\}$,
where $\widetilde{X}^{n}$ is a super-Brownian motion with initial value
$r_{n}\delta_{\zeta_{n}}$. Hence, from (\ref{123'}) we get
\begin{align}
&  \mathbf{E}\exp\!\left\{  _{\!_{\!_{\,}}}-\left\Vert X_{t}\right\Vert
_{B}\right\}  \,\label{firstbound}\\[2pt]
&  \leq\,\mathbf{E}\,\mathsf{1}_{\{\tau_{n}<t\}}\exp\left\{  -\int
_{B}\mathrm{d}y\ X_{t}(y)\,p_{t-\tau_{n}}(y-\zeta_{n})\right\}
+\,\mathbf{P}(\tau_{n}=\infty)\nonumber\\
\,  &  \leq\,\mathbf{E}\,\mathsf{1}_{\{\tau_{n}<t\}}\mathbf{E}_{r_{n}%
\delta_{\zeta_{n}}}\exp\left\{  -\int_{B}\mathrm{d}y\ X_{t-\tau_{n}%
}(y)\,p_{t-\tau_{n}}(y-\zeta_{n})\right\}  +\,\mathbf{P}(\tau
_{n}=\infty).\nonumber
\end{align}
Note that on the event $\{\tau_{n}<t\}$, we have
\begin{equation}
r_{n}\geq(t-\tau_{n})^{\frac{1}{1+\beta}}\log^{\frac{1}{1+\beta}}\Big(\frac{1}{t-\tau_{n}}\Big)
=:h_{\beta}(t-\tau_{n}). \label{not.hbeta}
\end{equation}
We now claim that
\begin{equation}
\lim_{n\uparrow\infty}\sup_{0<s<\varepsilon_{n},\ x\in B_{n},\ r\geq h_{\beta}(s)}
\mathbf{E}_{r\delta_{x}}\exp\left\{-\int_{B}\mathrm{d}y\ X_{s}(y)p_{s}(y-x)\right\}
=0. \label{secondlimit}%
\end{equation}
To verify (\ref{secondlimit}), let $s\in(0,\varepsilon_{n})$, $x\in B_{n}$
and $r\geq h_{\beta}(s)$. Then, using the Laplace transition functional of the
superprocess we get
\begin{gather}
\mathbf{E}_{r\delta_{x}}\exp\left\{-\int_{B}\mathrm{d}y\ X_{s}(y)p_{s}(y-x)\right\}
=\exp\left\{-r\,v_{s,x}^{n}(s,x)\right\} \nonumber\\
\leq\exp\left\{-h_{\beta}(s)v_{s,x}^{n}(s,x)\right\},  \label{Lapla}%
\end{gather}
where the non-negative function $\,v_{s,x}^{n}=\bigl\{v_{s,x}^{n}(s^{\prime
},x^{\prime}):\,s^{\prime}>0,$\ $x^{\prime}\in\mathbb{R}^{d}\bigr\}$%
\thinspace\ solves the log-Laplace integral equation
\begin{align}
v_{s,x}^{n}(s^{\prime}, \,x^{\prime})\,=\,&\int_{\mathbb{R}^{d}}%
\mathrm{d}y\ p_{s^{\prime}}(y-x^{\prime})\,1_{B}(y)\,p_{s}(y-x)\label{equt:4}\\
&  -\int_{0}^{s^{\prime}}\mathrm{d}r^{\prime}\int_{\mathbb{R}^{d}}%
\mathrm{d}y\ p_{s^{\prime}-r^{\prime}}(y-x^{\prime})
\bigl(v_{s,x}^{n}(r^{\prime},y)\bigr)^{1+\beta}\nonumber
\end{align}
related to (\ref{logLap}).

\begin{lemma}
\label{lem:2} 
If $d>1$ then
\begin{equation}
\lim_{n\uparrow\infty}\Big(\inf_{0<s<\varepsilon_{n},\,x\in B_{n}}h_{\beta
}(s)\,v_{s,x}^{n}(s,x)\Big)\,=\,+\infty\,. \label{limitv}%
\end{equation}
\end{lemma}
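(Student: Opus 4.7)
The plan is to bound $v_{s,x}^n(s,x)$ from below by combining a sharp lower bound on the inhomogeneous (linear) term of \eqref{equt:4} with a crude upper bound on the nonlinear correction. Write, for brevity,
\[
w_{s,x}(s',x')\,:=\,\int_{\R^d}\!\mathrm{d}y\, p_{s'}(y-x')\,\mathbf{1}_B(y)\,p_s(y-x),
\]
so that \eqref{equt:4} reads $v_{s,x}^n(s',x')=w_{s,x}(s',x')-\int_0^{s'}\!\!\int p_{s'-r'}(y-x')\,(v_{s,x}^n(r',y))^{1+\beta}\,\mathrm{d}y\,\mathrm{d}r'$, and in particular $v_{s,x}^n\le w_{s,x}$ pointwise.

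First I would estimate the linear term at $(s',x')=(s,x)$. For $x\in B_n$ and $s<\varepsilon_n$, the selection condition in \eqref{Bs} forces $\sqrt{s}/\mathrm{dist}(\overline{B_n},B^{\mathrm{c}})\to 0$ as $n\uparrow\infty$, so Gaussian tails give
\[
w_{s,x}(s,x)\,=\,\int_B p_s(y-x)^2\,\mathrm{d}y\,=\,(4\pi s)^{-d/2}-o(s^{-d/2}),
\qquad n\to\infty,
\]
uniformly in $x\in B_n$. Next, using $v_{s,x}^n\le w_{s,x}\le p_{s+s'}(x'-x)$ (the last bound obtained by dropping $\mathbf{1}_B$ and using $p_{s'}\ast p_s=p_{s+s'}$), the nonlinear correction at $(s,x)$ is dominated by
\[
\int_0^s\!\mathrm{d}r'\!\int_{\R^d}\!\mathrm{d}y\,p_{s-r'}(y-x)\,p_{s+r'}(y-x)^{1+\beta}
\,\le\,\int_0^s p_{s+r'}(0)^{\beta}\,p_{2s}(0)\,\mathrm{d}r'
\,\le\,C\,s^{1-d(1+\beta)/2},
\]
where I used $p_{s+r'}(y-x)^\beta\le p_{s+r'}(0)^\beta\le Cs^{-d\beta/2}$ and the convolution identity $\int p_{s-r'}(\cdot)p_{s+r'}(\cdot)=p_{2s}(0)$.

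Comparing orders, the ratio of the nonlinear correction to the linear term is bounded by $C s^{1-d\beta/2}$, which tends to $0$ as $s\downarrow 0$ precisely under the standing hypothesis $d<2/\beta$. Consequently there is $n_0$ such that for all $n\ge n_0$, all $s\in(0,\varepsilon_n)$ and all $x\in B_n$,
\[
v_{s,x}^n(s,x)\,\ge\,\tfrac{1}{2}(4\pi s)^{-d/2}.
\]
Multiplying by $h_\beta(s)=s^{1/(1+\beta)}\log^{1/(1+\beta)}(s^{-1})$ gives
\[
h_\beta(s)\,v_{s,x}^n(s,x)\,\ge\,c\,s^{\,1/(1+\beta)-d/2}\log^{1/(1+\beta)}(s^{-1}),
\]
and since $d\ge 2$ and $\beta\in(0,1)$ force $d/2\ge 1>1/(1+\beta)$, the right-hand side tends uniformly to $+\infty$ as $s\downarrow 0$, which is \eqref{limitv}.

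The main obstacle I anticipate is verifying that the lower bound on the linear term is indeed uniform in $x\in B_n$: this is where the specific choice of the sets $B_n\uparrow B$ in \eqref{Bs} is used in an essential way, to guarantee that all $x\in B_n$ sit at a distance from $B^{\mathrm{c}}$ that is large compared with $\sqrt{\varepsilon_n}$, so that the Gaussian mass lost to $B^{\mathrm{c}}$ in $\int_B p_s(y-x)^2\,\mathrm{d}y$ is negligible on the scale $s^{-d/2}$. The nonlinear estimate itself is robust and the final verification that $1/(1+\beta)<d/2$ is immediate.
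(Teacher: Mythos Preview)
Your proof is correct and follows essentially the same route as the paper: you split the log-Laplace solution into the linear term $w_{s,x}(s,x)=\int_B p_s(y-x)^2\,\mathrm{d}y$ and the nonlinear correction, bound the former below by $C s^{-d/2}$ using the tail condition \eqref{Bs}, bound the latter above by $Cs^{1-d(1+\beta)/2}$ via $v^n\le p_{s+s'}$ and the convolution identity, observe that the ratio vanishes under $d<2/\beta$, and finish by noting $1/(1+\beta)<d/2$ for integer $d\ge 2$. One minor remark: the paper bounds the lost mass $\int_{B^{\mathrm c}}p_s(y-x)^2\,\mathrm{d}y$ by $\sup_{y\in B^{\mathrm c},\,x\in B_n,\,s<\varepsilon_n}p_s(y-x)$ and invokes \eqref{Bs} directly, obtaining an $o(1)$ error rather than your $o(s^{-d/2})$; your version is weaker but still sufficient, and your heuristic that \eqref{Bs} forces $\sqrt{s}/\mathrm{dist}(\overline{B_n},B^{\mathrm c})\to 0$ is the right intuition behind that condition for the Gaussian kernel.
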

\begin{proof}
We start with a
determination of the asymptotics of the first term at the right hand side of
the log-Laplace equation (\ref{equt:4}) at $(s^{\prime},x^{\prime})=(s,x)$.
Note that
\begin{align}
&  \int_{\mathbb{R}^{d}}\mathrm{d}y\ p_{s}(y-x)\,1_{B}(y)\,p_{s}(y-x)\label{equt:i1}\\
&  =\int_{\mathbb{R}^{d}}\mathrm{d}y\ p_{s}(y-x)\,p_{s}(y-x)
-\int_{B^{\mathrm{c}}}\mathrm{d}y\ p_{s}(y-x)\,p_{s}(y-x).\nonumber
\end{align}
In the latter formula line, the first term equals $p_{2s}(0)=Cs^{-d/2}$,
whereas the second one is bounded from above by
\begin{equation}
\sup_{0<s<\varepsilon_{n},\,x\in B_{n},\ y\in B^{\mathrm{c}}}
p_{s}(y-x)\underset{n\uparrow\infty}{\longrightarrow}0, \label{equt:i2}%
\end{equation}
where the last convergence follows by assumption~(\ref{Bs}) on $B_{n\,}%
.$\thinspace\ Hence from~(\ref{equt:i1}) and (\ref{equt:i2}) we obtain
\begin{equation}
\int_{\mathbb{R}^{d}}\mathrm{d}y\ p_{s}(y-x) 1_{B}(y)\,p_{s}(y-x)
=Cs^{-d/2}+\mathrm{o}(1)\,\ \text{as}\,\ n\uparrow\infty, \label{equt:i3}%
\end{equation}
uniformly in $s\in(0,\varepsilon_{n})$ and $x\in B_{n\,}$. 

To simplify notation, we write $v^{n}:=v_{s,x}^{n}.$ Next, since $v^{n}$ is
non-negative we drop the non-liner term in from~(\ref{equt:4}) to get the
upper bound
\begin{equation*}
v^{n}(s^{\prime},x^{\prime})\,\leq\,
\int_{\R^{d}}\mathrm{d}y\ p_{s^{\prime}}(y-x^{\prime})\,p_{s}(y-x)
=\,p_{s^{\prime}+s}(x-x^{\prime}). 
\end{equation*}
Then we have
\begin{align}
&\int_{0}^{s}\mathrm{d}r^{\prime}\int_{\mathbb{R}^{d}}\mathrm{d}y
\ p_{s-r^{\prime}}(y-x)\bigl(v^{n}(r^{\prime},y)\bigr)^{1+\beta}\label{equt:i5}\\
&\leq\int_{0}^{s}\mathrm{d}r^{\prime}%
\int_{\mathbb{R}^{d}}\mathrm{d}y\ p_{s-r^{\prime}}(y-x)
\bigl(p_{r^{\prime}+s}(x-y)\bigr)^{1+\beta}\nonumber\\
&\leq\bigl(p_{s}(0)\bigr)^{\beta}
\int_{0}^{s}\mathrm{d}r^{\prime}\int_{\mathbb{R}^{d}}\mathrm{d}y
\ p_{s-r^{\prime}}(y-x)\,p_{r^{\prime}+s}(x-y)\nonumber\\
&=\bigl(p_{s}(0)\bigr)^{\beta}
\int_{0}^{s}\mathrm{d}r^{\prime}\ p_{2s}(0)=Cs^{1-d(1+\beta)/2}.\nonumber
\end{align}
Summarizing, by (\ref{equt:4}), (\ref{equt:i3}) and (\ref{equt:i5}),
\begin{equation}
v^{n}(s,x)\geq Cs^{-d/2}+\mathrm{o}(1)-Cs^{1-d(1+\beta)/2}
\label{equt:i6}%
\end{equation}
uniformly in $s\in(0,\varepsilon_{n})$ and $x\in B_{n\,}$. According
to the general assumption $d<2/\beta,$ we conclude that the
right hand side of (\ref{equt:i6}) behaves like $Cs^{-d/2}$ as
$s\downarrow0,$ uniformly in $s\in(0,\varepsilon_{n})$. Now recalling
definition (\ref{not.hbeta}) as well as our assumption
that $d>1$ we immediately get
\begin{equation*}
\lim_{n\uparrow\infty}\,\inf_{0<s<\varepsilon_{n}}h_{\beta}(s)\,s^{-d/2}=+\infty.
\end{equation*}
By (\ref{equt:i6}), this implies (\ref{limitv}), and the proof of the lemma is
finished.
\hfill$\square$
\end{proof}

We are now in position to complete the proof of Theorem \ref{T.dichotomy}(b).
The claim (\ref{secondlimit}) readily follows from
estimate~(\ref{Lapla}) and (\ref{limitv}). Moreover, according to
(\ref{secondlimit}), by passing to the limit $n\uparrow\infty$ in the right
hand side of (\ref{firstbound}), and then using \eqref{interpret}, we arrive
at
\begin{equation*}
\mathbf{E}\exp\left\{  _{\!_{\!_{\,}}}-\left\Vert X_{t}\right\Vert
_{B}\right\}  \,\leq\,\limsup_{n\uparrow\infty}\mathbf{P}\left(  \tau
_{n}=\infty\right)  \,\leq\,\limsup_{n\uparrow\infty}\mathbf{P}\!\left(
_{\!_{\!_{\,}}}X_{t}(B_{n})=0\right)  \!.
\end{equation*}
Since the event $\bigl\{X_{t}(B)=0\bigr\}$ is the non-increasing limit as
$n\uparrow\infty$ of the events $\bigl\{X_{t}(B_{n})=0\bigr\}$ we get%
\begin{equation*}
\mathbf{E}\exp\!\left\{  _{\!_{\!_{\,}}}-\left\Vert X_{t}\right\Vert
_{B}\right\}  \,\leq\,\mathbf{P}\!\left(  _{\!_{\!_{\,}}}X_{t}%
(B)\,=\,0\right)  \!.
\end{equation*}
Since obviously $\left\Vert X_{t}\right\Vert _{B}=0$ if and only if
$X_{t}(B)=0$, we see that (\ref{basicprop}) follows from this last bound. The
proof of Theorem \ref{T.dichotomy}(b) is finished for $U=B$.
\section{Pointwise H\"older exponent at a given point: proof of Theorem~\ref{T.fixed}.}
\label{sec:fixed}
Let us first give a heuristic explanation for the value of $\bar{\eta}_{\rm c}$. According to
Lemmas \ref{6.00} and \ref{n.L3}, the maximal jump at time $s$ and spatial point $x$ near
point $z=0$ is of order $((t-s)|x|)^{1/(1+\beta)}$. Due to the scaling properties of the
heat kernel, the jump that has a decisive effect on the pointwise H\"older exponent at $z=0$
should occur at distance 
\begin{equation}
\label{verylast2}
|x|\approx(t-s)^{1/2}. 
\end{equation}
Then the size of this jump $r$ is of order 
$$
((t-s)|x|)^{1/(1+\beta)}\approx|x|^{3/(1+\beta)}.
$$
Therefore the convolution of the jump $r\delta_x$ with $p_{t-s}(x-\cdot)-p_{t-s}(0-\cdot)$
is of order
$$
|x|^{3/(1+\beta)}(p_{t-s}(0)-p_{t-s}(|x|))\approx|x|^{3/(1+\beta)-1}.
$$
In the last step we used \eqref{verylast2}. This leads then to the result that 
difference of values of the density at points $x$ and $0$ is of the same order.
Then the pointwise H\"older exponent at $0$ should be 
$$
\frac{3}{1+\beta}-1=\bar{\eta}_{\rm c}.
$$ 
This heuristic works for $\bar{\eta}_{\rm c}<1$. In the
case $\bar{\eta}_{\rm c}>1$ the density becomes differentiable. For that reason one has to convolute
$r\delta_x$ with $q_{t-s}(x-\cdot,0-\cdot)$, where $q$ is defined in \eqref{def_qq}. This
convolution is also of the order
\begin{equation}
\label{verylast3}
|x|^{3/(1+\beta)}|q_{t-s}(0,x)|\approx|x|^{3/(1+\beta)-1}.
\end{equation}
Again, we arrive at the same value of the pointwise H\"older exponent $\bar{\eta}_{\rm c}$. 

Since the case $\bar{\eta}_{\mathrm{c}}<1$ has been studied in \cite{FMW11}, we shall
concentrate here on the case $\bar{\eta}_{\mathrm{c}}>1$. In other words, we shall assume
that $\beta<1/2$. Under this assumption, the function
$\frac{\partial}{\partial y}p_{t-u}(x_2-y)$ is integrable with respect to
$M(\mathrm{d}u,\mathrm{d}y)$. Consequently, we may consider $Z_t(x_1,x_2)$ defined in
\eqref{def.Z.new} with $q$ defined in \eqref{def_qq}.

\subsection{Proof of the lower bound for $H_X(0)$.}
To get a lower bound for $H_Z(0)$ it suffices to show that, for every positive $\gamma$,
$$
\sup_{0<x<1}\frac{\left|Z_t(x,0)\right|}{|x|^{\overline{\eta}_{\rm c}-\gamma}}<\infty
$$
with probability one.

Let $\Delta Z_s(x_1,x_2)$ denote the jump of $Z(x_1,x_2)$:
$$
\Delta Z_s(x_1,x_2):=Z_s(x_1,x_2)-Z_{s-}(x_1,x_2).
$$
Denote
\begin{align*}
A_1^\varepsilon:=
\left\{|\Delta X_{s}|\leq c_{(\ref{inL8})}\,(t-s)^{(1+\beta)^{-1}-\gamma}\text{ for all }s<t\right\}
\cap\left\{V(B_1(0))\leq c_\varepsilon\right\}\\
\cap\left\{\Delta X_{s}(x)\leq c_{(\ref{in6.1})}\,\bigl((t-s)|x|\bigr)^{\frac{1}{1+\beta}-\gamma}\text{ for all }s<t\ \text{and}\ x\in
B_{2}(0)\right\}
\end{align*}
with $V$ defined in Lemma \ref{L.fixed.2}. According to Lemma \ref{L.fixed.2}, there exists
$c_\varepsilon$ such that $\mathbf{P}(V(B_1(0)>c_\varepsilon)<\varepsilon$. Combining this with
Lemma \ref{L8} and Corollary \ref{6.1}, we conclude that
\begin{equation}
\label{P_A}
\mathbf{P}(A_1^\varepsilon)>1-3\varepsilon.
\end{equation}
In the next lemma we derive an upper bound for jumps of $Z_s(0,x)$.
Afterwords, in Lemma \ref{L.fixed.5} we derive an upper bound for the values of $Z_s(0,x)$,
which confirms the upper bound part of \eqref{verylast3}.
\begin{lemma}
\label{L.fixed.4}
On the event $A_1^\varepsilon$ we have, for all $s\leq t$ and all $x\in\R$,
$$
\left|\Delta Z_s(x,0)\right|
\leq C |x|^{\overline{\eta}_{\rm c}-3\gamma}.
$$
\end{lemma}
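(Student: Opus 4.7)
The plan rests on the observation that jumps of $s\mapsto Z_s(x,0)$ are in bijection with jumps of the martingale measure $M$: a jump $r\delta_y$ of $X$ at time $s$ produces, by \eqref{def.Z.new},
$$
\Delta Z_s(x,0)\,=\,r\,q_{t-s}(x-y,\,-y).
$$
Hence it suffices to bound $r\,|q_{t-s}(x-y,-y)|$ uniformly over all $(s,y,r)$ realizing a jump of $X$, on the event $A_1^\varepsilon$. The estimate for $|x|\geq 1$ is trivial by absorbing constants, so I focus on small $|x|$.

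First I would collect two kernel estimates. Taylor expansion of $p_{t-s}$ around $-y$ gives $|q_{t-s}(x-y,-y)|\leq C\,x^2\,(t-s)^{-3/2}\,\psi\!\bigl(y/\sqrt{t-s}\bigr)$ with $\psi$ of Gaussian type, while a direct triangle inequality yields the alternative $|q_{t-s}(x-y,-y)|\leq C\,|x|\,(t-s)^{-1}\,\psi\!\bigl(y/\sqrt{t-s}\bigr)$. Both encode the heuristic \eqref{verylast3}: $|q|$ is effectively supported on $|y|\lesssim\sqrt{t-s}$.

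The main work is a case analysis. For $y\notin B_2(0)$, applying the global bound $r\leq c_{(\ref{inL8})}(t-s)^{1/(1+\beta)-\gamma}$ from Lemma~\ref{L8}, the Gaussian factor $\psi$ contributes decay of order $e^{-c/(t-s)}$, which is far smaller than $|x|^{\bar\eta_{\mathrm c}-3\gamma}$. For $y\in B_2(0)$ replace $r$ by the refined bound $r\leq c_{(\ref{in6.1})}((t-s)|y|)^{1/(1+\beta)-\gamma}$ from Corollary~\ref{6.1}. In the subregime $t-s\geq x^2$, combine the second-order kernel bound with the Gaussian restriction to $|y|\lesssim\sqrt{t-s}$ to obtain
$$
r\,|q_{t-s}(x-y,-y)|\,\leq\,C\,x^2\,(t-s)^{-3\beta/(2(1+\beta))-3\gamma/2},
$$
and the negative exponent of $(t-s)$ together with $t-s\geq x^2$ yields exactly $|x|^{\bar\eta_{\mathrm c}-3\gamma}$. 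In the complementary subregime $t-s<x^2$, I would switch to the first-order kernel bound, split once more by $|y|\leq\sqrt{t-s}$ versus $|y|>\sqrt{t-s}$, and exploit $\beta<1/2$ (equivalently $\bar\eta_{\mathrm c}>1$) to check that, for $\gamma$ small enough, the exponent $(1-2\beta)/(2(1+\beta))-3\gamma/2$ of the remaining $(t-s)$ factor is positive, so that $t-s<x^2$ again produces $|x|^{\bar\eta_{\mathrm c}-3\gamma}$.

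The main obstacle is the arithmetic bookkeeping: one must verify in every subregime that Gaussian concentration of $|q_{t-s}|$ on $|y|\lesssim\sqrt{t-s}$ combined with the refined jump bound yields precisely the sharp exponent $\bar\eta_{\mathrm c}-3\gamma$. This sharpness reflects that the ``critical'' jump at $|y|\sim\sqrt{t-s}\sim|x|$ of size $r\sim|x|^{3/(1+\beta)}$ is what drives the pointwise regularity of $X_t(\cdot)$ at the origin, matching the heuristic preceding the lemma.
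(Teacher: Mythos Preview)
Your approach is essentially the paper's: represent the jump as $r\,q_{t-s}(x-y,-y)$, split according to whether $|x|\leq (t-s)^{1/2}$ or $|x|>(t-s)^{1/2}$, and in each regime combine the jump-size bound on $A_1^\varepsilon$ with a suitable kernel estimate (the paper uses Lemma~\ref{L.kernel1} with $\delta=\bar\eta_{\mathrm c}-1-3\gamma$ in the first regime and, in the second, decomposes $q$ as $|p_{t-s}(x-y)-p_{t-s}(-y)|+|x|\,|\partial_y p_{t-s}(-y)|$ via \eqref{L.kernel1.1} and \eqref{L.kernel1.2}). The arithmetic you display is correct.

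There is, however, a real gap in your kernel estimates. You write both bounds with a single Gaussian factor $\psi\bigl(y/\sqrt{t-s}\bigr)$ and infer that ``$|q|$ is effectively supported on $|y|\lesssim\sqrt{t-s}$''. This is not true: the correct bounds (see \eqref{L.kernel1.4} and \eqref{L.kernel1.1}) carry the symmetric factor $p_{t-s}\bigl((x-y)/2\bigr)+p_{t-s}(-y/2)$, so $|q_{t-s}(x-y,-y)|$ is also large when $|x-y|\lesssim\sqrt{t-s}$. In the subregime $t-s<x^2$ with $|y|>\sqrt{t-s}$ this matters: if $y$ is within $\sqrt{t-s}$ of $x$, the Gaussian does \emph{not} damp $|q|$, and your split ``$|y|\leq\sqrt{t-s}$ versus $|y|>\sqrt{t-s}$'' does not close. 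The paper handles this by a further split in $y$: for $|y|>2|x|$ one has $|x-y|>|y|/2$ and the Gaussian in $(x-y)$ controls the $|y|^{1/(1+\beta)-\gamma}$ growth; for $|y|\leq 2|x|$ one simply bounds $|y|^{1/(1+\beta)-\gamma}\leq (2|x|)^{1/(1+\beta)-\gamma}$, which combined with $|x|^{\eta_{\mathrm c}-2\gamma}$ already gives $|x|^{\bar\eta_{\mathrm c}-3\gamma}$. Once you restore the two-term Gaussian and add this last case, your argument goes through and coincides with the paper's.
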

\begin{proof}
Let $(y,s,r)$ be the point of an arbitrary jump of the measure $\mathcal{N}$ with $s\leq t$.
Then for the corresponding jump of $Z(x,0)$ we have the following bound
\begin{equation}
\label{L.fixed.4.1}
\left|\Delta Z_s(x,0)\right|
\leq r|q_{t-s}(x-y,-y)|.
\end{equation}
On $A_1^\varepsilon$ we have
$$
r\leq C(t-s)^{\frac{1}{1+\beta}-\gamma}|y|^{\frac{1}{1+\beta}-\gamma}.
$$
Applying \eqref{L.kernel1.4} with $\delta=\overline{\eta}_{\rm c}-1-3\gamma$ to $|q_{t-s}(x-y,-y)|$, 
we obtain
\begin{align}
\label{L.fixed.4.2}
\nonumber
&\left|\Delta Z_s(x,0)\right|\\
\nonumber
&\leq C (t-s)^{\frac{1}{1+\beta}-\gamma}|y|^{\frac{1}{1+\beta}-\gamma}
\frac{|x|^{\overline{\eta}_{\rm c}-3\gamma}}{(t-s)^{(\overline{\eta}_{\rm c}-3\gamma)/2}}
\left(p_{t-s}(-y/2)+p_{t-s}((x-y)/2)\right)\\
&=C|x|^{\overline{\eta}_{\rm c}-3\gamma}\left(\frac{|y|}{(t-s)^{1/2}}\right)^{\frac{1}{1+\beta}-\gamma}
\left(p_1\left(\frac{-y}{2(t-s)^{1/2}})\right)+p_1\left(\frac{x-y}{2(t-s)^{1/2}})\right)\right).
\end{align}
Assume first that $|x|\leq (t-s)^{1/2}$. If $|y|>2(t-s)^{1/2}$ then $|x-y|>|y|/2$ and,
consequently,
\begin{align*}
&\left(\frac{|y|}{(t-s)^{1/2}}\right)^{\frac{1}{1+\beta}-\gamma}
\left(p_1\left(\frac{-y}{2(t-s)^{1/2}})\right)+p_1\left(\frac{x-y}{2(t-s)^{1/2}})\right)\right)\\
&\hspace{1cm}\leq 2 \left(\frac{|y|}{(t-s)^{1/2}}\right)^{\frac{1}{1+\beta}-\gamma}
p_1\left(\frac{|y|}{4(t-s)^{1/2}})\right).
\end{align*}
Noting that the function on the right hand side is bounded, we conclude from \eqref{L.fixed.4.2}
that
\begin{equation}
\label{L.fixed.4.3}
\left|\Delta Z_s(x,0)\right|
\leq C |x|^{\overline{\eta}_{\rm c}-3\gamma}
\end{equation}
for $|x|\leq (t-s)^{1/2}$ and $|y|>2(t-s)^{1/2}$. Moreover, if $|y|\leq 2(t-s)^{1/2}$ then
$$
\left(\frac{|y|}{(t-s)^{1/2}}\right)^{\frac{1}{1+\beta}-\gamma}
\left(p_1\left(\frac{-y}{2(t-s)^{1/2}})\right)+p_1\left(\frac{x-y}{2(t-s)^{1/2}})\right)\right)
\leq 2^{1+\beta}.
$$
Consequently, \eqref{L.fixed.4.3} is valid also for $|y|\leq 2(t-s)^{1/2}$
and $|x|\leq (t-s)^{1/2}$.

Assume now that $|x|>(t-s)^{1/2}$ and $|y|>2(t-s)^{1/2}$. In this case we bound $q_{t-s}$
in a completely different way:
\begin{equation*}
|q_{t-s}(x-y,-y)|\leq |p_{t-s}(x-y)-p_{t-s}(-y)|+|x|\left|\frac{\partial}{\partial y}p_{t-s}(-y)\right|.
\end{equation*}
Applying now \eqref{L.kernel1.1} with $\delta=\eta_{\rm c}-2\gamma$ and \eqref{L.kernel1.2}, we see that
$|q_{t-s}(x-y,-y)|$ is bounded by
\begin{align*}
&C|x|^{\eta_{\rm c}-2\gamma}(t-s)^{-\frac{1}{1+\beta}+\gamma}
\left(p_1\left(\frac{-y}{2(t-s)^{1/2}})\right)+p_1\left(\frac{x-y}{2(t-s)^{1/2}})\right)\right)\\
&+C\frac{|x|}{t-s}p_1\left(\frac{-y}{2(t-s)^{1/2}})\right).
\end{align*}
Consequently,
\begin{align}
\label{L.fixed.4.4}
\nonumber
\left|\Delta Z_s(x,0)\right|
&\leq C|y|^{\frac{1}{1+\beta}-\gamma}|x|^{\eta_{\rm c}-2\gamma}
\left(p_1\left(\frac{-y}{2(t-s)^{1/2}})\right)+p_1\left(\frac{x-y}{2(t-s)^{1/2}})\right)\right)\\
&\hspace{1cm} +C|x||y|^{\frac{1}{1+\beta}-\gamma}(t-s)^{\frac{1}{1+\beta}-1-\gamma}
p_1\left(\frac{-y}{2(t-s)^{1/2}})\right).
\end{align}
Since $u^{\frac{1}{1+\beta}-\gamma}p_1(u)$ is bounded, the term in the second line in \eqref{L.fixed.4.4}
does not exceed
$$
C|x|(t-s)^{\frac{3}{2(1+\beta)}-1-\frac{3\gamma}{2}}.
$$
As a result, for $|x|>(t-s)^{1/2}$ we get
\begin{align}
\label{L.fixed.4.5}
|y|^{\frac{1}{1+\beta}-\gamma}(t-s)^{\frac{1}{1+\beta}-1-\gamma}
p_1\left(\frac{-y}{2(t-s)^{1/2}})\right)\leq
C|x|^{\frac{3}{1+\beta}-1-3\gamma}=C|x|^{\overline{\eta}_{\rm c}-3\gamma}.
\end{align}
By the same argument,
\begin{align}
\label{L.fixed.4.6}
|y|^{\frac{1}{1+\beta}-\gamma}|x|^{\eta_{\rm c}-2\gamma}
p_1\left(\frac{-y}{2(t-s)^{1/2}})\right)\leq
C|x|^{\overline{\eta}_{\rm c}-3\gamma}.
\end{align}
We next note that if $|y|>2|x|$ then $|y-x|>|y|/2$ and, consequently,
$$
p_1\left(\frac{x-y}{2(t-s)^{1/2}})\right)\leq p_1\left(\frac{-y}{4(t-s)^{1/2}})\right).
$$
Therefore,
\begin{align}
\label{L.fixed.4.7}
|y|^{\frac{1}{1+\beta}-\gamma}|x|^{\eta_{\rm c}-2\gamma}
p_1\left(\frac{x-y}{2(t-s)^{1/2}})\right)\leq
C|x|^{\overline{\eta}_{\rm c}-3\gamma}
\end{align}
for $|y|>2|x|$. But if $|y|\leq 2|x|$ then \eqref{L.fixed.4.7} is obvious. Combining
\eqref{L.fixed.4.4} -- \eqref{L.fixed.4.7} we conclude that \eqref{L.fixed.4.3} holds
for $|x|>(t-s)^{1/2}$. This completes the proof.
\hfill$\square$
\end{proof}
\begin{lemma}
\label{L.fixed.5}
For every fixed $x$ with $|x|<1$ we have
$$
\mathbf{P}\left(\left|Z_t(x,0)\right|>r|x|^{\overline{\eta}_{\rm c}-2\gamma},A_1^\varepsilon\right)
\leq \left(\frac{C}{r}|x|^\gamma\right)^{r|x|^{-\gamma}/C},\quad r>0.
$$
\end{lemma}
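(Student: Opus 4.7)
My plan is to derive the tail bound via an exponential Chernoff inequality applied to the martingale representation of $Z_t(x,0)$, using the deterministic controls coming from the event $A_1^\varepsilon$. From \eqref{def.Z.new} one has
\begin{equation*}
Z_t(x,0)=\int_0^t\!\int_\R\!\int_0^\infty r\,q_{t-s}(x-y,-y)\,\widetilde N(ds,dy,dr),
\end{equation*}
and on $A_1^\varepsilon$ every jump of this martingale is bounded, by Lemma~\ref{L.fixed.4}, by $K:=C|x|^{\overline{\eta}_c-3\gamma}$. Moreover, Lemma~\ref{L.fixed.3}(b) applied with $\theta=1+\beta$ (admissible since $\beta<1/2$), together with $V(B_1(0))\leq c_\varepsilon$ on $A_1^\varepsilon$, gives the $(1+\beta)$-variation bound $\int_0^t ds\int X_s(dy)|q_{t-s}(x-y,-y)|^{1+\beta}\leq C|x|^{\overline{\eta}_c}$.

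Next, I would truncate: let $Z_t^K$ be the compensated integral obtained by restricting the integrand to $\{|rq_{t-s}|\leq K\}$, so that $Z_t^K = Z_t(x,0)$ on $A_1^\varepsilon$. The exponential formula for purely discontinuous martingales combined with $n(dr) = c_\beta r^{-2-\beta}dr$ and the substitution $u=\lambda rq$ yields on $A_1^\varepsilon$ the compensator bound
\begin{equation*}
\Phi_t(\lambda):=\int_0^t\!ds\int X_s(dy)\int_0^\infty(e^{\lambda rq}-1-\lambda rq)\mathbf{1}_{|rq|\leq K}n(dr)\,\leq\, C|x|^{\overline{\eta}_c}\lambda^{1+\beta}\psi(\lambda K),
\end{equation*}
where $\psi(\Lambda):=\int_0^\Lambda(e^u-1-u)u^{-2-\beta}du$. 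Since $\exp(\lambda Z_t^K - \Phi_t(\lambda))$ is a supermartingale with unit mean, Markov's inequality (combined with the symmetric treatment of $-Z_t^K$ obtained by splitting the integrand by the sign of $q$) gives
\begin{equation*}
\mathbf{P}(|Z_t(x,0)|>r|x|^{\overline{\eta}_c-2\gamma},A_1^\varepsilon)\leq 2\exp\bigl(-\lambda r|x|^{\overline{\eta}_c-2\gamma}+C|x|^{\overline{\eta}_c}\lambda^{1+\beta}\psi(\lambda K)\bigr).
\end{equation*}

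The heart of the argument, and the principal obstacle, is the choice of $\lambda$. Parameterizing $\lambda=\alpha/K$ turns the linear part into $\alpha r|x|^\gamma$ (since $|x|^{\overline{\eta}_c-2\gamma}/K=|x|^\gamma$) and the compensator into $C|x|^{-\beta\overline{\eta}_c+3\gamma(1+\beta)}\alpha^{1+\beta}\psi(\alpha)$. Taking $\alpha$ in the exponential regime of $\psi$, namely $\alpha\sim\log(r/(C|x|^\gamma))$, uses $\psi(\alpha)\sim e^\alpha\alpha^{-2-\beta}$ to balance the compensator against the linear gain, leaving the exponent $-(r|x|^{-\gamma}/C)\log(r/(C|x|^\gamma))$, which is exactly $\log$ of the claimed bound. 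The delicate part is executing this balance while keeping the constants aligned so that one recovers precisely the base $C|x|^\gamma/r$ and exponent $r|x|^{-\gamma}/C$; a secondary subtlety is handling the transition regime $\alpha\sim 1$, where the asymptotic form of $\psi$ changes from polynomial to exponential behaviour, which should be dealt with by treating the cases $r\lesssim|x|^\gamma$ (where the bound is trivial) and $r\gg|x|^\gamma$ separately.
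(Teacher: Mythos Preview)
Your approach is essentially the same as the paper's, just differently packaged. The paper represents $Z_t(x,0)=L^+_{T_+(x)}-L^-_{T_-(x)}$ via the time-change Lemma~\ref{L9}, bounds the random times $T_\pm(x)\le C|x|^{2-\beta-\gamma}$ through \eqref{L.fixed.3.2}, bounds the jumps of $L^\pm$ via Lemma~\ref{L.fixed.4}, and then applies the black-box stable-process tail bound Lemma~\ref{L3}. Since Lemma~\ref{L3} is itself proved by a Fuk--Nagaev/Chernoff inequality, you are in effect inlining that argument and applying the exponential bound directly to the compensated Poisson integral. The splitting by the sign of $q$ that you mention is exactly the decomposition into $L^+$ and $L^-$. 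The paper's route has the minor advantage that the optimisation in $\lambda$ is done once and for all inside Lemma~\ref{L3}; your route is more self-contained but forces you to redo that optimisation by hand.

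One genuine slip: your $(1+\beta)$-variation bound $\int_0^t\!ds\int X_s(dy)\,|q_{t-s}(x-y,-y)|^{1+\beta}\le C|x|^{\overline\eta_{\rm c}}$ is too weak. The exponent $1+\delta$ displayed in \eqref{L.fixed.3.2} is a typo; following the proof (replace \eqref{L.kernel1.1} by \eqref{L.kernel1.4} in the argument for \eqref{L.fixed.3.1}) the correct bound is $CV|x|^{(1+\delta)\theta}$, and with $\theta=1+\beta$ and $\delta$ close to $(1-2\beta)/(1+\beta)$ this gives $C|x|^{2-\beta-\gamma}$, which is precisely what the paper uses in its own proof of Lemma~\ref{L.fixed.5}. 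With your stated exponent the compensator carries a factor $|x|^{-\beta\overline\eta_{\rm c}+3\gamma(1+\beta)}$, which blows up as $|x|\to0$ and makes the Chernoff bound trivial; with the correct exponent $2-\beta-\gamma$ the compensator carries a \emph{positive} power of $|x|$ and your balancing argument with $\alpha\sim\log\bigl(r/(C|x|^\gamma)\bigr)$ goes through as intended. (This correction also accounts for the discrepancy between the linear term $\alpha r|x|^{\gamma}$ you compute and the exponent $r|x|^{-\gamma}/C$ you announce at the end.)
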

\begin{proof}
According to Lemma \ref{L9} there exist spectrally positive $(1+\beta)$-stable processes
$L^+(x)$ and $L^-(x)$ such that
\begin{equation}
\label{L.fixed.5.1}
Z_t(x,0)=L^+_{T_+(x)}-L^-_{T_-(x)},
\end{equation}
where
\begin{equation*}
T_\pm(x):=\int_0^t du \int_{\mathbb{R}}X_u(dy)
\left(\left(q_{t-u}(x-y,-y)\right)^{\pm}\right)^{1+\beta}.
\end{equation*}
By \eqref{L.fixed.3.2} with $\theta=1+\beta$ and $\delta=(1-2\beta-\gamma)/(1+\beta)$,
there exists $C=C(\varepsilon,\gamma)$ such that
\begin{equation}
\label{L.fixed.5.2}
T_\pm(x)\leq C|x|^{2-\beta-\varepsilon_1}\quad\text{on }\ A_1^\varepsilon.
\end{equation}
Applying Lemma \ref{L.fixed.4} and \eqref{L.fixed.5.2}, we obtain
\begin{align*}
&\mathbf{P}\left(L^\pm_{T_{\pm}(x)}(x)\geq r|x|^{\overline{\eta}_{\rm c}-2\gamma},A_1^\varepsilon\right)\\
&\leq \mathbf{P}\left(L^\pm_{T_\pm(x)}(x)\geq r|x|^{\overline{\eta}_{\rm c}-2\gamma},A_1^\varepsilon,
\sup_{s\leq T_\pm(x)} \Delta L^\pm_s\leq C|x|^{\overline{\eta}_{\rm c}-3\gamma}\right)\\
&\leq\mathbf{P}\left(\sup_{s\leq C|x|^{2-\beta-\varepsilon_1}}L^\pm_{s}(x)
\mathsf{1}\bigl\{\sup_{0\leq v\leq s}\Delta L^\pm_{v}\leq C|x|^{\overline{\eta}_{\rm c}-3\gamma}\bigr\}
\geq r|x|^{\overline{\eta}_{\rm c}-2\gamma}\right).
\end{align*}
Applying now Lemma \ref{L3}, we get
\begin{align*}
\mathbf{P}\left(L^\pm_{T_{\pm}(x)}(x)\geq r|x|^{\overline{\eta}_{\rm c}-2\gamma},A_1^\varepsilon\right)
\leq \left(\frac{C|x|^{2-\beta-\gamma}}{r|x|^{\overline{\eta}_{\rm c}-2\gamma}
|y|^{\beta(\overline{\eta}_{\rm c}-3\gamma)}}\right)^{r|x|^{-\gamma}/C}\\
=\left(\frac{C}{r}|x|^\gamma\right)^{r|x|^{-\gamma}/C}.
\end{align*}
Thus, the proof is finished.
\hfill$\square$
\end{proof}

\vspace{12pt}

Taking into account \eqref{P_A}, we see that the lower bound for
$H_{Z}(0)$ will be proven if we show that, for every $\varepsilon>0$,
\begin{equation}
\label{T.fixed.10}
\sup_{0<x<1}\frac{\left|Z_t(x,0)\right|}{|x|^{\overline{\eta}_{\rm c}-\gamma}}<\infty
\quad\text{on }\ A_1^\varepsilon.
\end{equation}
Fix some $q\in(0,1)$ and note that
\begin{equation*}
\left\{  \sup_{0<x<1}\frac{\left|Z_t(x,0)\right|}{x^{\overline{\eta}_{\rm c}-\gamma}}>k\right\}
\subseteq\bigcup_{n=1}^{\infty}\left\{  \sup_{x\in I_{n}}\left|Z_t(x,0)\right|
>\frac{k}{2^{q}}n^{-q(\overline{\eta}_{\rm c}-\gamma)}\right\},
\end{equation*}
where $I_{n}:=\{x:(n+1)^{-q}\leq x<n^{-q}\}$. Moreover, by the triangle
inequality,
\begin{equation*}
\left|Z_t(x,0)\right| \leq \left|Z_t(n^{-q},0)\right|
+\left|Z_t(x)-Z_t(n^{-q})\right|+(n^{-q}-x)|W|,\quad x\in I_{n},
\end{equation*}
where
$$
W:=\int_0^t\int_{\mathbb{R}}
M\left(\mathrm{d}(u,y)\right)\frac{\partial}{\partial y}p_{t-u}(-y).
$$
Consequently, for all $n\geq1$,
\begin{align*}
&  \left\{ \sup_{x\in I_{n}}\left|Z_t(x,0)\right|>\frac{k}{2^{q}}n^{-q(\bar\eta_{\rm c}-\gamma)}\right\}
\subseteq\left\{\sup_{x\in I_n}\bigl|Z_t(x)-Z_t(n^{-q})\bigr|>\frac{k}{3\cdot2^{q}}n^{-q}\right\}\\ 
&\hspace{1cm} \cup\left\{ \left|Z_t(n^{-q},0)\right|>\frac{k}{3\cdot2^{q}}n^{-q(\bar\eta_{\rm c}-\gamma)}\right\}
\cup\left\{|W|>\frac{kn^{-q(\bar\eta_{\rm c}-\gamma)}}{3\cdot2^{q}(n^{-q}-(n+1)^{-q})}\right \},
\end{align*}
Note that, for all $R>0$,
\begin{align*}
&  \left\{  \sup_{0<x<y<1}\frac{\bigl|Z_t(x)-Z_t(y)\bigr|}%
{|x-y|^{q(\bar\eta_{\rm c}-\gamma)/(q+1)}}\leq R\right\} \nonumber\\
&  \subseteq\left\{  \bigl|Z_t(x)-Z_t(n^{-q})\bigr|\leq
R\,q^{q(\bar\eta_{\rm c}-\gamma)/(q+1)}n^{-q\eta},\,\ x\in I_{n}\right\}  \!.
\end{align*}
This implies that
\begin{align*}
\left\{  \sup_{0<x<1}\frac{\left|Z_t(x,0)\right|}{x^{\overline{\eta}_{\rm c}-\gamma}}>k\right\}
\subseteq \left\{\sup_{0<x<y<1}\frac{\bigl|Z_t(x)-Z_t(y)\bigr|}{|x-y|^{q(\bar\eta_{\rm c}-\gamma)/(q+1)}}>c(q)k\right\}\\
\cup\bigcup_{n=1}^\infty \left\{ \left|Z_t(n^{-q},0)\right|>\frac{k}{3\cdot2^{q}}n^{-q(\bar\eta_{\rm c}-\gamma)}\right\}
\cup\left\{|W|>c(q)k\right\}.
\end{align*}
where $c(q)$ is some positive constant. 

If we choose $q$ so small that
\thinspace$(\bar\eta_{\rm c}-\gamma) q/(q+1)<\eta_{c\,},$\thinspace\ then
\begin{equation*}
\lim_{k\rightarrow\infty}\mathbf{P}\biggl(\sup_{0<x<y<1}
\frac{\bigl|Z_t(x)-Z_t(y)\bigr|}{|x-y|^{q(\bar\eta_{\rm c}-\gamma)/(q+1)}}>c(q)k\biggr)=0,
\end{equation*}
since, by Theorem \ref{T.loc.Hold}, $Z$ is locally H\"{o}lder continuous of every index 
smaller than $\eta_{c}$. Furthermore,
\begin{equation*}
\lim_{k\rightarrow\infty}\mathbf{P}(|W|>c(q)k)=0.
\end{equation*}
Finally, applying Lemma \ref{L.fixed.5}, conclude that
\begin{equation*}
\lim_{k\rightarrow\infty}\mathbf{P}\biggl(
\bigcup_{n=1}^{\infty}\left\{\left|Z_t(n^{-q},0)\right|>\frac{k}{3\cdot2^{q}}n^{-q(\bar\eta_{\rm c}-\gamma)}\right\}  \biggr)=0.
\end{equation*}
Thus, \eqref{T.fixed.10} is shown.
\subsection{Proof of the optimality of $\bar{\eta}_{\mathrm{c}}$.}

Now it is time to explain the detailed strategy of the optimality proof.
Define
\begin{align}
&  A_2^{\varepsilon}:=\label{Aep}\\
&  \Big\{\Delta X_{s}(y)\leq c_{(\ref{in6.1a})}%
\,\bigl((t-s)|y|\bigr)^{1/(1+\beta)}(f_{s,x})^{\ell}\text{ for all
}s<t\ \text{and}\ y\in B_{1/\mathrm{e}}(0)\Big\}\nonumber\\
&  \cap\,\Big\{\Delta X_{s}(y)\leq c_{\eqref{inL8}}(t-s)^{1/(1+\beta)-\gamma
}\ \text{for all }s<t\ \text{and }y\in\R\Big\}\cap\{V(B_2(0))\leq
c_{\varepsilon}\},\nonumber
\end{align}
where $f_{s,x\,}$ and $\ell$ are defined in~(\ref{def_f}) and (\ref{def.lambda1}), 
respectively. Further, according to Lemma \ref{L.fixed.2}, 
$\mathbf{P}(V(B_2(0))>c_\varepsilon)\to 0$ for any 
$c_\varepsilon\to\infty$ as $\varepsilon\to0$. 
Note that $D_{\theta}\uparrow\{X_{t}(0)>0\}$ as $\theta\downarrow0$.
Moreover, by Lemmata \ref{L8}, \ref{6.00} and \ref{L.fixed.2},
$A_2^\varepsilon\uparrow\Omega$ as $\varepsilon\downarrow0$. Hence, for the proof of
Theorem~\ref{T.fixed}(b) \emph{it is sufficient}\/ to show that%
\begin{equation*}
\mathbf{P}\biggl(\sup_{x\in B_{\epsilon}(0),\,x\neq0}
\frac{\bigl|{X}_{t}(x)-{X}_{t}(0)\bigr|}{|x|^{\bar{\eta}_{\mathrm{c}}}}\,=\,\infty
\,\bigg|\,D_{\theta}\cap A_2^\varepsilon\biggr)=1.
\end{equation*}
Since $\mu\ast p_t(x)$ is Lipschitz continuous at
$0$, the latter will follow from the equality%
\begin{align}
\mathbf{P}\Big(Z_t(c\,2^{-n-2},0)\geq   \ 2^{-\bar{\eta
}_{\mathrm{c}}n}\,n^{1/(1+\beta)-\varepsilon}\text{ }\label{Nr.}
\ \text{infinitely often}\,\Big|\,D_{\theta}\cap A_2^\varepsilon\Big)=1,
\end{align}
where we choose $c=c_{(\ref{nn.L3})}$.

To verify (\ref{Nr.}), we will again use the{LM ''the'' instead of ''our''}  method of representing
$Z$ as a time-changed stable process. To be more precise, applying
(\ref{L.fixed.5.1}) with $x=c\,2^{-n-2}$ (for $n$ sufficiently large) and using
$n$-dependent notation as $L_{n}^{\pm},T_{n,\pm}$ (and $\varphi_{n,\pm})$, we
have
\begin{equation*}
Z_t(c\,2^{-n-2},0)=L_{n}^{+}(T_{n,+})-L_{n}^{-}(T_{n,-}).
\end{equation*}

It follows from this representation that \eqref{Nr.} is
a consequence in the following statement.
\begin{proposition}
\label{very_new_prop}
For almost every $\omega\in D_\theta\cap A_2^\varepsilon$ there exists a subsequence $n_j$
such that 
$$
L_{n_j}^{+}(T_{n_j,+})\geq 2^{1-\bar{\eta}_{\rm c}n_j}n_j^{1/(1+\beta)-\varepsilon}
\text{ and }\quad
L_{n_j}^{-}(T_{n_j,-})\leq 2^{-\bar{\eta}_{\rm c}n_j}n_j^{1/(1+\beta)-\varepsilon}.
$$
\end{proposition}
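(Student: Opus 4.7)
The strategy is to combine the ``infinitely often'' conclusion of Lemma~\ref{n.L4} with the representation $Z_t(x_1,0)=L^+_{n_j}(T_{n_j,+})-L^-_{n_j}(T_{n_j,-})$ of Lemma~\ref{L9}: the big jump of $X$ constructed in Lemma~\ref{n.L4} will translate into a positive jump of $L^+_{n_j}$ of exactly the right size, while $L^-_{n_j}$ will be shown to remain small by reusing the capped-jump concentration estimate already employed in the proof of Lemma~\ref{L.fixed.5}. Since $A_n$ holds for infinitely many $n$ almost surely on $D_\theta$, one can extract a measurable random subsequence $n_j\uparrow\infty$ and, on each $A_{n_j}$, select a triple $(s_j,y_j,r_j)$ with $s_j\in(t-2^{-2n_j},\,t-2^{-2(n_j+1)})$, $y_j\in(\tfrac{c}{2}2^{-n_j},\,\tfrac{3c}{2}2^{-n_j})$ and $r_j\geq 2^{-(\bar{\eta}_{\mathrm{c}}+1)n_j}n_j^{1/(1+\beta)}$, where $c=c_{(\ref{nn.L3})}$. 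Setting $x_1:=c\,2^{-n_j-2}$ and applying Lemma~\ref{L9}, this atom becomes a jump of $L^+_{n_j}$ of size $r_j(q_{t-s_j}(x_1-y_j,-y_j))^{+}$, or of $L^-_{n_j}$ of size $r_j(q_{t-s_j}(x_1-y_j,-y_j))^{-}$.

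To pin down the sign and magnitude of $q$, I would use the Lagrange remainder $q_s(x_1-y,-y)=\tfrac12 x_1^2\,p''_s(\xi)$ combined with $p''_s(z)=p_s(z)(z^2-s)/s^2$. In the $A_{n_j}$-regime $\xi\in[-y_j,-y_j+x_1]$ satisfies $|\xi|/\sqrt{t-s_j}\sim y_j/\sqrt{t-s_j}\in[c/2,3c/2]$; taking $c=c_{(\ref{nn.L3})}$ large enough (which Lemma~\ref{n.L3} allows) forces $\xi^2>t-s_j$, so $p''_s(\xi)>0$ and $|p''_s(\xi)|\geq C(t-s_j)^{-3/2}$. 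Combined with $x_1^2\sim 2^{-2n_j}$ and $(t-s_j)^{-3/2}\sim 2^{3n_j}$ this yields $q_{t-s_j}(x_1-y_j,-y_j)\geq C'\,2^{n_j}$, so the big jump lands in $L^+_{n_j}$ with magnitude at least $C''\,2^{-\bar{\eta}_{\mathrm{c}}n_j}n_j^{1/(1+\beta)}$, comfortably exceeding the target $2^{1-\bar{\eta}_{\mathrm{c}}n_j}n_j^{1/(1+\beta)-\varepsilon}$ for all large~$j$.

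To upgrade the jump estimate to the value $L^+_{n_j}(T_{n_j,+})$ I would split it as (value just before the image time $T_+(s_j)$ of the big jump) $+$ (big jump) $+$ (increment on $[T_+(s_j),T_{n_j,+}]$). On $A_2^\varepsilon$, Lemma~\ref{L8} and Corollary~\ref{6.1} bound all jumps of $M$, hence all jumps of $L^+_{n_j}$ other than the big one, and Lemma~\ref{L.fixed.3.2} bounds the time horizon $T_{n_j,+}$. Feeding these truncations into the capped-jump concentration inequality for spectrally positive $(1+\beta)$-stable processes used in the proof of Lemma~\ref{L.fixed.5} (the ``Lemma~L3'' cited there), and applying Borel--Cantelli over the dyadic indices, shows that almost surely both flanking increments are $o\bigl(2^{-\bar{\eta}_{\mathrm{c}}n_j}n_j^{1/(1+\beta)-\varepsilon}\bigr)$, which gives the required lower bound.

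The main obstacle is the matching upper bound $L^-_{n_j}(T_{n_j,-})\leq 2^{-\bar{\eta}_{\mathrm{c}}n_j}n_j^{1/(1+\beta)-\varepsilon}$. A naive bound of $L^-$ by $T_{n_j,-}^{1/(1+\beta)}$ times a logarithmic factor via stable tails is not sharp enough, because the bound on $T_{n_j,-}$ furnished by Lemma~\ref{L.fixed.3.2} does not, on its own, yield the correct exponent $\bar{\eta}_{\mathrm{c}}$. The cure is to exploit that $(q_{t-u}(x_1-y,-y))^{-}$ vanishes outside the thin strip $|y|\lesssim\sqrt{t-u}$ where $p''$ is negative, so only atoms of $M$ concentrated near the origin feed $L^-_{n_j}$; inside that strip, the $A_2^\varepsilon$-bounds of Lemma~\ref{6.00} force every individual jump of $L^-_{n_j}$ to be of size at most $|x_1|^{\bar{\eta}_{\mathrm{c}}-3\gamma}$, which is precisely the bound derived in Lemma~\ref{L.fixed.4}. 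Feeding these sharper truncations into the same capped-jump concentration inequality and applying Borel--Cantelli along $\{n_j\}$---thinning to the still-infinite set of indices on which the bound holds---completes the argument.
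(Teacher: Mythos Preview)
Your treatment of $L^+$ is essentially the paper's: the big jump guaranteed by $A_{n_j}$ lands in $L^+_{n_j}$ (the sign computation via $p''_s(\xi)>0$ for $c$ large is the right way to see this), and since a spectrally positive process cannot drift down much, the value $L^+_{n_j}(T_{n_j,+})$ stays large. The paper packages this as Lemma~\ref{n.L5}.

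The gap is in your $L^-$ argument. You propose to cap the jumps of $L^-_{n_j}$ at $|x_1|^{\bar\eta_{\rm c}-3\gamma}$ via Lemma~\ref{L.fixed.4} and then apply Lemma~\ref{L3}. But with $x_1=c\,2^{-n-2}$ the target threshold $x=2^{-\bar\eta_{\rm c}n}n^{1/(1+\beta)-\varepsilon}$ and the jump cap $y\sim 2^{-n(\bar\eta_{\rm c}-3\gamma)}$ give $x/y\sim 2^{-3\gamma n}\to 0$, so the bound $(Ct/xy^\beta)^{x/y}$ in Lemma~\ref{L3} tends to $1$ and is useless for Borel--Cantelli. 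In plain terms, a \emph{single} jump of $L^-_{n_j}$ of nearly maximal allowed size already exceeds your target, so no concentration inequality based on that cap can help. The strip observation about $(q)^-$ does not sharpen this: the bound $|x_1|^{\bar\eta_{\rm c}-3\gamma}$ of Lemma~\ref{L.fixed.4} is already the best uniform-in-$(s,y)$ cap available from $A_2^\varepsilon$.

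The paper resolves this with a different mechanism (Lemmas~\ref{n.L6} and~\ref{n.L7}). It splits $\{L^-_{n}(T_{n,-})>2^{-\bar\eta_{\rm c}n}n^{1/(1+\beta)-\varepsilon}\}$ into $U_n^2$ (no jump of $L^-_n$ exceeds $2^{-\bar\eta_{\rm c}n}n^{1/(1+\beta)-2\varepsilon}$) and $U_n^1$ (some jump does). On $U_n^2$ the jump cap is now of the \emph{correct} order $2^{-\bar\eta_{\rm c}n}$, so Lemma~\ref{L3} gives summable probabilities. The real work is $U_n^1$: Lemma~\ref{n.L7} carries out a case-by-case analysis in $(s,y)$ and shows that on $A_2^\varepsilon$ any jump of $L^-_n$ exceeding $2^{-\bar\eta_{\rm c}n}n^{1/(1+\beta)-2\varepsilon}$ must come from an atom of $M$ lying in a specific narrow region $D$ of $(r,s,y)$-space. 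But $A_n$ already forces one atom of $M$ in that same region; hence $U_n^1\cap A_n\cap A_2^\varepsilon$ requires \emph{two} such atoms, and an intensity computation (using the mass bounds available on $D_\theta$) shows this has probability $O(n^{-2+4\varepsilon(1+\beta)}\log^{C} n)$, which is summable. This two-jump argument is the missing idea in your proposal.
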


\medskip

Let us define the following events
\[
B_{n}^{+}:=\bigl\{L_{n}^{+}(T_{n,+})\geq2^{1-\bar{\eta}_{\mathrm{c}}%
n}\,n^{1/(1+\beta)-\varepsilon}\bigr\},\quad B_{n}^{-}:=\bigl\{L_{n}%
^{-}(T_{n,-})\leq2^{-\bar{\eta}_{\mathrm{c}}n}\,n^{1/(1+\beta)-\varepsilon
}\bigr\}
\]
and
\begin{equation*}
B_{n}:=B_{n}^{+}\cap B_{n}^{-}.
\end{equation*}
Then, obviously, Proposition \ref{very_new_prop} will follow once we verify
\begin{equation}
\lim_{N\uparrow\infty}\mathbf{P}\biggl(\,\bigcup_{n=N}^{\infty}(B_{n}\cap
A_{n})\,\bigg|\,D_{\theta}\cap A_2^\varepsilon\,\biggr)=1, \label{n6}%
\end{equation}
where $A_n$ were defined in \eqref{An-def}.
Taking into account Lemma~\ref{n.L4}, we conclude that to get (\ref{n6}) we
have to show
\begin{equation}
\lim_{N\uparrow\infty}\mathbf{P}\biggl(\,\bigcup_{n=N}^{\infty}(B_{n}%
^{\mathrm{c}}\cap A_{n})\,\bigg|\,D_{\theta}\cap A_2^\varepsilon\biggr)=0.
\label{n7}%
\end{equation}
Let us explain briefly the meaning of \eqref{n6}. By Lemma \ref{n.L4} we know
that there exists a sequence of big jumps
$$
\Delta X_s\left(\frac{c}{2}2^{-n_j},\frac{3c}{2}2^{-n_j}\right)\geq
2^{-(\overline{\eta}_{\rm c}+1)n_j}n_j^{1/(1+\beta)}
$$
for some $s\in(t-2^{-2n_j},t-2^{-2(n_j+1)})$. \eqref{n6} implies that
these jumps guarantee big values of $L_{n_j}^{+}(T_{n_j,+})-L_{n_j}^{-}(T_{n_j,-})$
for some subsequence of $\{n_j\}$. And this is the main consequence of
Proposition \ref{very_new_prop}.

Now we will present two lemmas, from which (\ref{n7}) will follow immediately.
To this end, split
\begin{equation}
B_{n}^{\mathrm{c}}\cap A_{n}=(B_{n}^{+,\mathrm{c}}\cap A_{n})\cup
(B_{n}^{-,\mathrm{c}}\cap A_{n}). \label{114}%
\end{equation}

\begin{lemma}
\label{n.L5}We have%
\begin{equation*}
\lim_{N\uparrow\infty}\sum_{n=N}^{\infty}\mathbf{P}\bigl(B_{n}^{+,\mathrm{c}%
}\cap A_{n}\cap A_2^\varepsilon\bigr)=\,0.
\end{equation*}
\end{lemma}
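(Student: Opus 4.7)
The plan is to use the event $A_n$ to isolate a single large jump of $X$ that produces a large positive jump in $L_n^+$, and to bound the probability that the remaining part of $L_n^+(T_{n,+})$ is negative enough to cancel it. On $A_n$, pick $(s_n,y_n,r_n)$ with $s_n\in(t-2^{-2n},t-2^{-2(n+1)})$, $y_n\in(c\cdot 2^{-n-1},3c\cdot 2^{-n-1})$ and $r_n\geq 2^{-(\bar\eta_{\mathrm{c}}+1)n}n^{1/(1+\beta)}$. Extending Lemma~\ref{L9} to the signed integrand $q_{t-u}(x_n-y,-y)$ with $x_n:=c\,2^{-n-2}$ through the positive/negative part decomposition (so that $L_n^\pm$ absorbs jumps of the form $r|q|$ according to the sign of $q$), this $X$-jump produces a jump of magnitude $r_n|q_{t-s_n}(x_n-y_n,-y_n)|$ in either $L_n^+$ or $L_n^-$.

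Next, the second-order Taylor expansion $q_{t-s}(x-y,-y)=\frac{x^2}{2}\,p''_{t-s}(\eta)$ with $\eta$ between $-y$ and $x-y$, combined with $p''_s(z)=p_s(z)(z^2-s)/s^2$, shows that for $c$ chosen large enough (as in Lemma~\ref{n.L3}) one has $\eta^2>t-s_n$ uniformly in the $A_n$-window and therefore $q>0$ and $|q_{t-s_n}(x_n-y_n,-y_n)|\geq c_1\cdot 2^n$. Hence the jump feeds $L_n^+$ with magnitude $J_n\geq c_2\cdot 2^{-\bar\eta_{\mathrm{c}}n}n^{1/(1+\beta)}$. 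Decomposing $L_n^+(T_{n,+})=J_n+R_n$ and using that $n^\varepsilon>4/c_2$ for all sufficiently large $n$, the event $B_n^{+,\mathrm{c}}\cap A_n$ forces $R_n\leq-\frac{c_2}{2}\cdot 2^{-\bar\eta_{\mathrm{c}}n}n^{1/(1+\beta)}$.

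Campbell's formula applied to the compensator $\widehat N(ds,dy,dr)=ds\,X_s(dy)\,n(dr)$ gives $\mathbf{P}(A_n)\leq C/n$ (from the time-window $\asymp 2^{-2n}$, the space-window $\asymp 2^{-n}$, and the tail $\int_{r_n}^{\infty}r^{-2-\beta}dr\asymp r_n^{-(1+\beta)}=2^{3n}/n$, using $(\bar\eta_{\mathrm{c}}+1)(1+\beta)=3$); on $A_2^\varepsilon$ the jump bounds, combined with the estimate on $|q|$ above (as in the proof of Lemma~\ref{L.fixed.4}), cap each jump of $L_n^+$ by some $M_n\leq C\,|x_n|^{\bar\eta_{\mathrm{c}}-3\gamma}$, and \eqref{L.fixed.3.2} with $\theta=1+\beta$, $\delta=(1-2\beta)/(1+\beta)$ gives $T_{n,+}\leq C\,|x_n|^{\bar\eta_{\mathrm{c}}}\leq C'\,2^{-\bar\eta_{\mathrm{c}}n}$ on $A_2^\varepsilon$. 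A polynomial-in-$n$ bound of the form $\mathbf{P}(R_n\leq-\tfrac{c_2}{2}\cdot 2^{-\bar\eta_{\mathrm{c}}n}n^{1/(1+\beta)},\,A_2^\varepsilon)\leq C/n^\delta$ would then combine with the Campbell estimate to give a summand $O(n^{-1-\delta})$.

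The hardest step is establishing this negative-excursion bound. The naive Chernoff bound from the Laplace exponent, $\mathbf{P}(L_n^+(t)\leq-a)\leq\exp(-c\,a^{(1+\beta)/\beta}t^{-1/\beta})$, is trivial in our regime because the target $a\sim 2^{-\bar\eta_{\mathrm{c}}n}n^{1/(1+\beta)}$ lies far below the intrinsic scale $t^{1/(1+\beta)}\sim 2^{-\bar\eta_{\mathrm{c}}n/(1+\beta)}$. The remedy is to truncate $L_n^+$ to its jumps bounded by $M_n$ (legitimate on $A_2^\varepsilon$), rewrite the truncated part as a purely discontinuous $L^2$-martingale with predictable quadratic variation $\leq C\,T_{n,+}\,M_n^{1-\beta}$, and apply a Bernstein-type inequality -- the lower-tail counterpart of Lemma~\ref{L3} -- together with a deterministic estimate for the compensator-induced drift; balancing these three scales produces the required $n^{-\delta}$ and completes the proof.
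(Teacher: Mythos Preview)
Your overall strategy matches the paper's: on $A_n$ the big $X$-jump feeds $L_n^+$ with a contribution $J_n\gtrsim 2^{-\bar\eta_{\mathrm c}n}n^{1/(1+\beta)}$ (your Taylor argument for the sign of $q$ is correct once $c=c_{\eqref{nn.L3}}>4$, which can be arranged), and $B_n^{+,\mathrm c}$ then forces the spectrally positive process $L_n^+$ to shed an amount comparable to $J_n$ over the stable-clock window $[0,T_{n,+}]$. The argument goes wrong, however, at the estimate for $T_{n,+}$, and everything downstream of that error collapses.

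The bound $T_{n,+}\le C|x_n|^{\bar\eta_{\mathrm c}}\asymp 2^{-\bar\eta_{\mathrm c}n}$ you extract from \eqref{L.fixed.3.2} is far too weak: the integrand there carries the power $\theta=1+\beta$, so the correct scale is $T_{n,+}\le C|x_n|^{(1+\beta)\bar\eta_{\mathrm c}}=C|x_n|^{2-\beta}$. One obtains this on $A_2^\varepsilon$, without any $\gamma$-loss, by splitting the $u$-integral at $t-u=|x_n|^2$: for $t-u>|x_n|^2$ use \eqref{L.kernel1.4} with $\delta=1$, giving a contribution
$\le CV\,|x_n|^{2(1+\beta)}\int_{|x_n|^2}^{t} s^{-1-3\beta/2}\,\mathrm ds\le CV\,|x_n|^{2-\beta}$;
for $t-u<|x_n|^2$ use the crude bound $|q_s|\le C|x_n|s^{-1/2}(p_s(\cdot/2)+p_s(\cdot/2))$ from \eqref{L.kernel1.1} and \eqref{L.kernel1.2}, giving
$\le CV\,|x_n|^{1+\beta}\int_0^{|x_n|^2}s^{-(1+2\beta)/2}\,\mathrm ds\le CV\,|x_n|^{2-\beta}$
(here $\beta<1/2$ is used). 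With this exponent the Chernoff bound of Lemma~\ref{L.small.values} is \emph{not} trivial: since $(1+\beta)\bar\eta_{\mathrm c}=2-\beta$,
\[
\frac{(J_n/4)^{(1+\beta)/\beta}}{\bigl(C\,2^{-(2-\beta)n}\bigr)^{1/\beta}}\ \asymp\ n^{1/\beta},
\]
and hence $\mathbf P\bigl(\inf_{u\le T_{n,+}}L_n^+(u)<-J_n/4,\;A_2^\varepsilon\bigr)\le\exp(-c\,n^{1/\beta})$, which is summable. This is precisely the ``very difficult for a spectrally positive process to come down'' step the paper alludes to; no Campbell estimate on $\mathbf P(A_n)$ enters at all.

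Your proposed remedy has two independent problems. First, the step ``combine with the Campbell estimate to give a summand $O(n^{-1-\delta})$'' implicitly multiplies $\mathbf P(A_n)\le C/n$ with $\mathbf P(R_n\le-a_n,\,A_2^\varepsilon)$; but $R_n=L_n^+(T_{n,+})-J_n$ is defined only on $A_n$, and there is no independence (the big jump also perturbs $X_s$ for $s>s_n$ and hence $T_{n,+}$), so no such product bound is available. Second, with the cap $M_n\asymp 2^{-(\bar\eta_{\mathrm c}-3\gamma)n}\gg J_n$, the Bernstein/Freedman exponent itself vanishes: one checks $a_n/M_n\asymp n^{1/(1+\beta)}2^{-3\gamma n}\to0$ and, even with the sharp $T_{n,+}$, $a_n^2/(T_{n,+}M_n^{1-\beta})\asymp n^{2/(1+\beta)}2^{-3\gamma(1-\beta)n}\to0$. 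So the truncation detour yields nothing. The fix is simply to use the correct order of $T_{n,+}$ and apply Lemma~\ref{L.small.values} directly.
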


The proof of this lemma is a word-for-word repetition of the proof of
Lemma~5.3 in \cite{FMW10} (it is even simpler as we do not need additional
indexing in $k$ here), and we omit it. The idea behind the proof is simple:
Whenever $X$ has a \textquotedblleft big\textquotedblright\ jump guaranteed
by $A_{n\,},$ this jump corresponds to the jump of $L_{n}^{+}$ and then it
is very difficult for a spectrally positive process $L_{n}^{+}$ to come down,
which is required by $B_{n}^{+,\mathrm{c}}$. 

Note that Lemma \ref{n.L5} alone is not enough to finish the proof of
Proposition \ref{very_new_prop}:
on $\{L_n^+\geq 2^{1-\bar{\eta}_{\rm c}n}n^{1/(1+\beta)-\varepsilon}\}$ we may still have
$Z_t(c2^{-n-2},0)\leq 2^{-\bar{\eta}_{\rm c}n}n^{1/(1+\beta)-\varepsilon}$
if $B_n^{-,c}$ occurs. In other words, this is the situation where the jump of $X$,
which leads to a large
value of $L_n^+$, can be compensated by a further jump of $X$. The next lemma
states that it can not happen that all the jumps guaranteed by $A_n$'s will be
compensated.
\begin{lemma}
\label{n.L5'}
We have
\begin{equation*}
\lim_{N\uparrow\infty}\sum_{n=N}^{\infty}\mathbf{P}\bigl(B_{n}^{-,\mathrm{c}%
}\cap A_{n}\cap A_2^\varepsilon\cap D_{\theta}\bigr)=0.
\end{equation*}
\end{lemma}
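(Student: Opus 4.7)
The plan is to estimate $\mathbf{P}(B_n^{-,\mathrm{c}}\cap A_2^\varepsilon)$ (the events $A_n$ and $D_\theta$ can be dropped by monotonicity for an upper bound, since the needed time-change control $V(B_2(0))\le c_\varepsilon$ is already built into $A_2^\varepsilon$) by the same scheme as the proof of Lemma~\ref{L.fixed.5}, but using the sharper jump bound $\Delta X_s(y)\le c_{(\ref{in6.1a})}((t-s)|y|)^{1/(1+\beta)}f_{s,y}^\ell$ available on $A_2^\varepsilon$. Replacing the polynomial margin $\gamma$ of $A_1^\varepsilon$ by a logarithmic factor $f_{s,y}^\ell$ is precisely what is needed to bring the tail bound on $|Z_t(c\,2^{-n-2},0)|$ down from the $|x|^{\bar\eta_{\mathrm{c}}-2\gamma}$ level of Lemma~\ref{L.fixed.5} to $|x|^{\bar\eta_{\mathrm{c}}}$ up to polynomial-in-$n$ corrections, matching the target $2^{-\bar\eta_{\mathrm{c}}n}n^{1/(1+\beta)-\varepsilon}$ that defines $B_n^-$.

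Step 1 is to reprove Lemma~\ref{L.fixed.4} on $A_2^\varepsilon$. With $x=c\,2^{-n-2}$ and the sharper bound for $r=\Delta X_s(y)$, the case analysis $|y|\lessgtr(t-s)^{1/2}$, $|x|\lessgtr(t-s)^{1/2}$ of Lemma~\ref{L.fixed.4} simplifies: using the identity $(1+\beta)\bar\eta_{\mathrm{c}}=2-\beta$, the polynomial exponents in $|x|$, $(t-s)$ and $|y|$ match exactly, leaving a bound of the form $|\Delta Z_s(x,0)|\le C|x|^{\bar\eta_{\mathrm{c}}}f_{s,y}^\ell\cdot G\bigl(y/(t-s)^{1/2},(x-y)/(t-s)^{1/2}\bigr)$ for some bounded, rapidly decaying $G$. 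Because $G$ suppresses contributions from $|y|\gg(t-s)^{1/2}$, the relevant jumps have $|y|\lesssim(t-s)^{1/2}$, and a routine verification shows $f_{s,y}^\ell\le Cn^{2\ell}$ uniformly over such relevant $(s,y)$. Hence on $A_2^\varepsilon$,
\[
J_n\,:=\,\sup_{u\le T_{n,-}}|\Delta L_n^{-}(u)|\ \le\ C\,2^{-\bar\eta_{\mathrm{c}}n}\,n^{2\ell}.
\]

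Step 2 combines this with Lemma~\ref{L.fixed.3}(b) (with $\theta=1+\beta$ and $\delta$ close to $(1-2\beta)/(1+\beta)$, legitimate since $\bar\eta_{\mathrm{c}}>1$ means $\beta<1/2$), which on $A_2^\varepsilon$ yields $T_{n,-}\le C\,2^{-(2-\beta-\epsilon_1)n}$, and then applies Lemma~\ref{L3} exactly as at the end of the proof of Lemma~\ref{L.fixed.5}:
\[
\mathbf{P}\bigl(L_n^{-}(T_{n,-})>2^{-\bar\eta_{\mathrm{c}}n}n^{1/(1+\beta)-\varepsilon},A_2^\varepsilon\bigr)\,\le\,\biggl(\frac{T_{n,-}}{2^{-\bar\eta_{\mathrm{c}}n}n^{1/(1+\beta)-\varepsilon}J_n^\beta}\biggr)^{\!\!2^{-\bar\eta_{\mathrm{c}}n}n^{1/(1+\beta)-\varepsilon}/(CJ_n)}.
\]
Thanks to $(1+\beta)\bar\eta_{\mathrm{c}}=2-\beta$, the exponentials inside the base collapse to $2^{\epsilon_1 n}$, so taking $\epsilon_1$ and $q$ small enough compared with $\varepsilon$ renders the base a negative power of $n$ while the exponent remains a positive power of $n$. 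The bound is then faster than any polynomial in $n$, hence summable.

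The main obstacle is Step~1: carefully tracking the two logarithmic factors $\log^\ell((t-s)^{-1})$ and $\log^\ell(|y|^{-1})$ from $f_{s,y}^\ell$ through the regime analysis uniformly in the jump location, so that the bound on $J_n$ only suffers a polynomial-in-$n$ loss. Once that is done, the algebraic identity $(1+\beta)\bar\eta_{\mathrm{c}}=2-\beta$ makes Lemma~\ref{L3} effective with essentially no slack, and summability follows automatically.
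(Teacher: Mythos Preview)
Your approach has a genuine gap at the interface of Steps~1 and~2. The jump bound you obtain in Step~1 is too weak to make Lemma~\ref{L3} effective in Step~2. Tracing through the case analysis of Lemma~\ref{L.fixed.4} with the sharper bound $\Delta X_s(y)\le c_{(\ref{in6.1a})}((t-s)|y|)^{1/(1+\beta)}f_{s,y}^\ell$ indeed yields $J_n\le C\,2^{-\bar\eta_{\mathrm c}n}n^{2\ell}$, but recall $\ell=\tfrac{1}{1+\beta}+q$, so $2\ell>\tfrac{1}{1+\beta}$. When you feed this into Lemma~\ref{L3} with $x=2^{-\bar\eta_{\mathrm c}n}n^{1/(1+\beta)-\varepsilon}$ and $y=J_n$, the exponent is
\[
\frac{x}{y}\ \sim\ n^{\,1/(1+\beta)-\varepsilon-2\ell}\ =\ n^{-1/(1+\beta)-\varepsilon-2q}\ \longrightarrow\ 0,
\]
so the bound $(Ct/(xy^{\beta}))^{x/y}$ tends to~$1$ and is not summable. (There is a second problem: the time-change bound $T_{n,-}\le C\,2^{-(2-\beta-\epsilon_1)n}$ from Lemma~\ref{L.fixed.3} always carries a strict loss $\epsilon_1>0$, so the base contains a factor $2^{\epsilon_1 n}$ that no polynomial in $n$ can absorb; your claim that the base becomes ``a negative power of $n$'' is incorrect.)

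The paper does \emph{not} drop $A_n$ and $D_\theta$; they are essential. The decomposition $B_n^{-,\mathrm c}\subseteq U_n^1\cup U_n^2$ separates the ``no big jump'' part $U_n^2$, where the jump cutoff $y=2^{-\bar\eta_{\mathrm c}n}n^{1/(1+\beta)-2\varepsilon}$ is imposed by hypothesis (so $x/y=n^{\varepsilon}\to\infty$ and Lemma~\ref{L3} does work, Lemma~\ref{n.L6}), from the ``big jump'' part $U_n^1$. For $U_n^1$ the key observation (Lemma~\ref{n.L7}) is that on $A_2^\varepsilon\cap A_n$ there is already one large jump of $M$ in the right window coming from the definition of $A_n$, and this jump lands on the $L_n^+$ side; a further big jump of $L_n^-$ must come from a \emph{second} large jump of $M$, at a different spatial location. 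Hence $A_2^\varepsilon\cap A_n\cap U_n^1\subseteq A_2^\varepsilon\cap E_n(\rho,\xi)$, the event of at least two large jumps in a thin space--time strip. The intensity of such jumps is then controlled using $D_\theta$ (via the mass bounds $W_{B_3(0)}\le\theta^{-1}$), yielding intensity $\le C\theta^{-1}n^{2\varepsilon(1+\beta)-1}\log^{C}n$; squaring gives $\mathbf P(E_n(\rho,\xi)\cap A_2^\varepsilon\cap D_\theta)\le C\theta^{-2}n^{4\varepsilon(1+\beta)-2}\log^{C}n$, which is summable. Your proposal discards precisely the two ingredients ($A_n$ for the ``two-jump'' argument, $D_\theta$ for the intensity estimate) that make the $U_n^1$ part tractable.
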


\medskip

Now we are ready to finish\\
\noindent{\it Proof of Proposition \ref{very_new_prop}.\ }
Combining Lemmata \ref{n.L5} and \ref{n.L5'}, we conclude that
there exists a subsequence $\{n_j\}$ with properties described in 
Proposition~\ref{very_new_prop}.
\hfill$\square$

\medskip

The remaining part of the paper will be devoted to the proof of
Lemma~\ref{n.L5'} and we prepare now for it.

One can easily see that $B_{n}^{-,\mathrm{c}}$ is a subset of a union of two
events (with the obvious correspondence):
\begin{align*}
B_{n}^{-,\mathrm{c}}\subseteq U_{n}^{1}\cup U_{n}^{2}:=\,  &  \bigl\{\Delta
L_{n}^{-}>2^{-\bar{\eta}_{\mathrm{c}}n}\,n^{1/(1+\beta)-2\varepsilon
}\bigr\}\nonumber\\
&  \cup\bigl\{\Delta L_{n}^{-}\leq2^{-\bar{\eta}_{\mathrm{c}}n}\,n^{1/(1+\beta
)-2\varepsilon},\ L_{n}^{-}(T_{n,-})>2^{-\bar{\eta}_{\mathrm{c}}%
n}\,n^{1/(1+\beta)-\varepsilon}\bigr\},
\end{align*}
where
\begin{equation*}
\Delta L_{n}^{-}:=\sup_{0<s\leq T_{n,-}}\Delta L_{n}^{-}(s).
\end{equation*}
The occurrence of the event $U_{n}^{1}$ means that $L_{n}^{-}$ has big jumps.
If $U_{n}^{2}$ occurs, it means that $L_{n}^{-}$ gets large without big jumps.
It is well-known that stable processes without big jumps can not achieve large
values. Thus, the statement of the next lemma is not surprising.

\begin{lemma}
\label{n.L6}
We have
\begin{equation*}
\lim_{N\uparrow\infty}\sum_{n=N}^{\infty}\mathbf{P}(U_{n}^{2}\cap
A_2^\varepsilon)=0.
\end{equation*}
\end{lemma}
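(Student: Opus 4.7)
The plan is to show that on $U_n^2 \cap A_2^\varepsilon$ the spectrally positive $(1+\beta)$-stable process $L_n^-$ must attain the value $R_n := 2^{-\bar\eta_{\mathrm{c}} n} n^{1/(1+\beta)-\varepsilon}$ by the random time $T_{n,-}$ while every one of its jumps on that interval is bounded by $K_n := 2^{-\bar\eta_{\mathrm{c}} n} n^{1/(1+\beta)-2\varepsilon}$. Since $R_n/K_n = n^\varepsilon$, such an accumulation of many moderate jumps is super-polynomially unlikely by the Bennett-type concentration inequality of Lemma~\ref{L3}. Concretely, after obtaining a deterministic bound $T_{n,-} \leq \bar T_n$ on $A_2^\varepsilon$, one has
$$
U_n^2 \cap A_2^\varepsilon \,\subseteq\, \Big\{\sup_{s \leq \bar T_n} L_n^-(s)\,\mathsf{1}\bigl\{\sup_{v\leq s}\Delta L_n^-(v) \leq K_n\bigr\} \geq R_n\Big\},
$$
so Lemma~\ref{L3} yields
$$
\mathbf{P}(U_n^2 \cap A_2^\varepsilon) \,\leq\, \Big(\frac{C\bar T_n}{R_n K_n^\beta}\Big)^{R_n/(CK_n)}.
$$
Using the identity $(1+\beta)\bar\eta_{\mathrm{c}} = 2-\beta$, one computes $R_n K_n^\beta = 2^{-(2-\beta)n} n^{1-(1+2\beta)\varepsilon}$ and $R_n/(CK_n) = n^\varepsilon/C$. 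Hence, provided $\bar T_n$ is at most of order $2^{-(2-\beta)n}$ up to sub-exponential factors, the base is $\leq 1/2$ for $\varepsilon$ small and $n$ large, giving $\mathbf{P}(U_n^2 \cap A_2^\varepsilon) \leq 2^{-n^\varepsilon/C}$, which is summable and vanishes as $N \to \infty$.

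The main obstacle is proving the sharp estimate $\bar T_n \leq C\,2^{-(2-\beta)n}$. Applying Lemma~\ref{L.fixed.3}(b) naively with $\theta = 1+\beta$ and the maximal admissible $\delta$ close to $(1-2\beta)/(1+\beta)$ (admissible since $\beta<1/2$) only furnishes $\bar T_n \leq C\,2^{-n(2-\beta)/(1+\beta)}$, which is off by the exponentially growing factor $2^{n(2-\beta)\beta/(1+\beta)}$ and would make the base blow up. To close the gap I would perform a direct dyadic analysis of
$$
T_{n,-}=\int_0^t du\int_\R X_u(dy)\,((q_{t-u}(x-y,-y))^-)^{1+\beta},
$$
with $x = c\,2^{-n-2}$. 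The key observation is that $q^-$ is essentially supported on the region $|y| < (t-u)^{1/2}$, where the second-order Taylor expansion of $p_{t-u}$ in the first argument gives $|q_{t-u}(x-y,-y)| \leq C\,x^2(t-u)^{-3/2}$ when $(t-u) \geq x^2$, while for $(t-u)<x^2$ the cruder bound $|q|\leq C(t-u)^{-1/2}$ suffices. Combining these with the local mass estimate $X_u(B_{(t-u)^{1/2}}(0)) \leq C(t-u)^{1/2}$, supplied by Lemma~\ref{n.L2} on the event $A_2^\varepsilon$ (where the control of $V(B_2(0))$ propagates to $W_{B_3(0)}$), and splitting the $u$-integral at $u = t - x^2$, each of the two pieces contributes at most $C\,x^{2-\beta}$. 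This gives $\bar T_n \leq C\,2^{-(2-\beta)n}$ up to logarithmic factors, which feeds into the concentration bound and completes the argument.
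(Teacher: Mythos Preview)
Your overall strategy is exactly the one the paper has in mind (it refers to Lemma~5.6 of \cite{FMW10}): bound $T_{n,-}$ deterministically on $A_2^\varepsilon$ and then apply the concentration inequality of Lemma~\ref{L3}. You are also right that the crucial point is the \emph{sharp} estimate $T_{n,-}\le C\,2^{-(2-\beta)n}$ with no exponential loss; any slack of the form $2^{\gamma' n}$ in the base would be fatal since the exponent is only $n^{\varepsilon}$. Your observation that Lemma~\ref{L.fixed.3}\,(b), as stated, gives at best $|x|^{(2-\beta)/(1+\beta)}$ is literally correct, but note that the exponent there is a typo: comparison with Lemma~\ref{L.fixed.5} shows the intended bound is $|x_1-x_2|^{(1+\delta)\theta}$. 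Even so, the admissible range $\delta<(3-2\theta)/\theta$ is strict (the endpoint makes $\int_0^t(t-s)^{-1}\,ds$ diverge), so Lemma~\ref{L.fixed.3} only yields $T_{n,-}\le C\,2^{-(2-\beta-\gamma')n}$, and your direct dyadic analysis is indeed needed.

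That direct analysis is correct in outline but has one genuine slip. For the near-time regime $t-u<x^2$ you claim the crude bound $|q_{t-u}(x-y,-y)|\le C(t-u)^{-1/2}$. This fails for the derivative part of $q$: by \eqref{L.kernel1.2} one has $|x|\,|\partial_y p_{t-u}(-y)|\le C|x|(t-u)^{-1}$, which for $t-u<x^2$ exceeds $(t-u)^{-1/2}$. The fix is simply to treat the three summands of $q$ separately on $\{t-u<x^2\}$. The two kernel terms $p_{t-u}(x-y)$ and $p_{t-u}(-y)$ each give, after raising to power $1+\beta$, integrating against $X_u$ via $V(B_2(0))\le c_\varepsilon$, and integrating in $u$, a contribution $C\!\int_0^{x^2}\!s^{-\beta/2}\,ds=Cx^{2-\beta}$. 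For the derivative term use \eqref{L.kernel1.2} to get
\[
\int X_u(\mathrm{d}y)\bigl(|x|\,|\partial_y p_{t-u}(-y)|\bigr)^{1+\beta}
\le C\,|x|^{1+\beta}(t-u)^{-(1+2\beta)/2}\,V,
\]
and since $\beta<1/2$ the exponent $-(1+2\beta)/2>-1$ is integrable, yielding $C|x|^{1+\beta}\,(x^2)^{(1-2\beta)/2}=C|x|^{2-\beta}$. For the far regime $t-u\ge x^2$ your argument works verbatim once you use \eqref{L.kernel1.4} with $\delta=1$ and integrate the resulting kernels against $X_u$ through $V$ (this replaces the appeal to $W_{B_3(0)}$, which is not part of $A_2^\varepsilon$ as defined). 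Altogether $T_{n,-}\le C\,c_\varepsilon\,2^{-(2-\beta)n}$, and your application of Lemma~\ref{L3} then gives $\mathbf{P}(U_n^2\cap A_2^\varepsilon)\le(C n^{-1+(1+2\beta)\varepsilon})^{n^{\varepsilon}/C}$, which is summable.
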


We omit the proof of this lemma as well, since its crucial part related to
bounding of $\mathbf{P}(U_{n}^{2}\cap A_2^\varepsilon)$ is a repetition of the
proof of Lemma~5.6 in \cite{FMW10} (again
with obvious simplifications).

\begin{lemma}
\label{n.L7}
There exist constants $\rho$ and $\xi$ such
that, for all sufficiently large values of $n$,
\begin{equation*}
A_2^\varepsilon\cap A_{n}\cap U_{n}^{1}\,\subseteq\,A_2^\varepsilon\cap
E_{n}(\rho,\xi),
\end{equation*}
where
\begin{align*}
E_{n}(\rho,\xi):=  &  \left\{  _{\!_{\!_{\,_{_{\!_{\!_{_{\!_{\!_{_{\!_{\!_{\,}%
}}\,}}}}}}}}}}\mathrm{There}\text{ }\mathrm{exist}\text{ }\mathrm{at}\text{
}\mathrm{least}\text{\textrm{ }}\mathrm{two}\text{\textrm{ }}\mathrm{jumps}%
\text{\textrm{ }}\mathrm{of\;}M\ \mathrm{of}\text{\textrm{ }}\mathrm{the}%
\text{\textrm{ }}\mathrm{form\;}r\delta_{(s,y)}\;\mathrm{such}\text{\textrm{
}}\mathrm{that}\right. \nonumber\\
&  \quad r\geq\left(  (t-s)\max\bigl\{(t-s)^{1/2},|y|\bigr\}\right)
^{1/(1+\beta)}\log^{1/(1+\beta)-2\varepsilon}\bigl((t-s)^{-1}%
\bigr),\nonumber\\[3pt]
&  \quad|y|\leq(t-s)^{1/2}\log^{\xi}\bigl((t-s)^{-1}\bigr),\ \!\!\left.
_{\!_{\!_{\,_{_{\!_{\!_{_{\!_{\!_{_{\!_{\!_{\,}}}\,}}}}}}}}}}s\in\left[
t-2^{-2 n}\,n^{\rho},\ t-2^{-2 n}\,n^{-\rho}\right]  \right\}  \!.
\end{align*}
\end{lemma}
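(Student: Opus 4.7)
The plan is to produce the required pair of jumps of $M$ explicitly, the first one from the $A_n$ event and the second one from the $U_n^1$ event, and then to verify that (i) each satisfies the three constraints defining $E_n(\rho,\xi)$ and (ii) the two jumps are distinct. Throughout, write $x_n:=c\,2^{-n-2}$ and $A:=2^{-\bar\eta_c n}n^{1/(1+\beta)-2\varepsilon}$, and recall the identity $2-\bar\eta_c=3\beta/(1+\beta)$.

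On $A_n$ we get a jump $(s_1,y_1,r_1)$ of $M$ with $y_1\in(c2^{-n}/2,3c2^{-n}/2)$, $s_1\in(t-2^{-2n},t-2^{-2(n+1)})$ and $r_1\geq 2^{-(\bar\eta_c+1)n}n^{1/(1+\beta)}$. Here $(t-s_1)\asymp 2^{-2n}$ and $|y_1|\asymp(t-s_1)^{1/2}$, so routine substitution shows that for every $\rho,\xi>0$ and $n$ large, this jump lies in the time window $[t-2^{-2n}n^\rho,t-2^{-2n}n^{-\rho}]$, satisfies $|y_1|\leq(t-s_1)^{1/2}\log^\xi((t-s_1)^{-1})$, and meets the $r$-lower bound (since $((t-s_1)|y_1|)^{1/(1+\beta)}\asymp 2^{-(\bar\eta_c+1)n}$ and the $\log$ factor is $\asymp n^{1/(1+\beta)-2\varepsilon}$).

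For the second jump I would exploit the fact that on $U_n^1$ there is a jump $(s_2,y_2,r_2)$ of $M$ producing the maximal jump of $L_n^-$, namely $r_2\,|q_{t-s_2}(x_n-y_2,-y_2)|>A$. Combine this with the kernel bound from \eqref{L.kernel1.4} with $\delta=1$, $|q_{t-s}(x-y,-y)|\leq C\,x^2\,(t-s)^{-3/2}\bigl(p_{t-s}(-y/2)+p_{t-s}((x-y)/2)\bigr)$, and with the pointwise upper bounds on $r_2$ provided by $A_2^\varepsilon$, namely $r_2\leq c_{(\ref{in6.1a})}((t-s_2)|y_2|)^{1/(1+\beta)}f_{s_2,y_2}^{\ell}$ when $|y_2|<1/\mathrm{e}$ and $r_2\leq c_{(\ref{inL8})}(t-s_2)^{1/(1+\beta)-\gamma}$ in general. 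Insertion into $r_2|q|>A$ yields an inequality whose logarithmic form, after using $2-\bar\eta_c=3\beta/(1+\beta)$ and $x_n^2\asymp 2^{-2n}$, forces both $(t-s_2)\in[2^{-2n}n^{-\rho},2^{-2n}n^{\rho}]$ for $\rho=\rho(\beta,\varepsilon,q)$ and $|y_2|\leq(t-s_2)^{1/2}\log^{\xi}((t-s_2)^{-1})$ for a suitable $\xi$ coming from the Gaussian tail $\exp(-y_2^2/(4(t-s_2)))$. With those constraints in hand, rearranging $r_2>A/|q|$ and inserting the matching lower bound on $|q|$ in the relevant regime gives the required lower bound on $r_2$ in $E_n(\rho,\xi)$.

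The final step is to show $J_1\neq J_2$. Since $J_2$ is by construction a jump of $L_n^-$, it has $q_{t-s_2}(x_n-y_2,-y_2)<0$, whereas the scaling analysis $q_{\sigma^2}(x_n-y_1,-y_1)=\sigma^{-1}[p_1(a-b)-(1+ab)p_1(b)]$ with $a=x_n/\sigma\in[c/4,c/2]$ and $b=y_1/\sigma\in[c/2,3c]$ shows the bracket is strictly positive once $c$ is chosen sufficiently large (say $c\geq 3$), so $J_1$ is an $L_n^+$-jump and in particular $J_1\neq J_2$. I expect the main obstacle to be precisely the coupled log-accounting in the previous step: one must read off $\rho$ and $\xi$ from the inequality $r_2|q|>A$ subject to both an upper bound on $r_2$ and an upper bound on $|q|$, and the exponents only close up thanks to the special cancellation $2-\bar\eta_c=3\beta/(1+\beta)$; a secondary subtlety is that one must ensure $c_{(\ref{nn.L3})}(\theta)$ is large enough to force positivity of $q$ at the $A_n$ jump, which is fine because $c$ may be chosen arbitrarily large in Lemma \ref{n.L3} at the cost of possibly shrinking $\theta$, and the statement of Lemma \ref{n.L7} is uniform in such choices.
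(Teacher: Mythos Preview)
Your overall strategy coincides with the paper's: the $A_n$-jump serves as the first qualifying jump, and the large $L_n^-$-jump witnessed by $U_n^1$ provides a distinct second; one then checks that this second jump must satisfy the three constraints defining $E_n(\rho,\xi)$. Your ``forward'' derivation of the constraints on $(s_2,y_2,r_2)$ from the inequality $r_2\,|q_{t-s_2}(x_n-y_2,-y_2)|>A$, combined with the kernel bounds \eqref{L.kernel1.1}--\eqref{L.kernel1.4} and the jump-size bounds on $A_2^\varepsilon$, is exactly the contrapositive of the paper's elimination argument, which writes $D=D_1\cap D_2\cap D_3$ and rules out each of $D_1^{\mathrm{c}}$, $D_1\cap D_2^{\mathrm{c}}$, $D_1\cap D_2\cap D_3^{\mathrm{c}}$ in turn. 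One small slip in your sketch: to extract the lower bound on $r_2$ from $r_2>A/|q|$ you need an \emph{upper} bound on $|q|$ (on $D_1\cap D_2$), not a lower bound.

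Where your proposal has a genuine gap is in the distinctness step. You claim the bracket $p_1(a-b)-(1+ab)p_1(b)$ is strictly positive on the full rectangle $a\in[c/4,c/2]$, $b\in[c/2,3c]$ once $c\geq 3$. This is false: as $a\uparrow c/2$ and $b\downarrow c/2$ the bracket tends to $1-(1+c^2/4)e^{-c^2/8}$, which for $c=3$ is approximately $-0.06$. The paper avoids this computation by arguing spatial separation instead: the $A_n$-jump has $y_1>c\,2^{-n-1}=2x_n$, while the paper asserts that jumps contributing to $L_n^-$ lie in $\{y<c\,2^{-n-3}=x_n/2\}$, so the two jumps cannot coincide. (That assertion is clean only in the $\bar\eta_{\mathrm{c}}<1$ setting and needs adjustment here; but your idea can in fact be salvaged: the proof of Lemma~\ref{n.L3} only imposes a \emph{lower} bound on $c=c_{(\ref{nn.L3})}(\theta)$, so one is free to take $c$ larger, and since $1-(1+c^2/4)e^{-c^2/8}\to1$ as $c\to\infty$ the bracket becomes uniformly positive on the rectangle for $c$ sufficiently large.) Either way, ``$c\geq 3$'' does not suffice, and the distinctness step needs a more careful justification than you gave.
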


\begin{proof}
By the definition of $A_{n\,},$\thinspace\ there exists a jump of $M$ of the
form $r\delta_{(s,y)}$ with $r,s$ as in $E_{n}(\rho,\xi)$, and $y>c\,2^{-n-1}%
$. Furthermore, noting that $\varphi_{n,-}(y)=0$ for $y\geq c\,2^{-n-3},$ we
see that the jumps $r\delta_{(s,y)}$ of $M$ contribute to $L_{n}^{-}(T_{n,-})$
if and only if $y<c\,2^{-n-3}$. Thus, to prove the lemma it is sufficient to
show that $U_{n}^{1}$ yields the existence of at least one further jump of $M$
on the half-line $\{y<c\,2^{-n-3}\}$ with properties mentioned in the
statement. Denote
\begin{align}
D:=\bigg\{(r,s,y)\!:\  &  r\geq\!\left((t-s)\max\bigl\{(t-s)^{1/2},|y|\bigr\}\right)^{1/(1+\beta)}
\log^{1/(1+\beta)-2\varepsilon}\bigl((t-s)^{-1}\bigr),\nonumber\\
&  y\in\left(  -(t-s)^{1/2}\log^{\xi}\bigl((t-s)^{-1}\bigr),\,c\,2^{-n-3}%
\right)  \!,\ \nonumber\\
&  \qquad\qquad\qquad\qquad\qquad\qquad\quad\ \,s\in\left[  t-2^{-2n}\,n^{\rho},
\ t-2^{-2 n}\,n^{-\rho}\right]  \!\bigg\}. \label{setD}%
\end{align}
Then we need to show that $U_{n}^{1}$ implies the existence of a jump
$r\delta_{(s,y)}$ of $M$ with $(r,s,y)\in D.$

Note that
\begin{align*}
\lefteqn{D=D_{1}\cap D_{2}\cap D_{3}}\\
&  :=\left\{  (r,s,y):\;r\geq0,\ s\in\lbrack0,t],\;y\in\left(
-(t-s)^{1/2}\log^{\xi}\bigl((t-s)^{-1}\bigr),\,c\,2^{-n-3}\right)
\right\} \\
&  \quad\quad\cap\Big\{(r,s,y):\;r\geq0,\ y\in(-\infty,\,c\,2^{-n-3}%
),\ s\in\left[  t-2^{-2 n}\,n^{\rho},\ t-2^{-2 n}\,n^{-\rho}\right]
\!\Big\}\\
&  \quad\quad\cap\bigg\{(r,s,y):\;y\in(-\infty,\,c\,2^{-n-3}),\ s\in
\lbrack0,t],\\
&  \;\;\;\;\;\;\;\quad\qquad\ r\geq\left(  (t-s)\max\bigl\{(t-s)^{1/2
},|y|\bigr\}\right)  ^{1/(1+\beta)}\log^{1/(1+\beta)-2\varepsilon
}\bigl((t-s)^{-1}\bigr)\bigg\}.
\end{align*}
Therefore,%
\begin{equation*}
D^{\mathrm{c}}\cap\{y<c2^{-n-3}\}\,=\,\left(  D_{1}^{\mathrm{c}}%
\cap\{y<c\,2^{-n-3}\}\right)  \cup\left(  D_{1}\cap D_{2}^{\mathrm{c}}\right)
\cup\left(  D_{1}\cap D_{2}\cap D_{3}^{\mathrm{c}}\right)  \!,
\end{equation*}
where the complements are defined with respect to the set%
\begin{equation*}
\left\{  _{\!_{\!_{\,}}}(r,s,y):\;r\geq0,\ s\in\lbrack0,t],\;y\in
\R\right\}  \!.
\end{equation*}

We first show that any jumps of $M$ in $D_{1}^{\mathrm{c}}\cap\{y<c\,2^{-n-3}%
\}$ cannot be the course of a jump of $L_{n}^{-}$ such that $U_{n}^{1}$ holds.
Indeed, using the last inequality in Lemma~\ref{L.kernel2} with
$\delta=\bar{\eta}_{\mathrm{c}\,}$, we get for $y<c2^{-n-3}$ the inequality
\begin{align}
(q_{t-s}(c2^{-n-2},0))^-
&\leq C 2^{-\bar{\eta}_{\mathrm{c}}n}(t-s)^{-\bar{\eta}_{\mathrm{c}}/2}p_{t-s}(y/2)\nonumber\\
&\leq C\,2^{-\bar{\eta}_{\mathrm{c}}n}(t-s)^{-(1+\bar{\eta}_{\mathrm{c}})/2}
\exp\Big\{-\frac{y^2}{8(t-s)}\Big\}\nonumber\\
&\leq C\,2^{-\bar{\eta}_{\mathrm{c}}n}(t-s)^{1-\bar{\eta}_{\mathrm{c}}/2}|y|^{-3}, \label{n8}%
\end{align}
in the second step we used the scaling property of the kernel $p$, and in the last step we have
used the trivial bound $\mathrm{e}^{-x}\leq x^{-3/2}$.

Further, by~(\ref{Aep}), on the set $A_2^\varepsilon$ we have
\begin{equation}
\Delta X_{s}(y)\leq C\,\bigl(|y|(t-s)\bigr)^{1/(1+\beta)}(f_{s,y})^{\ell},
\quad|y|\leq1/\mathrm{e}, \label{n9}%
\end{equation}
and
\begin{equation}
\Delta X_{s}(y)\leq C\,(t-s)^{1/(1+\beta)-\gamma},\quad|y|>1/\mathrm{e},
\label{n10}%
\end{equation}
and recall that $f_{s,x}=\,\log\bigl((t-s)^{-1}\bigr)\,\mathsf{1}_{\{x\neq
0\}}\log\bigl(|x|^{-1}\bigr)$. Combining (\ref{n8}) and~(\ref{n9}), we
conclude that the corresponding jump of $L_{n}^{-},$ henceforth denoted by
\hspace{-1pt}$\Delta L_{n}^{-}[r\delta_{(s,y)}],$ is bounded by
\begin{equation*}
C\,2^{-\bar{\eta}_{\mathrm{c}}n}(t-s)^{1-\bar{\eta}_{\mathrm{c}}/2
+\frac{1}{1+\beta}}\log^{\frac{1}{1+\beta}+q}\bigl((t-s)^{-1}%
\bigr)|y|^{-3+\frac{1}{1+\beta}}\log^{\frac{1}{1+\beta}+q}%
\bigl(|y|^{-1}\bigr).
\end{equation*}
Since $|y|^{-3+\frac{1}{1+\beta}}\log^{\frac{1+\gamma}{1+\beta}}\bigl(|y|^{-1}\bigr)$
is monotone decreasing, we get, maximizing over $y$, for 
$y<-(t-s)^{1/2}\log^{\xi}\bigl((t-s)^{-1}\bigr)$ the bound
\begin{equation*}
\Delta L_{n}^{-}[r\delta_{(s,y)}]\leq C\,2^{-\bar{\eta}_{\mathrm{c}}n}%
\log^{\frac{2}{1+\beta}+2q-\xi(3-\frac{1}{1+\beta})}\bigl(|y|^{-1}%
\bigr).
\end{equation*}
Choosing $\xi\geq\frac{2+2q(1+\beta)}{3(1+\beta)-1}$, we
see that
\begin{equation}
\Delta L_{n}^{-}[r\delta_{(s,y)}]\leq C\,2^{-\bar{\eta}_{\mathrm{c}}n}%
,\quad|y|<1/\mathrm{e}. \label{nn1}%
\end{equation}
Moreover, if $y<-1/\mathrm{e}$, then it follows from (\ref{n8}) and
(\ref{n10}) that the jump $\Delta L_{n}^{-}[r\delta_{(s,y)}]$ is bounded by
\begin{equation}
C\,2^{-\bar{\eta}_{\mathrm{c}}n}(t-s)^{1-\bar{\eta}_{\mathrm{c}}/2
+\frac{1}{1+\beta}-\gamma}|y|^{-3}\leq C\,2^{-\bar{\eta}_{\mathrm{c}}%
n}. \label{nn2}%
\end{equation}
Combining (\ref{nn1}) and (\ref{nn2}), we see that all the jumps of $M$ in
$D_{1}^{\mathrm{c}}\cap\{y<c\,2^{-n-3}\}$ do not produce jumps of $L_{n}^{-}$
such that $U_{n}^{1}$ holds.

We next assume that $M$ has a jump $r\delta_{(s,y)}$ in $D_{1}\cap
D_{2}^{\mathrm{c}}$. If, additionally, $s\leq t-2^{-2 n}\,n^{\rho}$,
then, using the last inequality in Lemma~\ref{L.kernel2} with $\delta=1$, we get
\begin{equation*}
(q_{t-s}(c2^{-n-2},0))^-\leq C 2^{-2n}(t-s)^{-3/2}.
\end{equation*}
\textrm{F}rom this bound and (\ref{n9}) we obtain
\begin{gather*}
\Delta L_{n}^{-}[r\delta_{(s,y)}]
\leq C\,2^{-2n}(t-s)^{-3/2+\frac{1}{1+\beta}}
\log^{\frac{1}{1+\beta}+q}\bigl((t-s)^{-1}\bigr)|y|^{\frac{1}{1+\beta}}
\log^{\frac{1}{1+\beta}+q}\bigl(|y|^{-1}\bigr)\nonumber\\
\leq C\,2^{-2n}(t-s)^{\frac{3}{2}(\frac{1}{1+\beta}-1)}
\log^{\frac{2+\xi}{1+\beta}+2q}\bigl((t-s)^{-1}\bigr).
\end{gather*}
Using the assumption $t-s\geq2^{-2 n}\,n^{\rho}$, we arrive at the
inequality
\begin{equation*}
\Delta L_{n}^{-}[r\delta_{(s,y)}]
\leq C\,2^{-\bar{\eta}_{\mathrm{c}}}
n^{\frac{2+\xi}{1+\beta}+2q+\frac{\rho}{2}(\bar{\eta}_{\mathrm{c}}-2)}.
\end{equation*}
\textrm{F}rom this we see that if we choose 
$\rho\geq\frac{2(2q+(2+\xi)/(1+\beta))}{2-\bar{\eta}_{\mathrm{c}}}$ then
the jumps of $L_{n}^{-}$ are bounded by $C\,2^{-\bar{\eta}_{\mathrm{c}}n}$,
and hence $U_{n}^{1}$ does not occur.

Using (\ref{L.kernel1.1}) with $\delta=1$ and \eqref{L.kernel1.2}, one can easily derive
\begin{equation*}
(q_{t-s}(c2^{-n-2},0))^-\leq C2^{-n}(t-s)^{-1}.
\end{equation*}
Then for $y\in\left(-(t-s)^{1/2}\log^{\xi}\bigl((t-s)^{-1}\bigr),\ c\,2^{-n-3}\right)$ 
and $t-s\leq2^{-2 n}\,n^{-\rho}$ we have the inequality
\begin{align*}
\Delta L_{n}^{-}[r\delta_{(s,y)}]
&\leq C 2^{-n}(t-s)^{-1}(|y|(t-s))^{\frac{1}{1+\beta}}(f_{s,y})^\ell\\
&\leq C 2^{-n}(t-s)^{\frac{3}{2(1+\beta)}-1}\log^{\frac{2+\xi}{1+\beta}+2q}\bigl((t-s)^{-1}\bigr)\\
&= C 2^{-n}(t-s)^{(\bar{\eta}_{\mathrm{c}}-1)/2}\log^{\frac{2+\xi}{1+\beta}+2q}\bigl((t-s)^{-1}\bigr)\\
&\leq C 2^{-\bar{\eta}_{\mathrm{c}}n}n^{-\rho(\bar{\eta}_{\mathrm{c}}-1)/2+\frac{2+\xi}{1+\beta}+2q}.
\end{align*}
Choosing $\rho\geq\frac{2(\xi+2+2q(1+\beta))}{(1+\beta)(\bar{\eta}_{\mathrm{c}}-1)}$,
we conclude that $\Delta L_{n}^{-}[r\delta_{(s,y)}]\leq
C\,2^{-\bar{\eta}_{\mathrm{c}}n}$, and again $U_{n}^{1}$ does not occur.

Finally, it remains to consider the jumps of $M$ in $D_{1}\cap D_{2}\cap
D_{3\,}^{\mathrm{c}}.$ If the value of the jump does not exceed
$(t-s)^{\frac{3}{2(1+\beta)}}\log^{\frac{1}{1+\beta}-2\varepsilon
}\bigl((t-s)^{-1}\bigr)$, then it follows from \eqref{L.kernel1.4} with
$\delta=\bar{\eta}_{\mathrm{c}}-1$ that
\begin{align*}
\Delta L_{n}^{-}[r\delta_{(s,y)}]
&\leq C2^{-\bar{\eta}_{\mathrm{c}}n}(t-s)^{-(\bar{\eta}_{\mathrm{c}}-1)/2}
(t-s)^{\frac{3}{2(1+\beta)}}\log^{\frac{1}{1+\beta}-2\varepsilon}\bigl((t-s)^{-1}\bigr)\\
&\leq C2^{-\bar{\eta}_{\mathrm{c}}n}\log^{\frac{1}{1+\beta}-2\varepsilon}\bigl((t-s)^{-1}\bigr).
\end{align*}
Then, on $D_{2}$, that is, for $t-s>2^{-2 n}\,n^{-\rho}$,
\begin{equation}
\Delta L_{n}^{-}[r\delta_{(s,y)}]\leq C\,2^{-\bar{\eta}_{\mathrm{c}}%
n}\,n^{\frac{1}{1+\beta}-2\varepsilon}. \label{n11}%
\end{equation}
Furthermore, if $y<-(t-s)^{1/2}$ and the value of the jump is less than
$\bigl(|y|(t-s)\bigr)^{\frac{1}{1+\beta}}\log^{\frac{1}{1+\beta}-2\varepsilon
}\bigl((t-s)^{-1}\bigr)$, then, using (\ref{n8}), we get
\begin{gather*}
\Delta L_{n}^{-}[r\delta_{(s,y)}]\leq C\,2^{-\bar{\eta}_{\mathrm{c}}%
n}(t-s)^{1-\bar{\eta}_{\mathrm{c}}/2}\log^{\frac{1}{1+\beta}%
-2\varepsilon}\bigl((t-s)^{-1}\bigr)|y|^{-3+\frac{1}{1+\beta}%
}\nonumber\\
\leq C\,2^{-\bar{\eta}_{\mathrm{c}}n}\log^{\frac{1}{1+\beta}-2\varepsilon
}\bigl((t-s)^{-1}\bigr).
\end{gather*}
Then, on $D_{2}$, that is, for $t-s>2^{-2 n}\,n^{-\rho}$,
\begin{equation}
\Delta L_{n}^{-}[r\delta_{(s,y)}]\leq C\,2^{-\bar{\eta}_{\mathrm{c}}%
n}\,n^{\frac{1}{1+\beta}-2\varepsilon}. \label{n12}%
\end{equation}
By (\ref{n11}) and (\ref{n12}), we see that the jumps of $M$ in $D_{1}\cap
D_{2}\cap D_{3}^{\mathrm{c}}$ do not produce jumps such that $U_{n}^{1}$
holds. Combining all the above we conclude that to have $\Delta L_{n}%
^{-}[r\delta_{(s,y)}]>C\,2^{-\bar{\eta}_{\mathrm{c}}n}\,n^{\frac{1}{1+\beta
}-2\varepsilon}$ it is necessary to have a jump in $D_{1}\cap D_{2}\cap
D_{3\,}.$\thinspace\ Thus, the proof is finished.
\hfill$\square$
\end{proof}
Now we are ready to finish\\
\noindent{\it Proof of Lemma $\ref{n.L5'}$.\ }
In view of the Lemmas~\ref{n.L6} and \ref{n.L7},
it suffices to show that
\begin{equation}
\lim_{N\uparrow\infty}\sum_{n=N}^{\infty}\mathbf{P}\!\left(  _{\!_{\!_{\,}}%
}E_{n}(\rho,\xi)\cap A_2^\varepsilon\cap D_{\theta}\right)  =0.\label{4.70}%
\end{equation}
The intensity of the jumps in $D$ [the set defined in~(\ref{setD}) and
satisfying conditions in $E_{n}(\rho,\xi)$\thinspace] is given by
\begin{align}
\int_{t-2^{-2 n}n^{\rho}}^{t-2^{-2 n}n^{-\rho}}\mathrm{d}s &
\int_{|y|\leq(t-s)^{1/2}\log^{\xi}\bigl((t-s)^{-1}\bigr)}X_{s}%
(\mathrm{d}y)\label{n13}\\
&  \qquad\qquad\qquad\qquad\qquad\frac{\log^{2\varepsilon(1+\beta
)-1}\bigl((t-s)^{-1}\bigr)}{(t-s)\max\bigl\{(t-s)^{1/2},|y|\bigr\}}%
\,.\nonumber
\end{align}
Since in (\ref{4.70}) we are interested in a limit as $N\uparrow\infty,$ we
may assume that $n$ is such that $\,(t-s)^{1/2}\log^{\xi}\bigl((t-s)^{-1}%
\bigr)\leq1$\thinspace\ for $\,s\geq t-2^{-2 n}n^{\rho}.$\thinspace\ We
next note that
\begin{align*}
&  \int_{|y|\leq(t-s)^{1/2}}\frac{X_{s}(\mathrm{d}y)}{\max
\bigl\{(t-s)^{1/2},|y|\bigr\}}\\
&  =\,(t-s)^{-1/2}X_{s}\bigl((-(t-s)^{1/2},\,(t-s)^{1/2
})\bigr)\,\leq\ \theta^{-1}\nonumber
\end{align*}
on $D_{\theta\,}$. Further, for every $j\geq1$ satisfying $\,j\leq\log^{\xi
}\bigl((t-s)^{-1}\bigr),$
\begin{align*}
&  \int_{j(t-s)^{1/2}\leq|y|\leq(j+1)(t-s)^{1/2}}\frac
{X_{s}(\mathrm{d}y)}{\max\bigl\{(t-s)^{1/2},|y|\bigr\}}\\
&  \leq j^{-1}(t-s)^{-1/2}X_{s}\left(  \left\{  y:\;j(t-s)^{1/2}%
\leq|y|\leq(j+1)(t-s)^{1/2}\right\}  \right)  .\nonumber
\end{align*}
Since the set $\,\left\{  y:\;j(t-s)^{1/2}\leq|y|\leq(j+1)(t-s)^{1/2}\right\}$
is the union of two balls with radius $\frac{1}{2}\,(t-s)^{-1/2}$
and centers in $B_{2}(0),$ we can apply Lemma~\ref{n.L2} with $c=1$ to get%
\begin{equation*}
\int_{j(t-s)^{1/2}\leq|y|\leq(j+1)(t-s)^{1/2}}\frac{X_{s}%
(\mathrm{d}y)}{\max\bigl\{(t-s)^{1/2},|y|\bigr\}}\,\leq\,2\,\theta
^{-1}j^{-1}%
\end{equation*}
on $D_{\theta\,}.$\thinspace\ As a result, on the event $D_{\theta}$ we get
the inequality%
\begin{gather*}
\int_{|y|\leq(t-s)^{1/2}\log^{\xi}\left(  (t-s)^{-1}\right)  }%
\!X_{s}(\mathrm{d}y)\,\frac{1}{\max\bigl\{(t-s)^{1/2},|y|\bigr\}}%
\nonumber\\
\leq C\theta^{-1}\log\!\Big(\left\vert \log\bigl((t-s)^{-1}\bigr)\right\vert
\!\Big).
\end{gather*}
Substituting this into (\ref{n13}), we conclude that the intensity of the
jumps is bounded by
\begin{equation*}
C\theta^{-1}\int_{t-2^{-2 n}n^{\rho}}^{t-2^{-2 n}n^{-\rho}%
}\mathrm{d}s\ \frac{\log^{2\varepsilon(1+\beta)-1}\Big((t-s)^{-1}\log
\log\bigl((t-s)^{-1}\bigr)\Big)}{(t-s)}\,.
\end{equation*}
Simple calculations show that the latter expression is less than
\begin{equation*}
C\theta^{-1}n^{2\varepsilon(1+\beta)-1}\log^{1+2\varepsilon(1+\beta)}n.
\end{equation*}
Consequently, since $E_{n}(\rho,\xi)$ holds when there are two jumps in $D$,
we have
\begin{equation*}
\mathbf{P}\!\left(  _{\!_{\!_{\,}}}E_{n}(\rho,\xi)\cap A_2^\varepsilon\cap
D_{\theta}\right)  \leq C\theta^{-2}n^{4\varepsilon(1+\beta)-2}\log
^{2+4\varepsilon(1+\beta)}n.
\end{equation*}
Because $\varepsilon<1/8\leq1/4(1+\beta)$, the sequence $\mathbf{P}\!\left(
_{\!_{\!_{\,}}}E_{n}(\rho,\xi)\cap A_2^\varepsilon\cap D_{\theta}\right)  $ is
summable, and the proof of the lemma is complete.
\hfill$\square$
\section{Elements of the proof of Theorem~\ref{thm:mfractal}}
\label{sec:multifactal}
The spectrum of singularities of $X_t$ coincides with that of $Z$. Consequently,
to prove Theorem~\ref{thm:mfractal}, we have to determine Hausdorff dimensions
of the sets
\begin{align*}
 {\mathcal E}_{Z,\eta}&:=\{x\in (0,1): H_{Z}(x)=\eta\},\\
 \tilde{\mathcal E}_{Z,\eta}&:=\{x\in (0,1): H_{Z}(x)\leq\eta\}
\end{align*}
and this is done in the next two subsections.  

As usual, we give some heuristic arguments, which should explain the result~(\ref{mfr}). Using heuristic
arguments that led to \eqref{verylast3}, one can easily show that a jump of order $(t-s)^\nu$ occurring between
times $t-s$ and $t$ implies that (if there are no other ''big`` jumps nearby) the pointwise H\"older exponent
should be  $2\nu-1$ in the ball of radius $(t-s)^{1/2}$ centered at the spatial position of this jump. In other
words, in order to have $H_X(x)=\eta$ at a point $x$, a jump of order $(t-s)^{(\eta+1)/2}$ should appear, whose
distance to $x$ is less or equal to $(t-s)^{1/2}$. 

From the formula for the compensator we infer that the number of such jumps, $\mathbf{N}_{\eta}\,,$ is of order
\begin{align*}
\mathbf{N}_{\eta}&\approx\int_{t-s}^t du X_u((0,1))\int_{(t-s)^{(\eta+1)/2}}r^{-2-\beta}dr\\
&\approx(t-s)^{-(\eta+1)(1+\beta)/2}\int_{t-s}^t du X_u((0,1)).
\end{align*}
It turns out that the random measure $X_u$ can be replaced by the
Lebesgue measure multiplied by a random factor. (The proof
of this fact is one of the main technical difficulties.) As a
result, the number of jumps $\mathbf{N}_{\eta}$, leading to pointwise H\"older exponent 
$\eta$ at certain points, is of order
$$
(t-s)^{-(\eta+1)(1+\beta)/2+1}.
$$
Since every such jump 
effects the regularity in the ball
of radius $\mathbf{r}\approx(t-s)^{1/2}$, the corresponding Hausdorff dimension $\alpha$
of the set of points $x$, with $H_X(x)=\eta$, 
can be obtained from the relation
$$
\mathbf{N}_{\eta}\mathbf{r}^{\alpha}\approx ((t-s)^{1/2})^\alpha (t-s)^{-(\eta+1)(1+\beta)/2+1}\approx 1.
$$
This gives  
$$
\alpha=(\eta+1)(1+\beta)-2=(\eta-\eta_{\rm c})(1+\beta),
$$
which coincides with~(\ref{mfr}). 

In the rest of this section we will justify the above heuristics. 
\subsection{Upper bound for the Hausdorff dimension}
\label{subsec:5.2}
The aim of this section is to prove the following proposition.
\begin{proposition}
\label{prop:08_02}
For every $\eta\in[\eta_{\rm c},\overline{\eta}_{\rm c})$,
$$
{\rm dim}({\mathcal E}_{Z,\eta})\leq 
{\rm dim}(\tilde{\mathcal E}_{Z,\eta})\leq (1+\beta)(\eta-\eta_{\rm c}),\;\;\mathbf{P}-\text{\rm a.s.}
$$
\end{proposition}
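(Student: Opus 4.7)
The plan is to bound $\dim(\tilde{\mathcal{E}}_{Z,\eta})$ by separating the jumps of $M$ at each dyadic scale into ``large'' and ``small'' parts, proving that the small-jump part of $Z_t$ is more regular than $\eta$, and then covering the set of bad points by balls around the locations of the few remaining large jumps. Fix $\eta' \in (\eta, \bar\eta_{\mathrm c})$ and a compact $K \subset (0,1)$. For each $n \geq 1$ set $\delta_n = 2^{-2n}$ and define the set of large jumps at scale $n$,
\begin{equation*}
J_n := \bigl\{(s,y,r) \in \mathrm{supp}(N) : s \in [t-\delta_n,\, t-\delta_{n+1}],\ y \in K,\ r \geq \delta_n^{(\eta'+1)/2}\bigr\},
\end{equation*}
and decompose $M = M^{>,n} + M^{<,n}$ accordingly, inducing the decomposition $Z_t = Z^{>,n}_t + Z^{<,n}_t$.

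The core step is to show that, outside a null set, one has uniform H\"older regularity of the small-jump part at scale $2^{-n}$: for every $n$ large enough,
\begin{equation*}
\sup\Bigl\{\bigl|Z^{<,n}_t(x) - Z^{<,n}_t(x')\bigr| : x, x' \in K,\ |x-x'| \leq 2^{-n+1}\Bigr\} \leq C\,2^{-n\eta'}.
\end{equation*}
To obtain this, I would apply Lemma~\ref{L9} to rewrite the increments $Z^{<,n}_t(x)-Z^{<,n}_t(x')$ (or, when $\bar\eta_{\mathrm c}>1$, the second-order quantity $Z^{<,n}_t(x,x')$ from \eqref{def.Z.new}) as differences of spectrally positive $(1+\beta)$-stable processes whose jumps are \emph{truncated} at height $\delta_n^{(\eta'+1)/2}$. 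The time changes $T_\pm$ are controlled by Lemma~\ref{L.fixed.3}; combined with the truncated-L\'evy tail estimate (Lemma~\ref{L3} of the paper, used in the same way as in the proof of Lemma~\ref{L.fixed.5}), this yields exponential-type tail bounds sharp enough to apply Kolmogorov/chaining along a $2^{-n}$-grid in $K$ and then Borel--Cantelli over $n$. By the triangle inequality, at any $x\in K$ with $H_Z(x) \leq \eta < \eta'$ the bound must be violated at infinitely many scales, so such $x$ lies within distance $C\cdot 2^{-n}$ of some jump location $y$ with $(s,y,r)\in J_n$, for infinitely many $n$. Consequently,
\begin{equation*}
\tilde{\mathcal{E}}_{Z,\eta}\cap K \ \subseteq\ \bigcap_{N\geq 1}\ \bigcup_{n\geq N}\ \bigcup_{(s,y,r)\in J_n} B\bigl(y,\ C\,2^{-n}\bigr).
\end{equation*}

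Finally I would estimate the Hausdorff measure of this cover using the compensator formula~\eqref{decomp} together with $\mathbf{E}X_s(K) \leq \mu(\mathbb{R})$:
\begin{equation*}
\mathbf{E}|J_n| \ \leq\ C\int_{t-\delta_n}^{t-\delta_{n+1}}\!ds\,\int_{\delta_n^{(\eta'+1)/2}}^\infty r^{-2-\beta}\,dr \ \leq\ C\,2^{\,n[(1+\beta)(\eta'-\eta_{\mathrm c})]}.
\end{equation*}
Therefore, for any $\alpha > (1+\beta)(\eta'-\eta_{\mathrm c})$, the expected $\alpha$-Hausdorff pre-measure of the level-$\geq N$ cover is
\begin{equation*}
\sum_{n\geq N} \mathbf{E}|J_n|\,(C\,2^{-n})^\alpha \ \leq\ C\sum_{n\geq N} 2^{\,n[(1+\beta)(\eta'-\eta_{\mathrm c})-\alpha]}\ \xrightarrow[N\to\infty]{}\ 0,
\end{equation*}
so $\mathcal{H}^\alpha(\tilde{\mathcal{E}}_{Z,\eta}\cap K)=0$ almost surely. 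Letting $\alpha \downarrow (1+\beta)(\eta'-\eta_{\mathrm c})$ and then $\eta' \downarrow \eta$, and finally exhausting $(0,1)$ by countably many compacts $K$, yields the proposition. The main obstacle is the uniform H\"older bound on $Z^{<,n}$: Lemma~\ref{L.moment_bound} alone only gives Hölder regularity up to $\eta_{\mathrm c}<\eta'$, so one genuinely has to exploit the jump-truncation to sharpen the tails of the associated stable time changes and to push the chaining exponent past $\eta_{\mathrm c}$ all the way up to $\eta'$.
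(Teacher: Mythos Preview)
Your overall strategy---cover $\tilde{\mathcal E}_{Z,\eta}$ by balls centred at the locations of ``large'' jumps and bound the dimension by counting those balls via the compensator---is exactly the route the paper takes (Lemmas~\ref{P1} and~\ref{P2}), and your intensity computation $\mathbf{E}|J_n|\le C\,2^{n(1+\beta)(\eta'-\eta_{\mathrm c})}$ together with the Hausdorff sum is correct.

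The gap is in your ``core step''. Your split $M=M^{>,n}+M^{<,n}$ removes \emph{only} the large jumps lying in the single time slab $[t-\delta_n,\,t-\delta_{n+1}]$; every jump in the other slabs, including the large ones, remains in $M^{<,n}$. Hence the stable processes representing $Z^{<,n}_t(x)-Z^{<,n}_t(x')$ are \emph{not} truncated at height $\delta_n^{(\eta'+1)/2}$, nor at $C\,2^{-n\eta'}$. Concretely, on the good event a jump $(s,y,r)$ in slab $n'\neq n$ may have $r$ as large as $C(t-s)^{1/(1+\beta)-\gamma}$, and its contribution to the stable process at spatial scale $2^{-n}$ is of order $r\,(t-s)^{-1/2}\approx 2^{-n'(\eta_{\mathrm c}-2\gamma)}$; for $n'$ near $n$ this is $\approx 2^{-n(\eta_{\mathrm c}-2\gamma)}\gg 2^{-n\eta'}$ since $\eta'>\eta_{\mathrm c}$. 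With the truncation forced up to $y\approx 2^{-n(\eta_{\mathrm c}-2\gamma)}$ while $x\approx 2^{-n\eta'}$, the exponent $x/y$ in Lemma~\ref{L3} tends to $0$, so that lemma gives no decay and your chaining/Borel--Cantelli step collapses. Equivalently, your uniform bound on $Z^{<,n}$ is simply false: near the location of any big jump from slab $n'\approx n$ one has $|Z^{<,n}_t(x)-Z^{<,n}_t(x')|\gg 2^{-n\eta'}$ at scale $2^{-n}$.

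The paper avoids this by a two-parameter book-keeping: to each jump $(s,y,r)$ it assigns the ball of radius $\bigl(r\,(t-s)^{-1/(1+\beta)+\gamma}\bigr)^{1/(\eta-\eta_{\mathrm c})}$, so the radius depends on both $t-s$ and $r$ (see \eqref{balls} and the set $S_\eta$). The dimension of the resulting limsup set is then bounded by summing over dyadic blocks in $t-s$ \emph{and} in $r$ (Lemma~\ref{P2}), while the regularity statement (Lemma~\ref{P1}) shows $H_Z(x)\ge\eta-4\gamma$ off $S_\eta$; its proof exploits that for $x\notin S_\eta$ one has the pointwise bound $\Delta X_s(y)\le C(t-s)^{1/(1+\beta)-\gamma}|x-y|^{\eta-\eta_{\mathrm c}}$, which is precisely what yields a truncation level compatible with Lemma~\ref{L3}. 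To repair your argument you would have to either remove large jumps from \emph{all} time slabs simultaneously (with slab-dependent thresholds $(t-s)^{(\eta'+1)/2}$) before attempting the H\"older bound, or adopt the paper's two-index covering.
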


\bigskip

\mbox{}\\
\noindent We need to introduce an additional notation. In what follows, for any\\
$\eta\in (\eta_{{\rm c}},\bar{\eta}_{{\rm c}})\setminus\{1\}$, we fix an
arbitrary small $\gamma=\gamma(\eta)\in(0,\frac{10^{-2}\eta_{\rm c}}{\alpha})$ such that
\begin{eqnarray*}
\gamma &<& 
\left\{\begin{array}{lcr}
 10^{-2}\min\{1-\eta,\eta\},&&{\rm if}\; \eta < 1, \\
 10^{-2}\min\{\eta-1, 2-\eta\},&&{\rm if}\; \eta> 1, 
\end{array}
\right.
\end{eqnarray*}
and define
\begin{align*}
S_\eta:=\Big\{x\in(0,1):&\text{ there exists a sequence } (s_n,y_n)\to(t,x)\\
& \text{ with }\Delta X_{s_n}(\{y_n\})\geq (t-s_n)^{\frac{1}{1+\beta}-\gamma}|x-y_n|^{\eta-\eta_{\rm c}}\Big\}.
\end{align*} 
To prove the above proposition we have to verify the following two lemmas.
\begin{lemma}
\label{P1}
For every $\eta\in(\eta_{\rm c},\overline{\eta}_{\rm c})\setminus\{1\}$ we have
$$
\mathbf{P}\left(H_{Z}(x)\geq\eta-4\gamma\text{ for all }x\in (0,1)\setminus S_{\eta}\right)=1.
$$
\end{lemma}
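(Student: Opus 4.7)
The plan is to decompose the martingale measure $M$ driving $Z$ by jump size, with a threshold tailored to the defining condition of $S_\eta$. Fix $x_0\in(0,1)\setminus S_\eta$. By the very definition of $S_\eta$, there exists a (random) $\rho=\rho(\omega,x_0)>0$ such that every jump $(s,y,r)$ of $M$ with $t-s<\rho$ and $|y-x_0|<\rho$ satisfies
\[
r<(t-s)^{\frac{1}{1+\beta}-\gamma}|x_0-y|^{\eta-\eta_{\rm c}}.
\]
This is the only probabilistic input we need from the $S_\eta$-condition; the remainder of the argument is a deterministic regularity analysis on the event that such a $\rho$ exists.

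I would split $Z_t(x_0+h)-Z_t(x_0)$ (for $\eta<1$; otherwise the centered version $Z_t(x_0+h,x_0)$ from \eqref{def.Z.new}) into three stochastic integrals corresponding to: (i) a \emph{near--small} region, $t-s<\rho$, $|y-x_0|<\rho$, $r\le (t-s)^{1/(1+\beta)-\gamma}|x_0-y|^{\eta-\eta_{\rm c}}$; (ii) a \emph{near--large} region, with the same space--time window but with the reversed jump-size inequality; and (iii) the \emph{far} complement. Piece (ii) is empty by the choice of $\rho$, and piece (iii) depends on $h$ only through $p_{t-s}(x_0+h-y)-p_{t-s}(x_0-y)$ evaluated at $(s,y)$ bounded away from $(t,x_0)$, so this kernel difference is jointly smooth in its arguments and has $O(|h|)$ modulus near $h=0$ --- far better than the target $|h|^{\eta-4\gamma}$.

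The crux is piece (i). I would control its $q$-th moment for $q$ slightly larger than $1+\beta$ using the $L^q$ estimate of Lemma~1.7 in \cite{FMW10} --- the very tool underlying Lemma~\ref{L.moment_bound} --- but now applied to a martingale measure whose jump-size intensity is truncated to $(0,R(s,y)]$ with $R(s,y)=(t-s)^{1/(1+\beta)-\gamma}|x_0-y|^{\eta-\eta_{\rm c}}$. The truncation converts the (at $r=\infty$) divergent integral $\int_0^\infty r^q\,n(dr)$ into $\int_0^{R}r^q\,n(dr)\lesssim R^{q-1-\beta}$, yielding the extra factor
\[
\Bigl((t-s)^{\frac{1}{1+\beta}-\gamma}|x_0-y|^{\eta-\eta_{\rm c}}\Bigr)^{q-1-\beta}
\]
inside the space-time integral. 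Combining this with the heat-kernel increment bounds furnished by Lemma~\ref{L.fixed.3} produces an estimate of the form $\mathbf{E}|\cdot|^{q}\lesssim |h|^{q(\eta-C\gamma)}$, and Kolmogorov's continuity criterion applied dyadically in $h$ then gives H\"older regularity of order $\eta-C\gamma$ at $x_0$ for this piece.

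The main obstacle is the $x_0$-dependence of both $\rho$ and of the truncation threshold: the argument sketched above controls the modulus at a \emph{fixed} $x_0\notin S_\eta$, whereas the lemma requires a statement uniform over all such $x_0$ simultaneously. I would resolve this by running the moment bound at dyadic scale $|h|\sim 2^{-n}$ using only jumps in the shrinking window $\{t-s\lesssim 2^{-2n},\,|y-x_0|\lesssim 2^{-n(1-\varepsilon)}\}$, which makes the precise value of $\rho$ irrelevant once $n\ge n_0(\omega,x_0)$; covering $(0,1)$ by $O(2^n)$ dyadic centers and taking a union bound, the gap between the ``natural'' exponent $\eta-\gamma$ and the claimed one $\eta-4\gamma$ absorbs both the Kolmogorov chaining loss and the union-bound overhead. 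For $\eta>1$ the same scheme applies with $Z_t(x_1,x_2)$ in place of the plain difference, using \eqref{L.fixed.3.2} instead of \eqref{L.fixed.3.1} to gain the extra power of $|h|$ needed to reach exponent $\eta-4\gamma>1$.
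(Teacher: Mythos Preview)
The paper does not spell out the proof here; it says the argument is that of Theorem~\ref{T.fixed}(a) adapted to handle uniformity over the random set $(0,1)\setminus S_\eta$, and points to Lemma~3.2 of \cite{MW14} for details. That route is \emph{not} through $L^q$ moments: one writes $Z^\eta_t(x_0+h,x_0)=L^+_{T_+}-L^-_{T_-}$ via Lemma~\ref{L9}, bounds the jumps of $L^\pm$ on a good event using the kernel estimates (as in Lemma~\ref{L.fixed.4}), bounds $T_\pm$ via Lemma~\ref{L.fixed.3}, and then invokes the tail inequality of Lemma~\ref{L3} for a spectrally positive stable process with truncated jumps. The point of Lemma~\ref{L3} is that it delivers a tail of the shape $(C|h|^{\gamma})^{\,c|h|^{-\gamma}}$, i.e.\ faster than any power of $|h|$, and this is precisely what makes the union bound over $O(2^{n})$ dyadic centres at scale $|h|\sim 2^{-n}$ summable.

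Your scheme has a real gap at the uniformity step. With $q$ ``slightly larger than $1+\beta$'' --- hence $q<2$, as in Lemma~1.7 of \cite{FMW10} --- Chebyshev converts your moment bound $\mathbf{E}|\cdot|^q\lesssim |h|^{q(\eta-C\gamma)}$ into
\[
\mathbf{P}\bigl(|Z^{\mathrm{trunc}}_t(x_0+h,x_0)|>|h|^{\eta-4\gamma}\bigr)\ \lesssim\ |h|^{q(4-C)\gamma},
\]
and summing this over $O(|h|^{-1})$ dyadic centres requires $q(4-C)\gamma>1$; since $q<2$ and $\gamma$ is forced to be tiny (the paper takes $\gamma<10^{-2}\eta_{\rm c}$), this fails by a wide margin. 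Equivalently, Kolmogorov's criterion applied to your truncated process gives H\"older order at most $\eta-C\gamma-1/q$, not $\eta-C\gamma$: the loss $1/q>1/2$ is not absorbed by the $3\gamma$ slack. The same arithmetic obstructs even the single-$x_0$ step, because interpolating between dyadic $h$'s via the global H\"older exponent $\eta_{\rm c}$ forces a grid of mesh $2^{-Qn}$ with $Q\ge\eta/\eta_{\rm c}>1$, inflating the union further. The remedy is to replace the polynomial moment tail by the super-polynomial tail of Lemma~\ref{L3}; alternatively one could try to push your truncated-moment idea to arbitrarily large $q$ (all moments are finite after truncation), but then one is outside the scope of Lemma~1.7 in \cite{FMW10} and must establish a higher-order BDG-type inequality and redo the kernel integral with the extra factor $R(s,y)^{q-1-\beta}$ --- a substantially different and longer argument than what the paper and \cite{MW14} actually use.
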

\begin{lemma}
\label{P2}
For every $\eta\in(\eta_{\rm c},\overline{\eta}_{\rm c})\setminus\{1\}$ we have
$$
{\rm dim}(S_{\eta})\leq (1+\beta)(\eta-\eta_{\rm c}),\quad \mathbf{P}-\text{\rm a.s.}
$$
\end{lemma}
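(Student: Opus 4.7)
The plan is to cover $S_\eta$ by balls centered at the spatial locations of the jumps of the random measure $N$ of Lemma~\ref{L.mart.dec}, and to control the resulting $\alpha$-Hausdorff sum by means of the compensator formula \eqref{decomp}. For a jump $(s,y,r)$ of $N$, set
$$
\delta(s,r):=\bigl(r\,(t-s)^{-1/(1+\beta)+\gamma}\bigr)^{1/(\eta-\eta_{\rm c})},
$$
so that the defining inequality of $S_\eta$ is equivalent to $|x-y|\leq\delta(s,r)$. Hence, if $x\in S_\eta\cap(0,1)$, then for every sufficiently large $n$ there is a jump $(s,y,r)$ with $s\in[t-2^{-n},t)$, $y\in[-1,2]$, and $x\in B(y,\delta(s,r))$, so $S_\eta\cap(0,1)\subseteq\bigcup_{B\in\mathcal{C}_n} B$, where $\mathcal{C}_n$ denotes the collection of all such balls.

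Fix $\kappa>0$ small and set $\alpha:=(1+\beta)(\eta-\eta_{\rm c})+\kappa$, $q:=\alpha/(\eta-\eta_{\rm c})>1+\beta$. Restrict to the event of Lemma~\ref{L8} for some $\gamma'\in(0,\gamma)$, on which $|\Delta X_s|\leq c(t-s)^{1/(1+\beta)-\gamma'}$ for all $s<t$; its probability tends to $1$. On this event, the diameters of balls in $\mathcal{C}_n$ are at most $C\,(2^{-n})^{(\gamma-\gamma')/(\eta-\eta_{\rm c})}\to 0$, and the compensator formula \eqref{decomp} combined with $\mathbf{E}[X_s([-1,2])]\leq\mu(\mathbb{R})$ yields
$$
\mathbf{E}\!\sum_{B\in\mathcal{C}_n}\!\mathrm{diam}(B)^\alpha
\,\leq\, C\!\int_{t-2^{-n}}^{t}(t-s)^{-q(1/(1+\beta)-\gamma)}\!\int_{0}^{c(t-s)^{1/(1+\beta)-\gamma'}}\!\!r^{q-2-\beta}\,dr\,ds.
$$
Since $q>1+\beta$, the inner integral converges; evaluating it and collecting the exponents reduces the right-hand side to a bound of the form $C\,2^{-n[q(\gamma-\gamma')+(1+\beta)\gamma']}$, which is geometrically summable in $n$.

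By Borel--Cantelli, $\sum_{B\in\mathcal{C}_n}\mathrm{diam}(B)^\alpha\to 0$ almost surely on the event of Lemma~\ref{L8}, and combined with the vanishing diameters this gives $\mathcal{H}^\alpha(S_\eta)=0$ almost surely. Letting $\kappa\downarrow 0$ along a countable sequence yields $\mathrm{dim}(S_\eta)\leq(1+\beta)(\eta-\eta_{\rm c})$ a.s. The only real technical obstacle is the localization step: one must verify that jumps with $y\notin[-1,2]$ (equivalently, far from the effective support of $\mu\ast p_t$) do not contribute to the covering of $S_\eta\cap(0,1)$ for large $n$, which holds because on the event of Lemma~\ref{L8} the radii $\delta(s,r)$ are eventually far smaller than the distance from $(0,1)$ to the complement of $[-1,2]$. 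The rest is a clean application of the compensator formula.
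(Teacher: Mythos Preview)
Your argument is correct and follows the same underlying strategy as the paper: cover $S_\eta$ by balls centred at the spatial locations of the jumps of $N$, with radii dictated by the defining inequality of $S_\eta$, restrict to a ``good'' event (Lemma~\ref{L8}) on which jump sizes are bounded by a power of $t-s$, and control the $\alpha$-Hausdorff sum of the covering via the compensator formula~\eqref{decomp}.

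The implementation, however, is somewhat different and in fact cleaner than the paper's. The paper performs a \emph{double} dyadic decomposition, first in $s$ (index $j$) and then in $r$ (index $n$), and then uses a Poisson/Chebyshev tail bound together with Borel--Cantelli to control the \emph{number} of jumps in each dyadic box $D_{j,n}$ pathwise, before multiplying by the $\theta$-th power of the corresponding radius and summing. You bypass this entirely: you truncate $r\le c(t-s)^{1/(1+\beta)-\gamma'}$ deterministically (valid on the event of Lemma~\ref{L8}) and compute the \emph{expected} $\alpha$-Hausdorff sum of the covering directly from the compensator, using only $\mathbf{E}X_s([-1,2])\le\mu(\R)$. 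This first-moment argument replaces the paper's counting step and its localisation to $\{\sup_{s\le t}X_s((0,1))\le N\}$. Both routes give the same bound; yours is shorter, while the paper's pathwise counting is the kind of estimate that becomes useful when one wants finer gauge functions than a pure power.
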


The aim of the Lemma~\ref{P1} is to show that for any $\varepsilon>0$ sufficiently small,
outside the set $S_{\eta+4\gamma-\varepsilon}$ the H\"older exponent  is {\it larger}
than $\eta+\varepsilon$, and hence $\tilde{\mathcal E}_{Z,\eta}\subset S_{\eta+4\gamma-\varepsilon}$. Therefore 
Lemma~\ref{P2} gives immediately the upper bound on dimension of $\tilde{\mathcal E}_{Z,\eta}$. 
We can formalize it and immediately get

\medskip
\noindent {\it Proof of Proposition~\ref{prop:08_02}.\ } It follows easily from Lemma \ref{P1} 
that  $\tilde{\mathcal E}_{Z,\eta}\subset S_{\eta+4\gamma+\varepsilon}$ 
 for every $\eta\in(\eta_{\rm c},\overline{\eta}_{\rm c})\setminus\{1\}$ and  every $\varepsilon>0$ sufficiently small. Therefore,
$$
{\rm dim}(\tilde{\mathcal E}_{Z,\eta})\leq \lim_{\varepsilon\to0}{\rm dim}(S_{\eta+4\gamma+\varepsilon}).
$$
Using Lemma \ref{P2} we then get
$$
{\rm dim}(\tilde{\mathcal E}_{Z,\eta})\leq (1+\beta)(\eta+4\gamma-\eta_{\rm c}), \quad \mathbf{P}-\text{\rm a.s.}
$$
Since $\gamma$ can be chosen arbitrary small, the result for $\eta\neq1$ follows immediately.
The inequality for $\eta=1$ follows from the monotonicity in $\eta$ of the sets $\tilde{\mathcal E}_{Z,\eta}$.
\hfill$\square$
\medskip

Let $\varepsilon\in (0,\eta_{\rm c}/2)$ be arbitrarily small.
We introduce a new ``good`` event  $A_2^\varepsilon$ which will be
frequently used throughout the proofs. On this event, with high
probability, $V=V(B_2(0))$ from Lemma~\ref{L.fixed.2}  is bounded by a constant,
and there is a bound on the sizes of jumps. By Lemma \ref{L8}, there exists a constant 
$C_{(\ref{eq:08_02_2})}=C_{(\ref{eq:08_02_2})}(\varepsilon,\gamma)$ 
such that
\begin{eqnarray}
\label{eq:08_02_2}
\mathbf{P}(|\Delta X_s|> C_{(\ref{eq:08_02_2})} (t-s)^{(1+\beta)^{-1}-\gamma}\;\;{\rm for\; some}\; s<t)\leq \varepsilon/3. 
\end{eqnarray}
Then we fix another constant $C_{(\ref{eq:08_02_3})}=C_{(\ref{eq:08_02_3})}(\varepsilon,\gamma)$ such that 
\begin{eqnarray}
\label{eq:08_02_3}
\mathbf{P}(V\leq C_{(\ref{eq:08_02_3})})\geq 1-\varepsilon/3.
\end{eqnarray}
Recall that, by Theorem~1.2 in~\cite{FMW10}, $x\mapsto X_t(x)$ is $\mathbf{P}$-a.s. 
H\"older continuous with any exponent less than $\eta_{\rm c}$. Hence we can define  a constant 
$C_{(\ref{eq:08_02_20})}=C_{(\ref{eq:08_02_20})}(\varepsilon)$ such that
\begin{eqnarray}
\label{eq:08_02_20}
\mathbf{P}\left(\sup_{x_1,x_2\in (0,1), x_1\not=x_2}
 \frac{|X_t(x_1)-X_t(x_2)|}{|x_1-x_2|^{\eta_{\rm c}-\varepsilon}}\leq C_{(\ref{eq:08_02_20})}\right)\geq 1-\varepsilon/3.
\end{eqnarray}
Now we are ready to define
\begin{align}
\label{good_set}
A_3^{\varepsilon}&:=\{|\Delta X_s|\leq  C_{(\ref{eq:08_02_2})} (t-s)^{(1+\beta)^{-1}-\gamma}\;\;{\rm for\; all}\; s<t\}
\\
\nonumber
&\cap \{ V\leq C_{(\ref{eq:08_02_3})}\}\cap
 \left\{\sup_{x_1,x_2\in (0,1), x_1\not=x_2}
 \frac{|X_t(x_1)-X_t(x_2)|}{|x_1-x_2|^{\eta_{\rm c}-\varepsilon}}\leq C_{(\ref{eq:08_02_20})}\right\}. 
\end{align}
Clearly by~(\ref{eq:08_02_2}), (\ref{eq:08_02_3}) and (\ref{eq:08_02_20}), 
$\mathbf{P}(A_3^\varepsilon)\geq 1-\varepsilon.$ See (3.4) 
in~\cite{FMW10} for the analogous definition. 

Now we are ready to give the  proof of Lemma \ref{P2}.

\medskip

\noindent {\it Proof of Lemma \ref{P2}.\ } 
To every jump $(s,y,r)$ of the measure $\cN$ (in what follows in the paper we will 
usually call them 
simply ``jumps'') with 
$$
(s,y,r)\in D_{j,n}:=[t-2^{-j}, t-2^{-j-1})\times(0,1)\times[2^{-n-1}, 2^{-n})
$$ 
we assign the ball
\begin{equation}\label{balls}
B^{(s,y,r)}:=B\left(y,\left(\frac{2^{-n}}{(2^{-j-1})^{\frac{1}{1+\beta}-\gamma}}\right)^{1/(\eta-\eta_{\rm c})}\right).
\end{equation}
We used here the obvious notation $B(y,\delta)$ for the ball in $\R$ with the center at $y$ and radius $\delta$. Define
 $n_0(j):=j[\frac{1}{1+\beta}-\frac{\gamma}{4}]$. It follows from \eqref{eq:08_02_2} and \eqref{good_set} that, on 
$A_2^\varepsilon$, there are no jumps bigger than $2^{-n_0(j)}$ in the time interval $[t-2^{-j}, t-2^{-j-1})$.

It is easy to see that every point from $S_\eta$ is contained in infinitely many balls $B^{(s,y,r)}$.
Therefore, for every $J\geq1$, the set
$$\bigcup_{j\geq J, n\geq 1}\bigcup_{(s,y,r)\in D_{j,n}} B^{(s,y,r)} $$
covers $S_\eta$. From \eqref{eq:08_02_2} and \eqref{good_set} we conclude
that, on $A_2^\varepsilon$, there are no jumps bigger than
$C_{\eqref{eq:08_02_2}}2^{-(j+1)(\frac{1}{1+\beta}-\gamma)}$ in the time
interval $s\in [t-2^{-j},t-2^{-j-1})$ for any $j\geq1$. Define
$n_0(j):=j[\frac{1}{1+\beta}-\frac{\gamma}{4}]$. Clearly, there exists $J_0$ 
such that for all $j\geq J_0$ there are no jumps bigger than $2^{-n_0(j)}$
in the time interval $[t-2^{-j}, t-2^{-j-1})$. Hence, for every $J\geq J_0$, the
set
$$
S_\eta(J):=\bigcup_{j\geq J, n\geq n_0(j)}\bigcup_{(s,y,r)\in D_{j,n}} B^{(s,y,r)} 
$$
covers $S_\eta$ for every $\omega\in A_3^\varepsilon$.

It follows from the formula for the compensator that, on the event\\
$\left\{\sup_{s\leq t}X_s((0,1))\leq N\right\}$,
the intensity of jumps with
$(s,y,r)\in D_{j,n}$ is bounded by 
$$
N2^{-j-1}\int_{2^{-n-1}}^{2^{-n}}c_\beta r^{-2-\beta}dr=\frac{Nc_\beta(2^{1+\beta}-1)}{2(1+\beta)}2^{n(1+\beta)-j}=:\lambda_{j,n}.
$$
Therefore, the intensity of jumps with $(s,y,r)\in\cup_{n=n_0(j)}^{n_1(j)}D_{j,n}=:\tilde{D}_j$, where 
$n_1(j)=j[\frac{1}{1+\beta}+\frac{\gamma}{4}]$, is bounded by
$$
\sum_{n=n_0(j)}^{n_1(j)}\lambda_{j,n}\leq \frac{Nc_\beta 2^\beta}{(\beta+1)}2^{j(1+\beta)\gamma/4}=:\Lambda_j.
$$
The number of such jumps does not exceed $2\Lambda_j$ with the probability $1-e^{-(1-2\log2)\Lambda_j}$.
This is immediate from the exponential Chebyshev inequality applied to Poisson distributed random variables.
Analogously, the number of
jumps with $(s,y,r)\in D_{j,n}$ does not exceed $2\lambda_{j,n}$ with the probability
at least $1-e^{-(1-2\log2)\lambda_{j,n}}$.
Since
$$
\sum_j\left(e^{-(1-2\log2)\Lambda_j}+\sum_{n=n_1(j)}^\infty e^{-(1-2\log2)\lambda_{j,n}}\right)<\infty,
$$
we conclude, applying the Borel-Cantelli lemma, that, for almost every $\omega$ from the set
$A_2^\varepsilon\cap\left\{\sup_{s\leq t}X_s((0,1))\leq N\right\}$, 
there exists $J(\omega)$ such that for all $j\geq J(\omega)$ and $n\geq n_1(j)$,  
the numbers of jumps in $\tilde{D}_j$
and in $D_{j,n}$ are bounded by $2\Lambda_j$ and $2\lambda_{j,n}$ respectively.

The radius of every ball corresponding to the jump in $\tilde{D}_j$ is bounded by 
$r_j:=C2^{-\frac{3\gamma}{4(\eta-\eta_{\rm c})}j}$. Thus, one can easily see that
$$
\sum_{j=1}^\infty\left(2\Lambda_j r_j^\theta+\sum_{n=n_1(j)}^\infty 2\lambda_{j,n}
\left(\frac{2^{-n}}{(2^{-j-1})^{\frac{1}{1+\beta}-\gamma}}\right)^{\theta/(\eta-\eta_{\rm c})}\right)<\infty
$$
for every $\theta>(1+\beta)(\eta-\eta_{\rm c})$. This yields the desired bound for the Hausdorff dimension
for almost every $\omega\in A_3^\varepsilon\cap\left\{\sup_{s\leq t}X_s((0,1))\leq N\right\}$.
Letting $N\to\infty$ and $\varepsilon\to0$ completes the proof.
\hfill$\square$
\medskip

The proof of Lemma \ref{P1} is omitted since it goes similarly to the proof of Theorem~\ref{T.fixed}(a) and all the 
modifications come from the necessity of dealing with numerous random points. For details of the proof of Lemma \ref{P1},
we refer the interested reader to the proof of Lemma 3.2 in~\cite{MW14}.

\subsection{Lower bound for the Hausdorff dimension}
\label{sec:6.2}
The aim of this section is to describe the main steps in the proof
of the following proposition. The full proof of it is given 
in the Section~4 of~\cite{MW14}. 
\begin{proposition}
\label{prop:08_02_3}
For every $\eta\in(\eta_{\rm c},\overline{\eta}_{\rm c})\setminus\{1\}$,
$$
{\rm dim}({\mathcal E}_{Z,\eta})\geq
(1+\beta)(\eta-\eta_{\rm c}),\;\;\mathbf{P}-\text{\rm a.s. on}
\; \{X_t((0,1))>0\}.
$$
\end{proposition}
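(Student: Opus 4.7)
The plan is to prove Proposition~\ref{prop:08_02_3} by constructing, for each admissible $\eta$, a random set $E_\eta\subseteq \mathcal E_{Z,\eta}$ whose Hausdorff dimension is at least $(1+\beta)(\eta-\eta_{\rm c})-\varepsilon$ for every $\varepsilon>0$, $\mathbf{P}$-almost surely on $\{X_t((0,1))>0\}$. The heuristic given before the proposition identifies the correct scale: a jump $r\delta_{(s,y)}$ of $N$ with $|x-y|\asymp(t-s)^{1/2}$ and $r\asymp (t-s)^{(\eta+1)/2}$ contributes, via the kernel bound $p_{t-s}(0)-p_{t-s}(x-y)\asymp(t-s)^{-1/2}$ applied in \eqref{Levy.rep}--\eqref{Levy.rep1}, an increment of order $|x-y|^{\eta}$ to $Z_t$ near $x$. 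Accordingly I would take
$$
E_\eta:=\bigcap_{J\geq 1}\bigcup_{j\geq J}\bigcup_{(s,y,r)\in\mathcal J_j} B^{(s,y,r)},
$$
where $\mathcal J_j$ consists of jumps with $s\in[t-2^{-j},t-2^{-j-1})$, $y\in(0,1)$, $r\asymp 2^{-j(\eta+1)/2}$, and $B^{(s,y,r)}$ is the ball of radius $2^{-j/2}$ around $y$; the construction would be further thinned to forbid, near each such jump, a significantly larger jump at a smaller scale.

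Next I would verify $H_Z(x)=\eta$ for every $x\in E_\eta$. The upper bound $H_Z(x)\leq\eta$ uses the defining sequence $(s_n,y_n,r_n)\downarrow t$: each such jump produces a definite spatial oscillation of order $|x-y_n|^{\eta}$ on $Z_t$, provided the contribution of all other jumps cannot cancel it; this non-cancellation is the analogue of what Lemmas~\ref{n.L5}--\ref{n.L7} achieve for the fixed-point Proposition~\ref{very_new_prop}, and I would adapt their proofs along the limsup structure. For the matching lower bound $H_Z(x)\geq\eta$, I would restrict to an event analogous to $A_2^\varepsilon$ from Section~\ref{sec:fixed}, on which the global bounds on jump sizes (Lemma~\ref{L8} and Corollary~\ref{6.1}) hold. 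The argument of Section~6.1 then upgrades from a single fixed point to a uniform estimate over all $x\in E_\eta$ simultaneously, precisely because the thinning in the definition of $E_\eta$ enforces the ``no exceptionally large nearby jump'' condition on which that argument relies.

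To lower bound $\dim E_\eta$, I would apply the mass distribution principle: for any $\theta<(1+\beta)(\eta-\eta_{\rm c})$ it suffices to exhibit a nonzero random Radon measure $\nu$ supported on $E_\eta$ with finite $\theta$-energy $\int\!\!\int|x-y|^{-\theta}\nu(\mathrm{d}x)\nu(\mathrm{d}y)$. A natural candidate is a weak limit of the absolutely continuous measures
$$
\nu_J(\mathrm{d}x)=c_J\sum_{(s,y,r)\in\mathcal J_J}\mathsf{1}_{B^{(s,y,r)}}(x)\,\mathrm{d}x,
$$
normalised so that $\mathbf{E}\nu_J((0,1))\asymp 1$. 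By the compensator formula \eqref{decomp}, $\mathbf{E}|\mathcal J_j|$ is of order $2^{j[(\eta+1)(1+\beta)/2-1]}\int_{t-2^{-j}}^t\mathbf{E}X_s((0,1))\,\mathrm{d}s$, and combined with $|B^{(s,y,r)}|\asymp 2^{-j/2}$ this matches the expected scaling $\theta=(1+\beta)(\eta-\eta_{\rm c})$. The non-triviality of the limit $\nu\not\equiv 0$ on $\{X_t((0,1))>0\}$ can be read off from Lemma~\ref{lem:3} applied dyadically.

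The main obstacle is the second-moment (energy) estimate. The compensator $\mathrm{d}s\,X_s(\mathrm{d}y)\,n(\mathrm{d}r)$ couples the spatial locations of the jumps through the random measure $X_s$, so the off-diagonal contribution $\mathbf{E}\int\!\!\int|x-y|^{-\theta}\nu_J(\mathrm{d}x)\nu_J(\mathrm{d}y)$ is controlled by two-point functionals of $X_s$ and $X_{s'}$ over small balls in $(0,1)$. Taming these requires the uniform smoothed-density control of Lemmas~\ref{L.fixed.2} and~\ref{n.L2} to replace $X_s$ by something comparable to Lebesgue measure on the relevant scales, combined with a careful use of the conditional Poissonian structure of $\mathcal N$ given the history of $X$ to decouple jumps at different dyadic scales. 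Once this energy bound is in place, Frostman's lemma yields $\dim E_\eta\geq\theta$ almost surely on $\{\nu\neq 0\}$, and letting $\theta\uparrow(1+\beta)(\eta-\eta_{\rm c})$ finishes the proof.
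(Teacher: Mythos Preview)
Your overall architecture---build a limsup set out of balls centred at suitably sized jumps, argue that the H\"older exponent equals $\eta$ there, and lower-bound its dimension---matches the paper's. The main methodological difference is in the dimension lower bound. The paper does \emph{not} run an energy/Frostman argument. Instead it invokes Jaffard's ubiquity theorem (Theorem~2 in \cite{Jaff99}): once one shows that the ``inflated'' limsup set $J_{\eta,(\beta+1)(\eta-\eta_{\rm c})}$, built from balls of radius $\asymp 2^{-j(\beta+1)(\eta-\eta_{\rm c})/2}$ rather than $2^{-j/2}$, contains an open interval, Jaffard's theorem gives $\mathcal H_\eta(J_{\eta,1})>0$ directly (Corollary~\ref{lower3}). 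The interval-containment is established by replacing $X_s$ near $t$ by a multiple of Lebesgue measure (Lemmas~\ref{lower0}, \ref{lower1}) and a Borel--Cantelli argument controlling the longest run of dyadic intervals without a qualifying jump (Lemma~\ref{lower2}). This entirely sidesteps the second-moment computation you correctly flag as the main obstacle; your energy approach is in principle viable but considerably heavier, precisely because of the coupling through $X_s$ that you identify, and the non-triviality of the limiting measure $\nu$ does not follow from Lemma~\ref{lem:3} alone.

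There is also a genuine gap in your treatment of the exponent. You cannot expect $H_Z(x)=\eta$ for \emph{every} $x$ in the limsup set. The paper obtains only $H_Z(x)\leq\eta$ on $J_{\eta,1}\setminus\mathbf G_\eta$, where $\mathbf G_\eta$ is a ``compensation'' set with ${\rm dim}(\mathbf G_\eta)<(\beta+1)(\eta-\eta_{\rm c})$ (Proposition~\ref{last_prop}); this is the random-point analogue of Proposition~\ref{very_new_prop}, and $\mathbf G_\eta$ cannot be removed by pathwise thinning since it arises from \emph{pairs} of large jumps at comparable scales. For the matching lower bound $H_Z(x)\geq\eta$ one removes $S_{\eta-2\rho}$ via Lemma~\ref{P1}, again of strictly smaller dimension, yielding only $\eta-2\gamma-2\rho\leq H_Z(x)\leq\eta$ on the remainder. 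To isolate points with exponent \emph{exactly} $\eta$, the paper uses an $\mathcal H_\eta$-measure argument: since $\mathcal H_\eta(J_{\eta,1})>0$ while, by Proposition~\ref{prop:08_02}, each set $\{x:H_Z(x)\in(\eta-n^{-1},\eta-(n+1)^{-1}]\}$ has dimension strictly below $(\beta+1)(\eta-\eta_{\rm c})$ and hence $\mathcal H_\eta$-measure zero, subtracting the countable union leaves $\mathcal H_\eta(\{x:H_Z(x)=\eta\})>0$. Your proposal omits this step.
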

\begin{remark}
Clearly the above proposition together with Proposition~\ref{prop:08_02}
finishes the proof of Theorem~\ref{thm:mfractal}.
\end{remark}

The proof of the lower
bound is much more involved then the proof of the upper one.  Let us give 
short description of the strategy. 
First we state two lemmas that give some uniform estimates on "masses" of $X_s$ of dyadic intervals 
at times $s$ close to $t$. These lemmas imply that $X_s(dx)$ for times $s$ close to $t$
is very close to Lebesgue measure with the density bounded from above and away of zero. 
This is very helpful for constructing a set
$J_{\eta,1}$ with 
${\rm dim}(J_{\eta,1})\geq (\beta+1)(\eta-\eta_{\rm c})$, on
which we show existence of "big" jumps of $X$ that occur close to time $t$.
These jumps are "encoded" in the jumps of the auxiliary processes $L^+_{n,l,r}$ 
and they, in fact,  "may" destroy  the H\"older continuity of $X_t(\cdot)$
on $J_{\eta,1}$ for any  index greater or equal to $\eta$ (see
Proposition \ref{last_prop} and the proof of Proposition \ref{prop:08_02_3}).

\bigskip

In the next two lemmas we give some bounds for $X_s(I_k^{(n)})$, where 
$$ I_k^{(n)}:= [ k2^{-n}, (k+1)2^{-n}).$$

In what follows, fix some 
\begin{equation}
\label{eq:02_08_1}
 m>3/2,
\end{equation}
and let $\theta\in (0,1)$ be arbitrarily small.
Define 
\begin{align*}
O_n&:=\left\{\omega:\text{ there exists }k\in[0, 2^{n}-1]\text{ such that } \right.\\
&\left.\hspace*{1.2cm}
\sup_{s\in(t-2^{-2 n}n^{4m/3},t)}X_s(I^{(n)}_k)\geq 2^{-n}n^{4m/3}\right\}
\end{align*}
and 
\begin{align*}
B_n=B_n(\theta):=&\left\{\omega:\text{ there exists }k\in [0, 2^{n}-1]\text{ such that } 
\right.\\
&\left.\hspace*{0.4cm} I^{(n)}_k\cap\{x:X_t(x)\geq\theta\}\neq\emptyset\right.
\\
&\left.\hspace*{0.4cm}\text{ and }\inf_{s\in(t-2^{-2 n}n^{-2 m},t)}X_s(I^{(n)}_k)\leq 2^{-n}n^{-2m}\right\}.
\end{align*}
\begin{lemma}\label{lower0}
There exists a constant $C$ such that
$$
\mathbf{P}(O_n)\leq Cn^{-2m/3},\;\;n\geq 1.
$$
\end{lemma}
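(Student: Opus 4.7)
The plan is to adapt the strong Markov argument from the proof of Lemma~\ref{L.fixed.2} to the smoothed density
\[
\Phi_s(y)\,:=\,S_{4(t-s+2^{-2n})}X_s(y)
\]
on the time window $I:=(t-2^{-2 n}n^{4m/3},t)$ and spatial window $[0,1]$. The idea is to convert the (volume) bound defining $O_n$ into a pointwise lower bound on $\Phi_s$, and then use the space-time harmonicity built into the smoothing to turn that lower bound at a stopping time into a bounded expectation at the terminal time $t$, where Lemma~\ref{L.fixed.1} applies.

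First I would carry out the conversion using a Gaussian lower bound on the heat kernel. For $s\in I$ and $y,z\in I_k^{(n)}$ one has $|y-z|\leq 2^{-n}$ while $4(t-s+2^{-2n})\in[4\cdot 2^{-2n},\,8\cdot 2^{-2n}n^{4m/3}]$, so
\[
p_{4(t-s+2^{-2n})}(y-z)\,\geq\,c\,2^{n}n^{-2m/3}.
\]
Integrating against $X_s$ restricted to $I_k^{(n)}$ yields $\Phi_s(y)\geq c\,2^{n}n^{-2m/3}X_s(I_k^{(n)})$ for every $y\in I_k^{(n)}\subseteq[0,1]$. With $N:=c\,n^{2m/3}$ this gives $O_n\subseteq\{\tau<t\}$, where
\[
\tau\,:=\,\inf\bigl\{s\in I:\sup_{y\in[0,1]}\Phi_s(y)\geq N\bigr\}\qquad(\inf\emptyset:=t),
\]
and $\tau$ is a stopping time for the usual right-continuous filtration.

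Next I would apply the strong Markov property at $\tau$ together with the semigroup identity
\[
S_{t-\tau}\bigl[p_{3(t-\tau)+4\cdot 2^{-2n}}(y_n-\cdot)\bigr]\,=\,p_{4(t-\tau+2^{-2n})}(y_n-\cdot),
\]
where on $\{\tau<t\}$ the point $y_n\in[0,1]$ is chosen with $\Phi_\tau(y_n)\geq N$; such $y_n$ exists by continuity of $y\mapsto\Phi_\tau(y)$ on the compact $[0,1]$. The first-moment formula for the superprocess then gives
\[
\mathbf{E}\bigl[S_{3(t-\tau)+4\cdot 2^{-2n}}X_t(y_n)\,\bigl|\,\mathcal{F}_\tau\bigr]\,=\,\Phi_\tau(y_n)\,\geq\,N\,\mathbf{1}_{\{\tau<t\}},
\]
so $N\,\mathbf{P}(\tau<t)\leq\mathbf{E}[S_{3(t-\tau)+4\cdot 2^{-2n}}X_t(y_n)]$. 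Splitting this integral into $z\in K:=[-1,2]$ and $z\notin K$: the first piece is bounded by $\sup_{z\in K}X_t(z)$, whose expectation is finite by Lemma~\ref{L.fixed.1}; the second piece is bounded by $X_t(\mathbb{R})\sup_{|y_n-z|\geq 1}p_{3(t-\tau)+4\cdot 2^{-2n}}(y_n-z)$, which by the Gaussian tail and the bound $3(t-\tau)+4\cdot 2^{-2n}\leq C\,2^{-2n}n^{4m/3}$ decays faster than any polynomial in $n$. Combining, $\mathbf{E}[S_{3(t-\tau)+4\cdot 2^{-2n}}X_t(y_n)]\leq C$ uniformly in $n$, and hence
\[
\mathbf{P}(O_n)\,\leq\,\mathbf{P}(\tau<t)\,\leq\,C/N\,\leq\,C\,n^{-2m/3}.
\]

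The delicate point is the calibration of the smoothing scale inside $\Phi_s$. The factor $4$ in $4(t-s+2^{-2n})$ is forced by two competing requirements: after stopping at $\tau$ and propagating by $S_{t-\tau}$, the two heat kernels must compose back into $p_{4(t-\tau+2^{-2n})}$ so that the lower bound on $\Phi_\tau(y_n)$ is preserved; simultaneously, the residual smoothing time $3(t-\tau)+4\cdot 2^{-2n}$ must stay small enough that $p_{3(t-\tau)+4\cdot 2^{-2n}}(y_n-\cdot)$ is essentially supported in a fixed compact set around $y_n\in[0,1]$, so that the compact-set moment bound of Lemma~\ref{L.fixed.1} controls the expectation uniformly in $n$. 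A naive choice of smoothing would either lose the pointwise lower bound on $O_n$ or leak mass to infinity and break the uniform-in-$n$ expectation estimate.
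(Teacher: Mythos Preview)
Your proof is correct and follows exactly the approach the paper indicates: it omits the proof of Lemma~\ref{lower0} and says it is ``an almost word-by-word repetition of the proof of Lemma~5.5 in \cite{FMW10}'', which is precisely the strong Markov argument of Lemma~\ref{L.fixed.2} adapted to the dyadic scale, with the extra offset $2^{-2n}$ in the smoothing time so that the kernel lower bound survives as $s\uparrow t$. Your calibration of $\Phi_s(y)=S_{4(t-s+2^{-2n})}X_s(y)$, the resulting bound $\Phi_s(y)\geq c\,2^{n}n^{-2m/3}X_s(I_k^{(n)})$, and the terminal-time expectation estimate via Lemma~\ref{L.fixed.1} are all in order.
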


Recall the definition of $A_3^\varepsilon$ given in \eqref{good_set}.

\begin{lemma}\label{lower1}
There exists a constant $C=C(m)$ such that, for every $\theta\in(0,1)$,
$$
\mathbf{P}(B_n(\theta)\cap A_3^\varepsilon)\leq C\theta^{-1}n^{-\alpha m/3},\;\;n\geq \tilde n(\theta),
$$
for some $\tilde n(\theta)$ sufficiently  large. 
\end{lemma}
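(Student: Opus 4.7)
The plan is to fix $k\in[0,2^n-1]$, denote by $B_n^k(\theta)$ the event in the definition of $B_n(\theta)$ corresponding to this $k$, estimate $\mathbf{P}(B_n^k(\theta)\cap A_3^\varepsilon)$ separately, and then sum over $k$. First, I would convert the pointwise condition ``$X_t(x)\geq\theta$ somewhere in $I_k^{(n)}$'' into a lower bound on the interval mass. On $A_3^\varepsilon$ the density $X_t$ is H\"older continuous with exponent $\eta_{\mathrm{c}}-\varepsilon$ and constant $C_{(\ref{eq:08_02_20})}$, so setting $\delta(\theta):=\bigl(\theta/(2C_{(\ref{eq:08_02_20})})\bigr)^{1/(\eta_{\mathrm{c}}-\varepsilon)}$ yields $X_t(y)\geq\theta/2$ throughout the $\delta(\theta)$-neighbourhood of any $x_0$ with $X_t(x_0)\geq\theta$. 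Choosing $\tilde n(\theta)$ so that $2^{-\tilde n(\theta)}\leq\delta(\theta)$ then gives, for all $n\geq\tilde n(\theta)$, the implication $B_n^k(\theta)\Rightarrow X_t(I_k^{(n)})\geq\theta 2^{-n-1}$.

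Next I would introduce the stopping time $\tau_k:=\inf\{s\in(t-2^{-2n}n^{-2m},t]:X_s(I_k^{(n)})\leq 2^{-n}n^{-2m}\}$, with $\tau_k:=t$ if the infimum is not attained. On $A_3^\varepsilon$ every jump in this time interval has total mass at most $C_{(\ref{eq:08_02_2})}(t-s)^{1/(1+\beta)-\gamma}$, which is $o(2^{-n}n^{-2m})$ since $2(1/(1+\beta)-\gamma)>1$ for $\beta<1$ and $\gamma$ small. Hence $X_{\tau_k}(I_k^{(n)})\leq 2\cdot 2^{-n}n^{-2m}$. By the strong Markov property and the branching property of superprocesses, conditionally on $\mathcal{F}_{\tau_k}$ the process $\{X_{\tau_k+u}\}_{u\geq 0}$ decomposes as an independent sum $X^{(1)}+X^{(2)}$, where $X^{(j)}$ starts from the restriction of $X_{\tau_k}$ to $I_k^{(n)}$ for $j=1$, and to $(I_k^{(n)})^c$ for $j=2$.

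The $X^{(1)}$ contribution is easy: the first-moment formula gives $\mathbf{E}[X^{(1)}_{t-\tau_k}(I_k^{(n)})\mid\mathcal{F}_{\tau_k}]\leq X_{\tau_k}(I_k^{(n)})\leq 2\cdot 2^{-n}n^{-2m}$, so Markov's inequality yields $\mathbf{P}(X^{(1)}_{t-\tau_k}(I_k^{(n)})\geq\theta 2^{-n-2}\mid\mathcal{F}_{\tau_k})\leq 8n^{-2m}/\theta$. For $X^{(2)}$, the remaining time $t-\tau_k\leq 2^{-2n}n^{-2m}$ corresponds to a diffusion scale of order $2^{-n}n^{-m}\ll 2^{-n}$, so splitting $X_{\tau_k}|_{(I_k^{(n)})^c}$ according to whether the distance to $I_k^{(n)}$ exceeds $C\sqrt{(t-\tau_k)\log n}$, the far part produces a super-polynomially small Gaussian tail, while the mass in the near-boundary strip is controlled by Lemma~\ref{n.L2}, which on $A_3^\varepsilon$ bounds it by $CW\sqrt{(t-\tau_k)\log n}=O(2^{-n}n^{-m}\sqrt{\log n})$.

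The hard part will be to make the per-$k$ estimate tight enough for the union bound over the $2^n$ values of $k$ to give the target bound $C\theta^{-1}n^{-\alpha m/3}$: a naive Markov inequality on $X^{(2)}$ alone only produces an $O(1/\theta)$ contribution per $k$, which is hopelessly weak after the union bound. The way out is to complement the decomposition above with the uniform upper bound $X_s(I_k^{(n)})\leq 2^{-n}n^{4m/3}$ supplied by Lemma~\ref{lower0} on the complement of $O_n$, which caps the total excursion of $s\mapsto X_s(I_k^{(n)})$ between $\tau_k$ and $t$. Combined with the local-density estimates of Lemmas~\ref{L.fixed.2} and~\ref{n.L2}, this extra ingredient converts the moment bounds into the polynomial decay in $n$ required in the statement.
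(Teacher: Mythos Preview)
The paper does not prove this lemma in the text; it only refers to Lemma~6.7 of \cite{MW14}. So there is no in-house argument to compare against, and your proposal has to be judged on its own.

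Your opening moves are fine: the H\"older continuity on $A_3^\varepsilon$ does convert ``$X_t\geq\theta$ somewhere in $I_k^{(n)}$'' into $X_t(I_k^{(n)})\geq\theta\,2^{-n-1}$ for $n\geq\tilde n(\theta)$; the stopping time $\tau_k$, the strong Markov property, and the branching decomposition $X^{(1)}+X^{(2)}$ are natural; and Lemma~\ref{n.L2} does control the boundary strip feeding $I_k^{(n)}$ from outside. All of this yields a per-interval estimate of the correct flavor, roughly $\mathbf{P}(B_n^k(\theta)\cap A_3^\varepsilon)\lesssim \theta^{-1}n^{-c\,m}$ for some $c>0$.

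The gap is precisely the one you name but do not close. A union bound over the $2^n$ intervals converts $\theta^{-1}n^{-c\,m}$ into $2^n\theta^{-1}n^{-c\,m}$, which is useless, and your proposed remedy does not repair this. Intersecting with $O_n^c$ caps each $X_s(I_k^{(n)})$ from \emph{above} by $2^{-n}n^{4m/3}$; but $B_n^k(\theta)$ concerns a transition from the low level $2^{-n}n^{-2m}$ to the moderate level $\theta\,2^{-n-1}$, both of which already sit well below that cap. So the cap imposes no constraint whatsoever on the transition you need to make rare. Lemmas~\ref{L.fixed.2} and~\ref{n.L2} sharpen the boundary contribution $X^{(2)}$, which you are already using; they cannot conjure the missing factor $2^{-n}$.

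What is needed is to abandon the union bound. One way to see the right mechanism is via Lemma~\ref{n.L1}: for the non-negative, spectrally positive martingale $s\mapsto S_{t-s}X_s(x_0)$, the event $\{X_t(x_0)\geq\theta\}$ is essentially incompatible with a dip to a level $\ll\theta$ shortly before $t$. Since the smoothing scale $\sqrt{t-s}\leq 2^{-n}n^{-m}$ is much smaller than $|I_k^{(n)}|=2^{-n}$, a lower bound on $S_{t-s}X_s(x_0)$ transfers to $X_s(I_k^{(n)})\gtrsim\theta\,2^{-n}n^{-m}>2^{-n}n^{-2m}$. The exceptional event on which this comparison fails is then controlled \emph{globally}---by a single stopping-time argument in the style of the proof of Lemma~\ref{L.fixed.2}---rather than interval by interval, and that is how the factor $2^n$ is avoided.
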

The proofs of the above lemmas are technical and hence we omit them.  
Let us just mention that the  proof of Lemma~\ref{lower0} is an almost word-by-word repetition of the 
proof of Lemma 5.5 in \cite{FMW10}, and for the proof of Lemma~\ref{lower1} we refer the reader to the proof of 
Lemma~6.7 in~\cite{MW14}.

\subsubsection{Analysis of the set of jumps which destroy the H\"older continuity}
\label{sec:4.2}
In this subsection we construct a set $J_{\eta,1}$ such that its
Hausdorff dimension is bounded from below by  $(\beta+1)(\eta-\eta_{\rm c})$
and in the vicinity of each $x\in J_{\eta,1}$ there are jumps of $X$
which destroy the H\"older continuity at $x$ for any index greater than $\eta$.

We first introduce $J_{\eta,1}$ and prove the lower bound for its
dimension. 
Set
$$
q:=\frac{5m}{(\beta+1)(\eta-\eta_{\rm c})}$$
and define
\begin{align*}
&A_{k}^{(n)}:=\left\{\Delta X_s(I^{(n)}_{k-2n^q-2})\geq 2^{-(\eta+1)n}\right.\\
&\hspace{1cm}\left.\text{ for some }
s\in[t-2^{-2 n}n^{-2 m},t-2^{-2(n+1)}(n+1)^{-2 m})\right\},
\end{align*}
$$
J^{(n)}_{k,r}:=\left[\frac{k}{2^n}-(n^q2^{-n})^r,\frac{k+1}{2^n}+(n^q2^{-n})^r\right].
$$

Let us introduce the following notation.
For a Borel set $B$ and an event $E$ define a random set
$$
B{\mathsf 1}_E(\omega):=
\left\{
\begin{array}{cl}
B,\quad \omega\in E,\\
\emptyset,\quad\omega\notin E.
\end{array}
\right.
$$
Now we are ready to define random sets
$$
J_{\eta,r}:=\limsup_{n\to\infty}
\bigcup_{k=2n^q+2}^{2^n-1} J^{(n)}_{k,r}{\mathsf 1}_{A_k^{(n)}},
\quad r>0.
$$
As we have mentioned already we are interested in getting the lower bound
on Hausdorff dimension of $J_{\eta,1}$. The standard procedure
for this is as follows. First  show that a bit "inflated" set
$J_{\eta,r}$, for certain $r\in (0,1)$, contains open intervals.
This would imply a lower bound $r$ on the Hausdorff dimension of 
$J_{\eta,1}$ (see  Lemma \ref{lower2} and Theorem 2 from 
\cite{Jaff99} where a similar strategy was implemented). Thus  to get a
sharper bound on Hausdorff dimension of $J_{\eta,1}$ one should
try to take $r$ as large as possible. In the next lemma we show that,
in fact, one can choose $r=(\beta+1)(\eta-\eta_{\rm c})$.
\begin{lemma}
\label{lower2}
On the event $A_3^\varepsilon$,
$$
\{x\in (0,1):\,X_t(x)\geq\theta\}\subseteq J_{\eta,(\beta+1)(\eta-\eta_{\rm c})}\,,\; \pr-{\rm a.s.} 
$$
for every $\theta\in(0,1)$.
\end{lemma}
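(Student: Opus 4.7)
The plan is to establish the following covering: on $A_3^{\varepsilon}$, for $\mathbf{P}$-almost every $\omega$ and for infinitely many $n$, every dyadic interval $I_j^{(n)}$ meeting $\{x\in(0,1):\,X_t(x)\geq\theta\}$ is contained in some $J_{k,r}^{(n)}$, with $r=(1+\beta)(\eta-\eta_{\rm c})$, on which $A_k^{(n)}$ occurs. Granted this, any $x$ in the left-hand side of the claim lies in such an $I_{j(n,x)}^{(n)}$ at every level, hence in $\bigcup_k J_{k,r}^{(n)}\mathsf{1}_{A_k^{(n)}}$ for infinitely many $n$, so $x\in J_{\eta,r}$.

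First I would fix a large $n$ and $j$ with $I_j^{(n)}\cap\{X_t\geq\theta\}\neq\emptyset$. Since $\eta<\bar\eta_{\rm c}$ forces $r<1$, one has $(n^q 2^{-n})^r\gg 2^{-n}$, so the set of $k$ with $I_j^{(n)}\subset J_{k,r}^{(n)}$ has cardinality of order $n^{qr}\,2^{n(1-r)}$. For each candidate $k$, $A_k^{(n)}$ asks for a jump of size $\geq 2^{-(\eta+1)n}$ inside the shifted interval $I_{k-2n^q-2}^{(n)}$ during a window of length $\sim 2^{-2n}n^{-2m}$. The H\"older control of $X_t$ with exponent $\eta_{\rm c}-\varepsilon$ is built into $A_3^{\varepsilon}$, and the shifted intervals lie at distance $\lesssim(n^q 2^{-n})^r\to 0$ from $x$, so each $I_{k-2n^q-2}^{(n)}$ meets $\{X_t\geq\theta/2\}$ for all sufficiently large $n$; on $B_n(\theta/2)^{\rm c}$, Lemma~\ref{lower1} then gives the uniform lower bound $X_s(I_{k-2n^q-2}^{(n)})\geq 2^{-n}n^{-2m}$ throughout the time window.

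Next I would compute the compensator mass of $A_k^{(n)}$. Using $\int_{2^{-(\eta+1)n}}^{\infty} n(d\rho)\sim 2^{(\eta+1)(1+\beta)n}$ and the identity $(\eta+1)(1+\beta)=r+2$ (which follows from $\eta_{\rm c}=\tfrac{2}{1+\beta}-1$), the per-$k$ compensator is $\gtrsim n^{-4m}\,2^{(r-1)n}$. As the shifted intervals are disjoint across $k$, summing over the $\sim n^{qr}\,2^{n(1-r)}$ candidates gives a total compensator of order $n^{qr-4m}=n^{m}$, in which the identity $qr=5m$ is the decisive input. A Poisson-type lower bound then yields, for this fixed $j$, failure probability $\lesssim\exp(-c\,n^{m})$, and a union bound over the at most $2^n$ relevant $j$'s gives an overall failure probability at level $n$ of at most $2^n\exp(-c\,n^m)+\mathbf{P}(B_n(\theta/2)\cap A_3^{\varepsilon})$. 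Both terms are summable in $n$ (directly for the first, and along a polynomial subsequence for the second by Lemma~\ref{lower1}), so Borel--Cantelli produces the covering for infinitely many $n$ a.s.\ on $A_3^{\varepsilon}$.

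The main obstacle is justifying the Poisson tail $\exp(-c\,n^m)$: the compensator $\hat N(ds,dy,d\rho)=ds\,X_s(dy)\,n(d\rho)$ depends on $X$, and hence on the very jumps of $N$ whose absence is being probed, so standard Poisson concentration does not apply directly. The remedy is a stopping/localization argument: stop $X$ at the first time the mass lower bound would be violated (an event of small probability by Lemma~\ref{lower1}), producing a stopped counting process whose compensator is deterministically bounded below on the window; then an exponential-martingale inequality (or equivalently a Cox-process coupling) supplies the desired tail. A secondary technical point, already noted above, is ensuring that every shifted interval $I_{k-2n^q-2}^{(n)}$ actually meets $\{X_t\geq\theta/2\}$ for the entire range of candidate $k$'s, which is exactly why uniform H\"older control is embedded in the definition of $A_3^{\varepsilon}$.
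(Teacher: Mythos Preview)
Your approach is correct and follows the same architecture as the paper: split $E_n^c\cap A_3^\varepsilon$ via $B_n$, use Lemma~\ref{lower1} for the $B_n$ part, and on $B_n^c$ exploit the mass lower bound to get a Poisson-type tail for the absence of jumps. Two implementation differences are worth noting. First, the paper makes your ``Cox-process coupling'' explicit: on $B_n^c$ it constructs a genuine Poisson process $\Gamma$ with deterministic intensity $n^{-2m}\,dy\,ds\,n(dr)$ dominated by $\mathcal N$, so that the indicators $\xi_k^{(n)}:=\mathsf{1}\{\Gamma(\mathcal J_1^{(n)}\times I_{k-2n^q-2}^{(n)}\times\mathcal J_2^{(n)})\geq1\}$ are i.i.d.\ Bernoulli with $p^{(n)}\asymp n^{-4m}2^{(r-1)n}$; this sidesteps the stopping argument entirely. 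Second, rather than union-bounding over $j$, the paper observes that the covering fails only if the longest run of zeros in $(\xi_k^{(n)})$ exceeds $\sim n^{5m}2^{n(1-r)}$, and bounds that directly; the resulting tail $\exp(-cn^m)$ matches yours. Your subsequence remark is unnecessary, since $\mathbf P(B_n\cap A_3^\varepsilon)\lesssim n^{-2m/3}$ is already summable when $m>3/2$. Finally, your insistence on passing to $\theta/2$ via the built-in H\"older control is exactly right: one must ensure that every candidate shifted interval $I_{k-2n^q-2}^{(n)}$ lies in $K_{\theta/2}$ before the coupling $\xi_k^{(n)}\leq\mathsf 1_{A_k^{(n)}}$ applies, and the paper is somewhat elliptical on this point.
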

\begin{proof}
Fix an arbitrary $\theta\in(0,1)$.
We estimate the probability of the event $E_n\cap A_3^\varepsilon$, where
$$
E_n:=\left\{\omega:\{x\in (0,1):\,X_t(x)\geq\theta\}\subseteq 
\bigcup_{k=2n^q+2}^{2^n-1} J^{(n)}_{k,(\beta+1)(\eta-\eta_{\rm c})}{\mathsf 1}_{A_k^{(n)}}\right\}.
$$
To prove the lemma it is enough to show that the sequence $\mathbf{P}(E_n^c\cap A_3^\varepsilon)$
is summable.
It follows from Lemma \ref{lower1} that, for all 
$n\geq \tilde n(\theta)$,
\begin{align}
\label{lower3.1}
\nonumber
\mathbf{P}(E_n^c\cap A_3^\varepsilon)&\leq \mathbf{P}(E_n^c\cap B_n\cap A_3^\varepsilon)+
 \mathbf{P}(E_n^c\cap B_n^{c}\cap A_3^\varepsilon)\\
&\leq C\theta^{-1}n^{-2m/3}+\mathbf{P}(E_n^c\cap B_n^{c}\cap A_3^\varepsilon).
\end{align}
For any $k=0,\ldots,2^n-1,$ the compensator measure $\widehat{N}(dr,dy,ds)$
of the random measure $\mathcal{N}(dr,dy,ds)$ 
(the jump measure for $X$ --- see Lemma \ref{L.mart.dec}), 
on
\begin{align*} 
& \mathcal{J}^{(n)}_1\times I^{(n)}_k\times \mathcal{J}^{(n)}_2
\\
&:= [2^{-(\eta+1)n},\infty)\times I^{(n)}_k\times [t-2^{-2 n}n^{-2m},t-2^{-2(n+1)}(n+1)^{-2m}),
\end{align*}
is given by the formula
\begin{equation}
\label{eq30_2}
 1\{(r,y,s)\in \mathcal{J}^{(n)}_1\times I^{(n)}_k\times \mathcal{J}^{(n)}_2\}n(dr)
    X_s(dy) ds.
\end{equation}
If 
$$k\in K_{\theta}:= \{l: I^{(n)}_l\cap\{x\in (0,1):X_t(x)\geq\theta\}\neq\emptyset\},$$ 
then, 
by the definition of $B_n$, we have
\begin{equation}
\label{eq30_1}
X_s(I^{(n)}_k)\geq 2^{-n}n^{-2m},\quad\text{ 
for $s\in\mathcal{J}^{(n)}_2$, on the event $A_3^\varepsilon\cap B_n^{c}$.}
\end{equation}
Define the measure $\widehat{\Gamma}(dr,dy,ds)$ on  $\R_+\times (0,1)\times \R_+\,,$ as follows, 
\begin{align}
 \label{eq:02_08_6}
\widehat{\Gamma}(dr,dy,ds):= n(dr) n^{-2m} dy ds. 
\end{align}
Then, by~(\ref{eq30_2}) and~(\ref{eq30_1}), on $A_3^\varepsilon\cap B_n^{c}$, and on the set 
$$
\mathcal{J}^{(n)}_1\times \{y\in (0,1):X_t(y)\geq\theta\}\times \mathcal{J}^{(n)}_2
$$
we have the following  bound 
$$
\widehat{\Gamma}(dr,I_k^{(n)},\mathcal{J}^{(n)}_2)\leq
\widehat{\mathcal{N}}(dr,I_k^{(n)},\mathcal{J}^{(n)}_2), k\in K_\theta.
$$
By standard arguments it is easy to construct the Poisson point process $\Gamma(dr,dx,ds)$ 
on $\R_+\times (0,1)\times \R_+$ with intensity measure $\widehat\Gamma$ given by~(\ref{eq:02_08_6}) on the whole space 
$\R_+\times (0,1)\times \R_+$ such that on $A_3^\varepsilon\cap B_n^{c}$, 
\begin{eqnarray*}
 \Gamma(dr,I_k^{(n)},\mathcal{J}^{(n)}_2)\leq\mathcal{N}(dr,I_k^{(n)},\mathcal{J}^{(n)}_2)
\end{eqnarray*}
for $r\in \mathcal{J}^{(n)}_1$ and $k\in K_\theta$.

Now, define 
$$\xi_k^{(n)}= {\mathsf 1}_{\left\{\Gamma\left(\mathcal{J}^{(n)}_1\times I^{(n)}_{k-2n^q-2}\times \mathcal{J}^{(n)}_2
\right)\geq 1\right\}},\quad k\geq 2n^q+2.
$$
Clearly, on $A_3^\varepsilon\cap B_n^{c}$ and for $k$ such that $k-2n^q-2\in K_{\theta}$,
$$ \xi_{k}^{(n)}\leq {\mathsf 1}_{ A_k^{(n)}}.
$$  
Moreover, by construction $\{\xi_k^{(n)}\}_{k=2n^q+2}^{2^n+2n^q+1}$ is a collection of independent 
 identically distributed Bernoulli random variables with success probabilities
\begin{eqnarray*}
p^{(n)}&:=& \widehat\Gamma\left(\mathcal{J}^{(n)}_1\times I^{(n)}_{k-2n^q-2}\times 
\mathcal{J}^{(n)}_2
\right)\\
&=&
C 2^{(\eta-\eta_{\rm c})(1+\beta)n-n}n^{-4m}.
\end{eqnarray*}
From the above  coupling with the Poisson point process $\Gamma$, it is 
easy to see that
\begin{align}
\label{eq:02_08_7}
\mathbf{P}(E_n^c\cap B_n^{c}\cap A_3^\varepsilon)\leq \mathbf{P}(\tilde{E}_n^{c}),
\end{align}
where
$$
\tilde{E}_n:=\left\{(0,1)\subseteq\bigcup_{k=2n^q+2}^{2^n+2n^q+1}J^{(n)}_k{\mathsf 1}_{\{\xi^{(n)}_k=1\}}\right\}.
$$

Let $L{(n)}$ denote the length of the longest run of zeros
in the sequence $\{\xi_k^{(n)}\}_{k=2n^q+2}^{2^n+2n^q+1}\,.$ 
Clearly,
$$
\mathbf{P}(\tilde{E}_n^c)\leq\mathbf{P}(L^{(n)}\geq 2^{n-(\beta+1)(\eta-\eta_{\rm c})n}n^{5m})
$$
and it is also obvious that
$$\mathbf{P}(L^{(n)}\geq j)\leq 2^{n}p^{(n)}(1-p^{(n)})^j\,, \forall j\geq 1.$$ Use this with the fact that,
 by~(\ref{eq:02_08_1}), 
 $m>1$,
to get that
\begin{align}
\label{eq:02_08_8}
\mathbf{P}(\tilde{E}_n^c)
\leq\exp\left\{-\frac{1}{2}n^{m}\right\}
\end{align}
for all $n$ sufficiently large. Combining \eqref{lower3.1}, \eqref{eq:02_08_7}
and \eqref{eq:02_08_8}, we conclude that the sequence 
$\mathbf{P}(E_n^c\cap A_3^\varepsilon)$ is summable.
Applying Borel-Cantelli, we complete the proof of the lemma. \hfill$\square$
\end{proof}

Define
$$
h_\eta(x):=x^{(\beta+1)(\eta-\eta_{\rm c})}\log^2\frac{1}{x}
$$
and
$$
\mathcal{H}_\eta(A):=\lim_{\epsilon\to0}
\inf\left\{\sum_{j=1}^\infty h_\eta(|I_j|), A\in\bigcup_{j=1}^\infty I_j\text{ and }|I_j|\leq\epsilon\right\}.
$$
Combining Lemma \ref{lower2} and Theorem 2 from \cite{Jaff99}, one can easily get
\begin{corollary}
\label{lower3}
On the event $A_3^\varepsilon\cap\{X_t((0,1))>0\}$,
$$
\mathcal{H}_\eta(J_{\eta,1})>0,\;\; \pr-{\rm a.s.}
$$
and, consequently, on $A_3^\varepsilon\cap\{X_t((0,1))>0\}$, 
$$
{\rm dim}(J_{\eta,1})\geq (\beta+1)(\eta-\eta_{\rm c}),\quad \pr-{\rm a.s.}
$$
\end{corollary}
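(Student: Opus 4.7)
The plan is to combine Lemma~\ref{lower2}, which provides an a.s.\ inclusion of a set of positive Lebesgue measure into an ``inflated'' version of $J_{\eta,1}$, with the ubiquity theorem of Jaffard (Theorem~2 of~\cite{Jaff99}), which converts such coverage statements at a dilated scale into positive Hausdorff $h$-measure at the natural scale.

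First, I would pass from $\{X_t((0,1))>0\}$ to the existence of a threshold $\theta>0$ such that $\{x\in(0,1):X_t(x)\geq\theta\}$ has positive Lebesgue measure. Since $d=1$, Theorem~\ref{T.dichotomy}(a) furnishes a continuous version of $X_t$; on the event $\{X_t((0,1))>0\}=\{\int_0^1 X_t(x)\,dx>0\}$, continuity supplies an open sub-interval of $(0,1)$ on which $X_t$ is uniformly bounded below by some random $\theta>0$, and any smaller rational $\theta$ will do. On a single probability-one event where Lemma~\ref{lower2} holds simultaneously for every rational $\theta\in(0,1)$, we then obtain the inclusion
\[
\{x\in(0,1):X_t(x)\geq\theta\}\subseteq J_{\eta,\,(\beta+1)(\eta-\eta_{\rm c})}\quad\text{on }A_3^\varepsilon,
\]
so $J_{\eta,r}$ with $r:=(\beta+1)(\eta-\eta_{\rm c})$ has positive Lebesgue measure on $A_3^\varepsilon\cap\{X_t((0,1))>0\}$. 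Crucially, $r\in(0,1)$, because $\bar\eta_{\rm c}-\eta_{\rm c}=1/(1+\beta)$ and $\eta<\bar\eta_{\rm c}$; setting $\lambda_n:=n^q 2^{-n}$, the intervals $J^{(n)}_{k,r}$ of radius $\lambda_n^r$ are therefore much longer than the natural-scale intervals $J^{(n)}_{k,1}$ of radius $\lambda_n$, as Jaffard's theorem will require.

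Next I would invoke Jaffard's ubiquity theorem with this family. Its hypothesis -- that the $\limsup$ of the inflated intervals (scale $\lambda_n^r$) covers a set of positive Lebesgue measure -- has just been verified, so the conclusion is that the $\limsup$ at the natural scale $\lambda_n$ has positive Hausdorff $h$-measure for a gauge of the form $h(x)=x^r\varphi(x)$, where the slowly varying factor $\varphi$ absorbs the polynomial factor $n^q$ hidden in $\lambda_n$. The choice $\varphi(x)=\log^2(1/x)$ does so, giving exactly $h=h_\eta$ and yielding $\mathcal{H}_\eta(J_{\eta,1})>0$ a.s.\ on $A_3^\varepsilon\cap\{X_t((0,1))>0\}$. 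The dimension bound follows at once: for every $s<r$ one has $h_\eta(x)/x^s=x^{r-s}\log^2(1/x)\to 0$ as $x\downarrow 0$, hence $\mathcal{H}^s(J_{\eta,1})\geq c_s\,\mathcal{H}_\eta(J_{\eta,1})>0$, forcing $\dim_H(J_{\eta,1})\geq s$ for every such $s$ and therefore $\dim_H(J_{\eta,1})\geq r$.

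The main (and relatively mild) obstacle is matching the precise form of Jaffard's Theorem~2 to our setting: the centers of our intervals are the deterministic dyadic points $k/2^n$, but each interval is ``active'' only on the random event $A^{(n)}_k$. Since Jaffard's hypothesis only asks for positive Lebesgue measure of the inflated $\limsup$ -- and cares neither about independence nor about any finer probabilistic structure of the indicators $\mathsf{1}_{A^{(n)}_k}$ -- this matching is a pure bookkeeping step once the coverage provided by Lemma~\ref{lower2} is in hand.
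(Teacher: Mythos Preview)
Your proposal is correct and follows essentially the same route as the paper: reduce to a threshold $\theta$ via continuity of $X_t(\cdot)$ to obtain an open interval contained in $J_{\eta,(\beta+1)(\eta-\eta_{\rm c})}$ by Lemma~\ref{lower2}, then invoke Jaffard's Theorem~2 to transfer this coverage at the inflated scale $\lambda_n^r$ into $\mathcal{H}_\eta(J_{\eta,1})>0$ at the natural scale $\lambda_n=n^q2^{-n}$, and finally extract the dimension bound by comparing $h_\eta$ with power gauges. Your explicit remarks on why $r\in(0,1)$, why the $\log^2$ factor in $h_\eta$ absorbs the polynomial $n^q$, and why the random activation of the intervals via $\mathsf{1}_{A_k^{(n)}}$ is immaterial to Jaffard's hypothesis are useful clarifications that the paper leaves implicit.
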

\begin{proof}
Fix any $\theta\in(0,1)$.
If $\omega\in A_2^\varepsilon$ is such that $B_\theta:=\{x\in(0,1):X_t(x)\geq\theta\}$
is not empty, then by the local H\"older continuity of $X_t(\cdot)$ there exists an
open interval $(x_1(\omega),x_2(\omega))\subset B_{\theta/2}$. Moreover, in view of
Lemma~\ref{lower2}, 
$$
(x_1(\omega),x_2(\omega))\subset J_{\eta,(\beta+1)(\eta-\eta_{\rm c})}(\omega),\quad \pr-{\rm a.s.}
$$
on the event $A_2^\varepsilon\cap\{B_\theta \text{ is not empty}\}$. Thus, we may apply
Theorem 2 from \cite{Jaff99} to the set $(x_1(\omega),x_2(\omega))$, which gives
$$
\mathcal{H}_\eta((x_1(\omega),x_2(\omega))\cap J_{\eta,1})>0,\quad \pr-{\rm a.s.}
$$
on the event $A_2^\varepsilon\cap\{B_\theta \text{ is not empty}\}$.
Thus,
$$
{\rm dim}((x_1(\omega),x_2(\omega))\cap J_{\eta,1})\geq (\beta+1)(\eta-\eta_{\rm c}),\quad \pr-{\rm a.s.}
$$
on the event $A_2^\varepsilon\cap\{B_\theta \text{ is not empty}\}$. Due to the monotonicity
of $\mathcal{H}_\eta(\cdot)$ and ${\rm dim}(\cdot)$, we conclude that 
$\mathcal{H}_\eta(J_{\eta,1})>0$ and
${\rm dim}(J_{\eta,1})\geq (\beta+1)(\eta-\eta_{\rm c})$, $\pr$-a.s.
on the event $A_2^\varepsilon\cap\{B_\theta \text{ is not empty}\}$. Noting that
${\mathsf 1}_{\{B_\theta \text{ is not empty}\}}\uparrow{\mathsf 1}_{\{X_t(0,1)>0\}}$ as $\theta\downarrow0$,
$\pr$-a.s., we complete the proof. \hfill$\square$
\end{proof}

Now we turn to the second part of the present subsection. 
By construction of $J_{\eta,1}$ we know that  to the 
left of every point $x\in J_{\eta,1}$ there 
exist big jumps of $X$ at time $s$ ``close'' to $t$: such 
jumps are  defined by  the events $A^{(n)}_k$. We would like to show 
that these jumps  will result in destroying the H\"older continuity of any index greater than $\eta$ 
at the point $x$. To this end, we will introduce auxiliary processes $L^{\pm}_{n,y,x}$  that are indexed by points 
 $(y,x)$ on a 
grid {\it finer} than $\{k2^{-n}, k=0,1,\ldots\}$. That is, take some integer $Q>1$ (note, that eventually
$Q$ will be chosen large enough, depending on $\eta$). Define
$$
Z^{\eta}_s(x_1,x_2):=
\int_0^s\int_{\R} M\left(\mathrm{d}(u,y)\right) p^{\eta}_{t-u}(x_1-y,x_2-y),\;s\in [0,t],
$$
where
$$
p^{\eta}_s(x,y):=
\left\{ \begin{array}{rcl}
   p_{s}(x)-p_{s}(y),&&{\rm if}\; \eta\leq 1,\\
  p_{s}(x)-p_{s}(y)-(x-y)\frac{\partial p_s(y)}{\partial y},&&{\rm if}\; \eta\in 
  (1,\bar\eta_{\rm c}).
\end{array}
\right.
$$

According to \eqref{Levy.rep} and \eqref{def.Z.new}, for every $x,y\in  2^{-Qn}\Z$,
there exist spectrally positive  $(1+\beta)$-stable processes $L^\pm_{n,y,x}$ such that
\begin{eqnarray}
\label{eq:18_10_2}
Z^\eta_s(y,x)&=&L^+_{n,y,x}(T^{n,y,x}_+(s))-
 L^-_{n,y,x}(T^{n,y,x}_{-}(s))\;\;\\
\nonumber 
&=:&\bL^+_{n,y,x}-\bL^-_{n,y,x},
\end{eqnarray}
where
\begin{eqnarray}
\nonumber
T^{n,y,x}_\pm(s)=\int_0^s du \int_{\mathbb{R}}X_u(dz)
\left(\left({p}^{\eta}_{t-u}(y-z,x-z)\right)^{\pm}\right)^{1+\beta},\;\;s\leq t.
\end{eqnarray}
In what follows 
let $[z]$ denote the integer part of $z$ for $z\in \R$.
The crucial ingredient for the proof of the lower bound is the following proposition. 
\begin{proposition}
\label{last_prop}
Fix arbitrary 
$\eta\in(\eta_{\rm c},\bar{\eta}_{\rm c})\setminus\{1\}$ and $Q>1$.
For $\pr$-a.s. $\omega$ on $A_3^\varepsilon$, there exists a set 
$\mathbf{G}_{\eta}\in  [0,1]$ with 
\begin{equation}
\label{last_eq}
{\rm dim}(\mathbf{G}_{\eta})<(\beta+1)(\eta-\eta_{\rm c})
\end{equation}
 such that the following holds.  
For $\pr$-a.s. $\omega$ on $A_3^\varepsilon$, for every $x\in  J_{\eta,1}\setminus \mathbf{G}_{\eta}$,
there exist a (random) sequences 
$$ n_j=n_j(x),\;\; j\geq 1,$$ and $$(x_{n_j}, y_{n_{j}})= (x_{n_j}(x), y_{n_{j}}(x)),\;\; j\geq 1$$
with 
\begin{eqnarray*}
x_{n_j}&=& 2^{-Qn_j}[2^{Qn_j}x],\;\;j\geq 1,\\
 | y_{n_j} - x_{n_j}| &\leq& C n_j^q2^{-n_j},\;\;j\geq 1,
\end{eqnarray*}
such that 
$$
\bL^+_{n_j,y_{n_j},x_{n_j}}\geq n_j^{m}2^{-\eta n_j},\quad
\bL^-_{n_j,y_{n_j},x_{n_j}}\leq 2^{-(\eta n_j-1)},
$$
for all $n_j$ sufficiently large.
\end{proposition}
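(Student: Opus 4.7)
The plan is to extract from the construction of $J_{\eta,1}$ the specific jumps of $X$ that generate a large value of $\bL^+$, and separately to rule out the compensating value of $\bL^-$ off a set of negligible Hausdorff dimension. For $x \in J_{\eta,1}$, the $\limsup$ in the definition of $J_{\eta,1}$ supplies a sequence $n_j \to \infty$ and indices $k_j$ with $x \in J_{k_j,1}^{(n_j)}$ on which $A_{k_j}^{(n_j)}$ occurs; unpacking this, there is a jump $r_\ast \delta_{(s_\ast, z_\ast)}$ of $\mathcal{N}$ with $z_\ast \in I_{k_j - 2n_j^q - 2}^{(n_j)}$, $s_\ast \in [t - 2^{-2n_j} n_j^{-2m}, t - 2^{-2(n_j+1)}(n_j+1)^{-2m})$, and $r_\ast \geq 2^{-(\eta+1)n_j}$. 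I set $x_{n_j} := 2^{-Qn_j}[2^{Qn_j}\, x]$ and let $y_{n_j} \in 2^{-Qn_j}\Z$ be a grid point nearest to $z_\ast$.

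\textbf{Positive part.} For $Q$ large, $|y_{n_j} - z_\ast| \lesssim 2^{-Qn_j} \ll (t-s_\ast)^{1/2} \asymp 2^{-n_j}$, so $p_{t-s_\ast}(y_{n_j}-z_\ast)$ is comparable to $p_{t-s_\ast}(0) \asymp 2^{n_j}$; meanwhile $|x_{n_j} - z_\ast| \asymp n_j^q 2^{-n_j}$ lies many standard deviations away, making $p_{t-s_\ast}(x_{n_j}-z_\ast)$ super-polynomially small (and, in the case $\eta > 1$, the same for the correction term involving $\partial p$, by Lemma \ref{L.kernel2}). Hence $p^\eta_{t-s_\ast}(y_{n_j}-z_\ast, x_{n_j}-z_\ast) > 0$ and is of order $2^{n_j}$, so the chosen jump contributes at least $c\, r_\ast \cdot 2^{n_j} \gtrsim 2^{-\eta n_j}$ to $\bL^+_{n_j, y_{n_j}, x_{n_j}}$. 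The extra factor $n_j^m$ is obtained by strengthening $A_{k_j}^{(n_j)}$: the compensator intensity of jumps of size at least $n_j^m 2^{-(\eta+1)n_j}$ in the relevant window still sums to infinity (up to a polylogarithmic loss), so a routine Borel--Cantelli argument, of the same flavor as Lemma \ref{n.L3} and Lemma \ref{lower2}, allows one to replace $r_\ast \geq 2^{-(\eta+1) n_j}$ by $r_\ast \geq n_j^m 2^{-(\eta+1) n_j}$ on a subsequence along which the $\limsup$ still holds.

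\textbf{Negative part and construction of $\mathbf{G}_\eta$.} For a fixed grid pair $(y,x) \in (2^{-Qn}\Z \cap [0,1])^2$ with $|y-x| \leq C n^q 2^{-n}$, Lemma \ref{L9} identifies $\bL^-_{n,y,x}$ with $L^-(T^{n,y,x}_-(t))$ for a spectrally positive $(1+\beta)$-stable $L^-$; the time change is controlled pointwise on $A_3^\varepsilon$ via Lemma \ref{L.fixed.3}, applied with $\theta = 1+\beta$ and $\delta$ giving the bound $T^{n,y,x}_-(t) \leq C V \cdot |y-x|^{2-\beta-\gamma_1}$, while the jumps of $L^-$ are capped through $A_3^\varepsilon$. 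Mimicking the splitting $U_n^1 \cup U_n^2$ from the proof of Theorem \ref{T.fixed}(b), namely Lemmas \ref{n.L5'}--\ref{n.L7}, together with the truncated tail bound (Lemma \ref{L3}) yields
\[
\mathbf{P}\bigl(\bL^-_{n,y,x} > 2^{-(\eta n - 1)} ,\ A_3^\varepsilon\bigr) \leq \exp(-c\, n^{m_1})
\]
for some $m_1 > 0$. I then define $\mathbf{G}_\eta$ as the $\limsup$ over $n$ of the union, over grid pairs $(y,x)$ failing this bound at scale $n$, of the intervals $[y - C n^q 2^{-n},\, y + C n^q 2^{-n}]$. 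A union bound over the $\lesssim 2^{2Qn}$ grid pairs leaves $2^{2Qn}\exp(-c\,n^{m_1})$, which is summable, and Borel--Cantelli combined with a direct Hausdorff-measure calculation on the cover by these intervals yields $\mathcal{H}^\theta(\mathbf{G}_\eta) = 0$ for every $\theta > 0$, hence $\dim \mathbf{G}_\eta = 0 < (1+\beta)(\eta-\eta_{\rm c})$. For $x \in J_{\eta,1} \setminus \mathbf{G}_\eta$ and $j$ large, the selected pair $(y_{n_j}, x_{n_j})$ automatically satisfies the upper bound on $\bL^-$, completing the proof.

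\textbf{Main obstacle.} The crux is the super-polynomial decay in the negative-part estimate: a naive application of stable tail bounds would only give polynomial decay, which cannot absorb the $2^{2Qn}$ grid union bound. Obtaining genuine stretched-exponential decay requires carefully splitting the bad event of $\bL^-_{n,y,x}$ according to whether a single large jump of $L^-$ is responsible or whether many small jumps accumulate, precisely along the lines of Lemmas \ref{n.L5'}--\ref{n.L7}, and identifying which jumps of $M$ can actually generate sizeable mass in $\bL^-_{n,y,x}$ using the sharp bounds on $(q^\eta_{t-s})^{-}$ provided by Lemma \ref{L.kernel2}. Consistency of the parameters $m$, $q$, $Q$, $\gamma$ must be tracked throughout; choosing $m$ (and thus $q$) sufficiently large, as in \eqref{eq:02_08_1}, suffices.
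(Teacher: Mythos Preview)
Your overall architecture mirrors the paper's: use the jumps encoded in $J_{\eta,1}$ to force $\bL^+$ large, and control $\bL^-$ off an exceptional set. The paper itself omits the proof and refers to \cite{MW14}, but describes it as ``the same ideas as the proof of Proposition~\ref{very_new_prop}'' together with Borel--Cantelli for the random points. Your sketch follows that template, but there is a genuine gap in the negative part.

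\medskip

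\textbf{The gap.} You claim
\[
\mathbf{P}\bigl(\bL^-_{n,y,x} > 2^{-(\eta n - 1)},\ A_3^\varepsilon\bigr) \leq \exp(-c\, n^{m_1})
\]
and then union-bound over $\lesssim 2^{2Qn}$ grid pairs. This fails on the $U_n^1$ branch of the splitting you invoke. The event $U_n^1$ (a single large jump of $L^-$) is, after the reduction in Lemma~\ref{n.L7}, a \emph{Poisson} event: it asks for one jump of $\mathcal{N}$ in a region whose compensator intensity is of order $2^{-c(\eta,\beta)\,n}$ times a polylogarithm (compare the intensity computation~(\ref{n13}) in the proof of Lemma~\ref{n.L5'}). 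Hence $\mathbf{P}(U_n^1\cap A_3^\varepsilon)$ decays at best like $2^{-c(\eta,\beta)\,n}$, with a rate that does \emph{not} depend on $Q$. For $Q$ large this cannot absorb $2^{(2Q-1)n}$ grid pairs, and Borel--Cantelli does not apply. Only the $U_n^2$ branch (accumulation of small jumps) enjoys the super-polynomial decay from Lemma~\ref{L3}; the two branches behave very differently and cannot be merged into a single stretched-exponential bound. Consequently your conclusion $\dim\mathbf{G}_\eta=0$ is too strong; note that the paper only asserts $\dim\mathbf{G}_\eta<(\beta+1)(\eta-\eta_{\rm c})$, not zero.

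\medskip

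\textbf{What the correct argument does.} The $U_n^2$ contribution is indeed disposed of by Lemma~\ref{L3} plus a union bound, exactly as you say. The $U_n^1$ contribution, however, is handled \emph{geometrically} rather than probabilistically: the set of $x$ for which a compensating jump exists at scale $n$ is covered by intervals around the (random) locations of those jumps, and one bounds the Hausdorff dimension of the resulting $\limsup$ set by a covering calculation in the spirit of the proof of Lemma~\ref{P2}. The point is that a compensating jump must be slightly larger (relative to its time--space location) than a ``typical'' jump producing H\"older exponent $\eta$, so the associated $\limsup$ set has dimension strictly below $(\beta+1)(\eta-\eta_{\rm c})$.

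\medskip

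\textbf{A secondary issue.} In the positive part you write that the big jump ``contributes at least $c\,r_\ast\cdot 2^{n_j}$ to $\bL^+_{n_j,y_{n_j},x_{n_j}}$''. That is a lower bound on a \emph{jump} of $L^+$, not on the terminal value $L^+(T^+)$; a spectrally positive process can still drift down afterwards. One needs the analogue of Lemma~\ref{n.L5} (combined with Lemma~\ref{L.small.values}) to show this does not happen, and here a Borel--Cantelli over grid pairs \emph{does} work because the relevant bound is genuinely super-polynomial. Your sketch does not make this step explicit.
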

 
Note that in the above proposition we do not give  precise definition of $y_{n_j}$, but it is chosen in a way 
that it is  ``close'' to the spatial position of a ``big'' jump that is supposed to destroy the H\"older continuity 
at $x$. As for the point $x_{n_j}$, it is chosen to be ``close'' to $x$ itself.

Similarly to Proposition \ref{very_new_prop}, Proposition~\ref{last_prop} deals
with possible compensation effects. In contrast to the case of fixed points
considered in Proposition~\ref{very_new_prop}, we cannot
show that the compensation  does not happen. But we derive an upper bound for the Hausdorff dimension of
the set $\mathbf{G}_{\eta}$, on which such a compensation may occur. The dimension of $\mathbf{G}_{\eta}$
turns to be strictly smaller than $(\beta+1)(\eta-\eta_{\rm c})$, see \eqref{last_eq}.

The proof of Proposition \ref{last_prop} is rather technical and uses the same ideas as the proof of
Proposition \ref{very_new_prop}. The major additional difficulty comes from the need to consider
random sets. To overcome it we use Borel-Cantelli arguments. We refer the reader to Section~4.3 in~\cite{MW14} where the proofs of 
results leading to Proposition~\ref{last_prop} are given. 
Now we will 
explain how Proposition~\ref{last_prop} implies the proof of Proposition~\ref{prop:08_02_3} for the case of $\eta<1$ (the proof for the case of $\eta>1$ 
goes along the similar lines, see Section 4.4 in~\cite{MW14}). 
 \medskip

 \noindent\emph{Proof of Proposition}~\ref{prop:08_02_3} {\emph {for}} $\eta<1$.\ \  
Fix arbitrary 
$\eta\in(\eta_{\rm c},\min(\bar{\eta}_{\rm c},1))$. 
Also fix 
$$
Q=\left[4\frac{\eta}{\eta_{\rm c}}+2\right],
$$
where as usual $[z]$ denotes the integer part of $z$.
Let $\mathbf{G}_{\eta}$ be as in Proposition~\ref{last_prop}.
If  $x$ is an  arbitrary point in $J_{\eta,1}\setminus \mathbf{G}_{\eta}$, 
then let 
$\{n_j(x)\}_{j\geq 1}$ and 
$\{(x_{n_j}(x), y_{n_{j}}(x))\}_{j\geq 1}$ be the  sequences constructed in 
Proposition \ref{last_prop}.
Then 
Proposition \ref{last_prop} implies that,
\begin{equation}\label{compens2.8}
\liminf_{j\to\infty}2^{(\eta+\delta)n_j}
\left|Z^\eta_t\left(y_{n_j}(x),x_{n_j}(x)\right)\right|=\infty,
\quad \forall x\in  J_{\eta,1}\setminus \mathbf{G}_{\eta},\;  \pr-{\rm a.s.}\;\; {\rm on}\,\; A_3^\varepsilon.
\end{equation}
for any $\delta>0$. Recall that, $X_t(\cdot)$ and $Z_t(\cdot)$ are H\"older continuous
with any  exponent less than $\eta_{\rm c}$ at every point of $(0,1)$. Therefore, recalling that
$Q>4\frac{\eta}{\eta_{\rm c}}$, we have
\begin{align}
\nonumber
&\lim_{j\rightarrow\infty}\sup_{x\in (0,1)}2^{(\eta+\delta)n_j}\left|Z^\eta_t(x,x_{n_j}(x))\right|\\
&\hspace{1cm}=\lim_{j\rightarrow \infty} C(\omega) 2^{-\frac{1}{2}Q\eta_{\rm c} n_j}2^{(\eta+\delta)n_j}
\label{compens2.1}
=0,\quad \pr-{\rm a.s.}\;\; {\rm on}\,\; A_3^\varepsilon.
\end{align}
Therefore, for any $x$ in $J_{\eta,1}\setminus \mathbf{G}_{\eta}$, we have
\begin{eqnarray}
\label{eq:triang}
 |Z^\eta_t(y_{n_j}(x),x)|\geq |Z^\eta_t(y_{n_j}(x),x_{n_j}(x))|-|Z^\eta_t(x_{n_j}(x),x)|,\;\;j\geq 1. 
\end{eqnarray}
Therefore, combining (\ref{compens2.8}) and (\ref{compens2.1}), (\ref{eq:triang})
we conclude that
\begin{equation}
\label{compens2.1a}
H_{Z}(x)\leq \eta,\ \text{ for all }x\in  J_{\eta,1}\setminus 
\mathbf{G}_{\eta}\,, 
\;\; \pr-{\rm a.s.}\;\; {\rm on}\,\; A_3^\varepsilon.
\end{equation}

We know, by Lemma~\ref{P1}, that 
$$
H_{Z}(x)\geq \eta-2\gamma-2\rho\ \text{ for all }x\in (0,1)\setminus S_{\eta-2\rho},
\;\;\pr-{\rm a.s.},
$$ 
This and (\ref{compens2.1a}) imply that ${\rm on}\,\; A_3^\varepsilon, \pr-{\rm a.s.},$
\begin{equation}
\label{compens2.0}
\eta-2\gamma-2\rho\leq H_{Z}(x)\leq \eta\ \text{ for all }x\in ( J_{\eta,1}\setminus
S_{\eta-2\rho})\setminus \mathbf{G}_{\eta}. 
\end{equation}
It follows easily from Lemma \ref{P2}, Corollary \ref{lower3} and Lemma \ref{compens} that
${\rm on}\,\; A_3^\varepsilon$
$$
{\rm dim}\Big(( J_{\eta,1}\setminus S_{\eta-2\rho})\setminus \mathbf{G}_{\eta}\Big)\geq(\beta+1)(\eta-\eta_{\rm c}),\;\;
\pr-{\rm a.s.}
$$
Thus, by (\ref{compens2.0}),
$$
{\rm dim}\{x:H_{Z}(x)\leq\eta\}\geq(\beta+1)(\eta-\eta_{\rm c}), \;\;{\rm on}\,\; A_3^\varepsilon,\; \pr-{\rm a.s.}.
$$
It is clear that
\begin{align*}
\{x:H_{Z}(x)=\eta\}\cup\bigcup_{n=n_0}^\infty\{x:H_{Z}(x)\in(\eta-n^{-1},\eta-(n+1)^{-1}]\}\\
=\{x:\eta-n_0^{-1}\leq H_{Z}(x)\leq\eta\}.
\end{align*}
Consequently,
\begin{align*}
&\mathcal{H}_\eta(\{x:\eta-n_0^{-1}\leq H_{Z}(x)\leq\eta\})\\
&\hspace{1cm}=\mathcal{H}_\eta(\{x:H_{Z}(x)=\eta\})\\
&\hspace{2cm}+\sum_{n=n_0}^\infty\mathcal{H}_\eta(\{x:H_{Z}(x)\in(\eta-n^{-1},\eta-(n+1)^{-1}]\}).
\end{align*}
Since the dimensions of $S_{\eta-2\rho}$ and $\mathbf{G}_{\eta}$ are smaller than $\eta$, the $\mathcal{H}_\eta$-measure of these
sets equals zero. Applying Corollary \ref{lower3}, we then conclude that on $ A_3^\varepsilon$ 
$$\mathcal{H}_\eta((J_{\eta,1}\setminus S_{\eta-2\rho})\setminus \mathbf{G}_{\eta})>0, \pr-{\rm a.s.}$$
 And in view of (\ref{compens2.0}), 
$\mathcal{H}_\eta(\{x:\eta-n_0^{-1}\leq H_{Z}(x)\leq\eta\})>0$. Furthermore, it follows from Proposition~\ref{prop:08_02},
that  dimension of the set $\{x:H_{Z}(x)\in(\eta-n^{-1},\eta-(n+1)^{-1}]\}$ is bounded from above by 
$(\beta+1)(\eta-(n+1)^{-1}-\eta_{\rm c})$. 
Hence, the definition of
 $\mathcal{H}_{\eta}$ immediately yields 
$$\mathcal{H}_\eta(\{x:H_{Z}(x)\in(\eta-n^{-1},\eta-(n+1)^{-1}]\})=0, \;\; {\rm on}\; A_3^\varepsilon,\; \pr-{\rm a.s.},$$
for all $n\geq n_0$. As a result we have
\begin{align}
\label{eq:02_08_10}
\mathcal{H}_\eta(\{x:H_{Z}(x)=\eta\})>0\;\; \pr-{\rm a.s.}\;\; {\rm on}\,\; A_3^\varepsilon.
\end{align}
Since $\varepsilon>0$ was arbitrary, this implies that~(\ref{eq:02_08_10}) is satisfied on the whole 
probability space $\pr$-a.s.  
From this we get  that $${\rm dim}\{x:H_{Z}(x)=\eta\}\geq(\beta+1)(\eta-\eta_{\rm c}), \ \pr-{\rm a.s.}$$
\hfill$\square$

\begin{appendix}
 \section{Estimates for the transition kernel of the one-dimensional Brownian motion}
We start with the following estimates for $p_t$ which are taken from Rosen \cite{Ros87}.
\begin{lemma}
\label{L.kernel1} 
Let $d=1$.
For each $\delta\in(0,1]$ there exists a constant $C$ such that
\begin{eqnarray}
\label{L.kernel1.1}
&&\bigl|p_{t}(x)-p_{t}(y)\bigr|
\leq C\frac{|x-y|^{\delta}}{t^{\delta/2}}\bigl(p_{t}(x/2)+p_{t}(y/2)\bigr),\\
\label{L.kernel1.2}
&&\left|\frac{\partial p_{t}(x)}{\partial x}\right|
\leq Ct^{-1/2}p_{t}(x/2),\\
\label{L.kernel1.3}
&&\left|\frac{\partial p_{t}(x)}{\partial x}-\frac{\partial p_{t}(y)}{\partial y}\right|
\leq C\frac{|x-y|^{\delta}}{t^{(1+\delta)/2}}\,\bigl(p_{t}(x/2)+p_{t}(y/2)\bigr),\\
\label{L.kernel1.4}
&&\left|p_{t}(x)-p_{t}(y) -(x-y)\frac{\partial p_{t}(y)}{\partial y} \right|
\leq C\frac{|x-y|^{1+\delta}}{t^{(1+\delta)/2}}\,\bigl(p_{t}(x/2)+p_{t}(y/2)\bigr)
\end{eqnarray}
for all $t>0$ and $x,y\in\R$.
\end{lemma}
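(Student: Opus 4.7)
The plan is to derive all four estimates by direct Gaussian computation, exploiting the identities
\[
\partial_x p_t(x) = -\frac{x}{t}\,p_t(x), \qquad \partial_x^2 p_t(x) = \frac{x^2-t}{t^2}\,p_t(x),
\]
together with the key algebraic observation
\[
\frac{p_t(x)}{p_t(x/2)} = \exp\!\Big(-\frac{3x^2}{8t}\Big),
\]
which shows that any polynomial in $|x|/\sqrt t$ multiplied by $p_t(x)$ is a bounded multiple of $p_t(x/2)$. Applied to $P(u)=u$ this gives \eqref{L.kernel1.2} immediately (the map $u\mapsto u e^{-3u^2/8}$ is bounded on $\R_+$), and applied to $P(u)=u^2+1$ it yields the analogous second-derivative bound $|\partial_x^2 p_t(x)| \le C t^{-1} p_t(x/2)$, which is not in the statement but is what I will use to prove \eqref{L.kernel1.3} and \eqref{L.kernel1.4}.

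For \eqref{L.kernel1.1}, \eqref{L.kernel1.3}, \eqref{L.kernel1.4} I would combine a ``trivial'' bound (coming from $p_t(z) \le p_t(z/2)$ and, where appropriate, \eqref{L.kernel1.2}) with a ``regularity'' bound obtained by integrating the appropriate derivative, and then interpolate by $\min(A,B) \le A^\delta B^{1-\delta}$. Concretely, for \eqref{L.kernel1.1} the trivial bound $|p_t(x)-p_t(y)| \le p_t(x/2)+p_t(y/2)$ takes care of the regime $|x-y|\ge\sqrt t$ since there $|x-y|^\delta t^{-\delta/2}\ge 1$. For $|x-y|<\sqrt t$ I would integrate \eqref{L.kernel1.2} along the segment from $y$ to $x$ to obtain $|p_t(x)-p_t(y)| \le C|x-y|t^{-1/2}(p_t(x/2)+p_t(y/2))$; the interpolation with exponents $\delta$ and $1-\delta$ then produces \eqref{L.kernel1.1}. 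Inequality \eqref{L.kernel1.3} is proved by the same scheme with $\partial_x p_t$ in place of $p_t$ and $t^{-1}$ in place of $t^{-1/2}$. For \eqref{L.kernel1.4} I would use the integral form of Taylor's remainder to get the quadratic bound $C|x-y|^2 t^{-1}(p_t(x/2)+p_t(y/2))$, pair it with the linear bound $C|x-y|t^{-1/2}(p_t(x/2)+p_t(y/2))$ that comes from combining \eqref{L.kernel1.1} (with $\delta=1$) and \eqref{L.kernel1.2}, and interpolate once more: the exponents work out to $|x-y|^{1+\delta}t^{-(1+\delta)/2}$.

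The main (and only) technical obstacle, which appears in each of \eqref{L.kernel1.1}, \eqref{L.kernel1.3}, \eqref{L.kernel1.4}, is the pointwise comparison
\[
p_t(\xi/2) \,\le\, C\bigl(p_t(x/2)+p_t(y/2)\bigr) \qquad \text{for every } \xi \text{ in the segment } [x,y],
\]
valid in the regime $|x-y|\le\sqrt t$ where the estimate is actually used. If $x$ and $y$ have the same sign then $|\xi|$ lies between $|x|$ and $|y|$ and the inequality is trivial from the monotonicity of $r\mapsto e^{-r^2/(8t)}$ in $|r|$. If $x$ and $y$ have opposite signs, then the restriction $|x-y|\le\sqrt t$ forces $|x|,|y|,|\xi| \le \sqrt t$, so each of the three quantities $p_t(\xi/2)$, $p_t(x/2)$, $p_t(y/2)$ is a constant multiple of $(2\pi t)^{-1/2}$ and the comparison follows. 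Once this case distinction is recorded, all four inequalities reduce to the Gaussian identities above and the elementary interpolation inequality, with no further analytic input.
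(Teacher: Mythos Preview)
Your proof is correct. The paper does not actually prove this lemma; it merely states that the estimates ``are taken from Rosen \cite{Ros87}''. Your argument is therefore strictly more than what the paper supplies: you give a self-contained elementary proof using the explicit Gaussian identities $\partial_x p_t(x)=-(x/t)p_t(x)$, $\partial_x^2 p_t(x)=((x^2-t)/t^2)p_t(x)$ and the ratio $p_t(x)/p_t(x/2)=e^{-3x^2/(8t)}$, followed by interpolation between a trivial bound and the bound obtained by integrating the derivative. The case distinction you record for the comparison $p_t(\xi/2)\le C(p_t(x/2)+p_t(y/2))$ on the segment $[x,y]$ when $|x-y|\le\sqrt t$ is exactly the point that needs care, and your treatment of the opposite-sign case (forcing $|x|,|y|,|\xi|\le\sqrt t$, so all three kernel values are comparable to $(2\pi t)^{-1/2}$) is clean and correct. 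One minor remark: the interpolation $\min(A,B)\le A^\delta B^{1-\delta}$ is not even needed in full strength here, since in each regime one of the two bounds already dominates the target after noting that $(|x-y|/\sqrt t)^\delta$ is monotone in $\delta$ on either side of $1$; but your formulation is fine as written.
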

The next lemma is a simple corollary of the previous one.
\begin{lemma}
\label{L.kernel2}
Let $d=1$. If $\theta\in(1,3)$ and $\delta\in(0,1]$
satisfy $\delta<(3-\theta)/\theta,$ then
\begin{align*}
&  \int_{0}^{t}\mathrm{d}s\int_{\mathbb{R}}\mathrm{d}y\ p_{s}(y)
\bigl|p_{t-s}(x_{1}-y)-p_{t-s}(x_{2}-y)\bigr|^{\theta}\\
&  \leq C (1+t)|x_{1}-x_{2}|^{\delta\theta}
\bigl(p_{t}(x_{1}/2)+p_{t}(x_{2}/2)\bigr),\quad t>0,\ \,x_{1},x_{2}\in
\R.
\end{align*}
\end{lemma}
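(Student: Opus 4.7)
The strategy is to reduce the double integral to a product of a finite time integral and a Gaussian at time $t$, by combining the pointwise estimate \eqref{L.kernel1.1} with the semigroup property of $p$. First I would raise \eqref{L.kernel1.1} to the $\theta$th power and use the convexity inequality $(a+b)^\theta\leq 2^{\theta-1}(a^\theta+b^\theta)$ (valid because $\theta>1$) to obtain
\[
|p_{t-s}(x_1-y)-p_{t-s}(x_2-y)|^\theta
\leq C\,\frac{|x_1-x_2|^{\delta\theta}}{(t-s)^{\delta\theta/2}}
\Bigl(p_{t-s}((x_1-y)/2)^\theta+p_{t-s}((x_2-y)/2)^\theta\Bigr).
\]
By symmetry in $(x_1,x_2)$ it suffices to handle one of the two resulting terms, say the one indexed by $x_i$.

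Next I would observe the Gaussian identity $p_{t-s}(z/2)^\theta=C(\theta)(t-s)^{-(\theta-1)/2}\,p_{4(t-s)/\theta}(z)$, which follows by direct computation and turns a power of a Gaussian into a single rescaled Gaussian times a deterministic prefactor. Chapman--Kolmogorov then collapses the inner integral into
\[
\int_\R p_s(y)\,p_{4(t-s)/\theta}(x_i-y)\,dy \,=\, p_{\phi(s)}(x_i),\qquad \phi(s):=s+\frac{4(t-s)}{\theta}.
\]
For $\theta\in(1,3)$, $\phi$ is monotone on $[0,t]$ with image contained in $[t,4t/\theta]\subset[t,4t)$. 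Writing out the ratio $p_{\phi(s)}(x_i)/p_t(x_i/2)$ explicitly shows that both the prefactor $\sqrt{t/\phi(s)}\leq 1$ and the Gaussian exponential $\exp(x_i^2(\phi(s)-4t)/(8t\phi(s)))\leq 1$, which yields the pointwise bound $p_{\phi(s)}(x_i)\leq p_t(x_i/2)$ uniformly in $s$.

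Combining these estimates, the left-hand side is bounded by
\[
C\,|x_1-x_2|^{\delta\theta}\,\bigl(p_t(x_1/2)+p_t(x_2/2)\bigr)\int_0^t (t-s)^{-(\delta\theta+\theta-1)/2}\,ds.
\]
The hypothesis $\delta<(3-\theta)/\theta$ is precisely $(\delta\theta+\theta-1)/2<1$, so the time integral converges and equals a constant times $t^{(3-\delta\theta-\theta)/2}$; since this exponent lies in $(0,1)$, it is dominated by $C(1+t)$, giving the claim. The only non-routine step is the Gaussian comparison $p_{\phi(s)}(x_i)\leq p_t(x_i/2)$: it uses essentially that $\theta>1$, forcing $\phi(s)\leq 4t/\theta<4t$ and hence non-positivity of the exponent in the ratio, which is why the hypothesis $\theta\in(1,3)$ enters both here and through the integrability of the time factor.
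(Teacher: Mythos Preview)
Your proof is correct and follows essentially the same route as the paper: apply \eqref{L.kernel1.1}, extract the singular factor $(t-s)^{-(\delta\theta+\theta-1)/2}$, collapse the spatial convolution via the semigroup property, and bound the resulting Gaussian by $p_t(x_i/2)$. The only minor difference is that the paper reduces $p_{t-s}^\theta$ to a single Gaussian by the crude sup bound $p_{t-s}(w)^\theta\leq C(t-s)^{-(\theta-1)/2}p_{t-s}(w)$ and then uses a scaling substitution to compute $\int p_s(y)p_{t-s}((x-y)/2)\,dy=p_{t-3s/4}(x/2)$, whereas you use the exact identity $p_{t-s}(z/2)^\theta=C(\theta)(t-s)^{-(\theta-1)/2}p_{4(t-s)/\theta}(z)$ followed by Chapman--Kolmogorov; both lead to a Gaussian at a time in $[t/4,4t]$ which is then compared to $p_t(x_i/2)$.
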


\begin{proof}
By Lemma~\ref{L.kernel1}, for every $\delta\in\lbrack0,1]$,
\begin{align*}
&  \bigl|p_{t-s}(x_{1}-y)-p_{t-s}(x_{2}-y)\bigr|^{\theta}\\
&  \leq\,C\,\frac{|x_{1}-x_{2}|^{\delta\theta}}{(t-s)^{\delta\theta/2}}
\Big(p_{t-s}\bigl((x_{1}-y)/2\bigr)+p_{t-s}\bigl((x_{2}-y)/2\bigr)\Big)^{\theta},
\end{align*}
$t>s\geq0$, $x_{1},x_{2},y\in\R$. Noting that
$p_{t-s}(\cdot)\leq C\,(t-s)^{-1/2}$, we obtain
\begin{align}
\label{L.kernel2.1}
&  \bigl|p_{t-s}(x_{1}-y)-p_{t-s}(x_{2}-y)\bigr|^{\theta}\\
&  \leq C\frac{|x_{1}-x_{2}|^{\delta\theta}}{(t-s)^{(\delta\theta+\theta-1)/2}}
\Big(p_{t-s}\bigl((x_{1}-y)/2\bigr)+p_{t-s}\bigl((x_{2}-y)/2\bigr)\Big),\nonumber
\end{align}
$t>s\geq0$, $x_{1},x_{2},y\in\R$. Therefore,
\begin{align*}
&  \int_{0}^{t}\mathrm{d}s\int_{\R}\mathrm{d}y p_{s}(y)
\bigl|p_{t-s}(x_{1}-y)-p_{t-s}(x_{2}-y)\bigr|^{\theta}
\leq C|x_{1}-x_{2}|^{\delta\theta}\\
&  \times\int_{0}^{t}\mathrm{d}s\ (t-s)^{-(\delta\theta+\theta-1)/2}
\int_{\R}\mathrm{d}y\ p_{s}(y)
\Big(p_{t-s}\bigl((x_{1}-y)/2\bigr)+p_{t-s}\bigl((x_{2}-y)/2\bigr)\Big).
\end{align*}
By scaling of the kernel $p$,
\begin{align*}
&  \int_{\R}\mathrm{d}y p_{s}(y) p_{t-s}\bigl((x-y)/2\bigr)
=\frac{1}{2}\int_{\R}\mathrm{d}y\ p_{s/4}(y/2)p_{t-s}\bigl((x_{2}-y)/2\bigr)\\
&  =\frac{1}{2} p_{s/4+t-s}(x/2)
=\frac{1}{2}\bigl(s/4+t-s\bigr)^{-1/2}\,p_{1}\bigl((s/4+t-s)^{-1/2}x/2\bigr)\\
&  \leq t^{-1/2}p_{1}(t^{-1/2}x/2)=p_{t}(x/2),
\end{align*}
since $t/4\leq s+t/4-s\leq t$.

As a result we have the inequality
\begin{align*}
&  \int_{0}^{t}\mathrm{d}s\int_{\R}\mathrm{d}y p_{s}(y)
\bigl|p_{t-s}(x_{1}-y)-p_{t-s}(x_{2}-y)\bigr|^{\theta}\\
&  \hspace{0.5cm}\leq C |x_{1}-x_{2}|^{\delta\theta}
\bigl(p_{t}(x_{1}/2)+p_{t}(x_{2}/2)\bigr)
\int_{0}^{t}\mathrm{d}s\ s^{-(\delta\theta+\theta-1)/2}.
\end{align*}
Noting that the latter integral is bounded by $C(1+t),$ since
$\,(\delta\theta+\theta-1)<2$, we get the desired inequality.
\hfill$\square$
\end{proof}

\section{Probability inequalities for a spectrally positive stable process}
Let $L$ be a spectrally positive stable process of index $\kappa$ with Laplace transform
given by \eqref{Laplace}.
Let $\,\Delta L_{s}:=L_{s}-L_{s-}>0$\thinspace\ denote the jumps of $\,L.$

\begin{lemma}
\label{L3}
We have
\begin{equation*}
\mathbf{P}\Bigl(\,\sup_{0\leq u\leq t}L_{u}\mathsf{1}\bigl\{\sup_{0\leq v\leq
u}\Delta L_{v}\leq y\bigr\}\geq x\Bigr)\,\leq\,\Bigl(\frac{C\,t}{xy^{\kappa
-1}}\Bigr)^{\!x/y},\quad t>0,\ \,x,y>0.
\end{equation*}
{}
\end{lemma}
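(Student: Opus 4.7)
The plan is to eliminate the large jumps of $L$, bounding the event in the statement by one involving a L\'evy \emph{martingale} whose jumps are bounded by $y$, and then to apply exponential Chebyshev together with Doob's maximal inequality to that martingale.

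Let $N$ denote the Poisson random measure of jumps of $L$ on $\R_+\times(0,\infty)$ with intensity $\mathrm{d}s\,n(\mathrm{d}r)$, where $n(\mathrm{d}r)=c_\kappa r^{-1-\kappa}\mathrm{d}r$ is the L\'evy measure associated with $\mathbf{E}[e^{-\lambda L_t}]=e^{t\lambda^\kappa}$, and let $\widetilde N:=N-\widehat N$ be its compensator. Since $L$ is a centered L\'evy process, $L_u=\int_0^u\!\int_0^\infty r\,\widetilde N(\mathrm{d}s,\mathrm{d}r)$. Set
\[
\tilde L^{(y)}_u:=\int_0^u\!\int_0^y r\,\widetilde N(\mathrm{d}s,\mathrm{d}r),\qquad \psi_y(\theta):=\int_0^y(e^{\theta r}-1-\theta r)\,n(\mathrm{d}r)\ge 0,
\]
so that $\tilde L^{(y)}$ is a L\'evy martingale with $\mathbf{E}[e^{\theta\tilde L^{(y)}_t}]=e^{t\psi_y(\theta)}$. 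On the event $\{\sup_{v\le u}\Delta L_v\le y\}$ no jump of size $>y$ occurs in $[0,u]$, so
\[
L_u=\tilde L^{(y)}_u-u\!\int_y^\infty r\,n(\mathrm{d}r)\le \tilde L^{(y)}_u,
\]
and hence
\[
\Bigl\{\sup_{u\le t}L_u\mathsf{1}\{\sup_{v\le u}\Delta L_v\le y\}\ge x\Bigr\}\subseteq\Bigl\{\sup_{u\le t}\tilde L^{(y)}_u\ge x\Bigr\}.
\]
For any $\theta>0$, $u\mapsto e^{\theta\tilde L^{(y)}_u}$ is a positive submartingale (because $\psi_y(\theta)\ge 0$), so Doob's maximal inequality gives
\[
\mathbf{P}\Bigl(\sup_{u\le t}\tilde L^{(y)}_u\ge x\Bigr)\le e^{-\theta x}\,\mathbf{E}[e^{\theta\tilde L^{(y)}_t}]=\exp\bigl(-\theta x+t\psi_y(\theta)\bigr).
\]

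It remains to bound $\psi_y$ and make the optimal choice of $\theta$. The substitution $u=\theta r$ yields $\psi_y(\theta)=c_\kappa\theta^\kappa\int_0^{\theta y}(e^u-1-u)u^{-1-\kappa}\mathrm{d}u$, and splitting the integral at $u=1$ (using $e^u-1-u\le u^2 e^u$ on $[0,1]$ together with the elementary bound $\int_1^M e^u u^{-1-\kappa}\mathrm{d}u\le C e^M M^{-1-\kappa}$ for $M\ge 1$, obtained by integration by parts) produces
\[
\psi_y(\theta)\le \frac{Ce^{\theta y}}{\theta\,y^{1+\kappa}}\qquad \text{for }\theta y\ge 1.
\]
Write $R:=xy^{\kappa-1}/(Ct)$. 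If $R\le e$, then $(eCt/(xy^{\kappa-1}))^{x/y}\ge 1$ and the announced inequality is trivial. If $R>e$, take $\theta:=y^{-1}\log R$, so that $\theta y=\log R\ge 1$ and $e^{\theta y}=R$. Using $CtR=xy^{\kappa-1}$,
\[
-\theta x+t\psi_y(\theta)\le -\frac{x\log R}{y}+\frac{CtR}{y^\kappa\log R}=-\frac{x}{y}\log R+\frac{x}{y\log R}\le -\frac{x}{y}(\log R-1).
\]
Exponentiating gives the bound $e^{x/y}R^{-x/y}=(e/R)^{x/y}=(eCt/(xy^{\kappa-1}))^{x/y}$, which is the claim after renaming the constant.

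The main obstacle is producing the logarithmic factor in the exponent. A naive sub-Gaussian optimization (e.g.\ $\theta$ of order $1/y$) yields only $\exp(-cx^2y^{\kappa-2}/t)$, which misses this logarithm. Capturing it requires both the sharp estimate $\psi_y(\theta)\le Ce^{\theta y}/(\theta y^{1+\kappa})$ in the regime $\theta y\ge 1$, reflecting the fact that moderate exponential moments of $\tilde L^{(y)}$ are dominated by the integrand near $r=y$, and the matching choice $\theta=y^{-1}\log R$, which saturates the Poissonian tail and delivers the required $(Ct/(xy^{\kappa-1}))^{x/y}$ decay.
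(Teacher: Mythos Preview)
Your proof is correct and takes a genuinely different route from the paper's. The paper reduces to $t=1$ by scaling, then passes to a discrete approximation $L^{(n)}_s=n^{-1/\kappa}\sum_{k\le ns}\xi_k$ with $\xi_k\stackrel{d}{=}L_1$, invokes the invariance principle to transfer the problem to random walks, and applies the Fuk--Nagaev inequality $\mathbf{E}\{e^{hL_1};L_1\le y'\}\le 1+\frac{e^{hy'}-1-hy'}{(y')^2}V(y')$ together with Doob's inequality, finally choosing $h=(y')^{-1}\log\bigl(1+x'y'/nV(y')\bigr)$ and using $V(y')\le C(y')^{2-\kappa}$. You instead work directly in continuous time via the L\'evy--It\^o decomposition, observing that on $\{\sup_{v\le u}\Delta L_v\le y\}$ one has $L_u\le\tilde L^{(y)}_u$, and then apply exponential Chebyshev/Doob to the compensated small-jump martingale $\tilde L^{(y)}$, bounding its Laplace exponent $\psi_y(\theta)\le Ce^{\theta y}/(\theta y^{1+\kappa})$ for $\theta y\ge 1$. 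The final optimizations are essentially the same (your $\theta=y^{-1}\log R$ matches the paper's choice of $h$), but your approach is more elementary: it avoids the discrete detour and the invariance principle entirely, and replaces the Fuk--Nagaev black box by a direct computation of $\psi_y$. One small remark: the phrase ``integration by parts'' for $\int_1^M e^u u^{-1-\kappa}\,du\le Ce^M M^{-1-\kappa}$ is slightly loose (a single integration by parts gives a coefficient $1+\kappa>1$ on the residual integral); the clean justification is that $e^u u^{-1-\kappa}\le 2\,\frac{d}{du}(e^u u^{-1-\kappa})$ for $u\ge 2(1+\kappa)$, which yields the bound immediately.
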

\begin{proof}
Since for $r>0$ fixed, $\{L_{r t}:\,t\geq0\}$ is equal to
$r^{1/\kappa}L$ in law, for the proof we may assume that $t=1.$ Let $\{\xi
_{i}:\,i\geq1\}$ denote a family of independent copies of $L_{1\,}.$%
\thinspace\ Set
\begin{equation*}
W_{ns}\,:=\,\sum_{1\leq k\leq ns}\xi_{k},\quad L_{s}^{(n)}\,:=\,n^{-1/\kappa
}W_{ns},\quad0\leq s\leq1,\ \,n\geq1.
\end{equation*}
Denote by $D_{[0,1]}$ the Skorokhod space of c\`{a}dl\`{a}g functions
$\,f:[0,1]\rightarrow\R.$\thinspace\ For fixed $y>0,$ let
$H:D_{[0,1]}\mapsto\R$ be defined by
\begin{equation*}
H(f)\,=\,\sup_{0\leq u\leq1}f(u)\,\mathsf{1}\Big\{\sup_{0\leq v\leq u}\Delta
f(v)\leq y\Big\},\quad f\in D_{[0,1]\,}.
\end{equation*}
It is easy to verify that $H$ is continuous on the set $D_{[0,1]}\setminus
J_{y}$, where $J_{y}$\thinspace:$=$\thinspace$\bigl\{f\in D_{[0,1]}:\,\Delta
f(v)=y\text{ for some }v\in\lbrack0,1]\bigr\}$. Since $\mathbf{P}(L\in
J_{y})=0$, from the invariance principle (see, e.g., Gikhman and Skorokhod
\cite{GS69}, Theorem~9.6.2) for $L^{(n)}$ we conclude that%
\begin{equation*}
\mathbf{P}\bigl(H(L)\geq x\bigr)\,=\,\lim_{n\uparrow\infty}\mathbf{P}%
\bigl(H(L^{(n)})\geq x\bigr),\quad x>0.
\end{equation*}
Consequently, the lemma will be proved if we show that
\begin{gather}
\mathbf{P}\Bigl(\,\sup_{0\leq u\leq1}W_{nu}\mathsf{1}\bigl\{\max_{1\leq k\leq
nu}\xi_{k}\leq yn^{1/\kappa}\bigr\}\geq xn^{1/\kappa}\Bigr)\,\nonumber\\
\leq\,\Bigl(\frac{C}{xy^{\kappa-1}}\Bigr)^{\!x/y},\quad x,y>0,\ \,n\geq1.
\label{L3.1}%
\end{gather}
To this end, for fixed $\,y^{\prime},h\geq0,$\thinspace\ we consider the
sequence%
\begin{equation*}
\Lambda_{0}:=1,\quad\Lambda_{n}\,:=\,\mathrm{e}^{hW_{n}}\mathsf{1}%
\bigl\{\max_{1\leq k\leq n}\xi_{k}\leq y^{\prime}\bigr\},\quad n\geq1.
\end{equation*}
It is easy to see that
\begin{equation*}
\mathbf{E}\{\Lambda_{n+1}\,|\,\Lambda_{n}=\mathrm{e}^{hu}\}\,=\,\mathrm{e}%
^{hu}\,\mathbf{E}\{\mathrm{e}^{hL_{1}};\,L_{1}\leq y^{\prime}\}\quad\text{for
all }\,u\in\R%
\end{equation*}
and that
\begin{equation*}
\mathbf{E}\{\Lambda_{n+1}\,|\,\Lambda_{n}=0\}\,=\,0.
\end{equation*}
In other words,
\begin{equation}
\mathbf{E}\{\Lambda_{n+1}\,|\,\Lambda_{n}\}\,=\,\Lambda_{n}\,\mathbf{E}%
\{\mathrm{e}^{hL_{1}};\,L_{1}\leq y^{\prime}\}. \label{L3.2}%
\end{equation}
This means that $\{\Lambda_{n}:\ n\geq1\}$ is a supermartingale
(submartingale) if $h$ satisfies $\mathbf{E}\{\mathrm{e}^{hL_{1}};\,L_{1}\leq
y^{\prime}\}\leq1\,\ $(respectively $\,\mathbf{E}\{\mathrm{e}^{hL_{1}}%
;L_{1}\leq y^{\prime}\}\geq1\,$). If $\Lambda_{n}$ is a submartingale, then by
Doob's inequality,
\begin{equation*}
\mathbf{P}\bigl(\max_{1\leq k\leq n}\Lambda_{k}\geq\mathrm{e}^{hx^{\prime}%
}\bigr)\,\leq\,\mathrm{e}^{-hx^{\prime}}\,\mathbf{E}\Lambda_{n\,},\quad
x^{\prime}>0.
\end{equation*}
But if $\Lambda_{n}$ is a supermartingale, then
\begin{equation*}
\mathbf{P}\bigl(\max_{1\leq k\leq n}\Lambda_{k}\geq\mathrm{e}^{hx^{\prime}%
}\bigr)\,\leq\,\mathrm{e}^{-hx^{\prime}}\,\mathbf{E}\Lambda_{0\,}%
=\,\mathrm{e}^{-hx^{\prime}},\quad x^{\prime}>0.
\end{equation*}
\textrm{F}rom these
inequalities and (\ref{L3.2}) we get
\begin{equation}
\mathbf{P}\bigl(\max_{1\leq k\leq n}\Lambda_{k}\geq\mathrm{e}^{hx^{\prime}%
}\bigr)\,\leq\,\mathrm{e}^{-hx^{\prime}}\max\Bigl\{1,\bigl(\mathbf{E}%
\{\mathrm{e}^{hL_{1}};\,L_{1}\leq y^{\prime}\}\bigr)^{n}\Bigr\}. \label{L3.3}%
\end{equation}
It was proved by Fuk and Nagaev \cite{FN71} (see the first formula in
the proof of Theorem~4 there) that
\[
\mathbf{E}\{\mathrm{e}^{hL_{1}};\,L_{1}\leq y^{\prime}\}\,\leq\,1+\hspace
{0.2pt}h\mathbf{E}\{L_{1\,};\,L_{1}\leq y^{\prime}\}\hspace{1pt}%
+\,\frac{\mathrm{e}^{hy^{\prime}}-1-hy^{\prime}}{(y^{\prime})^{2}%
}\,V(y^{\prime}),\,\ \,h,y^{\prime}>0,
\]
where $\,V(y^{\prime}):=\int_{-\infty}^{y^{\prime}}\mathbf{P}(L_{1}%
\in\mathrm{d}u)\,u^{2}>0.$\thinspace\ Noting that the assumption
$\mathbf{E}L_{1}=0$ yields that $\,\mathbf{E}\{L_{1\,};\,L_{1}\leq y^{\prime
}\}\leq0,$\thinspace\ we obtain
\begin{equation}
\mathbf{E}\{\mathrm{e}^{hL_{1}};\,L_{1}\leq y^{\prime}\}\,\leq\,1+\frac
{\mathrm{e}^{hy^{\prime}}-1-hy^{\prime}}{(y^{\prime})^{2}}\,V(y^{\prime
}),\quad h,y^{\prime}>0. \label{Nag}%
\end{equation}
Now note that%
\begin{align}
\Big\{\max_{1\leq k\leq n}W_{k}\mathsf{1}\{\max_{1\leq i\leq k}\xi_{i}  &
\leq y^{\prime}\}\geq x^{\prime}\Big\}\,=\,\Big\{\max_{1\leq k\leq
n}\mathrm{e}^{hW_{k}}\,\mathsf{1}\{\max_{1\leq i\leq k}\xi_{i}\leq y^{\prime
}\}\geq\mathrm{e}^{hx^{\prime}}\Big\}\nonumber\label{eq:2.30}\\
\,  &  =\,\bigl\{\max_{1\leq k\leq n}\Lambda_{k}\geq\mathrm{e}^{hx^{\prime}%
}\bigr\}.
\end{align}
Thus, combining (\ref{eq:2.30}), (\ref{Nag}), and (\ref{L3.3}), we get
\begin{gather}
\mathbf{P}\Bigl(\max_{1\leq k\leq n}W_{k}\mathsf{1}\{\max_{1\leq i\leq k}%
\xi_{i}\leq y^{\prime}\}\geq x^{\prime}\Bigl)\,\leq\,\mathbf{P}\bigl(\max
_{1\leq k\leq n}\Lambda_{k}\geq\mathrm{e}^{hx^{\prime}}\bigr)\nonumber\\
\,\leq\,\exp\Bigl\{-hx^{\prime}+\frac{\mathrm{e}^{hy^{\prime}}-1-hy^{\prime}%
}{(y^{\prime})^{2}}\,n\,V(y^{\prime})\Bigr\}\nonumber.
\end{gather}
Choosing $h:=(y^{\prime})^{-1}\log\bigl(1+x^{\prime}y^{\prime}/n\,V(y^{\prime
})\bigr)$, we arrive, after some elementary calculations, at the bound
\begin{equation*}
\mathbf{P}\Bigl(\max_{1\leq k\leq n}W_{k}\mathsf{1}\{\max_{1\leq i\leq k}%
\xi_{i}\leq y^{\prime}\}\geq x^{\prime}\Bigl)\,\leq\Bigl(\,\frac
{\mathrm{e}\,n\,V(y^{\prime})}{x^{\prime}y^{\prime}}\Bigr)^{\!x^{\prime
}/y^{\prime}},\quad x^{\prime},y^{\prime}>0.
\end{equation*}
Since $\mathbf{P}(L_{1}>u)\sim C\,u^{-\kappa}$ as $u\uparrow\infty$, we have
$\,V(y^{\prime})\leq C\,(y^{\prime})^{2-\kappa}$ for all $y^{\prime}>0.$
Therefore,
\begin{equation}
\mathbf{P}\Bigl(\max_{1\leq k\leq n}W_{k}\mathsf{1}\{\max_{1\leq i\leq k}%
\xi_{i}\leq y^{\prime}\}\geq x^{\prime}\Bigl)\,\leq\Bigl(\frac{Cn}{x^{\prime
}(y^{\prime})^{\kappa-1}}\Bigr)^{\!x^{\prime}/y^{\prime}},\quad x^{\prime
},y^{\prime}>0. \label{L3.4}%
\end{equation}
Choosing finally $\,x^{\prime}=xn^{1/\kappa},$\thinspace\ $y^{\prime
}=yn^{1/\kappa},$\thinspace\ we get (\ref{L3.1}) from (\ref{L3.4}). Thus, the
proof of the lemma is complete.\hfill$\square$
\end{proof}

\begin{lemma}
\label{L.small.values}There is a constant
$\,c_{\kappa}$ such that%
\begin{equation*}
\mathbf{P}\Big(\inf_{u\leq t}L_{u}<-x\Big)\leq\,\exp\!\Big\{-c_{\kappa}%
\,\frac{x^{\kappa/(\kappa-1)}}{t^{1/(\kappa-1)}}\Big\},\quad x,t>0.
\end{equation*}

\end{lemma}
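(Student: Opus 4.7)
The plan is to bound $\mathbf{P}(\inf_{u\leq t}L_u < -x) = \mathbf{P}(\sup_{u\leq t}(-L_u) > x)$ by a standard exponential Chebyshev / Doob argument, exploiting the fact that for a spectrally positive $\kappa$-stable process the Laplace transform $\mathbf{E}\mathrm{e}^{-\lambda L_t}=\mathrm{e}^{t\lambda^{\kappa}}$ is finite for every $\lambda\geq 0$.

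First I would fix $\lambda>0$ and note that, by the independent-increments property together with the given Laplace transform,
\[
N_u^{\lambda} \,:=\, \exp\bigl(-\lambda L_u - u\lambda^{\kappa}\bigr), \qquad u\geq 0,
\]
is a nonnegative martingale. Consequently $u\mapsto\mathrm{e}^{-\lambda L_u}$ is a nonnegative submartingale. Then I would apply Doob's $L^1$-maximal inequality: for every $x>0$,
\[
\mathbf{P}\!\Big(\inf_{u\leq t}L_u<-x\Big)
= \mathbf{P}\!\Big(\sup_{u\leq t}\mathrm{e}^{-\lambda L_u}>\mathrm{e}^{\lambda x}\Big)
\leq \mathrm{e}^{-\lambda x}\,\mathbf{E}\,\mathrm{e}^{-\lambda L_t}
= \exp\bigl(-\lambda x + t\lambda^{\kappa}\bigr).
\]

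The remaining step is a routine optimization of the exponent over $\lambda>0$. Differentiating $\lambda\mapsto -\lambda x + t\lambda^{\kappa}$ and setting the derivative to zero gives the optimal choice
\[
\lambda_{*} \,=\, \Bigl(\frac{x}{\kappa t}\Bigr)^{1/(\kappa-1)}.
\]
Plugging $\lambda_*$ back in yields
\[
-\lambda_{*} x + t\lambda_{*}^{\kappa}
\,=\,-\,\frac{\kappa-1}{\kappa^{\kappa/(\kappa-1)}}\,\frac{x^{\kappa/(\kappa-1)}}{t^{1/(\kappa-1)}},
\]
which gives the desired bound with $c_{\kappa}=(\kappa-1)\,\kappa^{-\kappa/(\kappa-1)}>0$.

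There is no real obstacle here: the only things to check are that $\mathrm{e}^{-\lambda L_t}$ is genuinely a submartingale for every $\lambda\geq 0$ (which is why spectral positivity is essential --- the Laplace transform would diverge otherwise) and that Doob's inequality applies to the càdlàg version of this submartingale, both of which are standard. The exponent obtained matches the one in the statement up to the explicit value of the constant $c_\kappa$, so we are done.
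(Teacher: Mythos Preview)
Your proof is correct and essentially identical to the paper's: both apply Doob's maximal inequality to the submartingale $u\mapsto \mathrm{e}^{-\lambda L_u}$, use the Laplace transform to obtain $\exp(-\lambda x + t\lambda^{\kappa})$, and then optimize over $\lambda$, arriving at the same explicit constant $c_{\kappa}=(\kappa-1)\kappa^{-\kappa/(\kappa-1)}$.
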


\begin{proof}
It is easy to see that for all $h>0,$%
\begin{equation*}
\mathbf{P}\Big(\inf_{u\leq t}L_{u}<-x\,\Big)=\,\mathbf{P}\Big(\sup_{s\leq
t}\mathrm{e}^{-hL_{u}}>\mathrm{e}^{hx}\Big).
\end{equation*}
Applying Doob's inequality to the submartingale \thinspace$t\mapsto
\mathrm{e}^{-hL_{t}}$, we obtain
\begin{equation*}
\mathbf{P}\Big(\inf_{u\leq t}L_{u}<-x\,\Big)\leq\,\mathrm{e}^{-hx}%
\,\mathbf{E}\,\mathrm{e}^{-hL_{t}}.
\end{equation*}
Taking into account definition (\ref{Laplace}), we have%
\begin{equation*}
\mathbf{P}\Big(\inf_{u\leq t}L_{u}<-x\,\Big)\leq\,\exp\!\Big\{-hx+th^{\kappa
}\Big\}.
\end{equation*}
Minimizing the function $h\mapsto-hx+th^{\kappa}$, we get the inequality in
the lemma with $c_{\kappa}=(\kappa-1)/\bigl(\kappa\bigr)^{\kappa/(\kappa-1)}$.
\hfill$\square$
\end{proof}
\end{appendix}







\begin{thebibliography}{99}
 \bibitem{Balanca15}
{\sc Balan{\c c}a, P.} (2015).
\newblock Uniform multifractal structure of stable trees.
\newblock ArXiv:1508.00229v1.

\bibitem{BBS07}
{\sc Berestycki, J., Berestycki, N., and Schweinsberg, J.} (2007).
\newblock Beta-coalescents and continuous stable random trees.
\newblock {\em Ann. Probab.\/} {\bf 35}~(5), 1835--1887.

\bibitem{bib:daw91}
{\sc Dawson, D.~A.} (1993).
\newblock Measure-valued {M}arkov processes.
\newblock In: {\em \'{E}cole d'\'{E}t\'e de {P}robabilit\'es de {S}aint-{F}lour
  {XXI}---1991\/}, vol. 1541 of {\em Lecture Notes in Math.\/}, pp. 1--260.
  Springer, Berlin.

\bibitem{DemboPeresRosenZeitouni2001}
{\sc Dembo, A., Peres, Y., Rosen, J., and Zeitouni, O.} (2001).
\newblock Thick points for planar {B}rownian motion and the {E}rd{\H
  o}s-{T}aylor conjecture on random walk.
\newblock {\em Acta Math.\/} {\bf 186}~(2), 239--270.

\bibitem{Durand2009}
{\sc Durand, A.} (2009).
\newblock Singularity sets of {L}\'evy processes.
\newblock {\em Probab. Theory Related Fields\/} {\bf 143}~(3-4), 517--544.

\bibitem{EKR}
{\sc El~Karoui, N. and Roelly, S.} (1991).
\newblock Propri\'et\'es de martingales, explosion et repr\'esentation de
  {L}\'evy-{K}hintchine d'une classe de processus de branchement \`a valeurs
  mesures.
\newblock {\em Stochastic Process. Appl.\/} {\bf 38}~(2), 239--266.

\bibitem{Fleischm88}
{\sc Fleischmann, K.} (1988).
\newblock Critical behavior of some measure-valued processes.
\newblock {\em Math. Nachr.\/} {\bf 135}, 131--147.

\bibitem{FMW10}
{\sc Fleischmann, K., Mytnik, L., and Wachtel, V.} (2010).
\newblock Optimal local {H}\"older index for density states of superprocesses
  with {$(1+\beta)$}-branching mechanism.
\newblock {\em Ann. Probab.\/} {\bf 38}~(3), 1180--1220.

\bibitem{FMW11}
{\sc Fleischmann, K., Mytnik, L., and Wachtel, V.} (2011).
\newblock H\"older index at a given point for density states of
  {$\text{super-}\alpha\text{-stable}$} motion of index {$1+\beta$}.
\newblock {\em J. Theoret. Probab.\/} {\bf 24}~(1), 66--92.

\bibitem{FN71}
{\sc Fuk, D.~H. and Nagaev, S.~V.} (1971).
\newblock Probabilistic inequalities for sums of independent random variables.
\newblock {\em Teor. Verojatnost. i Primenen.\/} {\bf 16}, 660--675.

\bibitem{GS69}
{\sc Gikhman, I.~I. and Skorokhod, A.~V.} (1969).
\newblock {\em Introduction to the theory of random processes\/}.
\newblock Translated from the Russian by Scripta Technica, Inc, W. B. Saunders
  Co., Philadelphia, Pa.-London-Toronto, Ont.

\bibitem{HuTaylor2000}
{\sc Hu, X. and Taylor, S.~J.} (2000).
\newblock Multifractal structure of a general subordinator.
\newblock {\em Stochastic Process. Appl.\/} {\bf 88}~(2), 245--258.

\bibitem{Jacod79}
{\sc Jacod, J.} (1979).
\newblock {\em Calcul stochastique et probl\`emes de martingales\/}, vol. 714
  of {\em Lecture Notes in Mathematics\/}.
\newblock Springer, Berlin.
\newblock ISBN 3-540-09253-6.

\bibitem{bib:jacshir87}
{\sc Jacod, J. and Shiryaev, A.} (1987).
\newblock {\em Limit Theorems for Stochastic Processes\/}.

\bibitem{Jaffard1997}
{\sc Jaffard, S.} (1997).
\newblock Old friends revisited: the multifractal nature of some classical
  functions.
\newblock {\em J. Fourier Anal. Appl.\/} {\bf 3}~(1), 1--22.

\bibitem{Jaff99}
{\sc Jaffard, S.} (1999).
\newblock The multifractal nature of {L}\'evy processes.
\newblock {\em Probab. Theory Related Fields\/} {\bf 114}~(2), 207--227.

\bibitem{KlenkeMoerters2005}
{\sc Klenke, A. and M{\"o}rters, P.} (2005).
\newblock The multifractal spectrum of {B}rownian intersection local times.
\newblock {\em Ann. Probab.\/} {\bf 33}~(4), 1255--1301.

\bibitem{bib:konshig88}
{\sc Konno, N. and Shiga, T.} (1988).
\newblock Stochastic partial differential equations for some measure-valued
  diffusions.
\newblock {\em Probability Theory and Related Fields\/} {\bf 79}, 201--225.

\bibitem{bib:lm03}
{\sc Le~Gall, J.-F. and Mytnik, L.} (2005).
\newblock Stochastic integral representation and regularity of the density for
  the exit measure of super-{B}rownian motion.
\newblock {\em Ann. Probab.\/} {\bf 33}~(1), 194--222.

\bibitem{LeGallPerkins1995}
{\sc Le~Gall, J.-F. and Perkins, E.~A.} (1995).
\newblock The {H}ausdorff measure of the support of two-dimensional
  super-{B}rownian motion.
\newblock {\em Ann. Probab.\/} {\bf 23}~(4), 1719--1747.

\bibitem{MoertersShieh2004}
{\sc M{\"o}rters, P. and Shieh, N.-R.} (2004).
\newblock On the multifractal spectrum of the branching measure on a
  {G}alton-{W}atson tree.
\newblock {\em J. Appl. Probab.\/} {\bf 41}~(4), 1223--1229.

\bibitem{MP03}
{\sc Mytnik, L. and Perkins, E.} (2003).
\newblock Regularity and irregularity of $\beta$-stable super-{B}rownian
  motion.
\newblock {\em The Annals of Probability\/} {\bf 31}, 1413--1440.

\bibitem{MW14}
{\sc Mytnik, L. and Wachtel, V.} (2014).
\newblock Multifractal analysis of superprocesses with stable branching in
  dimension one.
\newblock To appear in {\it The Annals of Probability}.

\bibitem{PerkinsTaylor1998}
{\sc Perkins, E.~A. and Taylor, S.~J.} (1998).
\newblock The multifractal structure of super-{B}rownian motion.
\newblock {\em Ann. Inst. H. Poincar\'e Probab. Statist.\/} {\bf 34}~(1),
  97--138.

\bibitem{bib:reim89}
{\sc Reimers, M.} (1989).
\newblock One-dimensional stochastic partial differential equations and the
  branching measure diffusion.
\newblock {\em Probability Theory and Related Fields\/} {\bf 81}, 319--340.

\bibitem{Ros87}
{\sc Rosen, J.} (1987).
\newblock Joint continuity of the intersection local times of {M}arkov
  processes.
\newblock {\em Ann. Probab.\/} {\bf 15}~(2), 659--675.

\bibitem{bib:seuret02}
{\sc Seuret, S. and V{\'e}hel, J.~L.} (2002).
\newblock The local {H}\"older function of a continuous function.
\newblock {\em Appl. Comput. Harmon. Anal.\/} {\bf 13}~(3), 263--276.

\bibitem{bib:wal86}
{\sc Walsh, J.} (1986).
\newblock An introduction to stochastic partial differential equations.
\newblock {\em Lecture Notes in Mathematics\/} {\bf 1180}, 265--439.
\end{thebibliography}
\end{document}